\documentclass[11pt,twoside]{article}
\usepackage{amsmath}
\usepackage{amssymb}
\usepackage{amsthm}
\usepackage{mathrsfs}
\usepackage{simplewick}
\usepackage{times}
\usepackage{color}
\usepackage{hep}
\usepackage{leftidx}
\usepackage{graphicx}
\usepackage{float}
\usepackage{enumerate}
\usepackage{titletoc}
\usepackage{epstopdf}
\usepackage{ulem}
\usepackage{appendix}
\usepackage{enumitem}
\usepackage{enumerate} \usepackage{hyperref}
\allowdisplaybreaks
\def\al{\alpha}
\def\be{\beta}
\def\de{\delta}
\def\dl{\delta}

\def\na{\nabla}

\def\Si{\Sigma}
\def\th{\theta}

\def\ve{\varepsilon}
\def\ve0{\varepsilon_{0}}
\def\vp{\varphi}

\def \no{\nonumber}

\def\d{\operatorname{d}}

\def\ds{\slashed{\operatorname{d}}}

\def\D{\mathscr{D}}

\def\f{\frac}

\def\les{\lesssim}

\def\Lc{\check{L}}

\def\Lies{\slashed{\mathcal{L}}}

\def\Lr{\mathring{L}}

\def\p{\partial}
\def\pis{\slashed{\pi}}

\def\tr{\operatorname{tr}}

\def\Tc{\check{T}}

\def\dps{\displaystyle}

\newcommand{\checkX}{\check{\mathscr{X}}} 

\begin{document}
\footskip=0pt
\footnotesep=2pt
\let\oldsection\section
\renewcommand\section{\setcounter{equation}{0}\oldsection}
\renewcommand\thesection{\arabic{section}}
\renewcommand\theequation{\thesection.\arabic{equation}}
\newtheorem{claim}{\noindent Claim}[section]
\newtheorem{theorem}{\noindent Theorem}[section]
\newtheorem{lemma}{\noindent Lemma}[section]
\newtheorem{proposition}{\noindent Proposition}[section]
\newtheorem{definition}{\noindent Definition}[section]
\newtheorem{remark}{\noindent Remark}[section]
\newtheorem{corollary}{\noindent Corollary}[section]
\newtheorem{example}{\noindent Example}[section]
\newcommand{\address}[1]{%
  \begingroup
  \centering
  \small
  #1\\
  \endgroup
  \vspace{0.2cm}
}
\title{Shock formation for 3D steady supersonic flows with general short pulse data}

\author{
Ding Bingbing$^{1}$ \quad Xin Zhouping$^{2}$ \quad Yin Huicheng$^{1,}$
\footnotemark[1]
\\[0.5cm]
\small
$^{1}$ School of Mathematical Sciences and IMS, Nanjing Normal University, Nanjing 210023, China\\
\small
$^{2}$ Institute of Mathematical Sciences, The Chinese University of Hong Kong, Shatin, Hong Kong, China
}

\footnotetext[1]{Ding Bingbing (13851929236@163.com, bbding@njnu.edu.cn) and Yin Huicheng (huicheng@nju.edu.cn, 05407@njnu.edu.cn) are
supported by the NSFC (No.12571237, No.12331007). Xin Zhouping (zpxin@ims.cuhk.edu.hk)
is partially supported by the Zheng Ge Ru Foundation, Hong Kong RGC Earmarked Research Grants
CUHK-14301421, CUHK-14300819, CUHK-14302819, CUHK-14301023, Basic and Applied Basic Research Foundations of Guangdong Province 2020131515310002, and the Key Projects of National Natural Science Foundation of China (No.12131010, No.11931013).\\
}

\date{}
\maketitle

\begin{abstract}
This paper concerns the shock formation problem for
the 3D steady supersonic potential equation of polytropic gases.
The potential equation is described by a second order quasilinear wave equation
$\dps\sum_{i=1}^{3}[\left(\partial_i\Phi\right)^2-c^2(\rho)]\partial_i^2\Phi
+2\dps\sum_{1\leq i<j\leq 3}\partial_i\Phi\partial_j\Phi\partial_{ij}^2\Phi=0$,
where $x=(x^1,x^2,x^3)$, $(\p_1,\p_2,\p_3)=(\p_{x^1},\p_{x^2},\p_{x^3})$,
$c(\rho)=\sqrt{p'(\rho)}$ is the sonic speed with
$p(\rho)=A\rho^{\gamma}$ ($\gamma>1$), and $\p_3\Phi>c(\rho)$.
For the short pulse boundary data $\Phi|_{x^3=0}
=\delta^\nu\Phi_0\left( \frac{r-1}{\delta}, \omega \right)$ and
$\partial_3\Phi|_{x^3=0}=q_0+\delta^{\nu-1}\Phi_1\left( \frac{r-1}{\delta}, \omega \right)$
with $r=\sqrt{(x^1)^2 + (x^2)^2}$, $\omega=(\f{x^1}{r}, \f{x^2}{r})\in\Bbb S$, $1<\nu<2$
and small $\delta>0$, it is shown that a shock will be formed in a finite $x^3$-distance as long as
the boundary data are supersonic and satisfy $(\Phi_0, \Phi_1)\not\equiv 0$. This coincides with physical phenomenon that strong compression of supersonic polytropic gases yields shocks. One of our main ingredients is to find a good unknown so that the previously imposed
compatibility conditions on the short pulse initial data are removed as well as the required weighted energy estimates
in the existing literatures are derived. It is expected that the method here will be applied to study the
shock formation problem with general short pulse initial data for the 3D steady supersonic Euler equations of polytropic gases.
\end{abstract}

\medskip
{\bf Keywords:} Supersonic flow, potential equation, short pulse data, shock formation,

\qquad \qquad \quad null frame, Lorentz geometry

\medskip
\textbf{Mathematical Subject Classification}: 35L05, 35L72

\medskip
\tableofcontents

\section{Introduction}\label{Section 1}
In this paper, we are concerned with the 3D isentropic, irrotational and steady compressible Euler equations
of polytropic gases
\begin{equation}\label{EulerSystem}
\left\{
\begin{aligned}
&\displaystyle\sum_{j = 1}^{3} \partial_{j} \left( \rho v_{j} \right) = 0, \\
&\displaystyle\sum_{j = 1}^{3} \partial_{j} \left( \rho v_{i} v_{j} \right) + \partial_{i} p = 0 ,\quad i = 1, 2, 3,
\end{aligned}
\right.
\end{equation}
where $x = (x^1, x^2, x^3)$, $\partial_i= \partial_{x^i}$ for $1\le i\le 3$, $\rho>0$ is the density, and $p=A\rho^{\gamma}$ is the pressure
with constants $A>0$ and $\gamma>1$. The sonic speed is given by $c(\rho)=\sqrt{p'(\rho)}$, and $v= \left( v_1, v_2, v_3 \right)$ denotes the velocity field of gases. It is assumed that the flow is irrotational (i.e.,
$\operatorname{rot}v
=(\p_2v_3-\p_3v_2,\p_3v_1-\p_1v_3,
\p_1v_2-\p_2v_1)\equiv0$) and $v_3>c(\rho)$ holds. At this time,  $x^3$ is the time-like direction for \eqref{EulerSystem}.

Under the irrotational assumption for \eqref{EulerSystem}, there exists a potential function $\Phi(x)$
such that $v_i = \partial_i \Phi$ ($1\le i\le 3$). This, together with the momentum equations in \eqref{EulerSystem},
yields the following Bernoulli's law
\begin{equation}\label{YHCC-1}
\frac{1}{2} |\p\Phi|^2 + h(\rho) \equiv C_0,
\end{equation}
where $\p=(\p_1,\p_2,\p_3)$, $|\p\Phi|^2=(\p_1\Phi)^2+(\p_2\Phi)^2+(\p_3\Phi)^2$, $C_0=\f12q_0^2$ is the
Bernoulli's constant of the uniform supersonic incoming flow
with constant density $\rho_0>0$ and constant velocity $(0,0,q_0)$ for $q_0>c(\rho_0)$,
and $h(\rho)$ represents the enthalpy satisfying $h'(\rho) = \frac{c^2(\rho)}{\rho}>0$ for $\rho>0$ and $h(\rho_0)=0$.

Substituting \eqref{YHCC-1} into the first equation in \eqref{EulerSystem} and performing direct computations  yield
\begin{equation}\label{PotentialEquation}
\sum_{i = 1}^{3}[\left(\partial_i\Phi\right)^2-c^2(\rho)]\partial_i^2\Phi+2\sum_{1\leq i<j\leq 3}\partial_i\Phi\partial_j\Phi\partial_{ij}^2\Phi = 0,
\end{equation}
where $\rho=h^{-1}(C_0-\tfrac 12|\p\Phi|^2)$ and $h^{-1}$ is the inverse function of $h(\rho)$.
It is easy to verify that \eqref{PotentialEquation} is strictly hyperbolic with respect to the $x^3$-direction due to $\p_3\Phi>c(\rho)$.

Consider the following  short pulse boundary data for \eqref{PotentialEquation}
\begin{equation}\label{InitialData}
\left\{
\begin{aligned}
&\Phi|_{x^3 = 0} = \delta^\nu \Phi_0\big(\frac{r-1}{\delta}, \omega\big), \\
&\partial_3\Phi|_{x^3 =0} = q_0 + \delta^{\nu - 1} \Phi_1\big(\frac{r-1}{\delta}, \omega\big),\\
\end{aligned}
\right.
\end{equation}
where $r=\sqrt{(x^1)^2 + (x^2)^2}$, $\omega=(\omega_1,\omega_2)=(\f{x^1}{r}, \f{x^2}{r})\in\Bbb S$, $1<\nu<2$,
$\delta>0$ is small,  \( \Phi_i(s, \omega) \in C_0^\infty\big((-1, 0) \times \mathbb{S}\big) \)
for \( i = 0, 1 \), and $(\Phi_0, \Phi_1)\not\equiv0$. Note that such a class of short pulse data
in \eqref{InitialData} are first introduced
to study the formation of trapped surfaces for the Einstein vacuum spacetime in the monumental work \cite{Ch2},
and subsequently generalized in \cite{K-R} by enlarging the admissible set of initial data.
Meanwhile, crucial extra smallness restrictions are imposed along the incoming directional  derivative $\p_t-\p_r$,
but the largeness is still kept for the outgoing directional derivative
$\p_t+\p_r$ in \cite{Ch2} and \cite{K-R}. Analogous smallness conditions are also required
in order to obtain the shock formation for the 3D quasilinear wave equations $-(1+(\p_t\phi)^p)\p_t^2\phi+\Delta\phi=0$
($p\ge 2, p\in\Bbb N$) (see \cite{M-Y} and \cite{L-Y}), that is, the following  compatibility conditions are imposed
\begin{equation}\label{YHCCC-1}
\begin{split}
&(\p_t+\p_r)^k\phi|_{t=0}=O(\delta^\nu)\ \text{(or}\ (\p_t-\p_r)^k\phi|_{t=0}=O(\delta^\nu)) \ \text{for}\  k=1,2,\\
&(\p_t-\p_r)^l\phi|_{t=0}=O(\delta^{\nu-l})\ \text{(or }(\p_t+\p_r)^l\phi|_{t=0}
=O(\delta^{\nu-l})\text{)}\   \text{for any}\ l\in\Bbb N.
\end{split}	
\end{equation}
It is pointed out that the short pulse solutions of differential
equations admit many important physical senses, which means that the related  quantities vary slowly
but their derivatives may change very rapidly. One can see more explanations in \cite{Alt1, Alt2, Hunter, Majda}.

It follows from \eqref{InitialData}, \eqref{YHCC-1} and $(\Phi_0, \Phi_1)\not\equiv0$ that
\begin{align}\label{YHCCC-2}
\begin{split}
&v_i|_{x^3=0}=O(\delta^{\nu-1})\quad\text{for $i=1,2$},\quad
v_3|_{x^3=0}=q_0+O(\delta^{\nu-1}),\\
&\rho|_{x^3=0}=\rho_0+O(\delta^{\nu-1}),\quad
\|\p v|_{x^3=0}\|_{L^\infty}\ge \f{C}{\delta^{2-\nu}}.
\end{split}
\end{align}
This implies that for small $\delta$ and $q_0>c(\rho_0)$, the flow on the boundary $x^3=0$ is supersonic
and strongly squeezed at some local positions due to $\Phi_i(s, \omega) \in C_0^\infty\big((-1, 0) \times \mathbb{S}\big)$
($i= 0, 1$).

On the other hand, for $(\Phi_0, \Phi_1)\not\equiv0$, it is easy to know that
\begin{align}\label{condition1}
&\textit{there exists a point $(s_0, \omega_0) \in (-1, 0) \times \mathbb{S}$\ such that}\ \partial_s\Phi_1(s_0,\omega_0)-\mathcal A\partial_s^2\Phi_0(s_0,\omega_0)>0
\end{align}
or
\begin{align}\label{condition2}
&\textit{there exists a point $(\tilde s_0, \tilde\omega_0) \in (-1, 0) \times \mathbb{S}$\  such that}\ -\partial_s\Phi_1(\tilde s_0,\tilde \omega_0)-\mathcal A\partial_s^2\Phi_0(\tilde s_0,\tilde \omega_0)>0,
\end{align}
where and hereafter $\mathcal A=\dps\frac{1}{\sqrt{M_0^2-1}}$ with $M_0=\dps\f{q_0}{c(\rho_0)}$ being the Mach number of the
uniform supersonic incoming flow, $\omega_0=(\omega_{0,1},\omega_{0,2})$ and $\tilde\omega_0=(\tilde\omega_{0,1}, \tilde\omega_{0,2})$.
Otherwise, for any $(s, \omega) \in (-1, 0) \times \mathbb{S}$, it holds that
$\partial_s\Phi_1(s,\omega)-\mathcal A\partial_s^2\Phi_0(s,\omega)\le 0$
and $-\partial_s\Phi_1(s,\omega)-\mathcal A\partial_s^2\Phi_0(s,\omega)\le 0$.
This yields $\partial_s^2\Phi_0(s,\omega)\ge 0$ and hence $\Phi_0(s,\omega)\equiv0$ due to $\Phi_0(s, \omega) \in C_0^\infty\big((-1, 0) \times \mathbb{S}\big)$. Based on this, one has $\partial_s\Phi_1(s,\omega)\le 0$
and $-\partial_s\Phi_1(s,\omega)\le 0$, which implies $\partial_s\Phi_1(s,\omega)\equiv0$
and hence $\Phi_1(s,\omega)\equiv0$ by $\Phi_1(s, \omega) \in C_0^\infty\big((-1, 0) \times \mathbb{S}\big)$.
This obviously contradicts $(\Phi_0, \Phi_1)\not\equiv0$.

In the present paper, we will remove the crucial compatibility condition \eqref{YHCCC-1}  for \eqref{InitialData} imposed
in \cite{Ch2,K-R,M-Y,L-Y}, and show the shock formation for \eqref{PotentialEquation}
with \eqref{InitialData} as long as $(\Phi_0, \Phi_1)\not\equiv0$. Our main result is stated as

\begin{theorem}\label{maintheorem}
Assume $(\Phi_0, \Phi_1)\not\equiv 0$ and $1<\nu<2$. For small $\delta>0$,

(A) when \eqref{condition1} holds, then the solution of \eqref{PotentialEquation}--\eqref{InitialData}
will develop a shock before the $x^3$-distance
\begin{equation}\label{distance1}
x^3_* = \delta^{2-\nu}\frac{4c(\rho_0)}{(\gamma+1)\big(\partial_s\Phi_1(s_0,\omega_0)-\mathcal A\partial_s^2\Phi_0(s_0,\omega_0)\big)}\big(1-\frac{1}{M_0^2}\big)^{3/2};
\end{equation}

(B) when \eqref{condition2} holds,
then the solution of  \eqref{PotentialEquation}--\eqref{InitialData} will develop a shock wave before the $x^3$-distance
\begin{equation}\label{distance2}
\tilde x^3_* = \delta^{2-\nu}\frac{4c(\rho_0)}{(\gamma+1)\big(-\partial_s\Phi_1(\tilde s_0,\tilde \omega_0)-\mathcal A\partial_s^2\Phi_0(\tilde s_0,\tilde \omega_0)\big)}\big(1-\frac{1}{M_0^2}\big)^{3/2}.
\end{equation}
\end{theorem}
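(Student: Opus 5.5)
We follow the Christodoulou--Miao--Yu geometric framework, now for the steady problem, with $x^3$ playing the role of time. The first step is to rewrite \eqref{PotentialEquation} for the perturbation $\phi=\Phi-q_0x^3$. Its principal part is $g^{\alpha\beta}(\p\phi)\p_{\alpha\beta}^2\phi=0$, with an acoustic metric $g$. For the background state $(0,0,q_0)$, the characteristic cones are $x^3=\sqrt{M_0^2-1}\,(r-1)$, up to the slope normalization by $\mathcal A$. This suggests introducing an optical function $u$ solving the eikonal equation $g^{\alpha\beta}\p_\alpha u\p_\beta u=0$, with $u|_{x^3=0}=1-r$. We then set up the null frame $\{\Lr,\Tc,X\}$, the inverse foliation density $\mu=1/(g^{\alpha\beta}\p_\alpha x^3\p_\beta u)$, and work in the short-pulse region $0\le u\le\delta$ (after the shift $s=(r-1)/\delta$). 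Shock formation is then equivalent to $\mu\to0$ in finite $x^3$, while all frame quantities and $\p\phi$ stay bounded. We will do the analysis for case (A); case (B) is symmetric, corresponding to the other characteristic family, obtained by taking $u|_{x^3=0}=r-1$.

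The key new ingredient, as the abstract indicates, is a good unknown replacing the raw derivatives $\p\phi$. Without the compatibility condition \eqref{YHCCC-1}, both $\Lr\phi$ and $\Tc\phi$ are of size $\delta^{\nu-1}$, and $\Lr^2\phi$ can be of size $\delta^{\nu-2}$. This destroys the $\delta$-hierarchy used in \cite{M-Y,L-Y}.

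1. The plan is to split the data into the two characteristic families. The combination $\p_s\Phi_1\mp\mathcal A\p_s^2\Phi_0$ in \eqref{condition1} and \eqref{condition2} is precisely the Riemann-invariant-type amplitude carried along each family.
2. We then define a modified unknown $\varphi$, roughly $\Lr\phi$ corrected by a multiple of the incoming component, or equivalently the component transported along $\Lr$. At $x^3=0$ this unknown satisfies the good smallness $\Lr^k\varphi=O(\delta^{\nu-1})$ with no loss.
3. The opposite-family part travels transversally and leaves the strip $0\le u\le\delta$ within distance $O(\delta)$. There it is handled by a local existence argument on $x^3\in[0,C\delta]$, with data estimates obtained by direct energy estimates on the thin slab.
4. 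After that time, the solution inside the pulse region satisfies compatibility conditions of the type \eqref{YHCCC-1}, up to errors of size $\delta^{\nu}$, and we restart the analysis from there.

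Next comes the bootstrap. We assume $L^\infty$ bounds of the form $|\Lr^{\le N}\Tc^{\le 1}X^{\le N}\phi|\lesssim\delta^{\nu-1}$ on the domain $x^3\le x^3_*$. We then derive the transport equation for $\mu$,
\[
\Lr\mu=-\frac{\gamma+1}{4c(\rho_0)}\Big(1-\frac1{M_0^2}\Big)^{-3/2}\,\Tc\Lr\phi+O(\delta^{2\nu-3}\mu)+\dots,
\]
where the leading coefficient comes from expanding $c(\rho)$ by Bernoulli's law \eqref{YHCC-1}. We also show that $\Tc\Lr\phi$ is nearly conserved along $\Lr$, because the dispersion factor $x^3$ is negligible on the scale $\delta^{2-\nu}$. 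Integrating along the characteristic through $(s_0,\omega_0)$ gives
\[
\mu(x^3)\le 1-\frac{x^3}{x^3_*}+o(1).
\]
Hence $\mu$ vanishes before $x^3_*$, and this produces \eqref{distance1}.

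The main obstacle is closing the energy estimates to high order near $\mu=0$. Following \cite{M-Y}, we use the multipliers $\mu\Lr$-type and $\underline L$ with $\delta$-weighted energies $E,\underline E$, together with commutator vector fields $\{\Lr,\Tc,\Omega\}$. The top-order terms $\Lr\tr\chi$ and $\Delta\mu$ are controlled via modified quantities and elliptic estimates. We also need Gronwall estimates with the weights $\mu_m^{-2a}$, where $\mu_m=\min\mu$, to handle the singular behavior. The delicate point is that, with the good unknown, the lowest-order $\delta$-power bookkeeping must still close without \eqref{YHCCC-1}. The bootstrap then yields the bounds up to $x^3_*$, and hence the blowup of $\p^2\Phi$ while $\p\Phi$ stays bounded, which is the shock formation.
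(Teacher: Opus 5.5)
There is a genuine gap. Your local phase on $x^3\in[0,C\delta]$, and your reading of $\partial_s\Phi_1\mp\mathcal A\partial_s^2\Phi_0$ as the amplitude carried by each family, agree with the paper. The failure is the step everything afterwards rests on.

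\textbf{Step~4 does not hold.} You assume that once the families separate, the outgoing pulse satisfies \eqref{YHCCC-1} up to $O(\delta^{\nu})$. You then run a bootstrap in which $\mathring L$-derivatives cost no powers of $\delta$.
\begin{itemize}
\item As stated, with $\partial_t+\partial_r$, this is false. The quadratic terms in $L\underline L\phi$ have size $\delta^{2\nu-3}$. Integrating them across the pulse (width $\sim\delta$) gives $L\phi$ of size $\delta^{2\nu-2}$, which is much larger than $\delta^{\nu}$ because $\nu<2$.
\item This is the simple-wave relation $(\partial_t+\partial_r)\phi=O(|\partial\phi|^2)$. Its quadratic coefficient does not vanish, because the equation is genuinely nonlinear (the first null condition fails).
\item What a thin-slab argument actually yields is Theorem~\ref{Local}: $L^m\phi(t_0)=O(\delta^{\nu-(2-\nu)m})$. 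Each $L$ loses $\delta^{-(2-\nu)}$, which is better than $\delta^{-1}$ but far from compatibility.
\item Passing to the true null vector does not rescue this with the tools you describe. Since $\mathring L-L=O(\delta^{\nu-1})$ while $\partial\varphi\sim\delta^{\nu-2}$, one only gets $\mathring L\varphi=O(\delta^{2\nu-3})$ at $t_0$.
\end{itemize}
So your bootstrap $|\mathring L^{\le N}\cdots|\lesssim\delta^{\nu-1}$ fails initially. After the restart you are in the compatible setting of the earlier short-pulse works, under a hypothesis that is not satisfied.

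Step~2 does not close the gap either. No quantity is actually defined, and its claimed property at $x^3=0$ would itself need proof, since there are no exact Riemann invariants here. It is also never used after the restart.

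\textbf{How the paper handles the loss.} It keeps the loss and neutralizes it only where it is fatal.
\begin{itemize}
\item The weights $\delta^{l+k(2-\nu)}$ in \eqref{bootstrap-assumptions}, with $k$ the number of $\mathring L$'s, hold at $t_0$ by Theorem~\ref{Local}.
\item The real obstruction is the second fundamental form. $\chi_{XX}$ contains $\frac12G^\kappa_{XX}\mathring L\varphi_\kappa\sim\delta^{2\nu-3}$, so $\chi_{XX}-\varrho^{-1}\slashed g_{XX}$ is not $O(\delta^{\nu-1})$. That bound is nevertheless needed wherever this combination enters: \eqref{deL}, \eqref{Rpi}, \eqref{H}, and the deformation tensors in the energy estimates.
\item The good unknown is therefore the geometric quantity $\check{\mathscr X}_{XX}$ of \eqref{errorv-1}, not a modified component of $\partial\phi$. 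It is $O(\delta^{\nu-1})$ at $t_0$. Its transport equation \eqref{Lchi'} contains no $(\mathring L\varphi)^2$ term, so it stays $O(\delta^{\nu-1})$ up to $t_*\sim\delta^{2-\nu}$.
\end{itemize}

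\textbf{Your $\mu$-equation has the wrong driver.} Under your own compatibility claim, $T\mathring L\phi$ would be small and could not make $\mu$ vanish.
\begin{itemize}
\item By \eqref{lmu} the driver is $\frac12G^\kappa_{\mathring L\mathring L}T\varphi_\kappa$, essentially a multiple of $T\varphi_0$.
\item By \eqref{Tvp}, $\varrho^{1/2}T\varphi_0$ is conserved along $\mathring L$ up to $O(\delta^{2\nu-3})$.
\item Its initial value comes from \eqref{y0}: $\underline L\partial_0\phi(t_0)\approx\delta^{\nu-2}\mathcal A(\mathcal A\partial_s^2\Phi_0-\partial_s\Phi_1)$. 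This is how \eqref{condition1} enters.
\end{itemize}
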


\begin{remark}\label{aboutcondition}
It follows from the proof procedure of Theorem \ref{maintheorem} that
the shock will be formed near the outgoing or incoming light conic surface
under the condition \eqref{condition1} or \eqref{condition2}, respectively.
\end{remark}

\begin{remark}\label{Yin.0}
By \eqref{YHCCC-2} and $(\Phi_0, \Phi_1)\not\equiv 0$,  we know that the initial supersonic flow is strongly squeezed
at some local positions. Indeed, under the condition \eqref{condition1}, one has either $\p_s\Phi_1(s_0,\omega_0)>0$
or $\p_s\Phi_1(s_0,\omega_0)<0$ or $\p_s\Phi_1(s_0,\omega_0)=0$ but $\p_s^2\Phi_0(s_0,\omega_0)<0$,
which, in the neighborhood of $(x^1_0,x^2_0)=((1+s_0\delta)\omega_{0,1}, (1+s_0\delta)\omega_{0,2})$, corresponds to the outward squeezing of $v_3|_{x^3=0}$ or the inward squeezing of $v_3|_{x^3=0}$
or the inward squeezing of the radial velocity $v_r|_{x^3=0}$ with $v_r=v_1\f{x^1}{r}+v_2\f{x^2}{r}$. An analogous statement
holds for the condition \eqref{condition2}. By Theorem \ref{maintheorem}, the 3D supersonic shock is formed in a finite $x^3$-distance.
This is consistent with the corresponding physical phenomena that the rapid compression of polytropic gases can yield shocks 
(see the books \cite{CF, Bers}).
\end{remark}

\begin{remark}\label{Yin.78}
Under the condition \eqref{condition1}, let $X_*$ denote the precise blowup distance ($0<X_*\le x^3_*$)
for equation \eqref{PotentialEquation} with \eqref{InitialData}.
By the proof procedure of Theorem \ref{maintheorem} (see the bootstrap assumptions \eqref{bootstrap-assumptions}
and \eqref{shock-0}-\eqref{shock} below), we have $\|\p\Phi-(0,0,q_0)\|_{L^{\infty}}\le M\delta^{\nu-1}$ 
and $\|\p^2\Phi\|_{L^{\infty}}=+\infty$ in some domain $D_*$ which is a neighborhood of
$\{((\mathcal A-(1+s_0\delta))\omega_{0,1}, (\mathcal A-(1+s_0\delta))\omega_{0,2}, x^3): 0\le x^3\le X_*\}$. 
Therefore, for small $\delta>0$ and $x\in D_*$,
one has $v_3>c(\rho)$, $\rho>0$,
and $|\p v|+|\p\rho|\to\infty$ as $x^3\to X_*-$.
This shows that in $D_*$, the flow remains supersonic and non-vacuum, while a shock wave is formed. An
analogous conclusion also holds under the condition \eqref{condition2}.
\end{remark}

\begin{remark}\label{Yin.1}
It should be emphasized that in Theorem \ref{maintheorem}, we do not impose such crucial smallness constraints on the initial short pulse data
along the incoming  or outgoing directional derivatives:
\begin{equation}\label{Yin.1-1}
(\partial_3-\partial_r)^{\leq N_0}\Phi|_{x^3=0}=O(\delta^{\nu}) \quad \text{or} 
\quad (\partial_3+\partial_r)^{\leq N_0} \Phi|_{x^3=0}=O(\delta^{\nu}),
\end{equation}
where $N_0\ge 2$ and $N_0\in\Bbb N$. Note that \eqref{Yin.1-1} generally leads to the over-determination of
the functions $(\Phi_0, \Phi_1)$ in \eqref{InitialData} due to $(\partial_3\pm\partial_r)^{\leq N_0}\Phi|_{x^3=0}=O(\delta^{\nu-N_0})$
rather than $O(\delta^{\nu})$. Consequently, $(\Phi_0, \Phi_1)$ should be chosen delicately by looking for the
approximate solutions of \eqref{PotentialEquation}, moreover, it is difficult to find suitable initial functions $(\Phi_0, \Phi_1)$
to fulfill \eqref{Yin.1-1} for $N_0\ge 2$ (see Section 2 of \cite{Ding3}).
Meanwhile, analogous assumptions to \eqref{Yin.1-1} play key roles in proving  the related blowup results in
\cite{Ch2, K-R, M-Y, L-Y}. In fact, by the methods in the paper, all the smallness
restriction conditions of the form \eqref{Yin.1-1}  in \cite{M-Y, L-Y} can be removed. Hence, the scope of the short pulse initial data may
be chosen more broadly.
\end{remark}

\begin{remark}\label{Yin.4}
We now explain the role of condition \eqref{condition1} or  \eqref{condition2} in proving Theorem \ref{maintheorem}.
Consider the general 3D quasilinear wave equation
\begin{equation}\label{YHCC14}
\begin{split}
\dps\sum_{\al,\beta=0}^3g^{\al\beta}(\p\phi)\p_{\al\beta}^2\phi=0,
\end{split}
\end{equation}
where $(x^0, x)=(t, x^1, x^2, x^3)\in [0,\infty)\times\Bbb R^3$,
$\p=(\p_0,\p_1,\p_2,\p_3)=(\p_{x^0}, \p_{x^1}, \p_{x^2}, \p_{x^3})$,
$g^{\al\beta}(\p\phi) =g^{\beta\al}(\p\phi)$ are smooth
functions of their arguments, and $g^{\al\beta}(\p\phi)=m^{\al\beta}+\dps\sum_{\kappa=0}^3 g^{\al\beta,
\kappa}\p_\kappa\phi+O(|\p\phi|^2)$
with $m^{00}=-1$, $m^{ii}=1$ for $1\le i\le 3$, $m^{\al\beta}=0$ for $\al\neq\beta$, and
$g^{\al\beta,\kappa}$ being constants. In \cite{Ch1}, an inverse foliation density $\mu=-\dps\f{1}{\sum_{\al=0}^3g^{0\al}(\p\phi)\p_{\al}u}$ is introduced
for \eqref{YHCC14},
and the optical function $u$
is determined by the outgoing characteristic conic surface equation $\dps\sum_{\al,\beta=0}^3g^{\al\be}(\p\phi)\p_{\al}u\p_{\be}u=0$ with the initial data $u(0,x)=-r=-\sqrt{(x^1)^2+(x^2)^2+(x^3)^2}$ and the condition $\p_{t}u>0$.
It is known that for the blowup time $T^*$,
\begin{equation*}
\lim_{t\to T^*-0}\|\p^2\phi(t,\cdot)\|_{L^{\infty}(\Bbb R^3)}=\infty
\end{equation*}
holds when $\mu\to 0+$ for $t\to T^*-0$ (also see \cite{Ch3,Luk,Sp,M-Y}).
For \eqref{PotentialEquation} and \eqref{condition1}, we can introduce {a} similar inverse foliation density $\mu$
and show that (see \eqref{YHCC-2} below)
\begin{equation}\label{YHCC3-1}
\mu=1+\frac{\gamma+1}{4c(\rho_0)}\Big(1-\frac{1}{M_0^2}\Big)^{-3/2}\delta^{\nu-2}
\big(\mathcal A\partial_s^2\Phi_0(s_0,\omega_0)-\partial_s\Phi_1(s_0, \omega_0)\big)
\big(x^3+ O((x^3)^2)\big)+O(\delta^{\nu-1}).
\end{equation}
Note that $\p_{x^3}\mu<0$ and $\mu|_{x^3=0}=1+O(\delta^{\nu-1})>0$ for small $\delta>0$.
Thus, by \eqref{condition1} and \eqref{YHCC3-1},
$\mu\to 0+$ holds before
\[
x^3_*=\frac{4c(\rho_0)}{(\gamma+1)\big(\p_s\Phi_1(s_0,\omega_0)-\mathcal A\p_s^2\Phi_0(s_0,\omega_0)\big)}
\big(1-\frac{1}{M_0^2}\big)^{3/2}\delta^{2-\nu}
\]
and leads to the shock formation as asserted in (A) of Theorem \ref{maintheorem} (see the detail \eqref{shock} below).
Analogously, under the condition \eqref{condition2}, by adjusting the corresponding quantities and estimates
in light of the incoming characteristic surfaces, (B) of Theorem \ref{maintheorem} can be derived.
\end{remark}

\begin{remark}\label{Yin.5}
Consider the small data solution problem for the 2D quasilinear wave equation
\begin{equation}\label{quasi-0}
\sum_{\al,\beta=0}^2g^{\al\beta}(\p\phi)\p_{\al\beta}^2\phi=0,
\end{equation}
where $(x^0, x)=(t, x^1, x^2)\in [0,\infty)\times\Bbb R^2$,
$\p=(\p_0,\p_1,\p_2)=(\p_{x^0}, \p_{x^1}, \p_{x^2})$, and
$g^{\al\beta}(\p\phi) =g^{\beta\al}(\p\phi)$ are smooth
functions of their arguments.
Without loss of generality, it is assumed that for small $\p\phi$,
\begin{equation}\label{H-01}
\dps g^{\al\beta}(\p\phi)=m^{\al\beta}+\sum_{\kappa=0}^{2} g^{\al\beta,\kappa}\p_\kappa\phi+\sum_{\kappa,\varsigma=0}^2h^{\al\beta,\kappa\varsigma}\p_\kappa\phi\p_\varsigma\phi+O(|\p\phi|^3),
\end{equation}
where $m^{00}=-1$, $m^{ii}=1$ for $1\le i\le 2$, $m^{\al\beta}=0$ for $\al\neq\beta$, $g^{\al\beta,\kappa}$
and $h^{\al\beta,\kappa\varsigma}$ are constants.
If \eqref{quasi-0} is equipped with the small initial data
\begin{equation}\label{Y1-0}
\phi(0,x)=\dl \phi_0(x),\quad \p_t\phi(0,x)=\dl\phi_1(x),
\end{equation}
where $\dl>0$ is small, $(\phi_0, \phi_1)(x)\in C_0^{\infty}(\mathbb R^2)$,
then it is shown in \cite{A} that the problem \eqref{quasi-0}-\eqref{Y1-0} admits a global smooth solution $\phi$ satisfying $|\partial\phi|\le C\delta(1+t)^{-1/2}$ if both the first and the second null conditions hold (i.e.,
$\dps\sum_{\al,\beta,\kappa=0}^2g^{\al\beta,\kappa}\xi_\al\xi_\beta\xi_\kappa\equiv0$ and $\dps\sum_{\al,\beta,\kappa,\varsigma=0}^2 h^{\al\beta,\kappa\varsigma}\xi_\al\xi_\beta\xi_\kappa\xi_\varsigma\equiv0$
for $\xi_0=-1$ and $(\xi_1,\xi_2)\in\mathbb S$, respectively).
Otherwise, if either the first or the second null condition fails,
under the generic non-degenerate assumption on the unique minimum point of  $(\phi_0, \phi_1)$, then the small smooth solution $\phi$ of \eqref{quasi-0}-\eqref{Y1-0} blows up in finite time. More precisely,
at the blowup time, $|\phi|$ and $|\partial\phi|$ remain small but $|\partial^{2}\phi|$ becomes infinite at the unique blowup point,
while it is unknown whether the characteristic surfaces can squeeze and further the shocks are formed (see \cite{Ali1,Ali2,H}).
On the other hand, it is easy to verify that the equation \eqref{PotentialEquation} does not satisfy the first null condition.
Then, according to \cite{Ch3,Sp}, under the {\bf suitable restrictions} on $(\Phi_0(x),\Phi_1(x))$,
the small perturbed smooth solution $\Phi$ forms a shock within a finite $x^{3}$ distance.
However, from Remark \ref{Yin.0}, we have established an interesting  conclusion that the shocks
are definitely formed for \eqref{PotentialEquation}-\eqref{InitialData}
as long as $(\Phi_0(x),\Phi_1(x))\not\equiv0$.
\end{remark}

\begin{remark}\label{Yin.6}
If the inverse foliation density $\mu\ge C>0$ holds, then the equation \eqref{quasi-0}
admits a global smooth solution $\phi$ (see \cite{MPY, Ding2, Ding3, Ding4, Wang}).
In addition, the authors in \cite{An-1, An-2} also apply the Lorentzian geometric tools from \cite{Ch1}
to study the local well-posedness or ill-posedness of low regularity solutions to the elastic wave system.
\end{remark}

\begin{remark}\label{Yin.7}
Consider the following 3D steady supersonic Euler equations of polytropic gases with the
short pulse boundary data
\begin{equation}\label{EulerSystem-1}
\left\{
\begin{aligned}
&\displaystyle\sum_{j = 1}^{3} \partial_{j}(\rho v^{j})=0, \\
&\displaystyle\sum_{j = 1}^{3} \partial_{j}(\rho v^{i} v^{j})+\partial_{i}p=0 ,\quad i = 1, 2, 3,\\
&\rho|_{x^3=0}=\bar\rho+\dl^{\nu}\rho_0\big(\frac{r-1}{\delta}, \omega\big),\quad v|_{x^3=0}=(0,0,\bar q)+\dl^{\nu} v_0\big(\frac{r-1}{\delta}, \omega\big),
\end{aligned}
\right.
\end{equation}
where $x=(x^1, x^2, x^3)$, $v=(v^1,v^2,v^3)$, $p=A\rho^{\gamma}$ with constants $A>0$ and $\gamma>1$,
$v_0=(v^1_0,v^2_0,v^3_0)$, $(\rho_0,v_0)(s,\omega)\in C_0^\infty\big((-1, 0) \times \mathbb{S}\big)$ and
$(\rho_0,v_0)\not\equiv 0$. In addition,  $\nu, \bar\rho, \bar q$ are some positive constants
with $0<\nu<1$ and $\bar q>c(\bar\rho)=\sqrt{p'(\bar\rho)}$,
and the constant $\delta>0$ is sufficiently small. Motivated by \cite{Luk} and \cite{LS2}, we can
introduce the following inverse Lorentzian metric and Lorentzian metric, respectively,
\begin{equation}\label{YHCC-4}
g^{-1} = -\frac{1}{c^2(\rho)}B\otimes B + \sum_{i=1}^3 \partial_i\otimes\partial_i,
\end{equation}
\begin{equation}\label{YHCC-5}
g = -\frac{1}{|v|^2-c^2(\rho)}\sum_{i,j=1}^3v_id x^i\otimes v_jd x^j+\sum_{i=1}^3 dx^i\otimes\ d x^i,
\end{equation}
where $|v|^2=(v^1)^2+(v^2)^2+(v^3)^2$, $B=\sum_{i=1}^3v^i \partial_i$ stands for the material derivative. It follows from
\eqref{EulerSystem-1}-\eqref{YHCC-5}
and direct computations that $\varrho=ln (\frac{\rho}{\bar\rho})$ and $v$ satisfy the quasilinear wave systems as follows
\begin{equation}\label{Mainequations2}
\Box_{g}\varrho=\frac{c'(\rho)(2c^2(\rho)-3|v|^2)-c^3(\rho)}{c(\rho)(|v|^2-c^2(\rho))}\sum_{i,j=1}^3g^{ij}\partial_i\varrho \p_j \varrho+c^{-2}(\rho)[\sum_{i=1}^3\p_i v^i\sum_{j=1}^3\p_j v^j-\sum_{i,j=1}^3\p_i v^j\p_j v^i]
\end{equation}
and
\begin{equation}\label{Mainequations1}
	\Box_{g}{v^{k}}=-\bigg(1+\frac{c'(\rho)|v|^2+c^3(\rho)}{c(\rho)(|v|^2-c^2(\rho))}\bigg)
\sum_{i,j=1}^3g^{ij}\p_i\varrho \p_j v^k,\quad k=1,2,3,
\end{equation}
where $\Box_{g}=\sum_{1\le i,j\le 3}g^{ij}\partial_{ij}^2$.
In our forthcoming paper, based on the introduction of a good unknown and related estimates in this paper,
we will study the shock formation problem of \eqref{Mainequations2} and \eqref{Mainequations1} with the general short pulse initial data
in \eqref{EulerSystem-1} and without the compatibility condition as in \eqref{YHCCC-1}.
\end{remark}

\begin{remark}\label{Yin.8}
Consider the boundary value problem of 2D supersonic steady full compressible Euler equations of polytropic gases
with short pulse data
\begin{equation}\label{FFF-1}
\begin{cases}
&\displaystyle\sum_{j=1}^{2}\partial_{j}(\rho v^{j})=0, \\
&\displaystyle\sum_{j=1}^{2}\partial_{j}(\rho v^{i} v^{j})+\partial_{i} p = 0 ,\quad i = 1, 2,\\
&\displaystyle\sum_{j=1}^{2}\partial_j((\rho e+\f12 \rho |v|^2+p)v^j))=0,\\
&\rho|_{x^2=0}=\bar\rho+\delta^{\nu}\rho_0(\f{x_1-1}{\delta}), v|_{x^2=0}=(0,\bar q)+\delta^{\nu} v_0(\f{x_1-1}{\delta}),
S|_{x^2=0}=\bar S+\delta^{\nu} S_0(\f{x_1-1}{\delta}),
\end{cases}
\end{equation}
where $x=(x^1, x^2)\in\mathbb{R}^2$, $v=(v^1,v^2)$, $p=p(\rho,S)=A\rho^{\gamma}e^{\f{S}{c_v}}$ with constants $A>0$,  $\gamma>1$
and $c_v>0$, $e=e(\rho, S)$ is smooth on its arguments, $\p_Se(\rho, S)>0$, $|v|^2=(v^1)^2+(v^2)^2$, $v_0=(v^1_0,v^2_0)$, $(\rho_0,v_0,S_0)(s)\in C_0^\infty(-1, 0)$, $(\rho_0,v_0,S_0)\not\equiv 0$, $0<\nu<1$,
$\bar\rho>0$, $\bar S>0$, $\bar q>c(\bar\rho,\bar S)$ with $c(\rho,S)=\sqrt{\p_{\rho}p(\rho, S)}$,
and $\delta>0$ is sufficiently small.
Under the transformation $(X^1,X^2)=(\f{x^1-1}{\delta}, \f{x^2}{\delta})$, set $(\tilde \rho, \tilde p, \tilde v, \tilde e, \tilde S)(X)=
(\rho, p, v, e, S)(1+\delta X^1, \delta X^2)$. We still denote $(\p_1,\p_2)=(\p_{X^1}, \p_{X^2})$, then it follows
from \eqref{FFF-1} that
\begin{equation}\label{FFF-01}
\begin{cases}
&\displaystyle\sum_{j=1}^{2}\partial_{j}(\tilde\rho \tilde v^{j})=0, \\
&\displaystyle\sum_{j=1}^{2}\partial_{j}(\tilde \rho \tilde v^{i} \tilde v^{j})+\partial_{i} \tilde p = 0 ,\quad i = 1, 2,\\
&\displaystyle\sum_{j=1}^{2}\partial_j((\tilde \rho\tilde  e+\f12 \tilde\rho |\tilde v|^2+\tilde p)\tilde v^j))=0,\\
&\tilde\rho|_{X^2=0}=\bar\rho+\delta^{\nu}\rho_0(X^1), \tilde v|_{X^2=0}=(0, \bar q)+\delta^{\nu} v_0(X^1),
\tilde S|_{X^2=0}=\bar S+\delta^{\nu} S_0(X^1).
\end{cases}
\end{equation}
By Chapter 4 of \cite{H} and \cite{DingM-Yin}, as long as $(\rho_0,v_0)\not\equiv 0$,
the smooth solution $(\tilde \rho, \tilde v, \tilde S)$ blows up before $X_2^*\le\f{C^*}{\delta^{\nu}}$
and the shock is actually produced from the blowup point. Motivated by this result for 2D case and Theorem \ref{maintheorem} in the paper,
one can guess that as long as $(\rho_0,v_0)\not\equiv 0$ for the 3D problem \eqref{EulerSystem-1}, the shock will be formed
in finite $x^3-$distance.
\end{remark}

We now rewrite the equation \eqref{PotentialEquation}.
Set $\phi=\Phi-q_0x^3$. Then one has from \eqref{PotentialEquation} and \eqref{InitialData}  that
\begin{equation}\label{equationofphi}
\begin{cases}
&-\partial_3^2\phi
-\dps\f{1}{(\partial_3\phi+q_0)^2-c^2(\rho)}\bigg\{\dps\sum_{i=1}^2\left[(\partial_i \phi)^2-c^2(\rho)\right]\partial_i^2\phi+ 2\partial_1\phi\partial_2\phi\partial_{12}^2\phi\\
&\qquad \qquad \qquad \qquad \qquad \qquad \qquad \qquad +2\dps\sum_{1\leq i\leq 2}\partial_i\phi(\partial_3\phi
+q_0)\partial_{i3}^2\phi\bigg\} = 0, \\
&\phi|_{x^3=0}=\delta^{\nu}\Phi_0(\frac{r-1}{\delta},\omega), \quad
\partial_3\phi|_{x^3=0}=\delta^{\nu-1}\Phi_1(\frac{r-1}{\delta},\omega),
\end{cases}
\end{equation}
where $c^2(\rho)=c^2(h^{-1}(-\f12|\p\phi|^2-q_0\p_3\phi))$.

Let $y^0=t=x^3$ and $y^i=\mathcal A^{-1}x^i$
for $i=1,2$. Meanwhile, $\phi=\phi(t,y^1,y^2)$ and $\p=(\p_{y^0}, \p_{y^1},\p_{y^2})=(\p_0,\p_1,\p_2)$ are written
from now on.
Then the coefficients of
the equation in \eqref{equationofphi} can be expressed as
\begin{equation}\label{g}
\begin{split}
g^{00}(\p\phi)&=-1, \qquad \qquad\qquad\qquad
g^{ii}(\p\phi)=\frac{\mathcal A^{-2} [c^2(\rho)-\mathcal A^{-2}\left(\partial_{i}\phi\right)^2]}{\left(\partial_{0}\phi+q_0\right)^2 - c^2(\rho)}\quad\text{for $i=1,2$},\\
g^{12}(\p\phi)&=\frac{-\mathcal A^{-4}\partial_{1}\phi\partial_{2}\phi}{\left(\partial_{0}\phi+q_0\right)^2-c^2(\rho)},\quad
g^{0i}(\p\phi)=\frac{-\mathcal A^{-2}\partial_{i}\phi\left(\partial_{0}\phi+q_0\right)}{\left(\partial_{0}\phi
+q_0\right)^2-c^2(\rho)}\quad \text{for $i=1,2$}.
\end{split}
\end{equation}
For convenience, $g^{\al\beta}(\p\phi)$ is denoted by  $g^{\al\beta}$ for $\alpha, \beta=0,1,2$.
A direct computation shows that the matrix $\big(g^{\alpha\beta}\big)_{0 \leq \alpha,\beta \leq 2}$
has an inverse $\big(g_{\alpha\beta}\big)_{0 \leq \alpha,\beta \leq 2}$ given by
\begin{equation}\label{metric}
\begin{split}
&g_{00}=\mathcal G\mathcal A^{-2}\left(c^2(\rho)-\mathcal A^{-2}\varphi_1^2-\mathcal A^{-2}\varphi_2^2\right), \quad g_{0i}=g_{i0}=\mathcal G\mathcal A^{-2}\varphi_i(\varphi_0+q_0)\ \text{for}\ i=1,2,\\
&g_{11}=\mathcal G\left(c^2(\rho)-\mathcal A^{-2}\varphi_2^2-(\varphi_0+q_0)^2\right),\quad g_{22}=\mathcal G\left(c^2(\rho)-\mathcal A^{-2}\varphi_1^2-(\varphi_0+q_0)^2\right),\\
&g_{12}=g_{21}=\mathcal G\mathcal A^{-2}\varphi_1\varphi_2,
\end{split}
\end{equation}
where $\vp_{j}=\p_{j}\phi$ for $j=0,1,2$, $\mathcal{G}=\dps\frac{(\vp_0+q_0)^2-c^2(\rho)}
{\mathcal{A}^{-2}c^2(\rho)\left(c^2(\rho)-\mathcal{A}^{-2}\vp_1^2-\mathcal{A}^{-2}\vp_2^2-(\vp_0+q_0)^2\right)}$
and $c^2(\rho)=c^2\big(h^{-1}(-\f12\vp_0^2-\f{1}{2\mathcal{A}^{2}}(\vp_1^2+\vp_2^2)-q_0\vp_0)\big)$.

In this case, \eqref{equationofphi} becomes
\begin{equation}\label{mainequation}
\begin{cases}
\displaystyle\sum_{\alpha,\beta=0}^2 g^{\alpha\beta} \partial_{\alpha\beta}^2 \phi = 0,\\[6pt]
\phi|_{t=0}=\delta^{\nu}\Phi_0(\frac{\mathcal{A}r - 1}{\delta}, \omega), \quad
\partial_{0}\phi|_{t=0}=\delta^{\nu - 1}\Phi_1(\frac{\mathcal{A}r - 1}{\delta}, \omega),
\end{cases}
\end{equation}
where $r=\sqrt{(y^1)^2 + (y^2)^2}$, $(y^1,y^2)=(r\omega_1, r\omega_2)$ with $\omega=(\omega_1, \omega_2)=\dps(\f{y^1}{r}, \f{y^2}{r})$.
These notations are still used without confusion of the original ones $r=\sqrt{(x^1)^2+(x^2)^2}$ and $\omega=(\omega_1, \omega_2)=\dps(\f{x^1}{r}, \f{x^2}{r})$. Thus, it suffices to study the equivalent problem \eqref{mainequation}
instead of \eqref{PotentialEquation} with \eqref{InitialData}.

We next only give the comments on the main ideas in the proof of Theorem \ref{maintheorem} (A) since the
proof on  Theorem \ref{maintheorem} (B) is completely analogous.
As indicated in Remark \ref{Yin.1}, no compatibility condition  analogous to \eqref{Yin.1-1} for
$(\Phi_0,\Phi_1)$ is imposed in the present work.
The key observations that enable us to remove the compatibility condition are inspired by the  delicate analysis on the global
existence of short pulse solutions to general quasilinear wave equations with suitable null forms in \cite{DLY,Ding2}
and a new key good unknown is found.
In addition, as in \cite{M-Y, L-Y},
strongly motivated by the geometric methods of D. Christodoulou \cite{Ch1},
one can introduce the inverse foliation density $\mu=-\dps\f{1}{\sum_{\al=0}^2g^{0\al}\p_{\al}u}$ from the equation
in \eqref{mainequation} to measure the squeezing of the characteristic surfaces, where the  optical function $u$ solves
for $t\ge t_0=\frac{2\delta}{\mathcal{A}}$,
\begin{equation}\label{YHCC-60}
\displaystyle\sum_{\alpha,\beta=0}^2 g^{\al\be}\p_{\al}u\p_{\be}u=0
\end{equation}
with $u(t_0,y)=t_0-r$ and $\p_0u>0$. If $\mu\to 0+$ can be shown for $t\le x^3_*$,
then a shock will be formed for problem \eqref{mainequation}.
We now give more concrete explanations.

At first, such a refined  property of the local smooth solution $\phi$ to \eqref{mainequation}
on \(t=t_0\) is derived (see details in Theorem \ref{Local} below): on the domain
$\{(t_0, y^1,y^2): \frac{1}{\mathcal{A}}-t_0\leq r\leq\frac{1}{\mathcal{A}}+t_0\}$,
it holds that for any $m\in\Bbb N$,
\begin{equation}\label{localsmallness}
(\partial_{0}+\partial_r)^m \phi(t_0,y)
=O(\delta^{\nu-(2-\nu)m}).
\end{equation}
This means that $(\partial_{0}+\partial_r)^m\phi(0, y)$ has a better smallness order $O(\delta^{\nu-(2-\nu)m})$ than the
usual order $O(\delta^{\nu-m})$. Meanwhile, due to the condition \eqref{condition1}, the crucial uniform negativity of  $\delta^{2-\nu}(\partial_{0}-\partial_r)\partial_{0}\phi(\frac{2\delta}{\mathcal{A}},$ $(t_0+\frac{1+s_0\delta}{\mathcal A})\omega_0)$
is established for small $\delta>0$.
Together with \eqref{YHCC3-1} and Theorem \ref{maintheorem} (A), this implies
that the outgoing characteristic conic surfaces for the equation of \eqref{mainequation} can intersect within a finite $t$-distance,
and then a shock is formed.

Secondly, compared with the smallness order $O(\delta^{\nu})$ restricted in \eqref{Yin.1-1} for the initial data,
the estimate \eqref{localsmallness} contains an additional large factor $\delta^{-(2-\nu)m}$. This will cause some
essential difficulties in obtaining the high order derivative estimates of $\phi$
by the energy method as in \cite{M-Y, L-Y}. Next we give some detailed explanations how to
overcome this difficulty.

\medskip
\textbf{$\bullet$ Introduction of a crucial good unknown $\check{\mathscr{X}}_{XX}$}
\medskip

To derive the energy estimates from the initial distance
$t=t_0$ for \eqref{mainequation}
as in \cite{Ch1, Sp}, the null frame $\{\mathring L, \mathring{\underline L}, X\}$ is introduced for the Lorentz
metric $(g_{\alpha\beta})$, where $\mathring L, \mathring{\underline L}$ and $X$ approximate $\p_0+\p_r$,  $\p_0-\p_r$ and $\p_\theta$
as $t\to t_0+0$, respectively. Under the coordinate transformation $(t, y^1, y^2)\longrightarrow (t, u, \theta)$ with $u=u(t,y)$ determined by \eqref{YHCC-60}, the corresponding energies in the domain $D^{t,u}=\{({t}', u',\theta): t_0\leq {t}'<{t}, -\frac{1}{\mathcal A}\leq u'\leq u, 0\le \theta\le 2\pi\}$
near the outgoing conic surface
$\{(t,y): r-t=\frac 1{\mathcal A}\}$ are defined as follows (see Section \ref{EE})
\begin{align}\label{YHCC-61}
\begin{split}
E_1[\phi](t, u) &:= \frac{1}{2} \int_{\Sigma_t^{u}} \mu \big\{ (\mathring{L}\p\phi)^2 + |\slashed{d}\p\phi|^2 \big\}, \\
E_2[\phi](t, u) &:= \frac{1}{2} \int_{\Sigma_t^{u}} \big\{ (\mathring{\underline{L}}\p\phi)^2 + \mu^2 |\slashed{d}\p\phi|^2\big\}, \\
F_1[\phi](t, u) &:= \int_{C_{u}^t} (\mathring{L}\p\phi)^2, \\
F_2[\phi](t, u) &:= \int_{C_{u}^t} \mu \, |\slashed{d}\p\phi|^2,\\
\end{split}
\end{align}
where $u\in [u_0, U_0]$ with $u_0=-\f 1{\mathcal A}$ and $U_0=-\f{1-4\delta}{\mathcal A}$,
$\Sigma_{ t}^{u}=\{({ t}',u',\theta): { t}'= t, u_0\leq u'\leq  u, 0\le \theta\le 2\pi\}$,
$C_{u}^{t}=\{({t}',u',\theta): t_0\leq {t}'\leq {t}, u'=u, 0\le \theta\le 2\pi\}$
and $\slashed{d}$ stands for the restriction of differential operator $d$ on the circle $\Bbb S$.

For $m\in\Bbb N$ and $\vp_{\kappa}=\p_{\kappa}\phi$ ($\kappa=0,1,2$), set $\Psi=Z^m\vp_{\kappa}$ with $Z'$s being some suitable vector fields
(the meanings of $Z\in\{\mathring{L}, T, R\}$ see \eqref{bootstrap-assumptions}).
It follows from the integration of $\mu(\Box_g\Psi) (V_i\Psi)$ with $V_1\Psi=\mathring{L}\Psi$ and $V_2\Psi=\mathring{\underline{L}}\Psi$ over
the domain $D^{t,u}$ that for $i=1,2$,
\begin{equation}\label{EI-Y}
E_i[\Psi](t, u)-E_i[\Psi](t_0, u)+F_i[\Psi](t, u)=-\int_{D^{t, u}}\mu\Box_g\Psi\cdot V_i\Psi-\int_{D^{t, u}}\f12\mu \dps\sum_{\alpha,\beta=0}^2Q_{\al\beta}[\Psi]\leftidx{^{(V_i)}}\pi^{\al\beta},
\end{equation}
where
$Q_{\al\beta}[\Psi]=(\p_\al\Psi)(\p_\beta\Psi)-\f12 g_{\al\beta}\dps\sum_{\nu,\lambda=0}^2g^{\nu\lambda}(\p_\nu\Psi)(\p_\lambda\Psi)$,
and $\leftidx{^{(V_i)}}\pi$ is the deformation tensor with respect to the vector field $V_i$ which is defined as
\begin{equation}\label{dt-Y}
{}^{(V_i)}\pi_{\alpha\beta}=g(\mathscr{D}_{\alpha}V_i, \partial_{\beta})+g(\mathscr{D}_{\beta}V_i, \partial_{\alpha}),
\end{equation}
where $\D$ is the Levi-Civita connection of the metric $g=(g_{\alpha\beta})$.

Obviously, under the frame $\{\mathring{L}, \mathring{\underline{L}}, X\}$, the expression of ${}^{(V_1)}\pi_{\alpha\beta}$
in \eqref{dt-Y} includes the second fundamental form $\chi_{XX} = g(\mathscr{D}_X \mathring{L}, X)$.
It is pointed out that due to the special structure of the 3D nonlinear equation $-(1+(\p_t\phi)^p)\p_t^2\phi+\Delta\phi=0$
($p\in\Bbb N$) in \cite{M-Y,L-Y}, $\chi_{XX}$ admits some relatively favorable $L^\infty$ or $L^2$ estimates by investigating the
first order partial differential equation of $\chi_{XX}$ along $\mathring{L}$.
However, owing to the more complicated structure of the quasilinear wave equation in \eqref{mainequation} and
the loss of smallness order in the estimate \eqref{localsmallness}, direct analysis on $\chi_{XX}$
along $\mathring{L}$ is no longer effective. Indeed, set
\begin{equation}\label{FG-Y}
G_{\alpha\beta}^{\kappa}= \partial_{\vp_{\kappa}} g_{\alpha\beta}\quad \text{for $\kappa=0,1,2$}
\end{equation}
and
\begin{equation}\label{FG-Y0}
G_{UV}^{\kappa} = \dps\sum_{\alpha,\beta=0}^2G_{\alpha\beta}^{\kappa} U^{\alpha} V^{\beta} \quad\text{for any vector fields
$U = \dps\sum_{\alpha=0}^2U^{\alpha} \partial_{\alpha}$ and $V = \dps\sum_{\alpha=0}^2V^{\alpha} \partial_{\alpha}$}.
\end{equation}
Let $\slashed g_{XX}=g(X, X)$ and $\varrho={t}-u$.
It is emphasized that the combination
\begin{equation}\label{FG-Y1}
\chi_{XX}-\frac{1}{2}\dps\sum_{\kappa=0}^2G_{XX}^\kappa\mathring{L}\vp_\kappa-\f{1}{\varrho}\slashed g_{XX}
\end{equation}
appears in several important relations (see \eqref{deL}, \eqref{Rpi}, \eqref{fequation}-\eqref{H} and so on), meanwhile
\begin{equation}\label{FG-Y001}
\chi_{XX}-\frac{1}{2}\dps\sum_{\kappa=0}^2G_{XX}^\kappa\mathring{L}\vp_\kappa-\f{1}{\varrho}\slashed g_{XX}=O(\delta^{\nu-1})
\end{equation}
is expected in almost all estimates in Sections \ref{Section 3}-\ref{ert}.
However, since  $|\mathring{L}\vp_\kappa| \lesssim \delta^{2\nu-3}$ and hence $|\dps\sum_{\kappa=0}^2G_{XX}^\kappa\mathring{L}\vp_\kappa|$
$\lesssim \delta^{2\nu-3}$ can be only obtained from Theorem \ref{Local} when $t\in [0, t_0]$, this implies that the term $\dps\sum_{\kappa=0}^2G_{XX}^\kappa\mathring{L}\vp_\kappa$
fails to achieve the required smallness order $O(\delta^{\nu-1})$ owing to the lack of the compatibility condition \eqref{Yin.1-1}.
Based on this key observation, we naturally choose a good unknown as follows
\begin{equation}\label{FG-Y2}
\check{\mathscr{X}}_{XX}=\chi_{XX}- \frac{1}{2}\dps\sum_{\kappa=0}^2G_{XX}^\kappa\mathring{L}\vp_\kappa-\f{1}{\varrho}\slashed g_{XX},
\end{equation}
which satisfies $\check{\mathscr{X}}_{XX}=O(\delta^{\nu-1})$  on the initial hypersurface $\Sigma_{t_0}^{U_0}$
(see \eqref{errorv-1} below).

By estimating $\mathring{L}\bigl(\varrho^2\operatorname{tr}\check{\mathscr X}\bigr)$ with
$\operatorname{tr}\check{\mathscr X}=\slashed g^{XX}\check{\mathscr{X}}_{XX}$ and integrating along the integral curves
of $\mathring{L}$, one can derive \eqref{FG-Y001} (see Lemma \ref{YHCC-555} and Proposition \ref{prop:lower-order-Linfty-estimates}
for details). Here we point out that the factor $\mathring{L}\vp_\kappa$ and its derivatives
in \eqref{FG-Y1} always admit favorable smallness orders and more precise long time decay rates
due to the null forms and the related compatibility condition in \cite{Ding2, Ding4}, and $\dps\sum_{\kappa=0}^2G_{XX}^\kappa\mathring{L}\vp_\kappa\equiv0$
holds for the special structures of nonlinear wave equations in \cite{M-Y, L-Y}.
Therefore, the term $\dps\sum_{\kappa=0}^2G_{XX}^\kappa\mathring{L}\vp_\kappa$
does not cause any serious difficulties in  \cite{Ding2, Ding4, M-Y, L-Y}. However, it is not the case
for our problem \eqref{mainequation} due to the different nonlinear structure and the lack of null condition.

\medskip
\textbf{$\bullet$ Choice of suitable weighted $\delta$-factor in energy estimate}
\medskip

Based on the required properties of $\check{\mathscr{X}}_{XX}$, through studying the delicate
structure of $\mu\Box_gZ^{m}\varphi_\kappa$ and deriving the energy estimates from \eqref{EI-Y},
we can obtain that in $D^{t,u}$ with $t_0\leq t \leq t_*$, it holds
\begin{equation}\label{bootstrap-assumptions-Y}
\delta^{l+k(2-\nu)} \| Z^{\alpha}\vp_{\kappa} \|_{L^{\infty}(\Sigma_t^u)}+\|\slashed\triangle\vp_{\kappa}\|_{L^{\infty}(\Sigma_t^u)}
\lesssim \delta^{\nu-1},
\quad \kappa= 0, 1, 2,
\end{equation}
where $|\alpha| \leq N$ with $N$ being a large positive integer, the vector field
$Z \in \{\mathring{L}, T, R\}$ with $T=\mu(-\dps\sum_{\alpha=0}^2g^{\al 0}\p_\al-\mathring L)$ and
$R=\slashed\Pi\Omega$ being the projection of $\Omega=y^1\p_2-y^2\p_1$ on $S_{t, u}$, and $l$ (resp. $k$) denotes the number of $T$ (resp. $\mathring{L}$)
contained in $Z^{\alpha}$. It is worth emphasizing that the choice of the important weighted $\delta$-factor
$\delta^{k(2-\nu)}$ in front of
$\|Z^{\alpha}\vp_{\kappa} \|_{L^{\infty}(\Sigma_t^u)}$ in \eqref{bootstrap-assumptions-Y} is strongly motivated by
\eqref{localsmallness}.

By \eqref{bootstrap-assumptions-Y}{,} together with delicate estimates on some related geometric quantities in Proposition \ref{prop:higher-order-L-infty-estimates}, one can achieve (see \eqref{YHCC-2} below)
\begin{equation*}
\begin{split}
\mu=1 + \frac{(\gamma+1) q_0^3\mathcal A}{4c^2(\rho_0)\bigl(q_0^2 - c^2(\rho_0)\bigr)} \delta^{\nu-2} \bigl(\mathcal A\p_s^2\Phi_0(s_0,\omega_0)-\p_s\Phi_1(s_0, \omega_0)\bigr)
\bigl(t + O(t^2)\bigr) + O(\delta^{\nu-1}).
\end{split}
\end{equation*}
From this, Theorem \ref{maintheorem} (A) will be shown.

Thirdly, in order to prove the crucial estimates in \eqref{bootstrap-assumptions-Y}, we first
give the following bootstrap assumptions for $\vp_{\kappa}$ ($\kappa=0,1,2$) in $D^{t,u}$ with $t_0\leq t \leq t_*$ and for
$|\alpha|\le N$ with a suitably large integer $N$:
\begin{equation}\label{bootstrap-assumptions-S}
\delta^{l+k(2-\nu)} \| Z^{\alpha}\vp_{\kappa} \|_{L^{\infty}(\Sigma_t^u)}+\|\slashed\triangle\vp_{\kappa}\|_{L^{\infty}(\Sigma_t^u)}
\le M\delta^{\nu-1},
\end{equation}
where $M>0$ is a suitably chosen constant, and the meanings of $l,k$ and $Z$ are as in \eqref{bootstrap-assumptions-Y}.
To close the bootstrap assumption \eqref{bootstrap-assumptions-S} up to the $N$-th order derivatives,
as in \cite{Ch2, K-R, M-Y, L-Y}, we need to obtain the
uniform positive constant $M$. For this purpose, our key ingredient is as follows: at first, it follows from the equation
in \eqref{mainequation} that for $\kappa=0,1,2,$
\begin{equation}\label{fequation-Y}
\mathring{L}\mathring{\underline{L}}\varphi_\kappa + \frac{1}{2\varrho}\mathring{\underline{L}}\varphi_\kappa
= \mu\slashed{\triangle}\varphi_\kappa + H_\kappa,
\end{equation}
where \begin{equation}\label{H-Y}
\begin{split}
H_\kappa=-\left(\operatorname{tr}\check{\mathscr X}\right) T\varphi_\kappa+\frac{1}{2\varrho}\mu\mathring{L}\varphi_\kappa+
\text{``good terms".}
\end{split}
\end{equation}
Together with those inequalities already obtained, one has (see \eqref{YHCCC-X1})
\begin{align*}
|\mathring{L}(\varrho^2\operatorname{tr}\check{\mathscr{X}})| \lesssim M\delta^{2\nu-3} + (\varrho^2\operatorname{tr}\check{\mathscr{X}})^2 + M\delta^{2\nu-3}(\varrho^2\operatorname{tr}\check{\mathscr{X}}),
\end{align*}
this yields the required precise estimates on $\operatorname{tr}\check{\mathscr{X}}$ and $\check{\mathscr{X}}$.
From this, one can get that in $D^{t,u}$ with $t_0\leq t \leq t_*$ and for
$|\alpha|\le N-2$,
\begin{equation}\label{YHCCC-X2}
\delta^{l+k(2-\nu)} \| Z^{\alpha}\vp_{\kappa} \|_{L^{\infty}(\Sigma_t^u)}+\|\slashed\triangle\vp_{\kappa}\|_{L^{\infty}(\Sigma_t^u)}
\le C\delta^{\nu-1},
\end{equation}
where the positive constant $C$ is independent of $M$.

Based on \eqref{YHCCC-X2} and the precise  smallness or largeness orders of
$\delta$ for all related quantities,
the bootstrap assumptions for the $(N-1)$-th order and $N$-th order derivatives of $\varphi_\kappa$ can be closed by the energy estimates
in \eqref{EI-Y} since the related constants only depend on $C$ rather than $M$ (see details in Sections \ref{EE}--\ref{ert}).
In this process, special attention will be paid to the large factor $\delta^{-(2-\nu)k}$  when $\mathring{L}$ acts on $\vp$ repeatedly.
On the other hand, we emphasize that although the proof procedure for \eqref{bootstrap-assumptions-S} is somewhat analogous to that
in \cite{L-Y, M-Y}, the details will be given in full, because the equation in \eqref{mainequation} is more complicated
than the concise 3D quasilinear wave equation $-(1+(\p_t\phi)^p)\p_t^2\phi+\Delta\phi=0$
($p\ge 2, p\in\Bbb N$) in \cite{L-Y, M-Y} as well as the crucial application and the treatment for the good unknown
$\check{\mathscr{X}}_{XX}$.

\medskip

The remainder of the paper is organized as follows.

In Section \ref{Section 2}, we first establish some basic properties of the local solution $\phi$ to \eqref{mainequation}.
In addition, the necessary geometric preliminaries including the introduction of optical function, inverse foliation density $\mu$
and null frame are presented. Meanwhile, a good unknown is introduced.
We also derive a series of important formulas for the covariant derivatives of the null frame and
for the deformation tensors as well as the evolution equation of $\mu$.

In Section \ref{Section 3}, the bootstrap assumptions are given and subsequently the uniform $L^\infty$ bounds for the
lower order derivatives of $\vp$ are established.
These crucial uniform bounds, which are independent of the constant in the bootstrap assumptions,
will be applied to the later analysis in the next sections.

In Section \ref{Sectionmu}, based on the estimates in Section \ref{Section 3},
as in  \cite{Ch1, Sp}, a detailed study on $\mu$ is conducted and then shock formation is shown under
the condition \eqref{condition1}.

In Sections \ref{EE}-\ref{ert}, through deriving the  energy estimates for the higher order derivatives
of $\vp$ and utilizing the Sobolev embedding theorem, the bootstrap assumptions in Section \ref{Section 3} are then closed.
This leads to the proof of Theorem \ref{maintheorem} (A).

Finally, in Section \ref{incoming}, we sketch the proof of Theorem \ref{maintheorem} (B) under condition \eqref{condition2}.
In this procedure, it is derived that
the inverse foliation density $\mu$ corresponding to the incoming characteristic conic surface
will tend to zero before the distance $\tilde {x}^3_*$.

\medskip

\textbf{Notations.}
Throughout the paper, Greek indices $\{\al, \beta, \cdots\}$ and Latin indices $\{i, j, k, \cdots\}$
denote the spacetime and spatial components, taking values in $\{0, 1, 2\}$ and $\{1, 2\}$, respectively.
In addition, the Einstein summation convention for repeated upper and lower indices is used.

For the nonnegative quantities $f$ and $g$,  $f\lesssim g$ means that there exists a universal constant $C>0$
such that $f\le Cg$, where $C$ is independent of the small parameter $\delta>0$.

For convenience, set $g^{\al\beta}=g^{\al\beta}(\p\phi)$ and let $(g_{\al\beta})$ be the inverse of the matrix $(g^{\al\beta})$.

The following notations will be used
\begin{align*}
\omega^i&=\omega_i, \quad i=1,2, \\
\omega&=(\omega_1,\omega_2), \quad \omega_\perp=(\omega_\perp^1,\omega_\perp^2)=(-\omega^2,\ \omega^1), \\
\partial&=(\partial_{y^0},\partial_{y^1},\partial_{y^2})=(\partial_t,\partial_{y^1},\partial_{y^2})=(\p_0,\p_1,\p_2), \\[2mm]
L&=\partial_t+\partial_r, \quad \underline{L} = \partial_t-\partial_r,\quad \Omega = y^1\p_2-y^2\p_1,\\[2mm]
S&=t\partial_t+r\partial_r=\frac{t- r}{2}\underline{L}+\frac{t+r}{2}L, \\[2mm]
H_i &=t\partial_{i}+y^i\partial_t=\omega^i\big(\frac{r-t}{2}\underline{L}+\frac{t+r}{2}L\big)+ \frac{t\omega^i_\perp}{r}\Omega,
\quad i=1,2, \\[2mm]
t_0 &= \frac{2\delta}{\mathcal A}, \quad t_* = x^3_*, \quad {\tilde t}_* = {\tilde  x}^3_*,\\[2mm]
\Sigma_t &= \{(t,y): y\in\mathbb R^2\}, \\[2mm]
\varphi_\kappa &= \partial_\kappa\phi, \quad \kappa=0,1,2.
\end{align*}

\section{Some preliminaries}\label{Section 2}
\subsection{Basic properties of the local solution}\label{Section 2-1}
In this subsection, we derive some crucial properties of the local smooth solution $\phi$ to problem \eqref{mainequation}.

\begin{theorem}\label{Local}
For small $\delta > 0$, \eqref{mainequation} admits a local solution $\phi \in C^{\infty}([0, t_0] \times \mathbb{R}^2)$.
Meanwhile, for any $m \in \mathbb{N}_0$, $q \in \mathbb{N}_0^3$ and $k \in \mathbb{N}_0$, the following estimates hold
\begin{align}
&|L^m \partial^q \Omega^k \phi(t_0, y)|
\leq \delta^{\nu-m(2-\nu)-|q|} \quad \text{for } \frac{1 - 2\delta}{\mathcal{A}} \leq r \leq \frac{1 + 2\delta}{\mathcal{A}},\label{initial1}\\
&|\underline L^m \partial^q \Omega^k \phi(t_0, y)|
\leq \delta^{\nu-m(2-\nu)-|q|} \quad \text{for } \frac{1 - 3\delta}{\mathcal{A}} \leq r \leq \frac{1 + \delta}{\mathcal{A}}.\label{initial2}
\end{align}
In addition, for any $r_0\in (\frac{1-3\delta}{\mathcal A}, \frac{1+2\delta}{\mathcal A})$,
\begin{equation}\label{y0}
\delta^{2-\nu}\underline L\partial_{0}\phi(t_0, r_0\omega)=\mathcal{A} \big(\mathcal A\partial_s^2\Phi_0\big(\tfrac{\mathcal A(r_0-t_0)-1}{\delta}, \omega\big)-\partial_s\Phi_1\big(\tfrac{\mathcal A(r_0-t_0)-1}{\delta}, \omega\big)\big)+ O(\delta^{\nu-1})
\end{equation}
and
\begin{equation}\label{y1}
\delta^{2-\nu}L\partial_{0}\phi(t_0, r_0\omega)=\mathcal{A} \big(\mathcal A\partial_s^2\Phi_0\big(\tfrac{\mathcal A(r_0+t_0)-1}{\delta}, \omega\big)+\partial_s\Phi_1\big(\tfrac{\mathcal A(r_0+t_0)-1}{\delta}, \omega\big)\big)+ O(\delta^{\nu-1}).
\end{equation}
\end{theorem}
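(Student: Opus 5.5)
Our plan is to prove Theorem \ref{Local} by rescaling to unit scale and then running a perturbative argument around the explicit free solution. In the fast variables $s=\frac{\mathcal A r-1}{\delta}$, $\tau=t/\delta$, the window $t\in[0,t_0]$ becomes $\tau\in[0,2/\mathcal A]$, which is $O(1)$. We therefore set
\[
\phi(t,y)=\delta^{\nu}\tilde\phi(\tau,\tfrac{y}{\delta}).
\]
Then $\partial\phi=\delta^{\nu-1}\tilde\partial\tilde\phi$, and the equation becomes $g^{\al\be}(\delta^{\nu-1}\tilde\partial\tilde\phi)\tilde\partial_{\al\be}\tilde\phi=0$. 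Its data are $O(1)$ in $C^\infty$, and the metric is a perturbation of size $\delta^{\nu-1}$ of the constant metric $g^{\al\be}(0)$. In the $y$-coordinates $g^{\al\be}(0)$ is exactly the Minkowski metric $m^{\al\be}$, since $g^{ii}(0)=\mathcal A^{-2}c^2/(q_0^2-c^2)=1$.

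\textbf{Step 1: existence and $C^\infty$ bounds.} Standard local existence for quasilinear wave equations, combined with energy estimates on the rescaled problem, gives a smooth solution on $\tau\in[0,2/\mathcal A]$ for small $\delta$. It satisfies $\|\tilde\partial^q\tilde\Omega^k\tilde\phi\|_{L^\infty}\lesssim 1$, and $\tilde\Omega=\Omega$. Scaling back yields $|\partial^q\Omega^k\phi|\lesssim\delta^{\nu-|q|}$. This already gives \eqref{initial1} and \eqref{initial2} in the case $m=0$. Finite speed of propagation shows that $\phi$ is supported where $\frac{1-\delta}{\mathcal A}-t\le r\le \frac{1}{\mathcal A}+t$.

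\textbf{Step 2: the linear approximation and the gain along $L$, $\underline L$.} Write $\tilde\phi=\tilde\phi^{(0)}+\tilde\psi$. Here $\tilde\phi^{(0)}$ solves the flat wave equation with the same data. Since the radius is $\approx 1/(\mathcal A\delta)\gg1$ in rescaled variables, the 2D flat equation is approximated by the 1D equation in $s$ with an error of relative size $O(\delta)$. The error $\tilde\psi$ solves a linear wave equation with source $O(\delta^{\nu-1})$, so $\tilde\psi=O(\delta^{\nu-1})$ in all $C^k$ norms. For the 1D solution, $\tilde\phi^{(0)}\approx F(s-\mathcal A\tau)+G(s+\mathcal A\tau)$ in the appropriate normalization. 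The key point is that at $\tau=2/\mathcal A$ the supports of $F$ and $G$, both contained in $(-1,0)$, have separated by the translation.

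\emph{Case \eqref{initial1}.} In the region of \eqref{initial1}, only the outgoing piece survives, so $L\tilde\phi^{(0)}=O(\delta)$ there after rescaling. Each application of $L=\delta^{-1}\tilde L$ therefore costs only $\delta^{-1}\cdot\delta^{\nu-1}$ (from $\tilde\psi$ and curvature terms) instead of $\delta^{-1}$. This gives the factor $\delta^{-(2-\nu)}$ per $L$.

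\emph{Case \eqref{initial2}.} The same reasoning applies with $\underline L$ on the incoming piece.

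To make this rigorous for arbitrary $m$, we commute $\tilde L^m$ through the equation, using $\tilde L\tilde{\underline L}=\tilde\Box+$ angular and $1/r$ terms. We then run an induction on $m$ in the region where the opposite family vanishes. The sources at each step carry either a factor $\delta^{\nu-1}$ (from the quasilinearity) or a factor $\delta$ (from $1/r$ in rescaled variables). Both are $\le\delta^{\nu-1}$ because $\nu<2$.

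\textbf{Step 3: the formulas \eqref{y0} and \eqref{y1}.} We compute $\underline L\partial_0\phi^{(0)}$ explicitly from d'Alembert's formula at $(t_0,r_0\omega)$. The data are $\phi=\delta^\nu\Phi_0$ and $\partial_0\phi=\delta^{\nu-1}\Phi_1$, and $\partial_r=\frac{\mathcal A}{\delta}\partial_s$. The incoming characteristic through $(t_0,r_0)$ traces back to $\mathcal A(r_0-t_0)$, and differentiating produces the combination $\mathcal A(\mathcal A\partial_s^2\Phi_0-\partial_s\Phi_1)$ multiplied by $\delta^{\nu-2}$. The remaining contributions are bounded by $O(\delta^{\nu-1})$: the outgoing piece contributes $O(\delta)$ after $\underline L$ acts, and the $\tilde\psi$ and curvature errors are $O(\delta^{\nu-1})$. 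Formula \eqref{y1} follows symmetrically.

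\textbf{Main obstacle.} The main obstacle is the uniform-in-$m$ induction in Step 2. There we must verify that the commutator and quasilinear terms never contain a "bad" derivative that loses a full $\delta^{-1}$ without a compensating $\delta^{\nu-1}$. We would handle this by expressing the quadratic terms through $\tilde L$ and $\tilde{\underline L}$ and checking that every term containing $\tilde{\underline L}$ applied to the surviving wave is multiplied by a factor $\delta^{\nu-1}$.
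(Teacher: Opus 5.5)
Your plan is correct in outline, and the rigorous part of Step 2 is the paper's argument: write the equation as $L\underline L\phi=\frac{1}{2r}(L\phi-\underline L\phi)+\frac{1}{r^2}\Omega^2\phi+2g^{0i}\partial^2_{0i}\phi+(g^{ij}-m^{ij})\partial^2_{ij}\phi$, apply $L^{m-1}\partial^q\Omega^k$, and integrate along $\underline L$.

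\textbf{The ``main obstacle''.} It is not really an obstacle. Every term on the right carries a factor $1/r$ or $g-m=O(\delta^{\nu-1})$. The induction hypothesis therefore gives $\underline L L^m\partial^q\Omega^k\phi=\frac{1}{2r}L^m\partial^q\Omega^k\phi+O(\delta^{2\nu-3-(m-1)(2-\nu)-|q|})$. The relevant $\underline L$-segments have length $O(\delta)$, which closes the induction.

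\textbf{Where the integration starts.} You need to state this explicitly. The $C^k$ bound on $\tilde\psi$ only gives $\tilde L^m\tilde\psi=O(\delta^{\nu-1})$, i.e.\ one factor $\delta^{\nu-1}$ in total, not one per $L$. Starting at $t=0$ gives no gain either, since no compatibility condition is imposed. The gain comes from starting on the outer support edge $r-t=1/\mathcal A$, where $\phi\equiv0$. The region in \eqref{initial1} is exactly the $t_0$-slice of the points whose backward $\underline L$-curve meets that edge at some $t\ge0$. For \eqref{initial2}, integrate along $L$ from the inner edge $r+t=(1-\delta)/\mathcal A$ instead.

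\textbf{Step 1 versus the paper.} The paper does not rescale. It bootstraps the $\delta^{2|\kappa|}$-weighted energy and passes to $L^\infty$ by Sobolev on the circles $\mathbb S_r$, using the $O(\delta)$ radial width of the support. Your Step 1 needs this device, or localization by finite speed. After rescaling the data lie on a circle of length $\sim\delta^{-1}$, so global Sobolev loses a factor $\delta^{-1/2}$.

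\textbf{Step 3 versus the paper.} For \eqref{y0}--\eqref{y1} the paper never uses the free solution. It integrates $L(\underline L\partial_0\phi)=O(\delta^{2\nu-4})$ along the $L$-curve from $(0,(r_0-t_0)\omega)$, computing $\underline L\partial_0\phi|_{t=0}$ from the data and the equation. Your d'Alembert computation is equivalent and shows more clearly where the main term comes from. The cost is the extra 2D-versus-1D and $\tilde\psi$ comparisons.

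\textbf{Label swap in Step 3.} Step 3 reverses the labels you used, correctly, in Step 2. The point $(0,r_0-t_0)$ lies on the outgoing characteristic $r-t=\mathrm{const}$, and the main term comes from the outgoing piece. It is the incoming piece that $\underline L$ annihilates, up to $O(\delta)$. Your target formula and evaluation point are nonetheless correct.
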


\begin{proof}
At first, we give the following bootstrap assumption:
\begin{equation}\label{ba}
\begin{split}
&\text{\it when $0\leq t\leq t_0$ and integer $N_0\ge 6$,
$|\partial^\kappa\Omega^k\phi|\leq\delta^{\nu_0-|\kappa|}$ holds for $|\kappa|+k\leq N_0$},\\
&\text{\it where $\nu_0\in(1, \nu)$ is any fixed constant.}
\end{split}
\end{equation}
For \eqref{mainequation} and $n\in\mathbb N_0$, define the energy
$$
M_n(t)=\sum_{|\kappa|+k\leq n}\delta^{2|\kappa|}\|\partial\partial^\kappa\Omega^k\phi(t,\cdot)\|_{L^2(\mathbb R^2)}^2.
$$
Set $w=\delta^{|\kappa|}\partial^\kappa\Omega^k\phi$ with $|\kappa|+k\leq 2N_0-2$.
It follows from \eqref{mainequation} and integration by parts that
\begin{equation}\label{Y-4}
\begin{split}
&\int_0^t\int_{\Sigma_\tau}\partial_t w\,g^{\alpha\beta}\partial_{\alpha\beta}^2 w\,dyd\tau\\
=&\int_{\Sigma_{t}}\big(g^{0\alpha}\partial_\alpha w\,\partial_t w
-\frac12g^{\alpha\beta}\partial_\alpha w\,\partial_\beta w\big)dy
-\int_{\Sigma_{0}}\big(g^{0\alpha}\partial_\alpha w\,\partial_t w
-\frac12g^{\alpha\beta}\partial_\alpha w\,\partial_\beta w\big)dy\\
&+\int_0^t\int_{\Sigma_\tau}\big(-(\partial_\alpha g^{\alpha\beta})\partial_\beta w\,\partial_t w
+\frac12(\partial_t g^{\alpha\beta})\partial_\alpha w\,\partial_\beta w\big)dyd\tau,
\end{split}
\end{equation}
where
\begin{equation}\label{w}
\begin{split}
|g^{\alpha\beta}&\partial_{\alpha\beta}^2w|
\lesssim\delta^{|\kappa|}\sum_{\substack{
|\kappa_1|+|\kappa_2|\leq|\kappa|,\ k_1+k_2\leq k,\\
|\kappa_2|+k_2+2<|\kappa|+k}}
|\partial^{\kappa_1}\Omega^{k_1}g^{\alpha\beta}|
\cdot|\partial^{\kappa_2}\partial^2\Omega^{k_2}\phi|.
\end{split}
\end{equation}
Combining \eqref{ba}--\eqref{w} with Gronwall's inequality yields that for $0\leq t\leq t_0$,
$$
M_{2N_0-2}(t)\lesssim M_{2N_0-2}(0)\lesssim\delta^{2\nu-1}.
$$
Applying the Sobolev embedding on the circle $\mathbb S_r$ (centered at the origin with radius $r$), we arrive at
$$
|w(t,y)|\lesssim \frac{1}{\sqrt{r}}\|\Omega^{\le 1}w\|_{L^2(\mathbb S_r)}.
$$
Due to $r\sim 1$ for $t\in[0,t_0]$ and $(t,y)\in\mathrm{supp}\,w$, then one has that for $|\kappa|+k\leq N_0$,
\begin{equation}\label{local}
|\partial^\kappa\Omega^k\phi(t,y)|
\lesssim \|\Omega^{\le 1}\partial^\kappa\Omega^k\phi\|_{L^2(\mathbb S_r)}
\lesssim\delta^{1/2}\|\partial\Omega^{\le 1}\partial^\kappa\Omega^k\phi\|_{L^2(\Sigma_{t})}
\lesssim\delta^{\nu-|\kappa|}.
\end{equation}
Therefore, for small $\delta>0$ and $\nu_0<\nu$, the bootstrap assumption \eqref{ba} can be closed by the continuity argument.

Based on \eqref{local}, we now improve the $L^\infty$ estimate of $\phi$ on the domain $D_1=\{(t,y): 0\le t \le t_0,$
$\frac{1}{\mathcal A} - t\le r \le t+\frac{ 1}{\mathcal A}\}$.
The equation in \eqref{mainequation} can be rewritten as
\begin{equation}\label{me}
L\underline{L}\phi
= \frac{1}{2r}L\phi
- \frac{1}{2r}\underline{L}\phi
+ \frac{1}{r^2}\Omega^2\phi
+ 2g^{0i}\partial_{0i}^2\phi
+ (g^{ij}-m^{ij})\partial_{ij}^2\phi,
\end{equation}
where $m^{ii} = 1$ for $i=1, 2$ and $m^{ij} = 0$ for $i\neq j$.
It follows from \eqref{local} and \eqref{me} that
\begin{equation}\label{underlineLL}
|\underline{L}L\partial^q\Omega^k\phi(t,y)|
\le \delta^{2\nu-3-|q|},
\quad 0 \le t \le t_0.
\end{equation}
Note that $\phi$ vanishes on the surface $\{(t,y) : r- t = \frac{1}{\mathcal A}\}$, then integrating \eqref{underlineLL} along
the integral curves of $\underline{L}$ yields
\begin{equation}\label{YHCC4}
|L\partial^q\Omega^k\phi(t,y)|
\le \delta^{2\nu-2-|q|},
\quad (t,y) \in D_1.
\end{equation}
By applying $L$ repeatedly to both sides of \eqref{me} and using an induction argument,
we conclude that for $m\in\Bbb N$,
\[
|L^m\partial^q\Omega^k\phi(t,y)|
\le \delta^{\nu-m(2-\nu)-|q|},
\quad (t,y) \in D_1,
\]
which yields \eqref{initial1}.

Similarly, for any $(t,y) \in D_2=\{(t,y): 0\le t \le t_0,
\frac{1-\delta}{\mathcal A} - t \le r \le t+\frac{ 1-\delta}{\mathcal A}\}$,
\[
|\underline L^m\partial^q\Omega^k\phi(t,y)|
\le \delta^{\nu-m(2-\nu)-|q|},
\]
and hence \eqref{initial2} is proved.

For any fixed $\omega\in\Bbb S$, since both $(0,(r_0-t_0)\omega)$ and $(t_0, r_0\omega)$ lie on the line $\{(t,r\omega): r-t=r_0-t_0\}$,
then it follows from \eqref{me} that
\begin{equation}\label{YHCCC-3}
\begin{split}
&\underline{L}\partial_{0}\phi(t_0,r_0\omega)
=\underline{L}\partial_{0}\phi(0,(r_0-t_0)\omega)+\int_{(0,(r_0-t_0)\omega)}^{(t_0,r_0\omega)}L\underline{L}\partial_{0}\phi\\
=&\delta^{\nu-2}\mathcal{A}\big(\mathcal{A}\partial_s^2\Phi_0\big(\frac{\mathcal{A}(r_0-t_0)-1}{\delta}, \omega\big)-\partial_s\Phi_1\big(\frac{\mathcal{A}(r_0-t_0)-1}{\delta}, \omega\big)\big)+O(\delta^{2\nu-3}),
\end{split}
\end{equation}
which yields \eqref{y0}. Analogously, \eqref{y1} can be obtained
by integrating \eqref{me} along the integral curves of $\underline L$.
\end{proof}

\subsection{Lorentzian geometry, basic computations and introduction of a good unknown $\check{\mathscr{X}}$}\label{Section 2-2}
In this subsection, we present some preliminary concepts from Lorentzian geometry and related quantities near the outermost characteristic
conic surface introduced in \cite{Ch1},
one can also see \cite{M-Y, Sp, L-Y}. However, for the convenience of readers, we still provide a detailed exposition here.

As in \cite{Ch1}, the optical function is introduced as follows.
\begin{definition}\label{Definition optical function}
A $C^1$ function $u(t,y)$ is called the optical function of the equation in \eqref{mainequation}, if $u(t,y)$ satisfies the eikonal equation
\begin{equation}\label{eikonal equation}
g^{\al\be}\p_{\al}u\p_{\be}u=0
\end{equation}
with the initial data $u(t_0,y)=t_0-r$ and the condition $\p_{0}u>0$.
\end{definition}

According to \cite{Ch1, Sp}, the inverse foliation density $\mu$ is defined as
\begin{equation}\label{inverse foliation density}
\mu=-\f{1}{g^{0\al}\p_{\al}u}.
\end{equation}
Note that $\mu^{-1}$ measures the foliation density of the outgoing
characteristic surfaces{,} and the shock will be formed when $\mu\rightarrow 0+$.

Let us recall the null frame $\{\mathring L, \mathring{\underline L}, X\}$ with respect to the metric $(g_{\al\beta})$
(see \cite{Ding2, Ding4}):
\begin{equation}\label{Lr}
\begin{split}	
&\Lr=\mu\tilde L\quad\text{with $\tilde L=-\na u=-g^{\al\be}\p_{\al}u\p_{\be}$},\\
&\mathring{\underline L}=\mu\mathring L+2T
\quad \text{with $T=\mu\tilde T$ and $\tilde T=-g^{\al 0}\p_\al-\mathring L$},\\
&X=\f{\p}{\p\vartheta}\quad\text{with $\vartheta$ being determined by $\mathring L\vartheta=0$
and $\vartheta|_{t=t_0}=\th\in\mathbb{S}$}.\\	
\end{split}
\end{equation}
By direct computations, we have the following conclusion.
\begin{lemma}\label{nullframe}
It holds that
\begin{align*}
&g(\mathring L,\mathring L)=g(\mathring{\underline L}, \mathring{\underline L})=g(\mathring L, X)=g(\mathring{\underline L}, X)=0,
\quad g(\mathring L, \mathring{\underline L})=-2\mu,
\end{align*}
\begin{align}\label{LTTT}
g(\mathring L, T)=-\mu,\quad g(T,T)=\mu^2
\end{align}
and
\begin{align}\label{T}
&\mathring L t=1,\quad\mathring L u=0, \quad \mathring{\underline L}t=\mu,\quad \mathring{\underline L} u=2, \quad Tt=0,\quad T u=1.
\end{align}
\end{lemma}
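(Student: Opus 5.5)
The plan is to verify each identity directly from the definitions in \eqref{Lr}, using the eikonal equation \eqref{eikonal equation}, the definition \eqref{inverse foliation density} of $\mu$, and $g^{00}=-1$ from \eqref{g}. Write $\tilde L^\al=-g^{\al\be}\p_\be u$, so that $\tilde L=-\na u$ and $g(\tilde L,V)=-Vu$ for any vector field $V$.

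First, the relations in \eqref{T}:
\begin{itemize}
\item $\tilde Lu=-g^{\al\be}\p_\al u\p_\be u=0$ by the eikonal equation, hence $\mathring Lu=0$.
\item $\tilde Lt=\tilde L^0=-g^{0\be}\p_\be u=\mu^{-1}$ by \eqref{inverse foliation density}, hence $\mathring Lt=1$.
\item From $\tilde T=-g^{\al0}\p_\al-\mathring L$ we get $\tilde Tt=-g^{00}-1=0$, so $Tt=0$.
\item $\tilde Tu=-g^{\al0}\p_\al u-\mathring Lu=\mu^{-1}$, so $Tu=1$.
\item Consequently $\mathring{\underline L}t=\mu\mathring Lt+2Tt=\mu$ and $\mathring{\underline L}u=\mu\cdot0+2=2$.
\end{itemize}

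Next, the metric products. Since $g(\tilde L,\tilde L)=g^{\al\be}\p_\al u\p_\be u=0$, we have $g(\mathring L,\mathring L)=0$. Observe that $-g^{\al0}\p_\al=-\na t$ as a vector field. Hence
\[
g(-\na t,\mathring L)=-\mathring Lt=-1,\qquad g(-\na t,-\na t)=g^{00}=-1 .
\]
Therefore $g(\mathring L,\tilde T)=g(-\na t,\mathring L)-g(\mathring L,\mathring L)=-1$, and so $g(\mathring L,T)=-\mu$. Similarly,
\[
g(\tilde T,\tilde T)=g(-\na t,-\na t)-2g(-\na t,\mathring L)+g(\mathring L,\mathring L)=-1+2+0=1,
\]
which gives $g(T,T)=\mu^2$. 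This proves \eqref{LTTT}. Expanding $\mathring{\underline L}=\mu\mathring L+2T$ then yields
\[
g(\mathring L,\mathring{\underline L})=0+2(-\mu)=-2\mu,\qquad g(\mathring{\underline L},\mathring{\underline L})=\mu^2\cdot 0+4\mu(-\mu)+4\mu^2=0 .
\]

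Finally, the products involving $X$. The function $\vartheta$ is constant along integral curves of $\mathring L$, and $\mathring Lt=1$, $\mathring Lu=0$. So $(t,u,\vartheta)$ are coordinates in which $\mathring L=\p_t$, and $X=\p_\vartheta$ satisfies $Xt=Xu=0$. Hence $g(\mathring L,X)=\mu g(\tilde L,X)=-\mu Xu=0$. For the remaining product, $g(\tilde T,X)=g(-\na t,X)-g(\mathring L,X)=-Xt-0=0$, so $g(T,X)=0$ and $g(\mathring{\underline L},X)=0$.

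The only point needing care is justifying that $X$ is tangent to the level sets $S_{t,u}$, i.e.\ that $Xt=Xu=0$. This follows from the construction: $\vartheta$ is transported by $\mathring L$, and $(t,u,\vartheta)$ form a coordinate system as long as $\mu>0$. Everything else is algebra.
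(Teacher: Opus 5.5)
Your proposal is correct and is exactly the direct computation the paper refers to with ``By direct computations'': every identity follows from the eikonal equation, the definition of $\mu$, $g^{00}=-1$, and the fact that $X=\partial_\vartheta$ annihilates $t$ and $u$ in the coordinates $(t,u,\vartheta)$. You also correctly flag the one non-algebraic input, namely that $(t,u,\vartheta)$ is a genuine coordinate system as long as $\mu>0$.
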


As in \cite{Sp,M-Y}, one can perform the change of coordinates:
$(t, y^1, y^2)\longrightarrow (t, u, \vartheta)$ near $\{(t,y): r-t = \frac 1{\mathcal A}\}$ with
$({t}, u, \vartheta)=(t, u(t,y), \vartheta(t,y))$.
In the new coordinate $({t}, u, \vartheta)$, the following subsets are introduced.
\begin{definition} Set
\begin{align*}
&\Sigma_{ t}^{u}:=\{({t}',u',\vartheta): {t}'= t, u_0\leq u'\leq u\},\quad\text{where $u\in [u_0, U_0], u_0=-\frac1{\mathcal A}, U_0=-\frac{1-4\delta}{\mathcal A}$},\\
&C_{u}:=\{({t}',u',\vartheta): {t}'\geq t_0, u'=u\},\\
&C_{u}^{t}:=\{({t}',u',\vartheta): t_0\leq {t}'\leq {t}, u'=u\},\\
&S_{{t}, u}:=\Sigma_{t}\cap C_{u},\\
&D^{{t}, u}:=\{({t}', u',\vartheta): t_0\leq {t}'<{t}, 0\leq u'\leq u\}.
\end{align*}
\end{definition}
Next, the following geometric notations are used.

\begin{definition} For the metric $g$ on the spacetime,
\begin{itemize}
\item $\underline g=(g_{ij})$ is defined as the induced metric of $g$ on $\Sigma_{t}$, i.e.,
$\underline g(U,V)=g(U,V)$ for any tangent vectors $U$ and $V$ of $\Sigma_{t}$;
\item $\slashed \Pi_\al^\beta:=\delta_\al^\beta-\delta_\al^0\mathring L^\beta+\mathring L_\al\tilde{T}^\beta$ is the
projection tensor field of type $(1,1)$ on $S_{{t},u}$, where $\delta_\al^\beta$ is the Kronecker delta;
\item For any $(m,n)$-type
spacetime tensor field
$\eta$, $\slashed\eta=\slashed\Pi\eta$ is the tensor field on $S_{{t},u}$  with components $$\slashed\eta^{\al_1\cdots\al_m}_{\beta_1\cdots\beta_n}:=(\slashed\Pi\eta)^{\al_1\cdots\al_m}_{\beta_1\cdots\beta_n}
=\slashed\Pi_{\beta_1}^{\beta_1'}\cdots\slashed\Pi_{\beta_n}^{\beta_n'}
\slashed\Pi_{\al_1'}^{\al_1}\cdots\slashed\Pi_{\al_m'}^{\al_m}\eta^{\al_1'\cdots\al_m'}_{\beta_1'\cdots\beta_n'}.$$
In particular, $\slashed g=(\slashed g_{\al\beta})$ is the induced metric of $g$ on $S_{{t},u}$;
\item $\slashed g^{XX}$ is defined as the inverse of $\slashed g_{XX}$ with $\slashed g_{XX}=g(X, X)$;
\item $\mathscr D$ and $\slashed\nabla$ denote the Levi-Civita connections of $g$ and $\slashed g$, respectively;
\item $\Box_g:=g^{\al\beta}\mathscr{D}^2_{\al\beta}$, $\slashed\triangle:=\slashed g^{XX}\slashed\nabla^2_{X}$;
\item $\mathcal L_V\eta$ is the Lie derivative of $\eta$ with respect to $V${;} $\slashed{\mathcal L}_V\eta:=\slashed\Pi(\mathcal L_V\eta)$
for any tensor field $\eta$ and vector $V$;
\item For any $(m,n)$-type spacetime tensor field $\eta$,
$$
|\eta|^2:=g_{\al_1\al_1'}\cdots g_{\al_m\al_m'}g^{\beta_1\beta_1'}\cdots g^{\beta_n\beta_n'}\eta_{\beta_1\cdots\beta_n}^{\al_1\cdots\al_m}\eta_{\beta_1'\cdots\beta_n'}^{\al_1'\cdots\al_m'};
$$
\item $\text{div}U:=\mathscr D_\al U^\al$ for any vector field $U$; $\slashed{\text{div}}Y:=\slashed{\nabla}_X Y^X$
and $\slashed{\text{div}}\zeta:=\slashed\nabla^X\zeta_X$ denote the angular divergence for any vector field $Y$ and 1-form $\zeta$ on $S_{{t},u}$;
\item If $\eta$ is a $(0,2)$-type spacetime tensor, $\Lambda$ is a 1-form, $U$ and $V$ are vector fields, then the contraction
of $\eta$ with respect to $U$ and $V$ is defined as
$\eta_{UV}:=\eta_{\al\beta}U^{\al}V^{\beta},$
and the contraction of $\Lambda$ with respect to $U$ is
$\Lambda_U:=\Lambda_{\al}U^{\al}$;
\item If $\eta$ is a $(0,2)$-type tensor on $S_{t,u}$, the trace of $\eta$ is defined as
$\operatorname{tr}\eta:=\slashed g^{XX}\eta_{XX}$;
\item  $\ds$ stands for the restriction of differential operator $d$ on the circle $\Bbb S$, and $\ds f=Xf$ for any smooth function $f$.
\end{itemize}
\end{definition}

In the frame $\{\mathring L,T,X\}$, the following decompositions hold
\begin{align}
g^{\alpha\beta} &= -\mathring L^\alpha\mathring L^\beta -\widetilde T^\alpha\,\mathring L^\beta -\mathring L^\alpha\,\widetilde T^\beta +(\slashed d_X y^\alpha)(\slashed d^X y^\beta), \label{gab}\\ \partial_\alpha &= \delta_\alpha^0\,\mathring L -\mu^{-1}\mathring L_\alpha\,T + g_{\alpha i}(\slashed d^X y^i)X. \label{pal}
\end{align}

Define the second fundamental forms
\[
\chi_{XX} = g(\mathscr D_X\mathring L,X), \qquad \sigma_{XX} = g(\mathscr D_X\widetilde T,X)
\]
and the one-forms
\[
\zeta_X = g(\mathscr D_X\mathring L,\widetilde T), \qquad \xi_X = -g(\mathscr D_X T,\mathring L).
\]
Then $\mu \zeta_X = -X\mu + \xi_X$ holds.

Introduce the deformation tensor
\begin{equation}\label{dt}
 {}^{(V)}\pi_{\alpha\beta} = g(\mathscr{D}_{\alpha} V, \partial_{\beta}) + g(\mathscr{D}_{\beta} V, \partial_{\alpha}).
\end{equation}

Let
\[
G_{\alpha\beta}^{\kappa} := \partial_{\varphi_\kappa} g_{\alpha\beta}\quad\text{with $\varphi_\kappa=\partial_\kappa\phi$
for $\kappa=0,1,2$.}
\]
For any vector fields $U = U^{\alpha} \partial_{\alpha}$ and $V = V^{\alpha} \partial_{\alpha}$, set
$G_{UV}^{\kappa} = G_{\alpha\beta}^{\kappa} U^{\alpha} V^{\beta}$.

On the initial hypersurface $\Sigma_{t_0}^{U_0}$, it holds that $\tilde T^i=-\f{y^i}{r}+O(\delta^{\nu-1})$,
$\mathring L^0=1$, $\mathring L^i=\f{y^i}{r}+O(\delta^{\nu-1})$ and
\begin{equation}\label{YHCC5}
 \chi_{XX}-\f12G_{XX}^\kappa\mathring L\vp_\kappa=\f1r\slashed g_{XX}+O(\delta^{\nu-1}).
\end{equation}
Note that on $\Sigma_{t_0}$, $r$ coincides with $t_0-u$. For $t\geq t_0$, we define the following \textit{error vectors} with the components
\begin{equation}\label{errorv}
\begin{split}
&\check{L}^0:=0,\ \check{L}^i:=\mathring L^i-\f{y^i}{\varrho},\ \check{T}^i:=\tilde T^i+\f{y^i}{\varrho},\ \check\varrho:=\f{r}{\varrho}-1,\\
\end{split}
\end{equation}
and introduce the following new good unknown
\begin{equation}\label{errorv-1}
\begin{split}
&\check{\mathscr{X}}_{XX}:=\chi_{XX}- \frac{1}{2}G_{XX}^\kappa\mathring{L}\vp_\kappa-\f{1}{\varrho}\slashed g_{XX},
\end{split}
\end{equation}
where $\varrho:={t}-u$.

In the new coordinate system $(t, u, \vartheta)$, one has $\mathring{L} = \frac{\partial}{\partial t}$. Additionally, it
follows from \eqref{T} that
there exists a smooth function $\Theta^X$ such that $T=\frac{\partial}{\partial u} - \Theta^X X$.
In addition, an analysis analogous to that in \cite[Lemma 3.66]{Sp} yields the following result.

\begin{lemma}
In domain $D^{t,u}$, the Jacobian determinant of the map $(t, u, \vartheta) \rightarrow (t, y^1, y^2)$ is given by
\begin{equation}\label{Jacobian}
\det\frac{\partial(t, y^1, y^2)}{\partial(t, u, \vartheta)} = \mu \left(\det\underline{g}\right)^{-1/2} \sqrt{\slashed{g}_{XX}}.
\end{equation}
\end{lemma}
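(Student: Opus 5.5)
The plan is to compute the Jacobian by expressing the coordinate vector fields $\frac{\partial}{\partial t},\frac{\partial}{\partial u},\frac{\partial}{\partial\vartheta}$ of the $(t,u,\vartheta)$ system in terms of the frame $\{\mathring L, T, X\}$. Then I would evaluate the determinant using the Gram matrix of $g$ restricted to $\Sigma_t$. The columns of $\frac{\partial(t,y^1,y^2)}{\partial(t,u,\vartheta)}$ are the Cartesian components of these three vector fields:
\[
\frac{\partial}{\partial t}=\mathring L,\qquad \frac{\partial}{\partial u}=T+\Theta^X X,\qquad \frac{\partial}{\partial \vartheta}=X.
\]

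\textbf{Step 1: reduction to a spatial $2\times 2$ determinant.} By \eqref{T}, $Tt=0$, and $Xt=0$ because $X=\partial_\vartheta$ at fixed $t$. Hence the first row of the Jacobian matrix is $(\mathring L^0,T^0+\Theta^XX^0,X^0)=(1,0,0)$. Expanding along this row, the determinant equals $\det\begin{pmatrix} T^1+\Theta^XX^1 & X^1\\ T^2+\Theta^XX^2 & X^2\end{pmatrix}$. Subtracting $\Theta^X$ times the second column from the first reduces this to $\det P$, where $P=\begin{pmatrix} T^1 & X^1\\ T^2& X^2\end{pmatrix}$. In particular the shift $\Theta^X$ drops out entirely.

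\textbf{Step 2: Gram matrix of $\{T,X\}$.} Both $T$ and $X$ are tangent to $\Sigma_t$, so $P^{\mathsf T}\,\underline g\,P$ is the Gram matrix of $(T,X)$ with respect to $\underline g=(g_{ij})$. By \eqref{LTTT}, $g(T,T)=\mu^2$. To see that $g(T,X)=0$, write $T=\mu\tilde T$ with $\tilde T=-g^{\alpha0}\partial_\alpha-\mathring L$. Then $g(-g^{\alpha 0}\partial_\alpha,X)=-X^0=0$, and $g(\mathring L,X)=0$ by Lemma \ref{nullframe}. Therefore
\[
(\det P)^2\det\underline g=\det\begin{pmatrix}\mu^2&0\\0&\slashed g_{XX}\end{pmatrix}=\mu^2\slashed g_{XX},
\]
which gives $|\det P|=\mu(\det\underline g)^{-1/2}\sqrt{\slashed g_{XX}}$, since $\mu>0$ and $\underline g$ is positive definite (the equation is hyperbolic with respect to $t$).

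\textbf{Step 3: sign.} This is the only delicate point, and I expect it to be a matter of convention rather than a real obstacle. The determinant is continuous and, by Step 2, nonvanishing wherever $\mu>0$, so its sign is constant on the connected domain $D^{t,u}$. It therefore suffices to check the sign on $\Sigma_{t_0}$. There $T\approx-\partial_r$ (up to $O(\delta^{\nu-1})$), and $X$ is a positive multiple of $\partial_\theta$ or of its negative, according to the chosen orientation of $\vartheta$. The orientation of the angular coordinate $\vartheta|_{t=t_0}=\theta$ is fixed (equivalently, one states \eqref{Jacobian} for the absolute value of the determinant, which is all that is used for volume forms) so that $\det P>0$ on $\Sigma_{t_0}$. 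Then $\det P>0$ throughout $D^{t,u}$, and \eqref{Jacobian} follows.
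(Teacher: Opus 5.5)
Your argument is correct and is essentially the Gram-determinant computation of \cite[Lemma 3.66]{Sp} to which the paper defers: the first row $(1,0,0)$ reduces the determinant to $\partial(y^1,y^2)/\partial(u,\vartheta)$, and $g(T,X)=0$, $g(T,T)=\mu^2$ then give $|\det P|=\mu(\det\underline g)^{-1/2}\sqrt{\slashed g_{XX}}$. The one caveat is your Step~3. The paper's convention is that $\vartheta|_{t=t_0}$ is the counterclockwise angle (cf.\ $(\cos\vartheta_0,\sin\vartheta_0)=\omega_0$ in Section~\ref{Sectionmu}), so on $\Sigma_{t_0}$ one has exactly $u=t_0-r$, $\vartheta=\theta$, and the minor equals $-r<0$ there. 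The identity therefore holds for the absolute value of the determinant, which is your fallback reading, and not under the orientation you posit.
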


\begin{remark}
The formula~\eqref{Jacobian} implies that for regular  metrics $\underline g$ and $\slashed g$  (i.e., $\det\underline{g} > 0$ and
$\slashed{g}_{XX} > 0$), the transformation $(t,u,\vartheta)\mapsto (t,y)$
becomes singular as $\mu\to 0^+$, which corresponds to the  formation of shocks.
\end{remark}

By (3.39b) of \cite{Sp},
the projections of $\Omega=y^1\p_2-y^2\p_1$ and  the differential $d$ on $S_{t, u}$ are defined
as
\begin{equation*}
R:=\slashed\Pi\Omega,\quad \slashed d:=\slashed\Pi d,
\end{equation*}
respectively. Note that the explicit expression of $R$
can be written as
\begin{equation}\label{R}
R=(\delta_j^i+g_{\al j}\mathring L^\al\tilde T^i){\Omega}^j\p_i
=\Omega-g_{aj}\tilde{T}^a{\Omega}^j\tilde{T},
\end{equation}
where $\delta_j^i$ is Kronecker delta symbol. For brevity, set
\begin{equation}\label{omega}
\upsilon:=g_{ab}\tilde{T}^a{\Omega}^b=g_{aj}\check{T}^a\epsilon_i^jy^i-(g_{aj}-m_{aj})\f{\epsilon_i^j y^iy^a}\varrho,
\end{equation}
where $\epsilon_1^2=1$, $\epsilon_2^1=-1$ and $\epsilon_1^1=\epsilon_2^2=0$.
Then we have
\begin{equation}\label{YHCC16}
R=\Omega-\upsilon\tilde{T}.
\end{equation}

For the quantities $\mu, \check{\mathscr{X}}$, $\sigma$, $\zeta$ and $\xi$ defined above, we can derive the following
basic equalities in the frame $\{\mathring{L}, \mathring{\underline{L}}, X\}$ or $\{T, \mathring{\underline{L}}, X\}$.
Since the proof is completely analogous to that in \cite{Ding4}, here we omit the details.

\begin{lemma}\label{Lem3.3}
It holds that
\begin{itemize}[leftmargin=*, labelsep=6pt, itemsep=10pt]
\item $\mu$ satisfies
\begin{equation}\label{lmu}
\mathring{L}\mu = -\frac{1}{2}\mu G_{\mathring{L}\mathring{L}}^\gamma \mathring{L}\vp_\gamma
- \mu G_{\tilde{T}\mathring{L}}^\gamma \mathring{L}\vp_\gamma + \frac{1}{2}G_{\mathring{L}\mathring{L}}^\gamma T\vp_\gamma;
\end{equation}
\item $\zeta$ and $\sigma$ satisfy the following relations
\begin{align}
&\zeta_X = -\frac{1}{2}\big(G_{\tilde{T}\mathring{L}}^\gamma \slashed{d}_X\vp_\gamma + G_{\tilde{T}\tilde{T}}^\gamma \slashed{d}_X\vp_\gamma
- G_{X\tilde{T}}^\gamma \mathring{L}\vp_\gamma + G_{X\mathring{L}}^\gamma \tilde{T}\vp_\gamma \big),\label{zeta}\\
&\sigma_{XX} = -G_{X\mathring{L}}^\gamma \slashed{d}_X\vp_\gamma - G_{X\tilde{T}}^\gamma \slashed{d}_X\vp_\gamma
+ \frac{1}{2}G_{XX}^\gamma \tilde{T}\vp_\gamma - \check{\mathscr{X}}_{XX} - \frac{\slashed{g}_{XX}}{\varrho};\label{theta}
\end{align}
\item the components of the vector fields $\mathring{L}$ and $\check{L}$ fulfill
\begin{align}
&\mathring{L}\mathring{L}^i = \frac{1}{2}G_{\mathring{L}\mathring{L}}^\gamma \mathring{L}\vp_\gamma \tilde{T}^i
-(G_{X\mathring{L}}^\gamma \mathring{L}\vp_\gamma - \frac{1}{2}G_{\mathring{L}\mathring{L}}^\gamma \slashed{d}_X\vp_\gamma )\slashed{d}^X y^i,\label{LL}\\
&T\mathring{L}^i = (\slashed{d}_X\mu - \frac{1}{2}\mu G_{\tilde{T}\tilde{T}}^\gamma \slashed{d}_X\vp_\gamma
- G_{X\mathring{L}}^\gamma T\vp_\gamma)\slashed{d}^X y^i + \frac{1}{2}G_{\mathring{L}\mathring{L}}^\gamma T\vp_\gamma \tilde{T}^i \nonumber\\
&\qquad\quad +(\frac{1}{2}\mu G_{\mathring{L}\mathring{L}}^\gamma \mathring{L}\vp_\gamma + \mu G_{\tilde{T}\mathring{L}}^\gamma \mathring{L}\vp_\gamma
+ \frac{1}{2}\mu G_{\tilde{T}\tilde{T}}^\gamma \mathring{L}\vp_\gamma)\mathring{L}^i,\label{TL}\\
&\mathring{L}\left( \varrho\check{L}^i \right) = \varrho\mathring{L}\mathring{L}^i,\label{LeL}\\
&\slashed{d}_X\check{L}^i = \operatorname{tr}\check{\mathscr{X}} \slashed{d}_X y^i-(G_{\mathring{L}\tilde{T}}^\gamma \slashed{d}_X\vp_\gamma
+ \frac{1}{2}G_{\tilde{T}\tilde{T}}^\gamma \slashed{d}_X\vp_\gamma)\tilde{T}^i;\label{deL}
\end{align}
\item the covariant derivatives of the vector fields $\{T, \mathring{L}, X\}$ are given by
\begin{equation}\label{cdf}
\begin{split}
&\mathscr{D}_{\mathring{L}}\mathring{L} = (\mu^{-1}\mathring{L}\mu)\mathring{L}, \quad
\mathscr{D}_{T}\mathring{L} = -\mathring{L}\mu\mathring{L} + \xi^X X, \quad
\mathscr{D}_X\mathring{L} = -\zeta_X\mathring{L} + \operatorname{tr}\chi \, X,\\
&\mathscr{D}_{\mathring{L}}T = -\mathring{L}\mu\mathring{L} - \mu\zeta^X X, \quad
\mathscr{D}_T T = \mu\mathring{L}\mu\mathring{L} + (\mu^{-1}T\mu + \mathring{L}\mu)T - \mu(\slashed{d}^X\mu) X,\\
&\mathscr{D}_X T = \mu\zeta_X\mathring{L} + \mu^{-1}\xi_X T + \mu\operatorname{tr}\sigma \, X,\\
&\mathscr{D}_X X = \slashed{\nabla}_X X + (\sigma_{XX} + \chi_{XX})\mathring{L} + \mu^{-1}\chi_{XX} T;
\end{split}
\end{equation}
\item the covariant derivatives of the vector fields $\{\mathring{\underline{L}}, \mathring{L}, X\}$ are
\begin{equation}\label{LuL}
\begin{split}
&\mathscr{D}_{\mathring{\underline{L}}}\mathring{L} = -\mathring{L}\mu\mathring{L} + 2\xi^X X, \quad
\mathscr{D}_{\mathring{L}}\mathring{\underline{L}} = -2\mu\zeta^X X,\\
&\mathscr{D}_{\mathring{\underline{L}}}\mathring{\underline{L}} = (\mu^{-1}\mathring{\underline{L}}\mu + \mathring{L}\mu)\mathring{\underline{L}} - (2\mu \slashed{d}^X\mu) X.
\end{split}
\end{equation}
\end{itemize}
\end{lemma}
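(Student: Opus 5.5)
The plan is to derive all the identities of Lemma \ref{Lem3.3} from three ingredients. The first is the frame relations of Lemma \ref{nullframe} together with \eqref{T}. The second is the decompositions \eqref{gab}--\eqref{pal}. The third is the fact that $g_{\al\beta}$ depends on $x$ only through $\vp_\kappa=\p_\kappa\phi$. This last fact lets every derivative of a metric component be written as $\p_\gamma g_{\al\beta}=G^\kappa_{\al\beta}\p_\gamma\vp_\kappa$. We also use the symmetry $\p_\al\vp_\kappa=\p_\kappa\vp_\al$. The Christoffel symbols of $\D$ then read
\[
\Gamma_{\al\beta\gamma}=\tfrac12\big(G^\kappa_{\al\gamma}\p_\beta\vp_\kappa+G^\kappa_{\beta\gamma}\p_\al\vp_\kappa-G^\kappa_{\al\beta}\p_\gamma\vp_\kappa\big).
\]
Every contraction of this expression with frame vectors yields terms of the form $G^\kappa_{UV}W\vp_\kappa$ with $U,V,W\in\{\Lr,\tilde T,X\}$. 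This is the source of all the $G$-terms in the lemma.

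\textbf{Step 1: $\D_{\Lr}\Lr$ and $\Lr\mu$.} Since $\tilde L=-\na u$ is a gradient null field, it is geodesic, so $\D_{\tilde L}\tilde L=0$. Hence $\D_{\Lr}\Lr=(\mu^{-1}\Lr\mu)\Lr$. To get \eqref{lmu}, we differentiate $\mu^{-1}=-g^{0\al}\p_\al u$ along $\Lr$. We use $\Lr(\p_\al u)=\mu\,\tilde L^\beta\D_\beta\p_\al u=\mu\,\D_\al\tilde L^\beta\p_\beta u$, which holds by the symmetry of the Hessian. The derivative of $g^{0\al}$ is expressed through $-g^{0\lambda}g^{\al\sigma}G^\kappa_{\lambda\sigma}\Lr\vp_\kappa$. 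Decomposing $g^{0\al}$ and $\p_\al u$ in the frame, using $\tilde T=-g^{\al0}\p_\al-\Lr$ and $\p_\al u=-\mu^{-1}\Lr_\al$, and collecting terms gives the three contributions $G_{\Lr\Lr}\Lr\vp$, $G_{\tilde T\Lr}\Lr\vp$ and $G_{\Lr\Lr}T\vp$. The coefficient $\tfrac12$ in front of $G_{\Lr\Lr}T\vp$ comes from $\Gamma_{\Lr\Lr T}$.

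\textbf{Step 2: the remaining connection coefficients.} We compute the other frame connection coefficients by pairing each covariant derivative with the frame. For $\D_T\Lr$, $\D_X\Lr$, $\D_{\Lr}T$, $\D_XT$, $\D_TT$ and $\D_XX$, we expand each in the basis $\{\Lr,T,X\}$. The coefficients follow from metric compatibility applied to $g(\Lr,\Lr)=0$, $g(\Lr,T)=-\mu$, $g(T,T)=\mu^2$ and $g(\Lr,X)=g(T,X)=0$, together with the torsion-free relations $[\Lr,X]=0$ and $[\Lr,T]=[\Lr,\p_u-\Theta^XX]$, whose $X$-component is expressed through $\zeta,\xi$. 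This produces \eqref{cdf} in terms of the definitions of $\chi$, $\sigma$, $\zeta$, $\xi$. Then \eqref{LuL} follows by linearity from $\underline{\Lr}=\mu\Lr+2T$.

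\textbf{Step 3: the one-form and second fundamental form identities.} For $\zeta_X=g(\D_X\Lr,\tilde T)$, we use that $\Lr_\al=-\mu\p_\al u$ is closed up to $d\mu$. This antisymmetrizes $\D\Lr$, so the only remaining contributions are the Christoffel terms, which give the four $G$-terms in \eqref{zeta}. For $\sigma_{XX}$, we write $\tilde T=-g^{\al0}\p_\al-\Lr$. The field $-g^{\al0}\p_\al$ is the unit normal to $\Sigma_t$ in the flat coordinate sense, so its derivative is purely a Christoffel expression. Thus $\sigma_{XX}=-\chi_{XX}+(\text{$G$-terms})$. Substituting the definition \eqref{errorv-1} of $\check{\mathscr X}_{XX}$ then yields \eqref{theta}; the term $\tfrac12 G^\kappa_{XX}\Lr\vp_\kappa$ is absorbed exactly into $\check{\mathscr X}$. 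Finally, for $\Lr\Lr^i$ and $T\Lr^i$ we read off components via $\Lr\Lr^i=(\D_{\Lr}\Lr)^i-\Gamma^i_{\Lr\Lr}$ and \eqref{pal}. Equation \eqref{LeL} is immediate from $\Lr\varrho=1$ and $\Lr y^i=\Lr^i$. For \eqref{deL}, we use $\slashed d_X(y^i/\varrho)=\varrho^{-1}\slashed d_Xy^i$, since $X\varrho=0$, and project $\D_X\Lr$ with the $X$-component $\operatorname{tr}\chi$. Subtracting $\varrho^{-1}$ leaves $\operatorname{tr}\check{\mathscr X}$ plus the Christoffel correction to the component.

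The main obstacle is bookkeeping rather than any conceptual point. The issue is tracking the signs and factors of $\tfrac12$ in the Christoffel contractions. In particular, one must verify that the $\Lr\vp$ term in $\chi_{XX}$ combines into $\check{\mathscr X}$ in both \eqref{theta} and \eqref{deL}. To control this, I would first check each formula against the flat case $g=m$, where all $G$-terms vanish and $\chi_{XX}=\slashed g_{XX}/r$. I would then follow the template of \cite{Ding4}, since only the specific form of $G$ changes.
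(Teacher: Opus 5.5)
Your overall route (expand in the frame $\{\Lr,T,X\}$, get the frame components from metric compatibility, and write every Christoffel symbol through $G^\kappa_{\al\be}\p\vp_\kappa$) is the standard computation the paper skips by citing \cite{Ding4}. Most of it goes through: the frame expansions of Step 2, $\sigma_{XX}$ via $-g^{\al0}\p_\al$, and \eqref{LL}, \eqref{LeL}, \eqref{LuL}. The step that does not work is your derivation of \eqref{zeta}. Christoffel symbols are symmetric in their two lower indices, so they cancel entirely from $\D_\al\Lr_\be-\D_\be\Lr_\al$. Closedness of $\Lr^\flat=-\mu\,du$ up to $d\mu$ only gives $\D_\al\Lr_\be-\D_\be\Lr_\al=\mu^{-1}(\p_\al\mu\,\Lr_\be-\p_\be\mu\,\Lr_\al)$. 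Contracting with $X^\al\tilde T^\be$ yields $\zeta_X=\mu^{-1}g(\D_T\Lr,X)-\mu^{-1}X\mu=\mu^{-1}(\xi_X-X\mu)$. That is just the relation $\mu\zeta_X=-X\mu+\xi_X$ stated before the lemma: it trades $\zeta$ for $\xi$ and produces no $G$-terms at all. What works is the device you already use for $\sigma$, with $\tilde T$ in the second slot. The field $N:=\tilde T+\Lr=-g^{\al0}\p_\al$ has $N_\be=-\delta^0_\be$, so $\D_\al N_\be=\Gamma^0_{\al\be}$. Also $g(\tilde T,\tilde T)=1$ gives $g(\D_X\tilde T,\tilde T)=0$. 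Hence $\zeta_X=g(\D_XN,\tilde T)=\Gamma^0_{X\tilde T}=-\Gamma_{X\tilde T\Lr}-\Gamma_{X\tilde T\tilde T}$, using $g^{0\gamma}=-\Lr^\gamma-\tilde T^\gamma$ from \eqref{gab}. With your formula for $\Gamma_{\al\be\gamma}$ this is exactly \eqref{zeta}. Fix this first, because \eqref{TL} (through $\xi_X=\mu\zeta_X+X\mu$) and \eqref{deL} are obtained by inserting \eqref{zeta}.

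Three smaller points. First, in Step 1, with $\D$ the Levi-Civita connection, $\tilde L^\be\D_\be\D_\al u=\tilde L^\be\D_\al\D_\be u=-\tfrac12\p_\al g(\tilde L,\tilde L)=0$. So your chain would assert $\Lr(\p_\al u)=0$. In fact the coordinate derivative equals the Christoffel term $\Lr^\be\Gamma^\lambda_{\be\al}\p_\lambda u$, and this term is the source of $\tfrac12G^\gamma_{\Lr\Lr}T\vp_\gamma$. If you meant coordinate derivatives, the chain is fine, but you then need the eikonal equation to write $\tilde L^\be\p_\al\p_\be u=\tfrac12(\p_\al g^{\be\gamma})\p_\be u\,\p_\gamma u$. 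The shortest route uses what you already proved: $\Lr^0=\Lr t=1$, so $\mu^{-1}\Lr\mu=(\D_{\Lr}\Lr)^0=\Gamma^0_{\Lr\Lr}=-\Gamma_{\Lr\Lr\Lr}-\Gamma_{\Lr\Lr\tilde T}$, which is \eqref{lmu}. Second, in Step 2 the commutator you need is $[T,X]=(X\Theta^X)X$, which is tangent to $S_{t,u}$. It gives $g(\D_T\Lr,X)=-g(\Lr,\D_XT)=\xi_X$ and $g(\D_TT,X)=-\mu X\mu$. The $X$-component $-\Lr\Theta^X$ of $[\Lr,T]$ is an output of \eqref{cdf} (compare \eqref{LXi}), so it cannot serve as input. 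Third, if you raise indices with \eqref{gab} in \eqref{deL}, you will get an $\Lr^i$-coefficient $\tfrac12(G^\gamma_{\Lr\Lr}+2G^\gamma_{\Lr\tilde T}+G^\gamma_{\tilde T\tilde T})\slashed{d}_X\vp_\gamma$ and a $\tilde T^i$-coefficient $\tfrac12G^\gamma_{\Lr\Lr}\slashed{d}_X\vp_\gamma$. These match the stated form because $G^\gamma_{NN}=-\p_{\vp_\gamma}g^{00}=0$, since $g^{00}\equiv-1$. Record this identity before doing the bookkeeping.
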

Let $\leftidx{^{(V)}}{\slashed{\pi}}_{UX} := \leftidx{^{(V)}}{\pi}_{UX}$ for $U \in \{\mathring{L}, \underline{\mathring{L}}, X\}$. As in \cite[Appendix A]{Ding4}, the deformation tensors under the frame $\{\mathring{L}, \underline{\mathring{L}}, X\}$ are listed in the following lemma.

\begin{lemma}The deformation tensors corresponding to different vector $V$ are given as follows:
\begin{itemize}\label{lem3.4}
\item For $V = T$,
\begin{equation}\label{Lpi}
\begin{split}
&\leftidx{^{(T)}}{\pi}_{\mathring{L}\mathring{L}} = 0,
\quad \leftidx{^{(T)}}{\pi}_{T\tilde{T}} = 2T\mu,
\quad \leftidx{^{(T)}}{\pi}_{\mathring{L}T} = -T\mu,
\quad \leftidx{^{(T)}}{\slashed{\pi}}_{TX} = 0, \\
&\leftidx{^{(T)}}{\slashed{\pi}}_{\mathring{L}X} = -2\mu\zeta_X - \slashed{d}_X\mu,
\quad \leftidx{^{(T)}}{\slashed{\pi}}_{XX} = 2\mu\sigma_{XX};
\end{split}
\end{equation}

\item For $V = \mathring{L}$,
\begin{equation}\label{uLpi}
\begin{split}
&\leftidx{^{(\mathring{L})}}{\pi}_{\mathring{L}\mathring{L}} = 0,
\quad \leftidx{^{(\mathring{L})}}{\pi}_{T\tilde{T}} = 2\mathring{L}\mu,
\quad \leftidx{^{(\mathring{L})}}{\pi}_{\mathring{L}T} = -\mathring{L}\mu,
\quad \leftidx{^{(\mathring{L})}}{\slashed{\pi}}_{\mathring{L}X} = 0, \\
&\leftidx{^{(\mathring L)}}{\slashed\pi}_{TX}=2\mu\zeta_X+\slashed d_X\mu,
\quad \leftidx{^{(\mathring{L})}}{\slashed{\pi}}_{XX} = 2\chi_{XX};
\end{split}
\end{equation}

\item For $V = R$,
\begin{equation}\label{Rpi}
\begin{split}
&\leftidx{^{(R)}}{\pi}_{\mathring{L}\mathring{L}} = 0,
\quad \leftidx{^{(R)}}{\pi}_{T\tilde{T}} = 2R\mu,
\quad \leftidx{^{(R)}}{\pi}_{\mathring{L}T} = -R\mu, \\
&\leftidx{^{(R)}}{\slashed{\pi}}_{\mathring{L}X} = -{R}^X\check{\mathscr{X}}_{XX}
-\upsilon(G_{\mathring{L}\tilde{T}}^\gamma\slashed{d}_X\vp_\gamma
+ \frac{1}{2}G_{\tilde{T}\tilde{T}}^\gamma\slashed{d}_X\vp_\gamma
- G_{X\tilde{T}}^\gamma{\mathring{L}}\vp_\gamma)
+ \epsilon_i^jg_{ja}\check{L}^i\slashed{d}_Xy^a, \\
&\leftidx{^{(R)}}{\slashed{\pi}}_{TX} = \mu{R}^X\check{\mathscr{X}}_{XX}
+ \upsilon\slashed{d}_X\mu
+ \mu G_{X\mathring{L}}^\gamma R\vp_{\gamma}
+ \upsilon(G_{X\tilde{T}}^\gamma T\vp_\gamma
- \frac{1}{2}\mu G_{\tilde{T}\tilde{T}}^\gamma\slashed{d}_X\vp_{\gamma})\\
&\qquad\quad + \mu G_{X\tilde{T}}^\gamma R\vp_{\gamma}
+ \mu\epsilon_i^j g_{ja}\check{T}^i\slashed{d}_Xy^a, \\
&\leftidx{^{(R)}}{\slashed{\pi}}_{XX} = 2\upsilon(\check{\mathscr{X}}_{XX}
+ \frac{\slashed{g}_{XX}}{\varrho})
+ \upsilon(2G_{X\mathring{L}}^\gamma\slashed{d}_X\vp_{\gamma}
+ 2G_{X\tilde{T}}^\gamma\slashed{d}_X\vp_{\gamma})
+ G_{XX}^\gamma R\vp_{\gamma} \\
&\qquad\quad + 2\epsilon_i^j\check{g}_{ja}(\slashed{d}_Xy^a)\slashed{d}_Xy^i,
\end{split}
\end{equation}
where $\check{g}_{ia} := g_{ia}-m_{ia}$.
\end{itemize}
\end{lemma}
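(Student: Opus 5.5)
The formulas in Lemma \ref{lem3.4} are obtained by expanding each deformation tensor component along the frame, using the definition \eqref{dt} and the covariant derivative tables of Lemma \ref{Lem3.3}. Since ${}^{(V)}\pi_{UW}=g(\mathscr D_UV,W)+g(\mathscr D_WV,U)$ for any frame vectors $U,W$, every entry reduces to inner products of known vectors.

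\textbf{The case $V=T$ and $V=\mathring L$.} I first treat these two cases directly from \eqref{cdf}. For instance, ${}^{(\mathring L)}\pi_{\mathring L\mathring L}=2g(\mu^{-1}\mathring L\mu\,\mathring L,\mathring L)=0$. Similarly,
\[
{}^{(\mathring L)}\pi_{\mathring LT}=g(\mathscr D_{\mathring L}\mathring L,T)+g(\mathscr D_T\mathring L,\mathring L)=\mu^{-1}\mathring L\mu\cdot(-\mu)+0=-\mathring L\mu,
\]
using $g(\mathring L,\mathring L)=g(X,\mathring L)=0$ and $g(\mathring L,T)=-\mu$ from Lemma \ref{nullframe}. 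Next, ${}^{(\mathring L)}\slashed\pi_{XX}=2g(\mathscr D_X\mathring L,X)=2\chi_{XX}$. For $T\tilde T=\mu^{-1}TT$ one uses $\mathscr D_T\mathring L$ and $g(T,T)=\mu^2$. For $TX$, combining $g(\mathscr D_T\mathring L,X)=\xi_X$ and $g(\mathscr D_X\mathring L,T)=\mu\zeta_X$ with $\mu\zeta_X=-X\mu+\xi_X$ yields $2\mu\zeta_X+\slashed d_X\mu$. The entries for $T$ are computed in exactly the same way from the rows of \eqref{cdf} involving $\mathscr D T$, with $g(\mathscr D_X T,X)=\mu\sigma_{XX}$ (recall $\mathscr D_XT=\mu\operatorname{tr}\sigma X+\cdots$).

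\textbf{The case $V=R$.} Here I use $R=\Omega-\upsilon\tilde T$ from \eqref{YHCC16}. In rectangular coordinates $\Omega$ has components linear in $y$, so its deformation tensor is $\mathcal L_\Omega g$. This equals $\Omega(g_{\alpha\beta})$ plus a term generated by the antisymmetric matrix $\epsilon$ acting on $g$. Because $g$ depends on $y$ only through $\varphi$, and the Minkowski part $m$ is rotation invariant, the rotational term reduces to $\epsilon$ contracted against $\check g=g-m$. The first term becomes $G^\gamma\Omega\varphi_\gamma$.

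For the second piece I expand ${}^{(\upsilon\tilde T)}\pi=\upsilon\,{}^{(\tilde T)}\pi+d\upsilon\otimes\tilde T^\flat+\tilde T^\flat\otimes d\upsilon$. Here ${}^{(\tilde T)}\pi$ is obtained from the $T$-case by dividing by $\mu$ and correcting with derivatives of $\mu^{-1}$. Contracting with $\mathring L\mathring L$ gives $0$, since $\Omega$ is Killing for $m$ and $g(\tilde T,\mathring L)=-1$. It remains to express $\mathring L\upsilon$ and $X\upsilon$ through \eqref{omega}, \eqref{LL}, \eqref{deL}, and the definition \eqref{errorv-1}. Substituting $\chi_{XX}=\check{\mathscr X}_{XX}+\frac12G^\kappa_{XX}\mathring L\varphi_\kappa+\varrho^{-1}\slashed g_{XX}$ together with \eqref{theta} for $\sigma_{XX}$ makes the $\frac12G_{XX}\mathring L\varphi$ pieces cancel. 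In particular, writing $\Omega\varphi_\gamma=R\varphi_\gamma+\upsilon\tilde T\varphi_\gamma$ removes the $\tilde T\varphi$ terms from $\sigma$, leaving $G^\gamma_{XX}R\varphi_\gamma$.

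\textbf{Main obstacle.} The hardest step is the bookkeeping for ${}^{(R)}\slashed\pi_{\mathring LX}$ and ${}^{(R)}\slashed\pi_{TX}$, where the $\epsilon$-terms must come out exactly as $\epsilon^j_ig_{ja}\check L^i\slashed d_Xy^a$ and $\mu\epsilon^j_ig_{ja}\check T^i\slashed d_Xy^a$. I would first verify that the $y^i/\varrho$ parts of $\mathring L^i$ and $\tilde T^i$ cancel against $\epsilon^j_iy^i$ contracted with the $\varrho^{-1}\slashed g$ term. After that cancellation only the error vectors \eqref{errorv} survive. This follows the computation in \cite[Appendix A]{Ding4}, adapted to the new unknown $\check{\mathscr X}$.
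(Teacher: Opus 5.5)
Your plan is the same direct frame computation the paper defers to \cite[Appendix A]{Ding4}, and it does reproduce the stated formulas. The $T$ and $\mathring L$ entries follow from \eqref{cdf} exactly as you indicate. For the $X$-components of ${}^{(R)}\pi$, the splitting $R=\Omega-\upsilon\tilde T$ works as you predict. For instance, ${}^{(R)}\slashed\pi_{\mathring LX}={}^{(\Omega)}\pi_{\mathring LX}+2\upsilon\zeta_X+X\upsilon$, and only $X\upsilon$ enters because $g(\tilde T,X)=0$. In that sum the terms $G^\gamma_{\mathring LX}R\vp_\gamma$ and $-G^\gamma_{R\mathring L}\slashed d_X\vp_\gamma$ cancel. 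The $y^i/\varrho$ part of $\mathring L^i$ in the $\epsilon$-terms gives $\varrho^{-1}g(\Omega,X)=\varrho^{-1}R^X\slashed g_{XX}$, which cancels the contribution of $X(y^b/\varrho)$ to $X\upsilon$.

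The one step that fails as written is your justification of ${}^{(R)}\pi_{\mathring L\mathring L}=0$.
\begin{itemize}
\item $\Omega$ is Killing for $m$, not for $g$. So ${}^{(\Omega)}\pi_{\mathring L\mathring L}=G^\gamma_{\mathring L\mathring L}\Omega\vp_\gamma+2\epsilon^j_a\check g_{j\beta}\mathring L^a\mathring L^\beta$ is not zero in general.
\item In your splitting this term is cancelled only by $-\upsilon\,{}^{(\tilde T)}\pi_{\mathring L\mathring L}+2\mathring L\upsilon$, where ${}^{(\tilde T)}\pi_{\mathring L\mathring L}=2\mu^{-1}\mathring L\mu$.
\item That cancellation requires computing $\mathring L\upsilon$ from $\upsilon=-g(\Omega,\mathring L)$, which itself comes from $g(R,\mathring L)=0$.
\end{itemize}
You also never treat ${}^{(R)}\pi_{\mathring LT}$ or ${}^{(R)}\pi_{T\tilde T}$.

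All three components follow at once without the splitting. Since $Rt=Ru=0$ while $\mathring Lt,\mathring Lu,Tt,Tu$ are constants, the commutators $[\mathring L,R]$ and $[T,R]$ are tangent to $S_{t,u}$. Hence they are $g$-orthogonal to $\mathring L$ and $T$. Torsion-freeness then gives ${}^{(R)}\pi_{\mathring L\mathring L}=R\,g(\mathring L,\mathring L)=0$, ${}^{(R)}\pi_{\mathring LT}=R\,g(\mathring L,T)=-R\mu$ and ${}^{(R)}\pi_{T\tilde T}=\mu^{-1}R\,g(T,T)=2R\mu$.
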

In addition, the commutators among vector fields, covariant derivatives, and Lie derivatives are also presented in \cite[Appendix A]{Ding4}
as follows.
\begin{lemma}\label{com}
For any vector field $Z \in \{\mathring{L}, T, R\}$, the following identities hold

 \begin{itemize}
  \item the commutators of the vector fields are
  \begin{equation}\label{c}
  \begin{split}
  [\mathring{L}, R] &= \leftidx{^{(R)}}{\slashed{\pi}}_{\mathring{L}}{}^{X}X, \quad
  [\mathring{L}, T] = \leftidx{^{(T)}}{\slashed{\pi}}_{\mathring{L}}{}^{X}X, \quad
  [T, R] = \leftidx{^{(R)}}{\slashed{\pi}}_{T}{}^{X}X.
  \end{split}
  \end{equation}
  \item for any smooth function $f$,
  \begin{align}
  \bigl([\slashed{\nabla}^2, \slashed{\mathcal{L}}_Z]f\bigr)_{XX}
  &= \frac{1}{2} \slashed{\nabla}_X\bigl(\operatorname{tr}\,\leftidx{^{(Z)}}{\slashed{\pi}}\bigr) \slashed{d}_Xf, \label{nZf}\\
  [\slashed{\triangle}, Z]f &= \leftidx{^{(Z)}}{\slashed{\pi}}^{XX} \slashed{\nabla}_X^2f
  + \frac{1}{2} \bigl(\slashed{\nabla}_X \leftidx{^{(Z)}}{\slashed{\pi}}^{XX}\bigr) \slashed{d}_Xf; \label{LZf}
  \end{align}
  \item for any one-form $\eta$ on $S_{t, u}$,
  \begin{equation}\label{nZx}
  \bigl([\slashed{\nabla}_X, \slashed{\mathcal{L}}_Z]\eta\bigr)_X = \frac{1}{2} \slashed{\nabla}_X\bigl(\operatorname{tr}\,\leftidx{^{(Z)}}{\slashed{\pi}}\bigr) \eta_X;
  \end{equation}
  \item for any $(0,2)$-type tensor $\eta$ on $S_{t, u}$,
  \begin{align}
  \bigl([\slashed{\nabla}_X, \slashed{\mathcal{L}}_Z]\eta\bigr)_{XX}
  &= \slashed{\nabla}_X\bigl(\operatorname{tr}\,\leftidx{^{(Z)}}{\slashed{\pi}}\bigr) \eta_{XX}. \label{nZxi}
  \end{align}
 \end{itemize}
\end{lemma}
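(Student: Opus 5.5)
The plan is to work in the coordinates $(t,u,\vartheta)$, where $\mathring L=\partial_t$, $T=\partial_u-\Theta^X X$ and $X=\partial_\vartheta$. I would exploit two facts. First, the three vector fields $\mathring L,T,R$ preserve the foliation by the circles $S_{t,u}$. Second, on the one-dimensional Riemannian manifold $(S_{t,u},\slashed g)$ all commutator formulas reduce to the variation of a single Christoffel symbol. The proof therefore splits into the vector-field commutators \eqref{c} and the angular identities \eqref{nZf}--\eqref{nZxi}.

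\emph{Commutators \eqref{c}.} By \eqref{T} we have $\mathring Lt=1$, $\mathring Lu=0$, $Tt=0$, $Tu=1$. Since $R=\slashed\Pi\Omega$ is tangent to $S_{t,u}$, also $Rt=Ru=0$. Applying each of $[\mathring L,R]$, $[\mathring L,T]$, $[T,R]$ to $t$ and to $u$ gives zero, because the first derivatives are constants. Hence each commutator is tangent to $S_{t,u}$ and equals $\slashed g^{XX}g([\cdot,\cdot],X)X$, so it remains to identify $g([V,R],X)$, resp.\ $g([\mathring L,T],X)$.

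For $[\mathring L,R]$, write $g([\mathring L,R],X)=g(\mathscr D_{\mathring L}R,X)-R^X g(\mathscr D_X\mathring L,X)$, using that $\mathscr D$ is torsion free and that $R=R^XX$. Differentiating the identity $g(R,\mathring L)=0$ along $X$ gives $g(\mathscr D_XR,\mathring L)=-R^X\chi_{XX}$. Therefore
\[
g([\mathring L,R],X)=g(\mathscr D_{\mathring L}R,X)+g(\mathscr D_XR,\mathring L)={}^{(R)}\pi_{\mathring LX}.
\]
The same argument, using $g(R,T)=0$, gives ${}^{(R)}\pi_{TX}$ for $[T,R]$. For $[\mathring L,T]$ I would use $g(T,X)=0$ and $g(T,\mathring L)=-\mu$, with the antisymmetry written as $[\mathring L,T]=-[T,\mathring L]$. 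One then checks that the cross terms reproduce ${}^{(T)}\pi_{\mathring LX}=-2\mu\zeta_X-\slashed d_X\mu$, by comparing with \eqref{cdf}, where $\mathscr D_{\mathring L}T=-\mathring L\mu\mathring L-\mu\zeta^XX$ and $\mathscr D_X T=\mu\zeta_X\mathring L+\ldots$.

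\emph{Angular identities.} The key preliminary step is
\[
\slashed{\mathcal L}_Z\slashed g={}^{(Z)}\slashed\pi \quad \text{on } S_{t,u}.
\]
This holds because $[Z,X]$ is proportional to $X$ for every $Z\in\{\mathring L,T,R\}$: indeed $[\mathring L,X]=0$ by construction of $\vartheta$, and for $T$ and $R$ it follows from \eqref{c}. Consequently $(\mathcal L_Zg)_{XX}$ restricted to $S_{t,u}$ equals the Lie derivative of the induced metric.

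The standard variation formula for the Levi-Civita connection then gives
\[
(\slashed{\mathcal L}_Z\slashed\Gamma)^X_{XX}=\tfrac12\slashed g^{XX}\bigl(2\slashed\nabla_X\slashed\pi_{XX}-\slashed\nabla_X\slashed\pi_{XX}\bigr)=\tfrac12\slashed\nabla_X\operatorname{tr}{}^{(Z)}\slashed\pi,
\]
where I use that $\slashed\nabla\slashed g=0$ and that $S_{t,u}$ is one-dimensional. Since $\slashed{\mathcal L}_Z$ commutes with $d$, one has $[\slashed\nabla,\slashed{\mathcal L}_Z]\eta=-(\slashed{\mathcal L}_Z\slashed\Gamma)\cdot\eta$ with one contraction per lower index. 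This yields \eqref{nZx} and \eqref{nZf} with one factor $\tfrac12$, and \eqref{nZxi} with the factor $2\cdot\tfrac12=1$, since a $(0,2)$ tensor has two slots. Finally, \eqref{LZf} follows from $\slashed\triangle=\slashed g^{XX}\slashed\nabla^2_X$, $\slashed{\mathcal L}_Z\slashed g^{XX}=-{}^{(Z)}\slashed\pi^{XX}$ and \eqref{nZf}, after rewriting $\slashed g^{XX}\slashed\nabla_X\operatorname{tr}\slashed\pi=\slashed\nabla_X\slashed\pi^{XX}$ (again $\slashed\nabla\slashed g=0$).

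\emph{Main obstacle.} The only delicate point is bookkeeping. One must verify that projecting the spacetime Lie derivative of $g$ onto $S_{t,u}$ gives exactly the restriction of ${}^{(Z)}\pi$ for the non-tangent fields $\mathring L$ and $T$; this is where the $\Theta^X X$ part of $T$ enters. One must also match the sign and ordering convention for $[\slashed\triangle,Z]$ and $[\slashed\nabla,\slashed{\mathcal L}_Z]$ with that of \cite[Appendix A]{Ding4}, which I would adopt verbatim.
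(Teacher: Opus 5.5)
Your route is the standard derivation and is sound in structure. The paper gives no proof of this lemma and only refers to \cite[Appendix A]{Ding4}.

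The tangency test on $t,u$, followed by differentiating $g(R,\mathring L)=0$ and $g(R,T)=0$ along $X$, gives the $R$-brackets in \eqref{c} exactly as you describe. For $[\mathring L,T]$ you need not go through \eqref{cdf}. Differentiate $g(\mathring L,X)=0$ along $T$ and use $g(\mathring L,[T,X])=0$. This gives $-g(\mathscr D_T\mathring L,X)=g(\mathscr D_XT,\mathring L)$, and hence $g([\mathring L,T],X)={}^{(T)}\pi_{\mathring LX}$ directly.

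Two slips need correcting.

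\emph{First, the sign in your commutator formula is wrong.} Work in coordinates in which $Z$ is a coordinate field and write $\slashed\nabla_a\eta_b=\partial_a\eta_b-\slashed\Gamma^c_{ab}\eta_c$. Differentiating gives $\slashed{\mathcal L}_Z\slashed\nabla\eta-\slashed\nabla\slashed{\mathcal L}_Z\eta=-(\slashed{\mathcal L}_Z\slashed\Gamma)\cdot\eta$. Hence $[\slashed\nabla,\slashed{\mathcal L}_Z]\eta=+(\slashed{\mathcal L}_Z\slashed\Gamma)\cdot\eta$, not $-$.

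The convention $[A,B]=AB-BA$ is not something to import from \cite{Ding4}; it is forced by the first term of \eqref{LZf}. There, $-Z\slashed\triangle f$ contributes $-(\slashed{\mathcal L}_Z\slashed g^{-1})^{XX}\slashed\nabla^2_Xf=+{}^{(Z)}\slashed\pi^{XX}\slashed\nabla^2_Xf$. With your minus sign you would get:
\begin{itemize}
\item $-\tfrac12$ in \eqref{nZf} and \eqref{nZx};
\item $-1$ in \eqref{nZxi};
\item the wrong relative sign between the two terms of \eqref{LZf}.
\end{itemize}
So this is an actual error, not a convention issue.

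\emph{Second, the bracket conditions are mis-cited.} $[T,X]\propto X$ and $[R,X]\propto X$ do not follow from \eqref{c}, which contains neither bracket. They follow from the same tangency test ($[T,X]t=[T,X]u=0$). Directly, $T=\partial_u-\Theta^XX$ with $X=\partial_\vartheta$ gives $[T,X]=(X\Theta^X)X$, and $R=R^XX$ gives $[R,X]=-(XR^X)X$.

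With these two corrections your argument proves the lemma.
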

Finally, we introduce some abbreviated integrations over different domains and norms over $\Sigma_t^u$.
\begin{definition}\label{3.4}
 For any continuous function $f$ and tensor field $\eta$, set
 \begin{align*}
 \int_{S_{t, u}} f &:= \int_{S_{t, u}} f \, d\nu_{\slashed{g}}
 = \int_{\mathbb{S}} f(t, u, \vartheta) \sqrt{\slashed{g}_{XX}(t, u, \vartheta)} \, d\vartheta, \\[6pt]
 \int_{C^t_u} f &:= \int_{t_0}^t \int_{S_{\tau, u}} f(\tau, u, \vartheta) \, d\nu_{\slashed{g}} \, d\tau, \\[3pt]
 \int_{\Sigma_t^u} f &:= \int_{u_0}^u \int_{S_{t, u'}} f(t, u', \vartheta) \, d\nu_{\slashed{g}} \, du', \\[6pt]
 \int_{D^{t, u}} f &:= \int_{t_0}^t \int_{u_0}^u \int_{S_{\tau, u'}} f(\tau, u', \vartheta) \, d\nu_{\slashed{g}} \, du' \, d\tau, \\[6pt]
 \|\eta\|_{t,u} &:= \sqrt{\int_{\Sigma_t^u} |\eta|^2}.
 \end{align*}
\end{definition}

\section{Bootstrap assumptions and $L^{\infty}$ estimates for lower order derivatives}\label{Section 3}

By virtue of Theorem \ref{Local} and the definition of $\{\mathring{L}, T, R\}$ in Section \ref{Section 2}, under the condition
\eqref{condition1}, we make the following bootstrap assumptions in $D^{t,u}$:
\begin{equation}\label{bootstrap-assumptions}
\delta^{l+k(2-\nu)} \| Z^{m}\vp_{\al} \|_{L^{\infty}(\Sigma_t^u)}+\|\slashed\triangle\vp_{\al}\|_{L^{\infty}(\Sigma_t^u)} \leq M \delta^{\nu-1},
\quad \al= 0, 1, 2,
\end{equation}
where $\vp_{\al} = \partial_{\al} \phi$, $t_0\leq t \leq t_*=
 \frac{4c(\rho_0)}{(\gamma+1)\big(\partial_s\Phi_1(s_0,\omega_0)-\mathcal A\,\partial_s^2\Phi_0(s_0,\omega_0)\big)}\big(1-\frac{1}{M_0^2}\big)^{3/2}\delta^{2-\nu}$,
$m \leq N$ with $N$ a large positive integer, $M$ is a suitably chosen positive number,
$Z \in \{\mathring{L}, T, R\}$, and $l$ (resp. $k$) denotes the number of $T$ (resp. $\mathring{L}$)
contained in $Z^{m}$.
Note that \eqref{bootstrap-assumptions} holds at $t=t_0$ by Theorem \ref{Local}.

By a derivation process similar to that in \cite[(2.24) and (2.40)-(2.41)]{Ding4}, we can obtain that $\vp_{\alpha}$ satisfies
\begin{equation}\label{ge}
\begin{split}
\mu\Box_g \varphi_\al=&\f12(g^{\beta'\beta}G_{\beta'\beta}^
{\al'})(\mu\slashed d^X\varphi_{\al'})\slashed d_X\varphi_\al+f(\varphi, \mathring L^1, \mathring L^2)\left(
\begin{array}{ccc}
T\varphi\\
\mu\mathring L\varphi\\
\mu(\slashed d_Xy)\slashed d^X\varphi\\
\end{array}
\right)
\left(
\begin{array}{ccc}
\mathring L\varphi\\
(\slashed d_Xy)\slashed d^X\varphi\\
\end{array}
\right)
\end{split}
\end{equation}
and
\begin{equation}\label{fequation}
\mathring{L}\mathring{\underline{L}}\varphi_\al+ \frac{1}{2\varrho}\mathring{\underline{L}}\varphi_\al
= \mu\slashed{\triangle}\varphi_\al + H_\al,
\end{equation}
where $\vp=(\vp_0,\vp_1,\vp_2)$, $H_\al$ is given by
\begin{equation}\label{H}
\begin{split}
H_\al=& -\left(\operatorname{tr}\check{\mathscr X}\right) T\varphi_\al + \frac{1}{2\varrho}\mu\mathring{L}\varphi_\al
+ f\big(\varphi, \mathring{L}^1, \mathring{L}^2, \slashed{d}y, \slashed{g}\big)
\begin{pmatrix}
T\varphi \\
\mu\mathring{L}\varphi \\
\mu\slashed{d}\varphi
\end{pmatrix}
\begin{pmatrix}
\mathring{L}\varphi \\
\slashed{d}\varphi
\end{pmatrix}.
\end{split}
\end{equation}
Here $f$ denotes a generic smooth function of its arguments, and the product
$
\begin{pmatrix} A_1 \\ \vdots \\ A_n \end{pmatrix}
\begin{pmatrix} B_1 \\ \vdots \\ B_m \end{pmatrix}
$
represents all terms of the form $A_iB_j$ for $1 \leq i \leq n$ and $1 \leq j \leq m$.

To estimate $\vp_\alpha$ in terms of \eqref{fequation}, we need to treat the term $\operatorname{tr}\check{\mathscr X}$ in \eqref{H}.
To this end, we now derive the equation of $\operatorname{tr}\check{\mathscr X}$. In fact, by the analogous method to that in \cite[(2.44) and (2.45)]{Ding4}, one has
\begin{lemma}\label{YHCC-555}
 \begin{equation}\label{Lchi'}
 \begin{split}
 \mathring{L}\bigl(\varrho^2\operatorname{tr}&\check{\mathscr X}\bigr)
 = -\bigl(\frac{1}{2}G_{\mathring{L}\mathring{L}}^\al\mathring{L}\varphi_\al
 + G_{\tilde{T}\mathring{L}}^\al\mathring{L}\varphi_\al
 - G_{X\mathring{L}}^\al\slashed{d}^X\varphi_\al
 + \slashed{g}^{XX}G_{XX}^\al\mathring{L}\vp_\al\bigr)
 \big(\varrho^2\operatorname{tr}\check{\mathscr X} + \varrho\big) \\
 &-\varrho^2G_{X\mathring{L}}^\al\bigl(\slashed{d}^X\mathring{L}\varphi_\al\bigr)
 + \frac{1}{2}\varrho^2G_{\mathring{L}\mathring{L}}^\al\slashed{\triangle}\varphi_\al
 - \varrho^2\bigl(\operatorname{tr}\check{\mathscr X}\bigr)^2+ \varrho^2f\bigl(\varphi, \slashed{d}y, \mathring{L}^1, \mathring{L}^2\bigr)\slashed{d}\vp
 \begin{pmatrix} \mathring{L}\varphi \\ \slashed{d}\varphi \end{pmatrix}
 \end{split}
 \end{equation}
 and
 \begin{equation}\label{Tchi'}
 \begin{split}
 T\bigl(&\operatorname{tr}\check{\mathscr X}\bigr)=\slashed\triangle\mu-G_{X\mathring L}^\al\slashed d^XT\vp_\al-\f12\mu G_{\tilde T\tilde T}^\al\slashed\triangle\vp_\al-\mu(\operatorname{tr}\check{\mathscr X})^2+\f{\mu-1}{\varrho^2}\\
 &+\f12\bigr(\mu G_{\mathring L\mathring L}^\al\mathring L\vp_\al+2\mu G_{\tilde T\mathring L}^\al\mathring L\vp_\al-G_{\mathring L\mathring L}^\al T\vp_\al-G_{\tilde T\mathring L}^\al T\vp_\al-2\mu G_{X\mathring L}^\al\slashed d^X\vp_\al-2\operatorname{tr}\leftidx{^{(T)}}{\slashed{\pi}}\bigr)\operatorname{tr}\check{\mathscr X}\\
 &-\slashed d^X\mu\bigr(G_{\tilde T\mathring L}^\al\slashed d_X\vp_\al+G_{\tilde T\tilde T}^\al\slashed d_X\vp_\al+\f12G_{XX}^\al\slashed d_X\vp_\al\bigr)+f(\varphi, \slashed dy,\slashed d\vp,\slashed\triangle y, \mathring L^1,\mathring L^2)
 \left(
 \begin{array}{ccc}
 \mu\mathring L\varphi\\
 \mu\slashed d\varphi\\
 T{\varphi}\\
 \end{array}
 \right).
 \end{split}
 \end{equation}
\end{lemma}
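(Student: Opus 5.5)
The plan is to derive both identities from the structure equations of the null frame, Lemma \ref{Lem3.3}, together with the Gauss/Raychaudhuri-type equations for $\chi$. The definition \eqref{errorv-1} then converts statements about $\operatorname{tr}\chi$ into statements about $\operatorname{tr}\check{\mathscr X}$.

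\textbf{Step 1: the Raychaudhuri equation along $\mathring L$.} Start from $\chi_{XX}=g(\mathscr D_X\mathring L,X)$. Compute $\mathring L\chi_{XX}$ using $[\mathring L,X]=0$ and the covariant derivative formulas \eqref{cdf}, in particular $\mathscr D_{\mathring L}\mathring L=(\mu^{-1}\mathring L\mu)\mathring L$. This gives
\[
\mathring L\operatorname{tr}\chi=(\mu^{-1}\mathring L\mu)\operatorname{tr}\chi-(\operatorname{tr}\chi)^2-\slashed g^{XX}\mathcal R_{\mathring L X\mathring L X},
\]
where $\mathring L\slashed g_{XX}=2\chi_{XX}$ enters through $\mathring L\slashed g^{XX}$. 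The curvature component of $g=g(\partial\phi)$ is computed in rectangular coordinates. Its top-order part is
\[
-\tfrac12 G_{\mathring L\mathring L}^\alpha\slashed\nabla^2_{XX}\varphi_\alpha+G^\alpha_{X\mathring L}\slashed\nabla_X\mathring L\varphi_\alpha-\tfrac12G_{XX}^\alpha\mathring L^2\varphi_\alpha,
\]
up to quadratic terms in $(\mathring L\varphi,\slashed d\varphi)$.

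\textbf{Step 2: absorb the bad term into the good unknown.} The term $\tfrac12 G_{XX}^\alpha\mathring L^2\varphi_\alpha$ contains $\mathring L^2\varphi$, which is too large. This is exactly what subtracting $\tfrac12 G^\kappa_{XX}\mathring L\varphi_\kappa$ in $\check{\mathscr X}$ removes. Applying $\mathring L$ to that subtracted term reproduces $\tfrac12 G_{XX}^\kappa\mathring L^2\varphi_\kappa$, plus $\mathring L G^\kappa_{XX}$ terms that are quadratic in $\mathring L\varphi$ and $\slashed d\varphi$. Here \eqref{LL} is used for $\mathring L X$ components, or equivalently $\mathring L\slashed d y$.

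Next subtract $\varrho^{-1}$, using $\mathring L\varrho=1$ from \eqref{T}. Multiplying by $\varrho^2$, the cross terms $-2\varrho^{-1}\operatorname{tr}\check{\mathscr X}$ from $-(\operatorname{tr}\chi)^2$ cancel against $\mathring L(\varrho^2)=2\varrho$. The $\varrho^{-2}$ terms also cancel. This leaves $-\varrho^2(\operatorname{tr}\check{\mathscr X})^2$.

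For the factor $\mu^{-1}\mathring L\mu$, use \eqref{lmu}. It contributes $-\tfrac12G_{\mathring L\mathring L}\mathring L\varphi-G_{\tilde T\mathring L}\mathring L\varphi$. The piece $\tfrac{1}{2\mu}G_{\mathring L\mathring L}T\varphi$ must cancel with the $T$-derivative pieces in the curvature term, after using $\partial_\alpha$ decomposed via \eqref{pal}. Collecting the coefficients of $(\varrho^2\operatorname{tr}\check{\mathscr X}+\varrho)$ then gives \eqref{Lchi'}. In this step $\slashed\nabla^2_{XX}\varphi$ is rewritten as $\slashed\triangle\varphi$, and terms involving $\mathring L^\alpha,\slashed d y$ are absorbed into $f$.

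\textbf{Step 3: the $T$ equation.} For \eqref{Tchi'}, differentiate $\chi_{XX}$ along $T$. Use $[T,X]=\leftidx{^{(T)}}{\slashed\pi}_{TX}$-type terms, which vanish by \eqref{Lpi}, together with $\mathscr D_T\mathring L$ and $\mathscr D_X T$ from \eqref{cdf}. This produces $\slashed\nabla_X\xi_X$ and the $X$-derivative of $\zeta$, hence $\slashed\triangle\mu$ via $\mu\zeta=-\slashed d\mu+\xi$. It also produces the trace of $\leftidx{^{(T)}}{\slashed\pi}$, via $T\slashed g_{XX}=\leftidx{^{(T)}}{\slashed\pi}_{XX}$, and a curvature term $\mathcal R_{\mathring L XTX}$ whose principal part is $G_{X\mathring L}\slashed d^XT\varphi$ and $\mu G_{\tilde T\tilde T}\slashed\triangle\varphi$.

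The $\mathring L T\varphi$ terms produced by differentiating the $G\mathring L\varphi$ correction are again cancelled by design. Since $T\varrho=-1$, one gets $T(\varrho^{-1})=\varrho^{-2}$. This combines with a $\mu\varrho^{-2}$ term from $-\mu(\operatorname{tr}\chi)^2$ to give $(\mu-1)\varrho^{-2}$.

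\textbf{Main obstacle.} The main difficulty is the bookkeeping of the curvature components for this specific metric \eqref{metric}. One must check that every second-derivative term either assembles into the listed principal terms or is cancelled by the good-unknown correction, in particular the $\mathring L^2\varphi$ and $\mathring L T\varphi$ terms. I would first verify this on the model computation of \cite[(2.44)--(2.45)]{Ding4}, where $G_{XX}^\kappa\mathring L\varphi_\kappa$ was absent, and then track the extra terms generated by that correction.
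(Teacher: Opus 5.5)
Your route (a Raychaudhuri-type equation for $\operatorname{tr}\chi$, then subtracting the correction built into $\check{\mathscr X}$) could in principle be completed. As written, however, it does not prove \eqref{Lchi'}, because it never establishes, and in fact misdescribes, the feature the lemma is about. Apart from the two principal terms and the coefficient of $\varrho^2\operatorname{tr}\check{\mathscr X}+\varrho$, every term on the right carries a factor $\slashed d\varphi$: there is no $(\mathring L\varphi)^2$, no $T\varphi$ and no $\mu^{-1}$. This is exactly what \eqref{YHCCC-X1} needs. With only $|\mathring L\varphi|\lesssim M\delta^{2\nu-3}$, a term $(\mathring L\varphi)^2$ integrated over $t\lesssim\delta^{2-\nu}$ contributes $M^2\delta^{3\nu-4}$, which is not $O(M\delta^{\nu-1})$ when $\nu<3/2$. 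A term $\mu^{-1}T\varphi\,\mathring L\varphi$ would even degenerate as $\mu\to0$. Your Steps 1--2 call the leftovers ``quadratic in $(\mathring L\varphi,\slashed d\varphi)$'' and absorb them into $f$, which only gives a weaker identity. Moreover, in the curvature route the leftovers are not even of that type. Terms in $(\mathring L\varphi)^2$ and in $\tilde T\varphi=\mu^{-1}T\varphi$ arise from several sources:
\begin{itemize}
\item $-(\operatorname{tr}\chi)^2$ and $(\mu^{-1}\mathring L\mu)\operatorname{tr}\chi$, since $\operatorname{tr}\chi$ contains $\frac12\slashed g^{XX}G^\kappa_{XX}\mathring L\varphi_\kappa$;
\item $\mathring L\slashed g^{XX}=-2\slashed g^{XX}\operatorname{tr}\chi$ and the $\partial_{\varphi}G$ terms when you differentiate the correction;
\item the quadratic Christoffel part of $\mathcal R_{\mathring LX\mathring LX}$;
\item the conversion of rectangular Hessians into frame derivatives; for example, $\mathring L^\alpha\mathring L^\beta\partial^2_{\alpha\beta}\varphi=\mathring L^2\varphi-(\mathring L\mathring L^\beta)\partial_\beta\varphi$ contains $-\frac12G_{\mathring L\mathring L}\mathring L\varphi\,\tilde T\varphi$ by \eqref{LL}.
\end{itemize}
All of these must cancel exactly, and you address only the single cancellation against the $T\varphi$-part of \eqref{lmu}.

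The structure becomes automatic if you work with rectangular components, which is what Lemma \ref{Lem3.3} is set up for. Since $X^0=0$, $X\mathring L^0=0$ and $X^\lambda X^\beta\mathring L^\alpha\Gamma_{\lambda\beta\alpha}=\frac12G^\kappa_{XX}\mathring L\varphi_\kappa$ (with $\Gamma$ the Christoffel symbols of $g$), one has exactly $\check{\mathscr X}_{XX}=g_{ab}(\slashed d_X\check L^a)(\slashed d_Xy^b)$, consistent with \eqref{deL}. Now apply $\mathring L$ using $[\mathring L,X]=0$, \eqref{LeL} and \eqref{LL}.
\begin{itemize}
\item The $\tilde T^i$-component of \eqref{LL} is $g$-orthogonal to $X$. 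It therefore enters only through $g_{ab}(X\tilde T^a)X^b=\sigma_{XX}-\frac12G_{XX}\tilde T\varphi=-\check{\mathscr X}_{XX}-\varrho^{-1}\slashed g_{XX}+O(\slashed d\varphi)$, by \eqref{theta}.
\item Differentiating the $\slashed d^Xy^i$-component gives the principal terms $-G_{X\mathring L}\slashed d^X\mathring L\varphi+\frac12G_{\mathring L\mathring L}\slashed\triangle\varphi$.
\item Every $X$-derivative falling on $\mathring L^a$, $X^a$ or $\slashed d^Xy^a$ is proportional to $\operatorname{tr}\check{\mathscr X}+\varrho^{-1}$ up to $\slashed d\varphi$-terms. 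This follows from \eqref{deL} and from $\slashed\nabla^2_{XX}y^a=(\check{\mathscr X}_{XX}+\varrho^{-1}\slashed g_{XX})\tilde T^a+O(\slashed d\varphi)$ (a consequence of \eqref{cdf} and \eqref{theta}), and it produces the listed coefficient.
\end{itemize}
Since \eqref{LL} and \eqref{deL} contain no $T\varphi$, no $(\mathring L\varphi)^2$ or $T\varphi$ term can appear, and there is no cancellation to check. Likewise \eqref{Tchi'} comes from \eqref{TL}: applying $X$ to its $\slashed d^Xy^i$-coefficient gives precisely $\slashed\triangle\mu-\frac12\mu G_{\tilde T\tilde T}\slashed\triangle\varphi-G_{X\mathring L}\slashed d^XT\varphi$.

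Three statements in your Step 3 are also wrong:
\begin{itemize}
\item $[T,X]=(X\Theta^X)X\neq0$, and the vanishing of ${}^{(T)}\slashed\pi_{TX}$ does not imply otherwise. Hence $T\slashed g_{XX}\neq{}^{(T)}\slashed\pi_{XX}$; this is harmless only because the extra terms cancel in traces.
\item The $+\mu\varrho^{-2}$ needed for $(\mu-1)\varrho^{-2}$ does not come from $-\mu(\operatorname{tr}\chi)^2$, which would give $-\mu\varrho^{-2}$. It comes from $-\mu\operatorname{tr}\sigma\operatorname{tr}\chi$ (equivalently, from $\operatorname{tr}{}^{(T)}\slashed\pi=2\mu\operatorname{tr}\sigma$), with $\operatorname{tr}\sigma=-\varrho^{-1}+\cdots$ by \eqref{theta}.
\item The term $-\frac12\mu G_{\tilde T\tilde T}\slashed\triangle\varphi$ comes from $\mu\,\slashed{\text{div}}\,\zeta$ via \eqref{zeta}, not from $\mathcal R_{\mathring LXTX}$.
\end{itemize}
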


\begin{remark}
Note that the last term in \eqref{Lchi'} contains only $(\slashed{d}\vp)(\mathring{L}\vp)$ and $\slashed{d}\vp \cdot \slashed{d}\vp$
rather than the more troublesome term $(\mathring{L}\vp)^2$, which is one of the reasons that we introduce the new good unknown $\check{\mathscr{X}}$.
This property will be crucial for estimating $\operatorname{tr}\check{\mathscr{X}}$ precisely later (see \eqref{eq:lower-order-Linfty-estimates} below).
\end{remark}

Next, we derive some rough $L^{\infty}$ estimates for the lower order derivatives of some quantities.

\begin{proposition}\label{prop:lower-order-Linfty-estimates}
 Under the assumptions \eqref{bootstrap-assumptions}, when $\de > 0$ is small,
the following estimates hold that for $t_0 \leq t \leq t_*$,
 \begin{equation}\label{eq:lower-order-Linfty-estimates}
 \begin{split}
 &\|\check{\mathscr{X}}\|_{L^\infty(\Si_t^u)} + \|\check{L}^i\|_{L^\infty(\Si_t^u)} + \|\check{T}^i\|_{L^\infty(\Si_t^u)} + \|\check{\varrho}\|_{L^\infty(\Si_t^u)} + \|\upsilon\|_{L^\infty(\Si_t^u)} \les M\de^{\nu-1}, \\
 &\|\mu\|_{L^\infty(\Si_t^u)} \les M, \qquad \|\slashed{d}y^i\|_{L^\infty(\Si_t^u)} \lesssim 1.
 \end{split}
 \end{equation}
\end{proposition}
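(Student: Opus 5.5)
The plan is to integrate transport equations along the integral curves of $\mathring L=\p_t$ in the coordinates $(t,u,\vartheta)$, starting from $\Sigma_{t_0}^{U_0}$. Each step will use the bootstrap assumptions \eqref{bootstrap-assumptions}, and the key fact is that the interval $[t_0,t_*]$ has length $O(\delta^{2-\nu})$.

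The bootstrap bounds give
\[
|\mathring L\vp|\les M\delta^{2\nu-3},\qquad |T\vp|\les M\delta^{\nu-2},\qquad |\vp|+|R\vp|+|\slashed\triangle\vp|\les M\delta^{\nu-1}.
\]
For $\slashed d\vp$ I will use $\slashed d_X\vp=\slashed g^{XX}$-normalized $R\vp$, together with a bound on the relation between $R$ and $X$ that is carried along the argument. Consequently $\int_{t_0}^t|\mathring L\vp|\,d\tau\les M\delta^{2\nu-3}\delta^{2-\nu}=M\delta^{\nu-1}$.

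The argument runs as a secondary continuity argument. I first assume the weak versions $|\check{\mathscr X}|,|\check L^i|\le\delta^{(\nu-1)/2}$, $\mu\le 2M'$ and $\slashed d y\les 1$, and then improve each of them.

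\textbf{The quantity $\mu$.} By \eqref{lmu}, $\mathring L\mu=O(M\delta^{2\nu-3})\mu+O(M\delta^{\nu-2})$. At $t=t_0$ Theorem \ref{Local} gives $\mu=1+O(\delta^{\nu-1})$. Gronwall over a $t$-interval of length $\les\delta^{2-\nu}$ then yields $\mu\les 1+M\delta^{\nu-2}\delta^{2-\nu}\les M$.

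\textbf{The good unknown $\operatorname{tr}\check{\mathscr X}$.} Set $w=\varrho^2\operatorname{tr}\check{\mathscr X}$. Since $\varrho\sim 1/\mathcal A$, \eqref{Lchi'} gives
\[
|\mathring L w|\les M\delta^{2\nu-3}(|w|+1)+M\delta^{\nu-1}\delta^{\nu-2}+w^2+M^2\delta^{2\nu-2}.
\]
The term $\slashed d^X\mathring L\vp$ is bounded by $M\delta^{\nu-1-(2-\nu)}$ via $R\mathring L\vp$. The crucial point is that \eqref{Lchi'} contains no $(\mathring L\vp)^2$ term; only $\mathring L\vp$ multiplied by $w+\varrho$ appears, which integrates to $M\delta^{\nu-1}$. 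At $t_0$, \eqref{YHCC5} gives $w=O(\delta^{\nu-1})$. Under the weak assumption $w^2\les\delta^{\nu-1}$, Gronwall gives $|w|\les M\delta^{\nu-1}$, and this improves the weak assumption. Because $\slashed g_{XX}$ is controlled through $\mathring L\slashed g_{XX}=2\chi_{XX}$, this bound passes to $\check{\mathscr X}_{XX}$ itself. I will also record here that $\slashed g_{XX}\sim\varrho^2$, with log-derivative equal to $2/\varrho+O(M\delta^{2\nu-3})$, which integrates to a bounded factor.

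\textbf{The remaining quantities.} I will handle them in the following order.
\begin{itemize}
\item $\check L^i$: by \eqref{LeL} and \eqref{LL}, $|\mathring L(\varrho\check L^i)|\les M\delta^{2\nu-3}$. Integrating gives $M\delta^{\nu-1}$, with initial data $O(\delta^{\nu-1})$.
\item $\check\varrho$: since $\mathring L r=\mathring L^i y^i/r$, we get $\mathring L(r-\varrho)=O(|\check L|+\check\varrho)$. This gives $\check\varrho\les M\delta^{\nu-1}$ after integration in $t$ (with $t\les\delta^{2-\nu}$, or simply $t\les1$).
\item $\check T^i$: $\tilde T$ is determined algebraically from $\mathring L$ and $g$ via $g(\mathring L,\tilde T)=-1$ and $\tilde T=-g^{\al0}\p_\al-\mathring L$. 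Hence $\tilde T^i=-\mathring L^i+O(\vp)$, and $\check T^i=-\check L^i+O(\vp)$.
\item $\upsilon$: the bound follows from \eqref{omega} and $|g-m|\les|\vp|$.
\item $\slashed d y^i$: these satisfy $|\slashed d y|^2\slashed g_{XX}$-type identities. Equivalently, $g_{ij}\slashed d^X y^i\slashed d_X y^j=1$ with $g_{ij}$ uniformly positive, which gives $|\slashed d y|\les1$ directly.
\end{itemize}

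\textbf{Main obstacle.} I expect the main obstacle to be the Riccati step for $w$. Its nonlinear term $-\varrho^2(\operatorname{tr}\check{\mathscr X})^2$ and the coefficient $M\delta^{2\nu-3}$ require the secondary bootstrap. The step also depends on every large factor ($\delta^{2\nu-3}$ and $\delta^{\nu-2}$) being multiplied by the short time $\delta^{2-\nu}$ or by $\mu$. If $\slashed d\mathring L\vp$ turns out to be too large, the first fix I would try is to rewrite it via the commutator \eqref{c} as $R\mathring L\vp$ plus ${}^{(R)}\slashed\pi$ terms already estimated.
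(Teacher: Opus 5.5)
Your proposal is correct and is essentially the paper's proof. Both transport along $\mathring L$ via \eqref{LeL}, \eqref{lmu} and \eqref{Lchi'} over an interval of length $O(\delta^{2-\nu})$, obtain $\check T^i=-g^{0i}-\check L^i$ and $\upsilon$ algebraically, and rely on the absence of $(\mathring L\vp)^2$ in \eqref{Lchi'}. The only deviations are minor, and both work: you get $\check\varrho$ from the exact identity $\mathring L\check\varrho=\varrho^{-1}\check L^i\omega^i$ with $\check\varrho|_{t=t_0}=0$ (which even gives $|\check\varrho|\lesssim M\delta$) rather than from the paper's quadratic relation coming from $g(\tilde T,\tilde T)=1$, and you get $|\slashed{d}y^i|\lesssim1$ from $g_{ij}\slashed{d}^Xy^i\slashed{d}_Xy^j=1$ rather than from \eqref{gab}.
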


\begin{proof}
It follows from \eqref{LTTT} that $g(\tilde{T}, \tilde{T}) = 1$, which implies
 \[
 1 = g_{ij}\tilde{T}^i\tilde{T}^j = \big(1 + O(M\delta^{\nu-1})\big)\sum_{i=1}^2|\tilde{T}^i|^2.
 \]
This leads to
 \begin{equation}\label{eq:LT}
 |\tilde{T}^i|, \, |\mathring{L}^i| \leq 1 + O(M\delta^{\nu-1}).
 \end{equation}

 In addition, by the identity $|\slashed{d}y^i|^2 = \slashed{g}^{ab}\slashed{d}_a y^i\slashed{d}_b y^i = g^{ii} + (g^{0i})^2 - (\tilde{T}^i)^2$ together with \eqref{bootstrap-assumptions} and \eqref{eq:LT}, we have $|\slashed{d}y^i| \lesssim 1.$

 On the other hand, it follows from \eqref{LeL} and \eqref{bootstrap-assumptions} that $|\mathring{L}(\varrho\check{L}^i)|\lesssim M\delta^{2\nu-3}$. Then integrating $\mathring{L}(\varrho\check{L}^i)$ along the integral curves of $\mathring{L}$ and using the relation $\check{T}^i = -g^{0i} - \check{L}^i$ yield
 \[
 |\check{L}^i| + |\check{T}^i| \lesssim M\delta^{\nu-1}.
 \]

 Note that $g_{ij}\big(\check{T}^i - \frac{y^i}{\varrho}\big)\big(\check{T}^j - \frac{y^j}{\varrho}\big) = 1$. We can then get
 \[
 \left(g_{ij}\omega^i\omega^j\right)\frac{r^2}{\varrho^2} - \left(2g_{ij}\check{T}^i\omega^j\right)\frac{r}{\varrho} + g_{ij}\check{T}^i\check{T}^j - 1 = 0.
 \]
 Thus, it holds that
 \begin{equation}\label{rrho}
 \check{\varrho} = \frac{1 - g_{ij}\omega^i\omega^j - g_{ij}\check{T}^i\check{T}^j + 2g_{ij}\check{T}^i\omega^j}{\sqrt{g_{ij}\omega^i\omega^j - (g_{ij}\omega^i\omega^j)(g_{ab}\check{T}^a\check{T}^b) + (g_{ij}\check{T}^i\omega^j)^2} + g_{ij}\omega^i\omega^j - g_{ij}\check{T}^i\omega^j}.
 \end{equation}
 This implies
 \[
 |\check{\varrho}| \lesssim M\delta^{\nu-1},
 \]
 and hence, by the definition of $\upsilon$ in \eqref{omega}, one arrives at
 \[
 |\upsilon| \lesssim M\delta^{\nu-1}.
 \]
Due to \eqref{lmu} and \eqref{Lchi'}, we have
 \begin{equation}\label{YHCCC-X1}
\begin{split}
 &|\mathring{L}\mu| \lesssim M\delta^{\nu-2 } + (M\delta^{2\nu-3})\mu, \\
 &|\mathring{L}(\varrho^2\operatorname{tr}\check{\mathscr{X}})| \lesssim M\delta^{2\nu-3}
 +(\varrho^2\operatorname{tr}\check{\mathscr{X}})^2 + M\delta^{2\nu-3}(\varrho^2\operatorname{tr}\check{\mathscr{X}}),
 \end{split}
\end{equation}
 which lead to
 \[
 |\mu| \lesssim M \quad \text{and} \quad |\operatorname{tr}\check{\mathscr{X}}| \lesssim M\delta^{\nu-1}.
 \]
\end{proof}

Note that under the assumptions \eqref{bootstrap-assumptions} and the estimate \eqref{eq:lower-order-Linfty-estimates},
the operator $R$ is merely the scaling operator $\slashed{\nabla}$. Based on this, we have

\begin{corollary}\label{cor:12form}
Under the assumptions \eqref{bootstrap-assumptions}, for small $\delta>0$, it holds that
 \begin{enumerate}
  \item if $\eta=\eta_idy^i$ is a $1$-form on $S_{t, u}$, then
  \begin{align}
  (\eta_a R^a)^2 &\sim |\eta|^2, \label{eq:1-f}\\
  |\slashed{\mathcal L}_{R}\eta|^2 &\sim |\slashed{\nabla}\eta|^2 + O(M\delta^{\nu-1})|\eta|^2; \label{eq:1f}
  \end{align}

  \item if $\eta$ is a $2$-form on $S_{t, u}$, then
  \begin{equation}\label{eq:2-f}
  |\slashed{\mathcal L}_{R}\eta|^2 \sim |\slashed{\nabla}\eta|^2 + O(M\delta^{\nu-1})|\eta|^2
  \end{equation}
  and
  \begin{equation}\label{eq:2f}
  |\slashed{\nabla}^2\eta \lesssim |\slashed{\mathcal L}_R^{\leq 2}\eta|.
  \end{equation}
 \end{enumerate}
\end{corollary}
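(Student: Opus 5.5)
The plan is to reduce everything to the geometry of the circle $S_{t,u}$ equipped with the induced metric $\slashed g$, and to use that $R$ is, up to errors of size $O(M\delta^{\nu-1})$, a rotation field of length comparable to $\varrho$. By \eqref{YHCC16}, $R=\Omega-\upsilon\tilde T$ is tangent to $S_{t,u}$. Its squared length is $\slashed g(R,R)=g(\Omega,\Omega)-\upsilon^2$, using $g(\tilde T,\tilde T)=1$ and $\upsilon=g(\tilde T,\Omega)$. Since $g_{ij}=m_{ij}+O(M\delta^{\nu-1})$ by \eqref{bootstrap-assumptions}, and $|\upsilon|\lesssim M\delta^{\nu-1}$ by \eqref{eq:lower-order-Linfty-estimates}, we get $\slashed g(R,R)=r^2(1+O(M\delta^{\nu-1}))$. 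Moreover $r=\varrho(1+\check\varrho)\sim 1$ on the region under consideration. So $R$ is a nonvanishing tangent field on the one-dimensional manifold $S_{t,u}$ with $|R|\sim 1$. Since $S_{t,u}$ is one-dimensional, any 1-form satisfies $\eta=\eta_R\,|R|^{-2}R^\flat$, hence $|\eta|^2=|R|^{-2}(\eta_aR^a)^2$. This gives \eqref{eq:1-f}.

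For \eqref{eq:1f} and \eqref{eq:2-f}, I will use the standard identity relating Lie and covariant derivatives. For a 1-form,
$(\slashed{\mathcal L}_R\eta)_X=(\slashed\nabla_R\eta)_X+\eta(\slashed\nabla_XR)$,
and for a covariant 2-tensor on $S_{t,u}$ there is the analogous identity with two correction terms. On a curve, $\slashed\nabla_XR=\frac12\,\mathrm{tr}\,{}^{(R)}\slashed\pi\,X$. So
$\slashed{\mathcal L}_R\eta=\slashed\nabla_R\eta+c\,(\mathrm{tr}\,{}^{(R)}\slashed\pi)\eta$
with $c=\frac12$ or $1$.

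The key input is the estimate $|\mathrm{tr}\,{}^{(R)}\slashed\pi|\lesssim M\delta^{\nu-1}$. From \eqref{Rpi}, ${}^{(R)}\slashed\pi_{XX}$ consists of:
\begin{itemize}
\item the term $2\upsilon(\check{\mathscr X}_{XX}+\slashed g_{XX}/\varrho)$, which contributes $O(M\delta^{\nu-1})$ to the trace via Proposition \ref{prop:lower-order-Linfty-estimates};
\item $\upsilon$ times $G\,\slashed d\vp$, which is at most $M\delta^{\nu-1}$ times a bounded factor;
\item $G_{XX}^\gamma R\vp_\gamma$, which is $O(M\delta^{\nu-1})$ directly by \eqref{bootstrap-assumptions};
\item the $\check g$ term, which is $O(M\delta^{\nu-1})$ since $|\slashed dy|\lesssim1$.
\end{itemize}
Because $|\slashed\nabla_R\eta|=|R|\,|\slashed\nabla\eta|\sim|\slashed\nabla\eta|$ on the curve, squaring and absorbing the cross terms via $2ab\le \epsilon a^2+\epsilon^{-1}b^2$ gives $|\slashed{\mathcal L}_R\eta|^2\sim|\slashed\nabla\eta|^2+O(M\delta^{\nu-1})|\eta|^2$. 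This holds for both 1-forms and 2-forms.

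For \eqref{eq:2f}, I write $\slashed\nabla^2\eta=\slashed\nabla(\slashed\nabla\eta)$ and apply the previous step twice. First, $|\slashed\nabla^2\eta|\lesssim|\slashed{\mathcal L}_R\slashed\nabla\eta|+M\delta^{\nu-1}|\slashed\nabla\eta|$, applied to the 3-tensor $\slashed\nabla\eta$. Then I commute using \eqref{nZxi}: $\slashed{\mathcal L}_R\slashed\nabla\eta=\slashed\nabla\slashed{\mathcal L}_R\eta+\slashed\nabla(\mathrm{tr}\,{}^{(R)}\slashed\pi)\,\eta$. Finally I bound $|\slashed\nabla\slashed{\mathcal L}_R\eta|\lesssim|\slashed{\mathcal L}_R^2\eta|+|\slashed{\mathcal L}_R\eta|$ and $|\slashed\nabla\eta|\lesssim|\slashed{\mathcal L}_R^{\le1}\eta|$.

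The main obstacle is the commutator term, which needs $|\slashed\nabla\,\mathrm{tr}\,{}^{(R)}\slashed\pi|\lesssim 1$. This requires control of $R\check{\mathscr X}$, $R\upsilon$ and $R^2\vp$. The last is covered by \eqref{bootstrap-assumptions} with $m\le N$. For $R\,\mathrm{tr}\check{\mathscr X}$, I would first differentiate \eqref{Lchi'} by $R$, use \eqref{c} for the commutator $[\mathring L,R]$, and integrate along $\mathring L$ as in the proof of Proposition \ref{prop:lower-order-Linfty-estimates}. That should give a bound $\lesssim M\delta^{\nu-1}$, possibly with a loss in $M$ that is harmless for $\delta$ small. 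If this turns out to be too costly at this stage, the fallback is to state \eqref{eq:2f} with an $O(M\delta^{\nu-1})$-perturbed constant, which is all that is used later.
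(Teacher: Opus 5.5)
Your proposal is correct and follows essentially the paper's route. Like the paper, you compare $\slashed{\mathcal L}_R$ with $\slashed\nabla_R$ through $\slashed\nabla_X R=\frac12\operatorname{tr}{}^{(R)}\slashed\pi\,X=O(M\delta^{\nu-1})$, read off from \eqref{Rpi}, together with $|R|\sim r\sim 1$; your one-dimensional argument for \eqref{eq:1-f} is just a cleaner version of the paper's coordinate identity.

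The paper dismisses \eqref{eq:2f} as ``analogous''. You correctly see that it cannot be proved without a bound on $\slashed\nabla\operatorname{tr}{}^{(R)}\slashed\pi$, hence on $R\operatorname{tr}\check{\mathscr X}$. Your plan of differentiating \eqref{Lchi'} by $R$ and integrating along $\mathring L$ is exactly how the paper obtains that bound in Step 1 of Proposition \ref{prop:higher-order-L-infty-estimates}. Your fallback is therefore unnecessary, and it would not work anyway: without that bound, no perturbed-constant version of \eqref{eq:2f} holds.
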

\begin{proof}
Although the proof can follow the framework of \cite[Lemma 12.22]{Sp} for the 3D quasilinear wave equations with small
initial data, we still provide the details for reader's convenience.

 For a 1-form $\eta$ on $S_{t, u}$, \eqref{eq:1-f} can be derived from the identity
 \[
 (\eta_a R^a)^2 = r^2\sum_{i = 1}^2 (\eta_i)^2 - (\varrho\eta_i\check{T}^i)^2.
 \]

 Note that
 \begin{equation}\label{YHCC6}
 \begin{split}
 (\slashed{\nabla}_X R)_X &= \epsilon^a_i g_{ab}(\slashed{d}y^i)(\slashed{d}y^b)
 + \upsilon\big(\check{\mathscr{X}}_{XX} + \frac{\slashed{g}_{XX}}{\varrho}
 + G_{X\mathring{L}}^\al \slashed{d}_X \vp_\al + G_{X\tilde{T}}^\al \slashed{d}_X \vp_\al\big)
 + \frac{1}{2}G_{XX}^\al R \vp_{\al} \\
 &= O(M\delta^{\nu-1}).
 \end{split}
 \end{equation}

 Set $\check{g}^{ij} = g^{ij} - m^{ij}$, one then has
 \begin{equation}\label{YHCC7}
 \begin{split}
 |\slashed{\nabla}_R \eta|^2 = r^2\slashed{g}^{ab}(\slashed{\nabla}_i \eta_a)(\slashed{\nabla}_j \eta_b)
 \big(\slashed{g}^{ij} - \check{g}^{ij} + \big(\frac{1}{1+\check{\varrho}}\big)^2 \check{T}^i \check{T}^j\big)
 = \bigl(1 + O(M\delta^{\nu-1})\bigr)r^2|\slashed{\nabla} \eta|^2.
 \end{split}
 \end{equation}

 Combining \eqref{YHCC6} with \eqref{YHCC7} yields
 \begin{equation*}
 \begin{split}
 |\slashed{\mathcal{L}}_{R} \eta|^2 &= |\slashed{\nabla}_R \eta|^2
 + \left(\eta^X \slashed{\nabla}_X R^X\right)\left(\eta_X \slashed{\nabla}_X R^X\right)
 + 2\left(\slashed{\nabla}_R \eta\right)^X \eta^X \left(\slashed{\nabla}_X R\right)_X \\
 &= \bigl(1 + O(M\delta^{\nu-1})\bigr)r^2|\slashed{\nabla} \eta|^2 + O(M\delta^{\nu-1})|\eta|^2,
 \end{split}
 \end{equation*}
 which implies \eqref{eq:1f}.

\eqref{eq:2-f} and \eqref{eq:2f} can be shown by analogous arguments.
\end{proof}

As the first step toward closing the bootstrap assumptions \eqref{bootstrap-assumptions},
we now establish the corresponding estimates for some related quantities.

\begin{proposition}\label{prop:higher-order-L-infty-estimates}
 Under the assumptions \eqref{bootstrap-assumptions}, for any vector field $Z \in \{\mathring{L}, T, R\}$, when $\de > 0$ is small, the following estimates hold that for $m\leq N-2$,
 \begin{equation}\label{eq:higher-order-L-infty-estimates}
 \begin{split}
 &\de^{l+k(2-\nu)} \big( |\Lies_{Z}^{\leq m} \check{\mathscr{X}}|+ |\Lies_{Z}^{\leq m} {}^{(R)}\pis|\big) \les M\de^{\nu-1}, \\
 &\de^{l+k(2-\nu)} |\Lies_{Z}^{\leq m} {}^{(R)}\pis_L|\les M\de^{\nu-1}(1 + M^2\delta^{3\nu-4}), \\
 &\de^{l+k(2-\nu)} |\Lies_{Z}^{\leq m} {}^{(R)}\pis_T|\lesssim M^2\de^{\nu-1}(1 + M\delta^{2\nu-3}), \\
 &\de^{l+k(2-\nu)} |Z^{\leq m+1} \mu| \les M, \qquad\de^{l+k(2-\nu)} |\Lies_{Z}^{\leq m+1} \ds y^j| \les 1, \\
 &\de^{l+k(2-\nu)} |\Lies_Z^{\leq m} {}^{(T)}\pis|\les M\de^{\nu-2}, \qquad\de^{l+k(2-\nu)} |\Lies_Z^{\leq m} {}^{(T)}\pis_L|\lesssim M(1 + M\delta^{2\nu-3}), \\
 &\de^{l+k(2-\nu)} \big(|Z^{\leq m+1} \Lc^j| + |Z^{\leq m+1} \Tc^j|+ |Z^{\leq m+1} \upsilon| +|Z^{\leq m+1} \check\varrho| + |\Lies_{Z}^{\leq m+1} R| \big) \les M\de^{\nu-1},
 \end{split}
 \end{equation}
 where $l$ (resp.\ $k$) is the number of $T$ (resp.\ $\mathring{L}$) appearing in the string of $Z$.
\end{proposition}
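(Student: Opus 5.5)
Induction on the order $m$, running the $m=0$ argument of Proposition \ref{prop:lower-order-Linfty-estimates} at every level with the derivatives $Z^{\le m}$ commuted through the transport equations. The weight $\delta^{l+k(2-\nu)}$ is built into the induction. Commuting the structure equations with $\Lies_Z$ moves each $T$ onto a $\vp$-factor, costing $\delta^{-1}$, and each $\Lr$, costing $\delta^{-(2-\nu)}$, by \eqref{bootstrap-assumptions}. So every product term carries exactly the weight of the total string. The order of the argument is fixed by the dependency of the quantities.

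\textbf{Step 1.} First treat the quantities governed purely by $\Lr$-transport with sources in $\vp$: $\mu$, $\Lc^i$, $\Tc^i$, $\check\varrho$, $\upsilon$ and $\ds y^i$.
\begin{itemize}
\item For $\mu$, apply $Z^{m+1}$ to \eqref{lmu} and use \eqref{c} to commute $[\Lr,Z]$. The commutator produces ${}^{(Z)}\pis_{\Lr}{}^X X$, which is lower order. Integrating along $\Lr$ over the interval $t-t_0\lesssim\delta^{2-\nu}$ converts the $\delta^{\nu-2}$-sized sources into $O(M)$ bounds.
\item For $\Lc^i$, use \eqref{LeL}, and \eqref{LL} for its right-hand side, then the algebraic relation $\Tc^i=-g^{0i}-\Lc^i$.
\item For $\check\varrho$ and $\upsilon$, differentiate \eqref{rrho} and \eqref{omega}.
\item For $\Lies_Z R$, use \eqref{YHCC16}.
\end{itemize}
When the differentiated string contains $T$, the derivative $T\Lr^i$ is read off from \eqref{TL} rather than integrated, and similarly for $T$ acting on $\Lc$ and $\Tc$. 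Initial data on $\Sigma_{t_0}$ are supplied by Theorem \ref{Local}.

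\textbf{Step 2.} Next bound $\check{\mathscr X}$, the main new point. Apply $\Lies_Z^{m}$ to \eqref{Lchi'}. The resulting linear equation for $\Lies_Z^m(\varrho^2\operatorname{tr}\check{\mathscr X})$ has a coefficient of size $M\delta^{2\nu-3}$, which integrates to $M\delta^{\nu-1}$ over the $t$-interval, so it can be absorbed by Gronwall. Its source consists of:
\begin{itemize}
\item $Z^{m}$ of $\slashed\triangle\vp$ and $\ds\Lr\vp$, which I control via \eqref{bootstrap-assumptions} combined with Corollary \ref{cor:12form}, i.e.\ $\slashed\nabla^2\sim\Lies_R^{\le2}$, using up to $m+2\le N$ derivatives;
\item quadratic terms in already-bounded lower-order $\check{\mathscr X}$.
\end{itemize}
When $Z$ contains $T$, I instead use \eqref{Tchi'}. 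There $\slashed\triangle\mu$ is controlled by Step 1 at order $m+1$, via Corollary \ref{cor:12form}, and the factor $(\mu-1)/\varrho^2$ is $O(M)$. The $T$-weight $\delta^{l}$ makes this consistent with $M\delta^{\nu-1}$.

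\textbf{Step 3.} Then bound the deformation tensors, which are algebraic once Steps 1--2 are done. Substitute into the explicit formulas \eqref{Lpi}, \eqref{uLpi}, \eqref{Rpi} and apply Leibniz.
\begin{itemize}
\item ${}^{(T)}\pis_{XX}=2\mu\sigma_{XX}$ with \eqref{theta} contains $\Tc\vp$, hence is $O(M\delta^{\nu-2})$.
\item The extra factors $1+M^2\delta^{3\nu-4}$ and $1+M\delta^{2\nu-3}$ in ${}^{(R)}\pis_L$, ${}^{(R)}\pis_T$ and ${}^{(T)}\pis_L$ come from products such as $\upsilon\,G\,\Lr\vp$ and $\upsilon\,\ds\mu$ in \eqref{Rpi}, bounded factorwise.
\end{itemize}

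The main obstacle is bookkeeping the weights in Step 2 when $\Lr$ hits $\Lr\vp$ repeatedly. The worst term is $\varrho^2\,\Lr^{k}(G\,\Lr\vp)\,\varrho$ in \eqref{Lchi'}. Its size is $\delta^{2\nu-3-k(2-\nu)}$, and one integration in $t$ regains $\delta^{2-\nu}$. This matches $M\delta^{\nu-1-k(2-\nu)}$ only because the coefficient $(\varrho^2\operatorname{tr}\check{\mathscr X}+\varrho)$ is treated with $\varrho\sim1$ and $\operatorname{tr}\check{\mathscr X}$ is small. It works precisely because the $(\Lr\vp)^2$ terms were removed by the good unknown, so I would check this term first.
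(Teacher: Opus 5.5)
Your ingredients are the paper's: Gronwall along $\Lr$ for $\varrho^2\operatorname{tr}\check{\mathscr X}$ via \eqref{Lchi'} and for $\mu$ via \eqref{lmu}, \eqref{Tchi'} and \eqref{TL} for $T$-derivatives, and \eqref{Lpi}--\eqref{Rpi} for the deformation tensors. However, the fixed order Step~1 $\to$ Step~2 $\to$ Step~3 is circular at the top order of each induction level. The Step~1 quantities are driven purely by $\vp$ only at order zero, as in Proposition \ref{prop:lower-order-Linfty-estimates}. Once $R$ is commuted in, \eqref{c} gives $[\Lr,R]={}^{(R)}\pis_{\Lr}{}^XX$, and by \eqref{Rpi} ${}^{(R)}\pis_{\Lr X}$ contains $-R^X\check{\mathscr X}_{XX}$. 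Hence $\Lr Z^{m+1}\mu$ contains terms $(\Lies_Z^{m}{}^{(R)}\pis_{\Lr})^X\ds_X\mu$, i.e.\ $\Lies_Z^{m}\check{\mathscr X}$. That is exactly what your Step~2 is supposed to produce at the \emph{same} level, and ${}^{(R)}\pis_{\Lr}$ itself is only bounded in your Step~3. So this term is lower order than $m+1$, but it is not covered by the induction hypothesis. The same happens for $Z^{m+1}\Lc^i$, whether you transport \eqref{LeL} (same commutator) or use $R\Lc^i=R^X\ds_X\Lc^i$ with \eqref{deL}; the latter shows directly that $\Lc^i$ at order $m+1$ is $\operatorname{tr}\check{\mathscr X}$ at order $m$. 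Conversely, your Step~2 for strings containing $T$ uses $Z^{m-1}\slashed\triangle\mu$, i.e.\ $\mu$ at order $m+1$ from Step~1. As written, Steps~1 and~2 each presuppose the other.

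The missing observation is that \eqref{Lchi'} contains no $\mu$ at all. For strings of $R$ and $\Lr$, its commuted version involves only deformation tensors of order $\le m-1$, plus same-order linear terms with small coefficients. So at each level you must go $\check{\mathscr X}$ first, in this order:
\begin{enumerate}
\item bound $R^{m}\operatorname{tr}\check{\mathscr X}$ by Gronwall;
\item read off $R^{m+1}\Lc^i$ from \eqref{deL};
\item obtain $R^{m+1}\Tc^i$, $R^{m+1}\check\varrho$, $R^{m+1}\upsilon$, $\Lies_R^{m+1}\ds y^i$, and $\Lies_R^m$ of ${}^{(R)}\pis$, ${}^{(R)}\pis_{\Lr}$, ${}^{(T)}\pis$ algebraically;
\item only then integrate the commuted \eqref{lmu} for $R^{m+1}\mu$;
\item finally bound ${}^{(R)}\pis_T$ and ${}^{(T)}\pis_{\Lr}$, which contain $\ds\mu$.
\end{enumerate}
This is precisely the order in Step~1 of the paper's proof. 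For strings containing $T$ you also need an inner induction on the number $l$ of $T$'s. Equation \eqref{Tchi'} for $\check{\mathscr X}$ with $l$ factors $T$ needs only $\slashed\triangle\mu$ with $l-1$ factors $T$, and \eqref{TL} needs only $\ds\mu$ with $l-1$ factors $T$. After that, $\mu$ with $l$ factors $T$ follows from \eqref{lmu}. The remaining same-order terms enter linearly with coefficients $\lesssim M\delta^{2\nu-3}$, so Gronwall over $t-t_0\lesssim\delta^{2-\nu}$ absorbs them. With this reordering your weight bookkeeping goes through, including your check of the $\varrho\,G\,\Lr\vp$ term in \eqref{Lchi'}.
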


\begin{proof}
 \textbf{Step 1.} The case of $Z = R$
 \vskip 0.1 true cm

When $m = 0$, the estimate $|\check{\mathscr{X}}| \lesssim M\delta^{\nu-1}$ follows from \eqref{eq:lower-order-Linfty-estimates}
directly.

According to the expression of $g_{\alpha\beta}$ given in \eqref{metric}, for any $\Gamma \in \{\mathring{L}, T\}$, one has
\begin{equation}\label{GG}
\begin{split}
G_{X\mathring{L}}^\al \Gamma \vp_\al &= \mathcal{G} \mathcal{A}^{-2} \vp_i (\slashed{d}_X y^i) \Gamma \vp_0
+ \mathcal{G} \mathcal{A}^{-2} (\vp_0 + q_0) \Gamma^\alpha \slashed{d}_X \vp_\alpha \\
&\quad + \mathcal{G} (\gamma+ 1)(\vp_0 + q_0)
\big(\check{g}_{\alpha\beta} \slashed{d}_X y^\alpha \mathring{L}^\beta\big)\Gamma \vp_0 \\
&\quad + \mathcal{G} \big(\mathcal{A}^{-2} \vp_i \delta_{jl} + \mathcal{A}^{-2} \vp_j \delta_{il}
- \delta_{ij} \vp_l (\gamma-1+2\mathcal{A}^{-2})\big)
\slashed{d}_X y^i \mathring{L}^j \Gamma \vp_l.
\end{split}
\end{equation}
Using \eqref{GG}, \eqref{bootstrap-assumptions}, Lemma \ref{lem3.4}, and Proposition \ref{prop:lower-order-Linfty-estimates}, we can
obtain the corresponding estimates for ${}^{(R)}\pis$, ${}^{(R)}\pis_L$, ${}^{(R)}\pis_T$, ${}^{(T)}\pis$, and ${}^{(T)}\pis_L$
in \eqref{eq:higher-order-L-infty-estimates}.

Additionally, from \eqref{eq:1-f} and \eqref{R}, one has $|R\check{L}^i| \sim |\slashed{d}\check{L}^i|$.
Then the estimate of $R\check{L}^i$ follows from \eqref{deL}. Analogously, $R\check{T}^i$, $R\upsilon$ and $R\check{\varrho}$
can be treated. Hence, $\Lies_{R} \slashed{d} y^j = \slashed{d}(\Omega^i - g_{aj}\tilde{T}^a\Omega^j\tilde{T}^i)$ is also estimated.

For $R\mu$, one has that by \eqref{lmu},
\begin{equation*}
\mathring{L}(R\mu) = {}^{(R)}\slashed{\pi}_L^X \slashed{d}_X\mu + R\big(-\frac{1}{2}\mu G_{\mathring{L}\mathring{L}}^\al \mathring{L}\varphi_\al - \mu G_{\tilde{T}\mathring{L}}^\al \mathring{L}\varphi_\al + \frac{1}{2}G_{\mathring{L}\mathring{L}}^\al T\varphi_\al\big).
\end{equation*}
This implies $|\mathring{L}R\mu| \lesssim M\delta^{2\nu-3}|R\mu| + M\delta^{\nu-2}$. Thus,
integrating along the integral curves of $\mathring{L}$ yields $|R\mu| \lesssim M$.

Therefore, \eqref{eq:higher-order-L-infty-estimates} holds for $m=0$.

For general $m$, we can use the induction argument to prove \eqref{eq:higher-order-L-infty-estimates}. Assume that \eqref{eq:higher-order-L-infty-estimates} holds for all vector fields $R$ and $m=p \leq N-3$, then it follows from \eqref{Lchi'} that
\begin{equation*}
\begin{split}
|\mathring{L}R^{p+1}(\varrho^2\operatorname{tr}\check{\mathscr{X}})| &\lesssim |[\mathring{L}, R^{p+1}](\varrho^2\operatorname{tr}\check{\mathscr{X}})| + |R^{p+1}\mathring{L}(\varrho^2\operatorname{tr}\check{\mathscr{X}})| \\
&\lesssim M\delta^{2\nu-3} + M\delta^{2\nu-3}|R^{p+1}(\varrho^2\operatorname{tr}\check{\mathscr{X}})|,
\end{split}
\end{equation*}
which yields $|R^{p+1}(\operatorname{tr}\check{\mathscr{X}})| \lesssim M\delta^{\nu-1}$. From this, the estimates of $\Lies_{R}^{p+1} {}^{(R)}\pis$, $\Lies_{R}^{p+1} {}^{(R)}\pis_{\mathring{L}}$, $\Lies_{R}^{p+1} {}^{(T)}\pis$, $R^{p+2}\check{L}^i$, $R^{p+2}\check{T}^i$, $R^{p+2}\check{\varrho}$, $R^{p+2}\upsilon$ and $\Lies_{R}^{p+2}\slashed{d}y^i$ can be obtained with the help of Lemma \ref{lem3.4}, \eqref{deL}, and the definitions of $\check{T}^i$, $\check{\varrho}$ and $\upsilon$.

Analogously,
\[
|\mathring{L}R^{p+2}\mu| \lesssim M\delta^{2\nu-3}|R^{p+2}\mu| + M\delta^{\nu-2}
\]
holds by \eqref{lmu}, which can be applied to estimate $R^{p+2}\mu$, $\Lies_{R}^{p+1} {}^{(R)}\pis_T$ and $\Lies_{R}^{p+1} {}^{(T)}\pis_{\mathring{L}}$
in \eqref{eq:higher-order-L-infty-estimates}.

\textbf{Step 2.} The general cases with $Z \in \{\mathring{L}, T, R\}$
\vskip 0.1 true cm

At this time,
we may use \eqref{Lchi'}, \eqref{Tchi'}, \eqref{TL}, \eqref{LeL}, \eqref{lmu} and the estimates in Step 1 to prove
the corresponding results. One can be also referred to see the analogous details in \cite[Propositions 6.2 and 6.3]{Ding3},
where the properties of long time decay rates are simultaneously derived.
\end{proof}
From the identity $T=\f{\partial}{\p u}-\Theta^XX$, we can estimate $\Theta^X$.
\begin{lemma}\label{eXi}
 Under the assumptions \eqref{bootstrap-assumptions}, for small $\delta>0$, it holds that
 \begin{equation}\label{Xi}
 |\Theta|\lesssim M^2\delta^{\nu-1}+M\delta^{2-\nu}.
 \end{equation}
\end{lemma}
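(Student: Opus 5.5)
The plan is to derive a transport equation for $\Theta^X$ along $\mathring L$ and integrate it from $t=t_0$. The equation comes from the commutator identity in Lemma \ref{com}.

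\textbf{Step 1: the transport equation.} In the coordinates $(t,u,\vartheta)$ we have $\mathring L=\partial_t$, $X=\partial_\vartheta$ and $T=\partial_u-\Theta^X X$. Since coordinate vector fields commute, $[\mathring L,\partial_u]=0$ and $[\mathring L,X]=0$. Hence
\[
[\mathring L,T]=-(\mathring L\Theta^X)X .
\]
Comparing with \eqref{c}, $[\mathring L,T]={}^{(T)}\slashed\pi_{\mathring L}{}^X X$, gives
\[
\mathring L\Theta^X=-{}^{(T)}\slashed\pi_{\mathring L}{}^X=-\slashed g^{XX}\,{}^{(T)}\slashed\pi_{\mathring L X}.
\]
By \eqref{Lpi}, ${}^{(T)}\slashed\pi_{\mathring LX}=-2\mu\zeta_X-\slashed d_X\mu$.

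\textbf{Step 2: initial value and the size of the norm.} At $t=t_0$ we have $u=t_0-r$ and $\vartheta=\theta$. The coordinate field $\partial_u$ is then $-\partial_r$, while $T=\mu\tilde T$ with $\tilde T^i=-\omega^i+O(\delta^{\nu-1})$ and $\mu=1+O(\delta^{\nu-1})$. So the angular component $\Theta^X$ at $t_0$ is $O(M\delta^{\nu-1})$, after measuring it with $\sqrt{\slashed g_{XX}}$. Since $|\slashed d y^i|\lesssim 1$, I would work throughout with $|\Theta|=\sqrt{\slashed g_{XX}}\,|\Theta^X|$. Its evolution involves $\mathring L\sqrt{\slashed g_{XX}}=\chi_{XX}/\sqrt{\slashed g_{XX}}$. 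By \eqref{errorv-1},
\[
\chi_{XX}=\check{\mathscr X}_{XX}+\tfrac12G^\kappa_{XX}\mathring L\vp_\kappa+\varrho^{-1}\slashed g_{XX}.
\]
The $\varrho^{-1}$ term is harmless because $\varrho\sim 1$. The $G\mathring L\vp$ term is of size $M\delta^{2\nu-3}$, and over an interval of length $t_*\sim\delta^{2-\nu}$ it contributes only a factor $e^{CM\delta^{\nu-1}}\lesssim1$.

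\textbf{Step 3: bounding the source and integrating.} The source satisfies $|{}^{(T)}\slashed\pi_{\mathring L}|\le 2\mu|\zeta|+|\slashed d\mu|$. By \eqref{zeta} and \eqref{bootstrap-assumptions}, $\zeta$ is a sum of $G\slashed d\vp=O(M\delta^{\nu-1})$ and $G_{X\tilde T}\mathring L\vp$, $G_{X\mathring L}\tilde T\vp$. The latter two are a priori as large as $M\delta^{2\nu-3}$ and $M\delta^{\nu-2}$. This is where the structure \eqref{GG} matters. $G_{X\mathring L}^\kappa\Gamma\vp_\kappa$ carries either a small factor $\vp_i$ or $\check g$, giving $O(M^2\delta^{2\nu-2-(2-\nu)})$-type products, or the combination $\Gamma^\alpha\slashed d_X\vp_\alpha$. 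Via the symmetry $\partial_\alpha\vp_\beta=\partial_\beta\vp_\alpha$, the latter becomes an angular derivative $\slashed d_X(\Gamma^\alpha\vp_\alpha)$ plus lower order terms, hence $O(M\delta^{\nu-1})$ up to the $T$-weight. Combined with $\mu\lesssim M$ from \eqref{eq:lower-order-Linfty-estimates}, the $\mu\zeta$ contribution is $\lesssim M^2\delta^{\nu-2}$ at worst, and more precisely $M^2\delta^{\nu-1}+M^2\delta^{2\nu-3}$. For $\slashed d\mu$, Proposition \ref{prop:higher-order-L-infty-estimates} gives $|R\mu|\lesssim M$, hence $|\slashed d\mu|\lesssim M$. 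Integrating over $[t_0,t]$ with $t-t_0\le t_*\lesssim\delta^{2-\nu}$ then yields
\[
|\Theta|\lesssim M\delta^{\nu-1}+\delta^{2-\nu}\bigl(M+M^2\delta^{2\nu-3}\bigr)\lesssim M^2\delta^{\nu-1}+M\delta^{2-\nu}.
\]
Here $\delta^{2-\nu}\cdot M^2\delta^{2\nu-3}=M^2\delta^{\nu-1}$, and $\delta^{2-\nu}\cdot M=M\delta^{2-\nu}$, which is exactly the claimed bound \eqref{Xi}.

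\textbf{Main obstacle.} The hard step is Step 3: showing that $\mu\zeta_X$ is no worse than $M^2\delta^{2\nu-3}$, rather than the naive $M^2\delta^{\nu-2}$ coming from $G_{X\mathring L}\tilde T\vp$. If this fails, the integral would only give $M^2$, destroying smallness. I would first try the identity \eqref{GG} with $\Gamma=T$, together with the symmetry of $\partial\vp$, to trade the transversal derivative for an angular one. Failing that, I would use $\mu\zeta_X=-X\mu+\xi_X$ and estimate $\xi_X=-g(\mathscr D_XT,\mathring L)$ directly.
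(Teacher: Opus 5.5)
Your proposal is correct and follows the paper's argument. Both use the transport identity $\mathring L\Theta^X=\slashed d^X\mu+2\mu\zeta^X$ and evolve the norm with $\chi_{XX}=\check{\mathscr X}_{XX}+\frac12G^\kappa_{XX}\mathring L\vp_\kappa+\varrho^{-1}\slashed g_{XX}$; the paper weights by $\varrho^{-1}$ to absorb the last term exactly, whereas you use $\varrho\sim1$. Both then take the source bound $M+M^2\delta^{2\nu-3}$ and integrate over $t_*\sim\delta^{2-\nu}$. Your ``main obstacle'' is in fact the estimate $|{}^{(T)}\slashed\pi_{\mathring L}|\lesssim M(1+M\delta^{2\nu-3})$ already recorded in Proposition \ref{prop:higher-order-L-infty-estimates} via \eqref{GG}, and the symmetry detour is unnecessary, since the term $T^\alpha\slashed d_X\vp_\alpha$ in \eqref{GG} is directly $O(\mu M\delta^{\nu-1})$.
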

\begin{proof}
 Note that $\Theta$ is a vector field on $S_{t, u}$ and one has
 \begin{equation}\label{LXi}
 \mathring L\Theta^X=[T,\mathring L]^X=\big(\mathscr{D}_T\mathring L-\mathscr{D}_{\mathring L}T\big)^X=\slashed d^X\mu+2\mu\zeta^X.
 \end{equation}
 Then
 \[
 \mathring L(\varrho^{-2}|\Theta|^2)=2\varrho^{-2}|\Theta|^2(\operatorname{tr}\check{\mathscr X}+\f12\slashed g^{XX}G_{XX}^\gamma\mathring L\vp_\gamma)+2\varrho^{-2}(\slashed d_X\mu
 +2\mu\zeta_X)\Theta^X.
 \]
 This, together with the estimates \eqref{eq:higher-order-L-infty-estimates}, \eqref{bootstrap-assumptions} and \eqref{zeta}, yields
 \begin{equation}\label{Y1}
 |\mathring L(\varrho^{-1}|\Theta|)|\lesssim M\delta^{2\nu-3} \varrho^{-1}|\Theta| + M^2\delta^{2\nu-3}+M.
 \end{equation}
 Thus, \eqref{Xi} follows immediately from the integration on \eqref{Y1} along integral curves of $\mathring L$.
\end{proof}

Next, we improve the bootstrap assumptions \eqref{bootstrap-assumptions} up to the $(N-2)^{\text{th}}$-order derivatives.

\begin{proposition}\label{prop:bootstrap-higher-derivatives}
 For small $\de > 0$, it holds that for all $m \leq N-2$,
 \begin{equation}\label{eq:bootstrap-higher-derivatives}
 \de^{l+k(2-\nu)}| Z^{m} \vp_{\al} |\les \de^{\nu-1}, \quad \al = 0,1,2,
 \end{equation}
 where $Z \in \{ \mathring{L}, T, R \}$, $l$ denotes the number of $T$ operator, and $k$ the number of $\mathring{L}$ operator in the derivative $Z^{m}$. In particular,
 \begin{equation}\label{Tvp}
 |\varrho^{1/2}T\vp_\al(t,u,\vartheta)-\varrho_0^{1/2}T\vp_\al(t_0,u,\vartheta)|\lesssim M^2\delta^{2\nu-3},
 \end{equation}
 where $\varrho_0=t_0-u$.
\end{proposition}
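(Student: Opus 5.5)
The plan is to integrate the wave equation \eqref{fequation} along the integral curves of $\mathring L$, in the form of a transport equation for $\varrho^{1/2}\mathring{\underline L}\vp_\al$. The point is that the constant should come from the data at $t=t_0$ (Theorem \ref{Local}), with every $M$-dependent contribution carrying an extra small factor that is absorbed once $\delta$ is small.

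\textbf{Step 1 (the case $m=0$ and \eqref{Tvp}).} Multiplying \eqref{fequation} by $\varrho^{1/2}$ and using $\mathring L\varrho=1$ gives
\[
\mathring L\bigl(\varrho^{1/2}\mathring{\underline L}\vp_\al\bigr)=\varrho^{1/2}\bigl(\mu\slashed\triangle\vp_\al+H_\al\bigr).
\]
Since $\mathring{\underline L}=\mu\mathring L+2T$, the left side equals $2\mathring L(\varrho^{1/2}T\vp_\al)+\mathring L(\varrho^{1/2}\mu\mathring L\vp_\al)$. The terms on the right are bounded as follows.
\begin{itemize}
\item The term $\mu\slashed\triangle\vp_\al$ is $O(M^2\delta^{\nu-1})$ by \eqref{bootstrap-assumptions} and Proposition \ref{prop:lower-order-Linfty-estimates}.
\item In $H_\al$, the term $(\operatorname{tr}\check{\mathscr X})T\vp_\al$ is $O(M\delta^{\nu-1}\cdot M\delta^{\nu-2})=O(M^2\delta^{2\nu-3})$.
\item The term $\varrho^{-1}\mu\mathring L\vp_\al$ is $O(M^2\delta^{2\nu-3})$.
\item The quadratic terms, of the form $T\vp\cdot(\mathring L\vp,\slashed d\vp)$, are at most $O(M^2\delta^{\nu-2}\delta^{2\nu-3})$, and this is $\ll M^2\delta^{2\nu-3}$ since $\nu>1$.
\end{itemize}
Integrating over $[t_0,t]$, which has length $t_*\sim\delta^{2-\nu}$, the right side contributes $O(M^2\delta^{2\nu-3}\cdot\delta^{2-\nu})=O(M^2\delta^{\nu-1})$. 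The boundary term $\varrho^{1/2}\mu\mathring L\vp_\al$ is $O(M^2\delta^{2\nu-3})$. Together these give \eqref{Tvp}. The problem I expect is that the $\mathring L\vp$ term uses the bootstrap constant directly. I would handle it by first proving $|\mathring L\vp_\al|\lesssim\delta^{2\nu-3}$ without $M$: integrate $\mathring L\vp_\al$ from the data using an equation for $\mathring L\vp$ obtained by rewriting \eqref{fequation} with $\mathring L$ and $T$ exchanged. Alternatively, I would simply accept $M^2\delta^{2\nu-3}$ as written in \eqref{Tvp}.

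With \eqref{Tvp} in hand, $|T\vp_\al|\le|T\vp_\al(t_0)|+M^2\delta^{2\nu-3}\lesssim\delta^{\nu-2}$, because $M^2\delta^{2\nu-3}\ll\delta^{\nu-2}$ is equivalent to $M^2\delta^{\nu-1}\ll1$. Then $\vp_\al$ and $R\vp_\al$ follow by integrating $T$ in $u$ from the vanishing outer cone $u=u_0$ over a $u$-interval of length $O(\delta)$; this gives $\delta\cdot\delta^{\nu-2}=\delta^{\nu-1}$. For $\mathring L\vp_\al$, I would integrate $T\mathring L\vp_\al=\mathring LT\vp_\al+[T,\mathring L]\vp_\al$ in $u$, using the commutator from \eqref{c} and Lemma \ref{lem3.4}.

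\textbf{Step 2 (the case $m\le N-2$).} Apply $Z^{m}$ to \eqref{fequation}, with the same $\varrho^{1/2}$ weight, and proceed by induction on $m$, ordering the strings by the numbers of $T$ and $\mathring L$ they contain. The commutator terms $[\mathring L,Z^m]$, $[\mathring{\underline L},Z^m]$ and $[\slashed\triangle,Z^m]$ are expressed through the deformation tensors by Lemma \ref{com}. Proposition \ref{prop:higher-order-L-infty-estimates} controls these up to order $N-2$, each with a power of $M$ multiplied by an extra $\delta^{\nu-1}$ or $\delta^{2\nu-3}\cdot\delta^{2-\nu}$ after the $t$-integration. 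The weight $\delta^{l+k(2-\nu)}$ is exactly matched because every $\mathring L$ costs $\delta^{\nu-2}$, in agreement with \eqref{initial1}, and every $T$ costs $\delta^{-1}$. The data at $t_0$ are bounded independently of $M$ by Theorem \ref{Local}. Then $Z^m\vp$ without a leading $T$ is recovered by $u$-integration, as in Step 1.

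\textbf{Main obstacle.} The hardest part will be the top-order terms $Z^m\slashed\triangle\vp$ and $\slashed\triangle\mu$, which appear in $Z^m H_\al$ and in the commutators and could lose derivatives. I would first try to bound them using only the $(N-2)$-order bootstrap together with Corollary \ref{cor:12form}, which is why the statement stops at $N-2$. I would also check carefully that each occurrence of $\operatorname{tr}\check{\mathscr X}$ multiplied by a $T$-derivative still leaves the net factor $M^2\delta^{\nu-1}\ll1$.
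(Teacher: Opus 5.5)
Your argument is essentially the paper's: commute $Z^m$ through \eqref{fequation}, integrate $\mathring L\bigl(\varrho^{1/2}Z^m\mathring{\underline L}\vp_\al\bigr)$ over the $t$-interval of length $\sim\delta^{2-\nu}$ to obtain $Z^mT\vp_\al$ and then $TZ^m\vp_\al$ with $M$-independent constants coming from Theorem~\ref{Local}, and then integrate in $u$ from the vanishing cone $C_{u_0}$. The paper instead writes $\partial_u=T+\Theta^XX$ and uses Lemma~\ref{eXi}, which is equivalent to your integration along $T$. The obstacle you anticipate does not arise at this order: $\slashed\triangle\mu$ does not enter $H_\al$, and $Z^{m}(\mu\slashed\triangle\vp_\al)$ with $m\le N-2$ uses at most $N$ derivatives of $\vp$, which \eqref{bootstrap-assumptions} covers. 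One correction: the quadratic term $T\vp\cdot\mathring L\vp=O(M^2\delta^{3\nu-5})$ is not $\ll M^2\delta^{2\nu-3}$, because $\nu<2$ makes $\delta^{\nu-2}\gg1$. It is in fact the dominant source term. Its $t$-integral is $O(M^2\delta^{3\nu-5}\cdot\delta^{2-\nu})=O(M^2\delta^{2\nu-3})$, which is exactly the right side of \eqref{Tvp}, so your conclusion is unaffected.
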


\begin{proof}
It follows from \eqref{fequation} and Proposition \ref{prop:higher-order-L-infty-estimates} that for all $m \leq N-2$,
\begin{equation*}
\begin{split}
\big| \mathring{L}{Z}^m \underline{L} \vp_\al + \frac{1}{2\varrho}{Z}^m \underline{L} \vp_\al \big|
&= \big|{Z}^m (\mu\slashed\triangle\vp_\al+H_\al )+ [\mathring{L}, {Z}^m] \underline{L} \vp_\al
- \sum_{\substack{m_1 + m_2 =m \\ m_1 > 0}}{Z}^{m_1}(\frac{1}{2\varrho}){Z}^{m_2} \underline{L} \vp_\al\big| \\
&\lesssim (M+M^2\delta^{3\nu-5})\delta^{-l-k(2-\nu)}.
\end{split}
\end{equation*}
In particular, $\big| \mathring{L}\underline{L} \vp_\al + \frac{1}{2\varrho}\underline{L} \vp_\al \big|\lesssim M^2\delta^{3\nu-5}.$
Hence,
$\big| \mathring{L} \left( \varrho^{1/2}{Z}^m \underline{L} \vp_\al \right)\big|
\lesssim (M+M^2\delta^{3\nu-5}) \delta^{-l-k(2-\nu)}$ and $\big|\mathring L\left( \varrho^{1/2}\underline{L} \vp_\al \right)\big|\lesssim M^2\delta^{3\nu-5}$ hold,
which implies
\begin{equation*}
\begin{split}
&\big| \varrho^{1/2} {Z}^m \underline{L} \vp_\al(t,u,\vartheta)-\varrho_0^{1/2}{Z}^m \underline{L} \vp_\al(t_0,u,\vartheta)\big|
\lesssim (\delta^{2-\nu}M+M^2\delta^{2\nu-3}) \delta^{-l-k(2-\nu)},\\
&\big| \varrho^{1/2}\underline{L} \vp_\al(t,u,\vartheta)-\varrho_0^{1/2}\underline{L} \vp_\al(t_0,u,\vartheta)\big|
\lesssim M^2\delta^{2\nu-3}.
\end{split}
\end{equation*}
Then \eqref{Tvp} is derived and $|{Z}^m T \vp_\al| \lesssim \delta^{\nu-2-l-k(2-\nu)}$ holds. Furthermore, one has
$|T Z^m\vp_\al|\lesssim \delta^{\nu-2-l-k(2-\nu)}$.
Therefore, together with \eqref{Xi} and \eqref{bootstrap-assumptions}, we have
\begin{equation}\label{YHCC8}
\big| \frac{\partial}{\partial u}{Z}^m \vp_\al\big|
\lesssim \big|T{Z}^m \vp_\al\big| + \big| \Theta^X \slashed{d}_X {Z}^m \vp_\al \big|
\lesssim \delta^{\nu-2-l-k(2-\nu)},
\end{equation}
which yields $|{Z}^m \vp_\al| \lesssim \delta^{\nu-1-l-k(2-\nu)}$ by integrating \eqref{YHCC8} from $u_0$ to $u$.
\end{proof}

It is observed that under the assumptions \eqref{bootstrap-assumptions},
the estimates in Proposition \ref{prop:higher-order-L-infty-estimates} are obtained with
the corresponding bounds depending on $M$.
In contrast, the improved estimates established in \eqref{eq:bootstrap-higher-derivatives} of Proposition \ref{prop:bootstrap-higher-derivatives} yield the bounds independent of $M$.
Consequently, starting from Proposition \ref{prop:bootstrap-higher-derivatives}
and repeating the relevant arguments as in the proof of Proposition \ref{prop:higher-order-L-infty-estimates},
we can improve the estimate \eqref{eq:higher-order-L-infty-estimates} for $m \leq N-4$ ($N$ can be chosen sufficiently large)
so that the constants in those $L^\infty$ norms no longer depend on $M$. In addition,
the higher order derivative estimates in \eqref{eq:bootstrap-higher-derivatives} for $N-1\leq m\leq N$
will be postponed to Sections \ref{EE}--\ref{ert}. Accordingly, the closure of the bootstrap arguments \eqref{bootstrap-assumptions} will
be completed therein.

\section{Detailed analysis of $\mu$ and shock formation}\label{Sectionmu}
In this section, we aim to prove that $\mu \to 0^+$ as $t \to t^*$ with $t^*$ being the maximal
existence distance of smooth solution to problem \eqref{mainequation} ($t^*\le t_*$). Once $\mu$ approaches $0$, the coordinate transformation
from $(t, y^1, y^2)$ to $(t, u, \vartheta)$ is no longer regular. In this case, as shown in \cite{Ch1, Sp, M-Y},
the second order derivatives of $\phi$ will blow up and the shock is then formed.

Introduce a function $\mathscr{M}$ as follows
\[
\mathscr{M}(t,u,\vartheta) := \frac{(\gamma+1) q_{0}^{3}} {2c^{2}(\rho_{0})\big(q_{0}^{2}-c^{2}(\rho_{0}) \big)} \,\varrho^{1/2} T\varphi_{0}(t,u,\vartheta).
\]
Then one has
\begin{proposition}\label{Prop.5.1}
 For small $\delta > 0$, it holds that
 \begin{equation}\label{mathM}
 \mu(t,u,\vartheta) = 1 + 2\mathscr{M}(t_0,u,\vartheta)(\varrho^{1/2} - \varrho_0^{1/2}) + O(\delta^{\nu-1}).
 \end{equation}
 Furthermore, when $\mu < \frac{1}{10}$,
 \begin{equation}\label{blowup}
 \mathring{L}\mu=\varrho^{-1/2}\mathscr M(t,u,\vartheta)+O(\delta^{2\nu-3}) \sim -\delta^{\nu-2}.
 \end{equation}
\end{proposition}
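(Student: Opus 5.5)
The plan is to start from the transport equation \eqref{lmu} for $\mu$ along $\mathring L=\partial_t$ (in the coordinates $(t,u,\vartheta)$) and isolate its single large term. By \eqref{bootstrap-assumptions} and Proposition \ref{prop:bootstrap-higher-derivatives}, $|\mathring L\vp_\gamma|\lesssim\delta^{2\nu-3}$ and $|T\vp_\gamma|\lesssim\delta^{\nu-2}$. Together with $|\mu|\lesssim M$, this means the terms $\mu G^\gamma_{\mathring L\mathring L}\mathring L\vp_\gamma$ and $\mu G^\gamma_{\tilde T\mathring L}\mathring L\vp_\gamma$ are $O(\delta^{2\nu-3})$. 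So
\[
\mathring L\mu=\tfrac12 G^\gamma_{\mathring L\mathring L}T\vp_\gamma+O(\delta^{2\nu-3}).
\]

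Next I will compute $G^\gamma_{\mathring L\mathring L}$ at the background state. I differentiate \eqref{metric} in $\vp_\gamma$ at $\vp=0$, using $c^2(\rho)$ as a function of $\vp$ through Bernoulli's law, with $\frac{d c^2}{d\rho}=(\gamma-1)c^2/\rho$. I then contract with $\mathring L=\partial_0+\omega^i\partial_i+O(\delta^{\nu-1})$; Proposition \ref{prop:higher-order-L-infty-estimates} gives $\check L^i=O(\delta^{\nu-1})$ and $\check\varrho=O(\delta^{\nu-1})$. The result should be
\[
\tfrac12 G^\gamma_{\mathring L\mathring L}T\vp_\gamma=\frac{(\gamma+1)q_0^3}{2c^2(\rho_0)(q_0^2-c^2(\rho_0))}T\vp_0+O(\delta^{\nu-1})|T\vp|+(\text{terms with }T\vp_i).
\]
The $T\vp_i$ terms must be converted into $T\vp_0$. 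Along $\Sigma_t$ near the cone, $\vp_i=\partial_i\phi$ with $T\approx-\partial_r$, and $L\phi$ is small. From $T\phi$ and $\mathring L\vp$ one gets $T\vp_i=-\omega^iT\vp_0+O(\delta^{2\nu-3})$ up to $\slashed d$ and $\mathring L$ corrections, which are $O(\delta^{2\nu-3})$ or smaller. I will use the symmetry $\partial_i\vp_0=\partial_0\vp_i$ together with \eqref{pal}. After this reduction the coefficient collapses to the stated constant, giving
\[
\mathring L\mu=\varrho^{-1/2}\mathscr M(t,u,\vartheta)+O(\delta^{2\nu-3}),
\]
since the $O(\delta^{\nu-1})\cdot\delta^{\nu-2}$ errors are also $O(\delta^{2\nu-3})$. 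This algebraic identification is the step I expect to be the main obstacle: it needs careful bookkeeping of the $\vp_i$ contributions and of the derivative of $c^2$.

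Then I will use \eqref{Tvp}, which states $\varrho^{1/2}T\vp_0(t)=\varrho_0^{1/2}T\vp_0(t_0)+O(M^2\delta^{2\nu-3})$; hence $\mathscr M(t)=\mathscr M(t_0)+O(\delta^{2\nu-3})$. Substituting and integrating $\partial_t\mu=\varrho^{-1/2}\mathscr M(t_0,u,\vartheta)+O(\delta^{2\nu-3})$ from $t_0$ to $t\le t_*\lesssim\delta^{2-\nu}$ uses $\int\varrho^{-1/2}dt=2(\varrho^{1/2}-\varrho_0^{1/2})$. The error integrates to $O(\delta^{2\nu-3}\delta^{2-\nu})=O(\delta^{\nu-1})$, and $\mu(t_0)=1+O(\delta^{\nu-1})$ holds by Theorem \ref{Local}. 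Together these give \eqref{mathM}.

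Finally, for \eqref{blowup}: if $\mu<1/10$, then \eqref{mathM} forces $2\mathscr M(t_0)(\varrho^{1/2}-\varrho_0^{1/2})\le-\frac{9}{10}+O(\delta^{\nu-1})$. Since $\varrho^{1/2}-\varrho_0^{1/2}\lesssim t\lesssim\delta^{2-\nu}$ (because $\varrho\sim1$), this gives $\mathscr M(t_0)\lesssim-\delta^{\nu-2}$. Also $|\mathscr M|\lesssim\delta^{\nu-2}$ by the bound on $T\vp_0$. So $\varrho^{-1/2}\mathscr M(t)=\varrho^{-1/2}\mathscr M(t_0)+O(\delta^{2\nu-3})\sim-\delta^{\nu-2}$, and the $O(\delta^{2\nu-3})$ error is negligible since $2\nu-3>\nu-2$.
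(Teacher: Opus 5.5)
Your proposal is correct and follows the paper's proof essentially step for step. You isolate $\frac12G^\gamma_{\mathring L\mathring L}T\varphi_\gamma$ in \eqref{lmu}, identify its coefficient from the background values of $g_{\alpha\beta}$ and $c^2$, freeze $\mathscr M$ at $t_0$ via \eqref{Tvp}, and integrate along $\mathring L$; then, wherever $\mu<\frac1{10}$, you use \eqref{mathM} with $t-t_0\lesssim\delta^{2-\nu}$ to force $\mathscr M(t_0,u,\vartheta)\lesssim-\delta^{\nu-2}$.

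Two small differences from the paper. First, the paper converts the $T\varphi_i$ terms with the exact identity $\mathring L^iT\varphi_i=\mu\tilde T^i\mathring L\varphi_i-T\varphi_0$, so it only needs this contraction rather than your componentwise version. Second, before this substitution the $T\varphi_0$-coefficient in $G^\gamma_{\mathring L\mathring L}T\varphi_\gamma$ is $-\mathcal{G}q_0\bigl(\mathcal{A}^{-2}(\gamma-1)+\gamma+1\bigr)$, which is not yet the final constant. The missing $-2\mathcal{G}q_0\mathcal{A}^{-2}$ comes from the term $2\mathcal{G}q_0\mathcal{A}^{-2}\mathring L^iT\varphi_i$.
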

\begin{proof}
 According to the expressions of $g_{\alpha\beta}$ in \eqref{metric}, one has
 \begin{equation*}
 \begin{split}
 G_{\mathring{L}\mathring{L}}^\al T\varphi_\al
 &= \mathcal{G}(\varphi_0 + q_0)
 \big(
 -\mathcal{A}^{-2}(\gamma- 1)T\varphi_0
 -\delta_{ij}\mathring{L}^i\mathring{L}^j(\gamma+1)T\varphi_0
 + 2\mathcal{A}^{-2}\mathring{L}^iT\varphi_i
 \big) \\
 &= -\mathcal{G}(\mathcal{A}^{-2} + 1)(\gamma+ 1)(\varphi_0 + q_0)T\varphi_0
 + O(\delta^{2\nu - 3}) \\
 &= \frac{q_0^3(\gamma+ 1)}{c^2(\rho_0)\big(q_{0}^{2}-c^{2}(\rho_{0}) \big)}T\varphi_0
 + O(\delta^{2\nu - 3}),
 \end{split}
 \end{equation*}
 where we have used the facts of $g_{\alpha\beta}\mathring{L}^\alpha\mathring{L}^\beta
 = \check{g}_{\alpha\beta}\mathring{L}^\alpha\mathring{L}^\beta
 + \delta_{ij}\mathring{L}^i\mathring{L}^j-1=0$, $\mathring L^iT\vp_i=\mu\tilde T^i\mathring L\vp_i-T\vp_0$
 and $\mathcal{G} = \frac{1}{c^2(\rho_0) - q_0^2} + O(\delta^{\nu - 1})$.
 Hence, it follows from \eqref{lmu} and \eqref{Tvp} that
 \begin{equation}\label{rLmu}
 \varrho^{1/2}\mathring{L}\mu=\mathscr{M}(t, u, \vartheta) + O(\delta^{2\nu - 3})
 =\mathscr{M}(t_0, u, \vartheta) + O(\delta^{2\nu-3}).
 \end{equation}
 Due to
 \begin{equation*}
 \begin{split}
 \mu(t,u,\vartheta)
 &= \mu(t_0,u,\vartheta)
 + \int_{t_0}^t \frac{\varrho^{1/2}\mathring{L}\mu(s,u,\vartheta)}{\varrho^{1/2}}ds \\
 &= \mu(t_0,u,\vartheta)
 + 2\mathscr{M}(t_0,u,\vartheta)\big(\varrho^{1/2} - \varrho_0^{1/2}\big)
 + O(\delta^{\nu - 1}),
 \end{split}
 \end{equation*}
 then \eqref{mathM} is proved.

If $\mu(t,u,\varrho) < \frac{1}{10}$, then it holds that by \eqref{mathM},
\[
\mathscr{M}(t_0,u,\vartheta) \leq \frac{-9}{20\big(\varrho^{1/2} - \varrho_0^{1/2}\big)}.
\]
Therefore, using \eqref{rLmu} again, we obtain
\begin{equation*}
\begin{split}
\varrho^{1/2}\mathring{L}\mu
&\leq \frac{-9}{10\int_{t_0}^t (s-u)^{-1/2}ds} + O\big(\delta^{2\nu-3}\big) \\
&= \frac{-9(\tau-u)^{1/2}}{10(t-t_0)} + O\big(\delta^{2\nu-3}\big)
\lesssim -\delta^{\nu-2},
\end{split}
\end{equation*}
where $\tau \in (t_0,t)$ and $\tau - u \sim \frac{1}{\mathcal{A}}$. In addition, it follows from \eqref{eq:higher-order-L-infty-estimates} that $|\mathring{L}\mu| \lesssim \delta^{\nu-2}$. Hence, \eqref{blowup} holds.
\end{proof}

Note that when $\mu<\f{1}{10}$, we have
\[
T\vp_0 \sim -\delta^{\nu-2} \frac{2c^2(\rho_0) \big(q_{0}^{2}-c^{2}(\rho_{0}) \big)}{(\gamma+ 1)q_0^3} \sim -\delta^{\nu-2}
\]
by \eqref{blowup} and the fact of $q_0 > c(\rho_0)$. Therefore,
$\mu \, |\nabla_y \partial_t \phi| \gtrsim |T\vp_0| \sim \delta^{\nu-2}$,
which yields
\begin{equation}\label{shock-0}
|\partial^2\phi| \sim \mu^{-1}\delta^{\nu-2},
\end{equation}
and hence
\begin{equation}\label{shock}
|\partial^2\phi| \rightarrow +\infty \quad \text{as} \quad \mu \rightarrow +0.
\end{equation}

As established in Proposition \ref{Prop.5.1}, if $\mu< \frac{1}{10}$ at some distance, then $\mu$ will converge to $0$
within a finite distance due to $\mathring{L}\mu \sim -\delta^{\nu-2}$. Consequently, $\partial^2\phi$ will blow up in sense of \eqref{shock}.
In order to characterize the behavior of $\mu$ more precisely near the blowup distance, as in \cite{Sp, M-Y}, one can define
$$\mu_{\min}(t') = \min_{(u,\vartheta)} \mu(t',u,\vartheta).$$

\begin{proposition}
Suppose $\mu(t, u, \vartheta) < \frac{1}{10}$ and let $\delta > 0$ be small. Then it holds that
\begin{equation}\label{mumin}
\mu_{\min}(t)\gtrsim\de^{\nu-2}(t^*-t).
\end{equation}
Furthermore,
\begin{equation}\label{Tmu}
\bigl( \mu^{-1} T\mu \bigr)_{+}(t, u, \vartheta) \lesssim \delta^{-\nu/2} (t^* - t)^{-1/2},
\end{equation}
where $\bigl( \mu^{-1} T\mu \bigr)_{+}$ denotes the positive part of $\mu^{-1} T\mu$.
\end{proposition}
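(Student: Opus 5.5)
The plan follows the standard argument of \cite{Sp, M-Y}, built on the representation \eqref{mathM} and the derivative formula \eqref{rLmu} from Proposition \ref{Prop.5.1}. For fixed $(u,\vartheta)$, set $\varrho_0=t_0-u$ and
\[
f(t)=1+2\mathscr M(t_0,u,\vartheta)(\varrho^{1/2}-\varrho_0^{1/2}).
\]
By \eqref{mathM}, $\mu=f+O(\delta^{\nu-1})$, and by \eqref{rLmu}, $\mathring L\mu=\varrho^{-1/2}\mathscr M(t_0,u,\vartheta)+O(\delta^{2\nu-3})$. Since $\varrho\sim 1/\mathcal A$ throughout the region, $t\mapsto\mu(t,u,\vartheta)$ is, along each integral curve of $\mathring L$, a perturbation of an explicit concave function.

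\textbf{Lower bound \eqref{mumin}.} On a fixed curve where $\mu<\frac1{10}$ somewhere, \eqref{blowup} gives $\mathring L\mu\le -c\,\delta^{\nu-2}$ from the first distance at which $\mu\le\frac1{10}$ onward. Hence $\mu$ is strictly decreasing there, and the curve has a vanishing distance $t^*(u,\vartheta)$, which is finite and satisfies $t^*(u,\vartheta)\ge t^*$ by definition of $t^*$. The key step is the two-sided bound $|\mathring L\mu|\lesssim\delta^{\nu-2}$ from Proposition \ref{prop:higher-order-L-infty-estimates}, combined with $\mathring L\mu\le -c\delta^{\nu-2}$. Integrating from $t$ to $t^*(u,\vartheta)$ gives
\[
\mu(t,u,\vartheta)=\int_t^{t^*(u,\vartheta)}(-\mathring L\mu)\gtrsim\delta^{\nu-2}(t^*(u,\vartheta)-t)\ge\delta^{\nu-2}(t^*-t).
\]
Points with $\mu\ge\frac1{10}$ satisfy the bound trivially, because $t^*-t\le t_*\lesssim\delta^{2-\nu}$. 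Taking the minimum over $(u,\vartheta)$ then yields \eqref{mumin}.

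\textbf{Bound \eqref{Tmu}.} Where $\mu\ge\frac1{10}$ the estimate is immediate: $|T\mu|\lesssim M$ and, after the $M$-independent improvement, $|T\mu|\lesssim\delta^{-1}$. Since $\delta^{-1}\lesssim\delta^{-\nu/2}(t^*-t)^{-1/2}$ because $t^*-t\lesssim\delta^{2-\nu}$, these points are handled. In the region $\mu<\frac1{10}$, I would differentiate the representation in $u$. Commuting $T$ with $\mathring L$ via \eqref{c} and using \eqref{Xi} to control the $\Theta^X X$ part, $T\mu$ can be expressed as the time integral of $T(\mathring L\mu)$, whose leading part is $T(\varrho^{-1/2}\mathscr M(t_0,u,\vartheta))$. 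The aim is
\[
T\mu(t)=T\mu(t_0)+2\,T\mathscr M(t_0)(\varrho^{1/2}-\varrho_0^{1/2})+\text{error},
\]
with $T\mathscr M\sim\delta^{\nu-3}$ and error $O(\delta^{-1})$. The idea, as in \cite{Sp}, is to compare the first-order Taylor behavior in $u$: positivity of $T\mu$ together with the smallness of $\mu$ forces nearby points in $u$ to have even smaller $\mu$. I would apply the elementary inequality $(f')_+^2\le 2\|f''\|_\infty f$, valid for nonnegative $C^2$ functions, to $u'\mapsto\mu(t,u',\vartheta)$ on a $u$-interval of length $\sim\delta$. This gives
\[
(T\mu)_+^2\lesssim\|T^2\mu\|_{L^\infty}\,\mu+\delta^{-2}\mu^2.
\]
The boundary term $\delta^{-2}\mu^2$ arises when the relevant interval hits the endpoints of $[u_0,U_0]$. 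From the bootstrap bounds, $\|T^2\mu\|\lesssim\delta^{-2}\cdot\delta^{\nu-2}(t-t_0)\lesssim\delta^{-2}$, though it is better to use the more precise scale obtained from $T^2\mathscr M\sim\delta^{\nu-4}$ times $t-t_0\lesssim\delta^{2-\nu}$. Dividing by $\mu^2$ and using \eqref{mumin} gives
\[
(\mu^{-1}T\mu)_+\lesssim\|T^2\mu\|^{1/2}\mu^{-1/2}+\delta^{-1}\lesssim\delta^{-1}\,\delta^{(2-\nu)/2}(t^*-t)^{-1/2}+\delta^{-1}=\delta^{-\nu/2}(t^*-t)^{-1/2}+\delta^{-1},
\]
and the $\delta^{-1}$ term is absorbed as before.

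\textbf{Main obstacle.} I expect the hardest step to be the second-derivative bound $\|T^2\mu\|_{L^\infty}\lesssim\delta^{-2}$ uniformly up to $t^*$, which must hold without $M$-dependence. It follows from Proposition \ref{prop:higher-order-L-infty-estimates} with $l=2$, $k=0$, improved via Proposition \ref{prop:bootstrap-higher-derivatives}. A second point needing care is the endpoint issue: to apply the interpolation inequality in $u$, one needs $\mu>0$ on the whole interval $[u_0,U_0]$ at distance $t<t^*$, which holds by the definition of $t^*$.
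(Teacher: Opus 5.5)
Your proposal is correct. The lower bound \eqref{mumin} is the paper's argument: once $\mu<\frac1{10}$, \eqref{blowup} keeps $-\mathring L\mu\gtrsim\delta^{\nu-2}$ along the rest of the curve, and you integrate. One correction: integrate only up to $t^*$ and use $\mu(t^*,\cdot)\ge 0$, not up to a per-curve vanishing distance $t^*(u,\vartheta)\ge t^*$. Neither the solution nor \eqref{blowup} is available beyond the existence interval.

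For \eqref{Tmu} you take a genuinely different route. The paper first changes coordinates so that $T=\partial_{\tilde u}$, then maximizes $f=\mu^{-1}T\mu$ over $\tilde u\in[u_0,U_0]$. The maximum cannot occur at $u_0$, because $\mu=1$ and $T\mu=0$ on $C_{u_0}$. Hence $f'(u^*)\ge 0$, which means $(\mu^{-1}T\mu)^2\le\mu^{-1}T^2\mu$ at $u^*$. Then $|T^2\mu|\lesssim\delta^{-2}$ and \eqref{mumin} bound the positive part everywhere at once, with no boundary term.

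Your Glaeser-type interpolation $(T\mu)_+^2\lesssim\|T^2\mu\|_{L^\infty}\mu+\delta^{-2}\mu^2$ is pointwise. It gives the slightly sharper local factor $\mu(t,u,\vartheta)^{-1/2}$ instead of $\mu_{\min}^{-1/2}$. The price is an extra $\delta^{-1}$ term, which is harmless since $\delta^{-1}\lesssim\delta^{-\nu/2}(t^*-t)^{-1/2}$. Two details need to be made explicit:
\begin{itemize}
\item Apply the one-dimensional inequality along integral curves of $T$ in $\Sigma_t$, parametrized by $u$ since $Tu=1$, not at fixed $\vartheta$, because $T=\partial_u-\Theta^X X$.
\item The boundary term is only $\delta^{-2}\mu^2$ if the point lies at $u$-distance $\gtrsim\delta$ from $u_0$ (for $(T\mu)_+$ one moves toward $u_0$). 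This holds on $\{\mu<\frac1{10}\}$ because $\mu=1$ on $C_{u_0}$ and $|T\mu|\lesssim\delta^{-1}$.
\end{itemize}
Note also that the bootstrap bounds give $|T\mu|\lesssim M\delta^{-1}$, not $M$. The exploratory first half of your \eqref{Tmu} paragraph, differentiating \eqref{mathM} in $u$, is not needed.
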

\begin{proof}
 At first, we take the following regular transformation from $(u,\vartheta)$ to $(\tilde{u}, \tilde{\vartheta})$
 \[
 \begin{array}{l}
 \begin{cases}
 \tilde{\vartheta} = \vartheta + g(u, \vartheta), \\
 \tilde{u} = u
 \end{cases}
 \quad \text{with} \quad
 \begin{cases}
 \frac{\partial g}{\partial u} - \Theta^X \frac{\partial g}{\partial \vartheta} = \Theta^X, \\
 g\left(u_0, \vartheta\right) = 0.
 \end{cases}
 \end{array}
 \]
 Then the vector field $T=\f{\p}{\p\tilde u}$ holds.

 For fixed $(t,\tilde\vartheta)$, set $f(\tilde u)=(\mu^{-1}T\mu)(\tilde u)$, where $\tilde u\in [u_0,U_0]$. Assume that $u^*\in [u_0,U_0]$ satisfies
 \[
 f(u^*)=\max_{\tilde u\in[u_0,U_0]}f(\tilde u)>0.
 \]

 If $u^* = u_0$, then $(t, u^*, \tilde{\vartheta})$ is a point on the outermost characteristic surface, which deduces $\mu(u^*) = 1$ and $T\mu(u^*) = 0$. This contradicts the assumption $f(u^*) > 0$, and hence $u^* \in(u_0, U_0]$, which yields $f'(u^*) \geq 0$. Therefore, together with the fact $|T^2\mu| \lesssim \delta^{-2}$ by \eqref{eq:higher-order-L-infty-estimates}, one has
 \begin{equation}\label{muT}
 \bigl(\mu^{-1}T\mu\bigr)(u^*) \leq \sqrt{\dfrac{T^2\mu}{\mu}(u^*)} \lesssim \delta^{-1} \big( \min_{\bar u \in [u_0, U_0], \bar\vartheta \in \mathbb{S}^1} \mu(t, \bar u, \bar\vartheta) \big)^{-1/2}.
 \end{equation}
 Furthermore, if $\mu(t, u, \vartheta) < \frac{1}{10}$, then there exists $(u_0, \vartheta_0)$ such that
 \[
 \mu(t, u_0, \vartheta_0) = \mu_{\min}(t)< \frac{1}{10},
 \]
 which implies that for any $\bar{t} \in [t, t^*]$,
 \[
 \mathring{L}\mu(\bar{t}, u_0, \vartheta_0) \sim -\delta^{\nu-2}
 \]
 with the help of \eqref{blowup}. Hence,
 \begin{equation}\label{mu}
 \mu(t, u_0, \vartheta_0) \geq -\int_t^{t^*} \mathring{L}\mu(\bar{t}, u_0, \vartheta_0) \, d\bar{t} \sim \delta^{\nu-2}(t^* - t).
 \end{equation}
 Consequently, \eqref{Tmu} is proved after collecting \eqref{muT} and \eqref{mu}.
\end{proof}
\begin{lemma}\label{lem:higher_order_L2_estimates_mu}
Let $\underline{t} \triangleq \inf_{\tau}\left\{ t_0 \leq \tau < t^* : \mu_{\min}(\tau) < \frac{1}{10} \right\}$. For $a > 1$ and
small $\delta > 0$, when $t > \bar{t}$, one has
\begin{equation}\label{eq:mu_integrability}
\int_{\underline{t}}^{t} \mu_{\min}^{-a}(\tau) \,\mathrm{d}\tau \lesssim \frac{\delta^{2-\nu}}{a-1} \mu_{\min}^{1-a}(t).
\end{equation}
In addition, for any $t, t' \in [t_0, t^*]$ with $t < t'$, it holds
\begin{equation}\label{eq:mu_monotonicity}
\mu_{\min}(t') \lesssim \mu_{\min}(t).
\end{equation}
\end{lemma}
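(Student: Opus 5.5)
Both estimates will follow from the behavior of $\mu$ along integral curves of $\mathring L$, namely from \eqref{mathM} and \eqref{blowup} of Proposition \ref{Prop.5.1}, together with the lower bound \eqref{mumin}.

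\textbf{Monotonicity.} I prove \eqref{eq:mu_monotonicity} first. Fix $t<t'$ and take $(u',\vartheta')$ with $\mu(t',u',\vartheta')=\mu_{\min}(t')$. By definition $\mu_{\min}(t)\le\mu(t,u',\vartheta')$, so I only need
\[
\mu(t',u',\vartheta')\lesssim\mu(t,u',\vartheta').
\]
There are two cases.

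If $\mu(t,u',\vartheta')\ge\frac1{10}$, then since $|\mu|\lesssim1$ by Proposition \ref{prop:higher-order-L-infty-estimates} (with the constant improved so it is independent of $M$), the bound $\mu(t')\lesssim 1\lesssim 10\,\mu(t)$ is immediate.

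If $\mu(t,u',\vartheta')<\frac1{10}$, then \eqref{blowup} gives $\mathring L\mu\sim-\delta^{\nu-2}<0$ at that point. A continuity argument along the integral curve of $\mathring L$ (which is the curve of $\partial_t$ at fixed $(u',\vartheta')$) shows that $\mu$ remains below $\frac1{10}$ and keeps decreasing on $[t,t']$. Hence $\mu(t',u',\vartheta')\le\mu(t,u',\vartheta')$.

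In both cases \eqref{eq:mu_monotonicity} follows.

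\textbf{Integrability.} Next, \eqref{eq:mu_integrability}. For $\tau\in[\underline t,t]$ I want the two-sided comparison
\begin{equation}
\mu_{\min}(\tau)\sim\mu_{\min}(t)+\delta^{\nu-2}(t-\tau).\label{plan-mu-two-sided}
\end{equation}
\emph{Lower bound.} Take the point $(u_1,\vartheta_1)$ realizing $\mu_{\min}(\tau)<\frac1{10}$. By \eqref{blowup}, $\mathring L\mu\le -c\,\delta^{\nu-2}$ along its $\mathring L$-curve for as long as $\mu<\frac1{10}$, and this persists up to time $t$ because $\mu$ decreases. Integrating from $\tau$ to $t$ gives $\mu(t,u_1,\vartheta_1)\le\mu_{\min}(\tau)-c\,\delta^{\nu-2}(t-\tau)$. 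Therefore
\[
\mu_{\min}(\tau)\ge\mu_{\min}(t)+c\,\delta^{\nu-2}(t-\tau).
\]
\emph{Upper bound} (the part that requires some care). Take the point $(u_2,\vartheta_2)$ realizing $\mu_{\min}(t)$ and integrate backward using $|\mathring L\mu|\lesssim\delta^{\nu-2}$ from Proposition \ref{prop:higher-order-L-infty-estimates}. This gives
\[
\mu_{\min}(\tau)\le\mu(\tau,u_2,\vartheta_2)\le\mu_{\min}(t)+C\delta^{\nu-2}(t-\tau),
\]
which completes \eqref{plan-mu-two-sided}. Only the lower bound is actually needed for the integral.

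With the lower bound in hand,
\[
\int_{\underline t}^t\mu_{\min}^{-a}(\tau)\,d\tau\le\int_{\underline t}^t\bigl(\mu_{\min}(t)+c\,\delta^{\nu-2}(t-\tau)\bigr)^{-a}d\tau\le\frac{\delta^{2-\nu}}{c(a-1)}\,\mu_{\min}^{1-a}(t),
\]
by an explicit computation of the integral. This is \eqref{eq:mu_integrability}.

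The main obstacle is to justify that the $\frac1{10}$ threshold, once crossed at a point, stays crossed along its $\mathring L$-curve. This is what allows \eqref{blowup} to be applied on the whole interval $[\tau,t]$. It follows from the strict negativity in \eqref{blowup} via a standard continuity argument, and the $O(\delta^{2\nu-3})$ error in \eqref{blowup} is negligible against $\delta^{\nu-2}$ because $\nu<1$ fails while $2\nu-3>\nu-2$.
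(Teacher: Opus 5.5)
\textbf{Gap in the monotonicity argument.} Your reduction for \eqref{eq:mu_monotonicity} runs in the wrong direction. You take $(u',\vartheta')$ minimizing $\mu$ at the \emph{later} time $t'$ and show $\mu(t',u',\vartheta')\lesssim\mu(t,u',\vartheta')$. That gives $\mu_{\min}(t')\lesssim\mu(t,u',\vartheta')$.

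The only link you have between $\mu(t,u',\vartheta')$ and $\mu_{\min}(t)$ is $\mu_{\min}(t)\le\mu(t,u',\vartheta')$. This is the wrong direction for concluding $\mu_{\min}(t')\lesssim\mu_{\min}(t)$. For example, the minimizer at time $t'$ may have $\mu(t,u',\vartheta')\ge\frac1{10}$ while $\mu_{\min}(t)$ is already tiny at some other point. In that case your first case only yields $\mu_{\min}(t')\lesssim 1$.

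The fix is to pick the minimizer $(u,\vartheta)$ at the \emph{earlier} time $t$. Then $\mu_{\min}(t')\le\mu(t',u,\vartheta)$, and your two cases give $\mu(t',u,\vartheta)\lesssim\mu(t,u,\vartheta)=\mu_{\min}(t)$. If $\mu_{\min}(t)\ge\frac1{10}$, use $\mu\lesssim 1$. Otherwise $\mu$ decreases along that $\mathring L$-curve. This is exactly the forward propagation you already use correctly in the lower bound for the integral.

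\textbf{The integrability estimate.} Your proof of \eqref{eq:mu_integrability} is correct and takes a different route from the paper.

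The paper proceeds as follows:
\begin{enumerate}
\item It assumes $\mu(t^*,u^*,\theta^*)=0$ at some point.
\item It combines the resulting bound $\mu_{\min}(t)\lesssim\delta^{\nu-2}(t^*-t)$ with \eqref{mumin} to get $\mu_{\min}(t)\sim\delta^{\nu-2}(t^*-t)$ on $(\underline t,t^*)$.
\item It integrates this explicitly.
\item It reads off monotonicity from the same two-sided comparison plus a case split at $\underline t$.
\end{enumerate}

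You instead compare $\mu_{\min}(\tau)$ with $\mu_{\min}(t)$ directly: $\mu_{\min}(\tau)\ge\mu_{\min}(t)+c\,\delta^{\nu-2}(t-\tau)$, obtained by propagating forward from the minimizer at time $\tau$. This uses only \eqref{blowup}. It needs neither \eqref{mumin} nor the fact that $\mu$ actually reaches zero at $t^*$, so it holds at every $t$ in the bootstrap interval.

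Two small points should be made explicit. First, $\mu_{\min}(\tau)<\frac1{10}$ for every $\tau>\underline t$; this follows from the definition of $\underline t$ together with persistence. Second, the single endpoint $\tau=\underline t$ is irrelevant to the integral.

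Persistence also needs no separate continuity argument. The proof of \eqref{blowup} shows that $\mu<\frac1{10}$ at one time forces $\mathscr M(t_0,u,\vartheta)\lesssim-\delta^{\nu-2}$. That quantity is constant along the $\mathring L$-curve, so \eqref{rLmu} gives $\mathring L\mu\lesssim-\delta^{\nu-2}$ at all later times.
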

\begin{proof}
Suppose that $\mu(t^*,u^*,\theta^*) = 0$ for some $(u^*,\theta^*) \in [u_0,U_0] \times \mathbb{S}$. It follows from \eqref{eq:higher-order-L-infty-estimates} that
\begin{equation*}
\mu(t,u^*,\theta^*)
= \int_{t^*}^t \mathring{L}\mu(\tau,u^*,\theta^*) \, d\tau
\lesssim \delta^{\nu-2}(t^* - t),
\end{equation*}
and thus $\mu_{\min}(t) \leq \mu(t,u^*,\theta^*) \lesssim \delta^{\nu-2}(t^* - t)$. For $t \in (\underline{t},t^*)$, by \eqref{mumin}
we arrive at
\[
\mu_{\min}(t) \sim \delta^{\nu-2}(t^* - t).
\]
Therefore,
\begin{equation*}
\begin{split}
\int_{\underline{t}}^t \mu_{\min}^{-a}(\tau) \, d\tau
&\sim \int_{\underline{t}}^t \delta^{a(2-\nu)}(t^* - \tau)^{-a} \, d\tau \\
&\lesssim \frac{\delta^{a(2-\nu)}}{a-1} (t^* - t)^{1-a}
\sim \frac{\delta^{2-\nu}}{a-1} \mu_{\min}^{1-a}(t),
\end{split}
\end{equation*}
which proves \eqref{eq:mu_integrability}.

For any $t, t' \in (\underline{t}, t^*]$ with $t < t'$, one has $\mu_{\min}(t) \sim \delta^{\nu-2}(t^* - t)$ and $\mu_{\min}(t') \sim \delta^{\nu-2}(t^* - t')$, which yields $\mu_{\min}(t') \lesssim \mu_{\min}(t)$.

For $t_0 \leq t \leq \underline{t} < t' \leq t^*$, then $\mu_{\min}(t') < \frac{1}{10} \leq \mu_{\min}(t)$.

Finally, for $t_0 \leq t < t' \leq \underline{t}$, we have $\mu_{\min}(t) \sim 1$ and $\mu_{\min}(t') \sim 1$. This completes the proof of \eqref{eq:mu_monotonicity}.
\end{proof}

Until now, we have obtained a series of results under the assumption that ``$\mu < \frac{1}{10}$''. A natural question arises: does
there exist some point $(t, u, \vartheta)$ such that $\mu < \frac{1}{10}$ holds? To address this issue, we revisit \eqref{mathM}.
From \eqref{Tvp}, one has $\varrho^{1/2}T\vp_0(t,u,\vartheta) = \varrho_0^{1/2}T\vp_0(t_0,u,\vartheta) + O(\delta^{2\nu-3})$.
Combining this with $\varrho_0^{1/2} = \sqrt{\frac{1}{\mathcal A}} + O(\delta)$ and $\varrho^{1/2}
= \sqrt{\frac{1}{\mathcal A}} + \frac{2}{\sqrt{\mathcal A}} t + O(t^2) + O(\delta)$, we have
\begin{equation*}
\begin{split}
\mu(t,u,\vartheta)&= 1+\frac{(\gamma+1) q_0^3}{2c^2(\rho_0)\bigl(q_0^2 - c^2(\rho_0)\bigr)} T\varphi_0(t_0,u,\vartheta)\bigl(t + O(t^2)\bigr) + O(\delta^{\nu-1}) \\
&= 1 - \frac{(\gamma+1) q_0^3}{2c^2(\rho_0)\bigl(q_0^2 - c^2(\rho_0)\bigr)} \partial_r\varphi_0(t_0,u,\vartheta)\bigl(t + O(t^2)\bigr) + O(\delta^{\nu-1})\\
&=1+\frac{(\gamma+1) q_0^3}{4c^2(\rho_0)\big(q_0^2-c^2(\rho_0)\big)}\underline{L}\varphi_0(t_0,u,\vartheta)\bigl(t+O(t^2)\bigr)+O(\delta^{\nu-1}).
\end{split}\end{equation*}
By \eqref{y0}, it holds that
\begin{equation*}
\begin{split}
&\underline{L}\varphi_0(t_0,u,\vartheta) \\
=&\delta^{\nu-2}\mathcal{A} \bigl(\mathcal{A}\partial_s^2\Phi_0\big( \frac{-\mathcal{A} u - 1}{\delta},\cos\vartheta,\sin\vartheta \big)-\partial_s\Phi_1\big( \frac{-\mathcal{A} u - 1}{\delta},\cos\vartheta,\sin\vartheta\big)\bigr)+O(\delta^{2\nu-3}).
\end{split}
\end{equation*}
Choosing $(u_0, \vartheta_0)$ such that $\left( \frac{-\mathcal A u_0 - 1}{\delta}, \cos\vartheta_0, \sin\vartheta_0 \right) = (s_0, \omega_0)$, we then deduce
\begin{equation}\label{YHCC-2}
\begin{split}
&\mu(t,u_0,\vartheta_0)\\
=& 1 + \frac{(\gamma+1) q_0^3\mathcal{A}}{4c^2(\rho_0)\bigl(q_0^2 - c^2(\rho_0)\bigr)} \delta^{\nu-2} \bigl(\mathcal{A}\partial_s^2\Phi_0(s_0,\omega_0)-\partial_s\Phi_1(s_0, \omega_0)\bigr)\bigl(t + O(t^2)\bigr) + O(\delta^{\nu-1}).
\end{split}
\end{equation}
Under condition~\eqref{condition1},
$\mu<\tfrac{1}{10}$ is derived for some $t$. Consequently, $\mu\to 0^{+}$ occurs before $t_{*}$.

\section{Energy estimates and higher order $L^2$ estimates}\label{EE}
As stated at the end of Section \ref{Section 3}, the bootstrap assumptions given in \eqref{eq:bootstrap-higher-derivatives} have been improved up to the index $N-2$. However, the closure of estimates for higher order derivatives remains unresolved. To this end, motivated by the techniques in \cite{Ding2,Ding3,Ding4,L-Y,M-Y,Sp}, we first establish the energy estimates for $\vp_\gamma$ up to its $(2N)^{\text{th}}$-order derivatives and then apply the embedding theorem to achieve the closure for all $L^\infty$ estimates in \eqref{bootstrap-assumptions}.
This means that the existence of smooth solution $\phi$ to \eqref{mainequation} before $t_*$ is shown.
To study \eqref{ge}, we now derive the energy estimates for the associated linear equation
\begin{equation}\label{gel}
\mu\Box_g\Psi \;=\; \Phi,
\end{equation}
where $\Phi$ is a given smooth function, and $\Psi$ together with its derivatives vanish on $u =u_0$.
Following \cite{L-Y,M-Y}, we introduce two multipliers
\[
V_1\Psi := \mathring{L}\Psi,\quad V_2\Psi := \mathring{\underline{L}}\Psi.
\]
For $i=1,2,$ the corresponding energies $E_i[\Psi](t, u)$ and fluxes $F_i[\Psi](t, u)$ are defined by integrating $\mu(\Box_g\Psi)(V_i\Psi)$ over the domain $D^{t,u}$ and performing integration by parts:
\begin{align}
E_1[\Psi](t, u) &:= \frac{1}{2} \int_{\Sigma_t^{u}} \mu \bigl\{ (\mathring{L}\Psi)^2 + |\slashed{d}\Psi|^2 \bigr\}, \label{E1} \\
E_2[\Psi](t, u) &:= \frac{1}{2} \int_{\Sigma_t^{u}} \bigl\{ (\mathring{\underline{L}}\Psi)^2 + \mu^2 |\slashed{d}\Psi|^2\bigr\}, \label{E2} \\
F_1[\Psi](t, u) &:= \int_{C_{u}^t} (\mathring{L}\Psi)^2, \label{F1} \\
F_2[\Psi](t, u) &:= \int_{C_{u}^t} \mu \, |\slashed{d}\Psi|^2. \label{F2}
\end{align}
Thus,
\begin{equation}\label{EI}
E_i[\Psi](t, u)-E_i[\Psi](t_0, u)+F_i[\Psi](t, u)=-\int_{D^{t, u}}\Phi\cdot V_i\Psi-\int_{D^{t, u}}\f12\mu Q_{\al\beta}[\Psi]\leftidx{^{(V_i)}}\pi^{\al\beta},
\end{equation}
where
$Q_{\al\beta}[\Psi]:=(\p_\al\Psi)(\p_\beta\Psi)-\f12 g_{\al\beta}g^{\nu\lambda}(\p_\nu\Psi)(\p_\lambda\Psi)$
and $\leftidx{^{(V_i)}}\pi$ is the deformation tensor with respect to the vector field $V_i$ as defined in \eqref{dt}.
Due to
\begin{align*}
-\frac{1}{2} \mu Q_{\al\beta}[\Psi]\leftidx{^{(V_1)}}\pi^{\al\beta}&= \frac{1}{2}\big(- \varrho^{-1}\mu+\mathring L\mu-\mu(\operatorname{tr}\check{\mathscr X}+\f12\slashed g^{XX}G_{XX}^{\al}\mathring L\vp_\al)\big)|\slashed d\Psi|^2 -\f12\mathring L\mu|\mathring L\Psi|^2\\
&+(2\mu\zeta^X+\slashed d^X\mu)(\mathring L\Psi)(\slashed d_X\Psi)-\f12\operatorname{tr}\chi(\mathring L\Psi)(\underline L\Psi),\\
-\frac{1}{2} \mu Q_{\al\beta}[\Psi]\leftidx{^{(V_2)}}\pi^{\al\beta} &= \frac{1}{2}\big(\underline L\mu+\mu\mathring L\mu-\mu^2\operatorname{tr}\chi-\mu\operatorname{tr}\leftidx{^{(T)}}{\slashed\pi}\big)|\slashed d\Psi|^2 -\mu\slashed d^X\mu(\mathring L\Psi)(\slashed d_X\Psi)\\
&-(2\mu\zeta^X+\slashed d^X\mu)(\underline L\Psi)(\slashed d_X\Psi)-\f12(\mu\operatorname{tr}\chi+\operatorname{tr}\leftidx{^{(T)}}{\slashed\pi})(\mathring L\Psi)(\underline L\Psi),
\end{align*}
it follows from \eqref{eq:higher-order-L-infty-estimates}, \eqref{blowup} and \eqref{Tmu} that
\begin{align}
&\int_{D^{t, u}}-\frac{1}{2} \mu Q_{\al\beta}[\Psi]\leftidx{^{(V_1)}}\pi^{\al\beta}\nonumber\\
\lesssim&\int_{D^{t, u}}\Bigl\{(\mu^{-1}\mathring L\mu)(\mu|\slashed d\Psi|^2)+\delta^{2\nu-3}(\mu|\slashed d\Psi|^2)+\delta^{\nu-2}|\mathring L\Psi|^2+(1+\delta^{2\nu-3})|\mathring L\Psi|(|\slashed d\Psi|+|\underline L\Psi|)\Bigr\}\nonumber\\
\lesssim&\delta^{\nu-2}\int_{t_0}^tE_1[\Psi](s,u)ds-\delta^{\nu-2}\int_{D^{t, u}\cap\{\mu<\f{1}{10}\}}|\slashed d\Psi|^2+\delta^{-1}\int_{u_0}^uF_1[\Psi](t,u')du'\label{QV1}\\
&+\delta(1+\delta^{4\nu-6})\int_{t_0}^tE_2[\Psi](s,u)ds
\nonumber
\end{align}
and
\begin{align}
&\int_{D^{t, u}}-\frac{1}{2} \mu Q_{\al\beta}[\Psi]\leftidx{^{(V_2)}}\pi^{\al\beta}\nonumber\\
\lesssim&\int_{D^{t, u}}\Bigl\{(\mu^{-1}T\mu)_+(\mu|\slashed d\Psi|^2)+\delta^{\nu-2}(\mu|\slashed d\Psi|^2)+(1+\delta^{2\nu-3})|\underline L\Psi||\slashed d\Psi|+\mu|\mathring L\Psi||\slashed d\Psi|\nonumber\\
&\qquad\quad+\delta^{\nu-2}|\mathring L\Psi||\underline L\Psi|\Bigr\}\nonumber\\
\lesssim&\delta^{-\nu/2}\int_{t_0}^t(t^*-s)^{-1/2}E_1[\Psi](s,u)ds+\delta^{-1}\int_{u_0}^uF_2[\Psi](t,u')du'+\delta^{\nu-2}\int_{t_0}^tE_2[\Psi](s,u)ds\label{QV2}\\
&+\delta^{2-\nu}(1+\delta^{4\nu-6})\int_{D^{t, u}\cap\{\mu<\f{1}{10}\}}|\slashed d\Psi|^2\nonumber
\end{align}
Substituting \eqref{QV1} and \eqref{QV2} into \eqref{EI} leads to
\begin{equation}\label{e}
\begin{split}
&\delta^{\nu-1} E_2[\Psi](t, u) + \delta^{\nu-1} F_2[\Psi](t, u) + E_1[\Psi](t, u) + F_1[\Psi](t, u)+ \delta^{\nu-2} \int_{D^{t, u} \cap \{\mu < \tfrac{1}{10}\}} |\slashed{d}\Psi|^2 \\
\lesssim &\delta^{\nu-1} E_2[\Psi](t_0, u) + E_1[\Psi](t_0, u) + \delta^{\nu-1} \bigl| \int_{D^{t, u}} \Phi \cdot \mathring{\underline{L}}\Psi \bigr| + \bigl| \int_{D^{t, u}} \Phi \cdot \mathring{L}\Psi \bigr|.
\end{split}
\end{equation}
Choosing $\Psi = \Psi_\al^{n+1} = Z^{n+1}\varphi_\al$ and then
$\Phi = \Phi_\al^{n+1} = \mu\Box_g\Psi_\al^{n+1}$ ($n \leq 2N-6$)
so that \eqref{gel} holds. Note that
\begin{equation}\label{Psi}
\begin{split}
\Phi_\al^{n+1} &= \mu[\Box_g,Z]\Psi_\al^{n} + Z\big(\mu\Box_g\Psi_\al^{n}\big)-(Z\mu)\Box_g\Psi_\al^{n}\\
&= \mu\mathscr{D}^\beta\big({\leftidx{^{(Z)}}C_{\al}^{n}}_{,\beta}\big)
+ \big(Z + \leftidx{^{(Z)}}\lambda\big)\Phi_\al^{n},
\end{split}
\end{equation}
where
\begin{equation}\label{ClP}
\begin{split}
{\leftidx{^{(Z)}}C_{\al}^{n}}_{,\beta} &= \big(\leftidx{^{(Z)}}\pi_{\nu\beta}
- \frac{1}{2}g_{\nu\beta}\big(g_{\kappa\lambda}\leftidx{^{(Z)}}\pi^{\kappa\lambda}\big)\big)g^{\nu\iota}\partial_\iota\Psi_\al^{n}, \\
\leftidx{^{(Z)}}\lambda &= -\mu^{-1}\leftidx{^{(Z)}}\pi_{\mathring{LT}}
+ \frac{1}{2}\operatorname{tr}\leftidx{^{(Z)}}{\slashed{\pi}} - \mu^{-1}Z\mu,\\
\Psi_\al^0 &= \varphi_\al, \quad
\Phi_\al^0 = \mu\Box_g\varphi_\al,
\end{split}
\end{equation}
and $\Phi_\al^0$ equals the right hand side of \eqref{ge}.
Consequently, for $\Psi_\al^{n+1} = Z_{n+1}Z_{n}\cdots Z_{1}\varphi_\al$ with $Z_j \in \{\varrho\mathring{L}, T, R\}$,
by \eqref{Psi}, the induction argument gives
\begin{equation}\label{Phik}
\begin{split}
\Phi_\al^{n+1} &= \sum_{j=1}^{n} \big(Z_{n+1} + \leftidx{^{(Z_{n+1})}}\lambda\big) \cdots
\big(Z_{n+2-j} + \leftidx{^{(Z_{n+2-j})}}\lambda\big)
\big(\mu\mathscr{D}^\beta{\leftidx{^{(Z_{n+1-j})}}C_{\al}^{n-j}}_{,\beta}\big) \\
&\quad + \mu\mathscr{D}^\beta{\leftidx{^{(Z_{n+1})}}C_\al^{n}}_{,\beta}
+ \big(Z_{n+1} + \leftidx{^{(Z_{n+1})}}\lambda\big) \cdots
\big(Z_{1} + \leftidx{^{(Z_1)}}\lambda\big)\Phi_\al^0 \\
&=: J_1^{n+1} + J_2^{n+1}, \qquad n \geq 1, \\
\Phi_\al^{1} &= \big(Z_{1} + \leftidx{^{(Z_1)}}\lambda\big)\Phi_\al^0
+ \mu\mathscr{D}^\beta{\leftidx{^{(Z_{1})}}C_{\al}^{0}}_{,\beta},
\end{split}
\end{equation}
where $J_1^{n+1}$ and $J_2^{n+1}$ stand for the first and second lines on the right hand side of \eqref{Phik}, respectively.
By \eqref{Lpi}--\eqref{Rpi}, one has
\begin{equation}\label{lamda}
\begin{split}
\leftidx{^{(T)}}\lambda &= \frac{1}{2}\operatorname{tr}\leftidx{^{(T)}}{\slashed{\pi}}, \qquad
\leftidx{^{(\varrho\mathring{L})}}\lambda = \varrho\big\{\operatorname{tr}{\check{\mathscr X}}+\f12\slashed g^{XX}G_{XX}^\al\mathring L\vp_\al\big\}+2, \qquad
\leftidx{^{(R)}}\lambda = \frac{1}{2}\operatorname{tr}\leftidx{^{(R)}}{\slashed{\pi}}.
\end{split}
\end{equation}
In addition, as in \cite[(5.1)-(5.6) and (5.10)-(5.12)]{Ding4}, the
term $\mu\mathscr{D}^\beta{\leftidx{^{(Z)}}C_{\al}^{n}}_{,\beta}$ can be written as
\begin{equation}\label{muZC}
\mu\mathscr{D}^\beta{\leftidx{^{(Z)}}C_{\al}^{n}}_{,\beta}
= \leftidx{^{(Z)}}D_{\al,1}^n + \leftidx{^{(Z)}}D_{\al,2}^n + \leftidx{^{(Z)}}D_{\al,3}^n,
\end{equation}
where
\begin{align}
\leftidx{^{(T)}}D_{\al,1}^{n}=&(T\mu)\mathring L^2\Psi_\al^n+\mu(\slashed d_X\mu+2\mu\zeta_X)\slashed d^X\mathring L\Psi_\al^n+\f12\operatorname{tr}\leftidx{^{(T)}}{\slashed\pi}(\mathring L\mathring{\underline L}\Psi_\al^n+\f12\operatorname{tr}\chi\mathring{\underline L}\Psi_\al^n)\no\\
&+(\slashed d_X\mu+2\mu\zeta_X)\slashed d^X\mathring{\underline L}\Psi_\al^n-(T\mu)\slashed\triangle\Psi_\al^n
+\f12\mu\operatorname{tr}\leftidx{^{(T)}}{\slashed\pi}\slashed\triangle\Psi_\al^n,\label{T1}\\
\leftidx{^{(T)}}D_{\al,2}^{n}=&\big\{\mathring L T\mu+\f14\uwave{\mathring{\underline L}(\operatorname{tr}\leftidx{^{(T)}}{\slashed\pi})}+\boxed{\slashed\nabla_X\big(\f12\mu\slashed d^X\mu}+\mu^2\zeta^X\big)\big\}\mathring L\Psi_\al^n+\big\{\f14\mathring L(\operatorname{tr}\leftidx{^{(T)}}{\slashed\pi})\no\\
&+\f12\boxed{\slashed\nabla_X(\slashed d^X\mu}+2\mu\zeta^X)\big\}\mathring{\underline L}\Psi_\al^n+\big\{\f12\slashed{\mathcal L}_{\mathring L}(2\mu^2\zeta_X+\mu\slashed d_X\mu)-\uline{\slashed d_XT\mu}\no\\
&+\underline{\f12\slashed{\mathcal L}_{\mathring{\underline L}}(\slashed d_X\mu}+2\mu\zeta_X)+\underbrace{\f12\slashed d_X(\mu\operatorname{tr}\leftidx{^{(T)}}{\slashed\pi})}\big\}\slashed d^X\Psi_\al^n,\label{T2}\\
\leftidx{^{(T)}}D_{\al,3}^{n}=&\big\{\operatorname{tr}\chi T\mu+\f14(\mu\operatorname{tr}\chi+\operatorname{tr}\leftidx{^{(T)}}{\slashed \pi})\operatorname{tr}\leftidx{^{(T)}}{\slashed \pi}-\f12|\slashed d\mu|^2-\mu\zeta_X(\slashed d^X\mu)\big\}\mathring L\Psi_\al^n\no\\
&+(\f12\mathring L\mu-\mu\operatorname{tr}\chi)(\slashed d_X\mu+2\mu\zeta_X)\slashed d^X\Psi_\al^n,\label{T3}\\
\leftidx{^{(\varrho\mathring L)}}D_{\al,1}^{n}=&(2-\mu+\varrho\mathring L\mu)\mathring L^2\Psi_\al^n-2\varrho(\slashed d_X\mu+2\mu\zeta_X)\slashed d^X\mathring L\Psi_\al^n+\varrho\operatorname{tr}\chi(\mathring L\mathring{\underline L}\Psi_\al^n+\f12\operatorname{tr}\chi\mathring{\underline L}\Psi_\al^n)\no\\
&-(\varrho\mathring L\mu-\varrho\mu\operatorname{tr}{\check{\mathscr X}}-\f12\varrho\mu\slashed g^{XX}G_{XX}^\beta\mathring L\vp_\beta)\slashed\triangle\Psi_\al^n,\label{rL1}\\
\leftidx{^{(\varrho\mathring L)}}D_{\al,2}^{n}=&\big\{\mathring L\big(\varrho\mathring L\mu-\mu\big)+\uwave{\f12\mathring{\underline L}\big(\varrho\operatorname{tr}\check{\mathscr X}+\f12\varrho\slashed g^{XX}G_{XX}^\al\mathring L\vp_\al\big)}-\varrho\boxed{\slashed\nabla_X\big(\slashed d^X\mu}+2\mu\zeta^X\big)\big\}\mathring L\Psi_\al^n\no\\
&+\f12\mathring L\big(\varrho\operatorname{tr}\check{\mathscr X}+\f12\varrho\slashed g^{XX}G_{XX}^\beta\mathring L\vp_\beta\big)\mathring{\underline L}\Psi_\al^n\no\\
&-\big\{\slashed{\mathcal L}_{\mathring L}\big(\varrho\slashed d_X\mu+2\varrho\mu\zeta_X\big)+\slashed d_X\big(\mu+\varrho\mathring L\mu\big)-\varrho\underbrace{\slashed d_X(\mu\operatorname{tr}\chi)}\big\}\slashed d^X\Psi_\al^n,\label{rL2}\\
\leftidx{^{(\varrho\mathring L)}}D_{\al,3}^{n}=&\operatorname{tr}\chi\big\{2-\mu+\varrho\mathring L\mu+\f12\varrho\operatorname{tr}\leftidx{^{(T)}}{\slashed\pi}+\f12\varrho\mu\operatorname{tr}\chi\big\}\mathring L\Psi_\al^n+2\varrho\operatorname{tr}\chi(2\mu\zeta^X+\slashed d^X\mu)\slashed d_X\Psi_\al^n,\label{rL3}\\
\leftidx{^{(R)}}D_{\al,1}^{n}=&(R\mu)\mathring L^2\Psi_\al^n-\leftidx{^{(R)}}{\slashed\pi}_{\mathring{\underline L}X}\slashed d^X\mathring L\Psi_\al^n+\f12\operatorname{tr}\leftidx{^{(R)}}{\slashed\pi}(\mathring L\mathring{\underline L}\Psi_\al^n+\f12\operatorname{tr}\chi\mathring{\underline L}\Psi_\al^n)-\leftidx{^{(R)}}{\slashed\pi}_{\mathring LX}\slashed d^X\mathring{\underline L}\Psi_\al^n\no\\
&-(R\mu)\slashed\triangle\Psi_\al^n
+\f12\mu\operatorname{tr}\leftidx{^{(R)}}{\slashed\pi}\slashed\triangle\Psi_\al^n,\label{R1}\\
\leftidx{^{(R)}}D_{\al,2}^n=&\big\{\mathring LR\mu
-\f12\underbrace{\slashed\nabla^X\leftidx{^{(R)}}{\slashed\pi}_{\mathring{\underline L}X}}+\f14\uwave{\mathring{\underline L}(\operatorname{tr}\leftidx{^{(R)}}{\slashed\pi})}\big\}\mathring L\Psi_\al^n+\big\{\f14\mathring L(\operatorname{tr}\leftidx{^{(R)}}{\slashed{\pi}})\no\\
&-\f12\underbrace{\slashed\nabla^X\leftidx{^{(R)}}{\slashed\pi}_{\mathring LX}}\big\}\mathring{\underline L}\Psi_\al^n+\big\{-\f12\slashed{\mathcal L}_{\mathring L}\leftidx{^{(R)}}{\slashed\pi}_{\mathring{\underline L}X}-\f12\uwave{\slashed{\mathcal L}_{\mathring{\underline L}}\leftidx{^{(R)}}{\slashed\pi}_{\mathring LX}}
-\boxed{\slashed d_XR\mu}\no\\
&+\f12\underbrace{\slashed d_X(\mu\operatorname{tr}\leftidx{^{(R)}}{\slashed\pi})}\big\}\slashed d^X\Psi_\al^n.\label{R2}\\
\leftidx{^{(R)}}D_{\gamma,3}^{n}=&\big\{\operatorname{tr}\chi R\mu+\f14(\operatorname{tr}\leftidx{^{(T)}}{\slashed\pi}+\mu\operatorname{tr}\chi)\operatorname{tr}\leftidx{^{(R)}}{\slashed\pi}+\f12\slashed d^X\mu\leftidx{^{(R)}}{\slashed\pi}_{\mathring LX}\big\}\mathring L\Psi_\al^n
+\big\{\operatorname{tr}\chi\leftidx{^{(R)}}{\slashed\pi}_{TX}\no\\
&+\operatorname{tr}\leftidx{^{(R)}}{\slashed\pi}(\mu\zeta_X+\f12\slashed d_X\mu)+(-\f12\mathring L\mu+\mu\operatorname{tr}\chi+\f12\operatorname{tr}\leftidx{^{(T)}}{\slashed\pi})\leftidx{^{(R)}}{\slashed\pi}_{\mathring LX}\big\}\slashed d^X\Psi_\al^n.\label{R3}
\end{align}
Note that all the terms in $\leftidx{^{(Z)}}D_{\al,1}^m$ are products of the deformation tensor and the secon{-}order derivatives of $\Psi_\al^n$, except for the first term, which contains a factor of the form
\[
\mathring{L}\mathring{\underline{L}}\Psi_\al^n + \frac{1}{2}\operatorname{tr}{\chi}\mathring{\underline{L}}\Psi_\al^n
\]
(see \eqref{T1}, \eqref{rL1} and \eqref{R1}). It should be emphasized that this structure is crucial for our analysis
when $\Psi_\al^n$ includes
some derivatives of $\varphi_\al$. By \eqref{fequation}, the identity
\[
\mathring{L}\mathring{\underline{L}}\varphi_\al + \frac{1}{2}\operatorname{tr}{\chi}\mathring{\underline{L}}\varphi_\al
=\mu\slashed\triangle\vp_\al+ H_\al + \frac{1}{2}(\operatorname{tr}\check{\mathscr X}+\f12\slashed g^{XX}G_{XX}^\beta\mathring L\vp_\beta)\mathring{\underline{L}}\varphi_\al
\]
exhibits a better smallness order than that of either $\mathring{L}\mathring{\underline{L}}\varphi_\al$ or $\frac{1}{2}\operatorname{tr}{\chi}\mathring{\underline{L}}\varphi_\al$ individually.
The term $\leftidx{^{(Z)}}D_{\al,2}^n$ collects all products of the first order derivatives of the deformation tensor and the first
order derivatives of $\Psi_\al^n$, while $\leftidx{^{(Z)}}D_{\al,3}^n$ contains the remaining terms.
The explicit expressions of $\Phi_\al^{n+1}$ given in \eqref{Phik} and \eqref{muZC}--\eqref{R3} are important
for estimating the last two integrals in \eqref{e}. To derive the energy estimates, we introduce the
following definitions of energies and fluxes, inspired by \cite{Ch1,M-Y,Sp}:
\begin{equation*}\label{higher order energy and flux}
\begin{split}
\mathcal{E}_{i,\leq m+1}(t,u) &=
\sum_{\al=0}^2 \sum_{n \leq m}
\delta^{2l+2k(2-\nu)} \, E_i\bigl[Z^n \vp_\al\bigr](t,u),
\quad i=1,2, \\[8pt]
\mathcal{F}_{i,\leq m+1}(t,u) &=
\sum_{\al=0}^2 \sum_{n \leq m}
\delta^{2l+2k(2-\nu)} \, F_i\bigl[Z^n \vp_\al\bigr](t,u),
\quad i=1,2, \\[8pt]
\end{split}
\end{equation*}
\begin{equation}\label{higher order energy and flux}
\begin{split}
\widetilde{\mathcal{E}}_{i,\leq m+1}(t,u) &=
\sup_{t_0 \leq t' \leq t}
\big\{ \mu_{\min}^{2b_{m+1}}(t') \, \mathcal{E}_{i,\leq m+1}(t',u) \big\},
\quad i=1,2, \\[8pt]
\widetilde{\mathcal{F}}_{i,\leq m+1}(t,u) &=
\sup_{t_0 \leq t' \leq t}
\big\{ \mu_{\min}^{2b_{m+1}}(t') \, \mathcal{F}_{i,\leq m+1}(t',u) \big\},
\quad i=1,2.
\end{split}
\end{equation}
Here, $l$ (respectively $k$) denotes the number of $T$ (respectively $\rho \mathring{L}$) operators in $Z^n$,
and the sequence $\{b_k\}$ will be determined subsequently.

To establish higher order energy estimates for \eqref{ge}, we need to derive the higher order $L^2$ estimates for
certain related quantities.
These estimates allow the last two terms in \eqref{e} to be absorbed into the left hand side, thereby yield the desired energy estimates.
To achieve this, we apply two lemmas from \cite[Lemma 7.3]{M-Y} and \cite[Lemma 12.57]{Sp}, respectively.

\begin{lemma}\label{L2T}
Let $\psi \in C^1(D^{t, u})$ be a function vanishing on $C_{u_0}$. For small $\delta > 0$, the following estimates hold
\begin{align}
\int_{S_{t, u}} \psi^2 &\lesssim \delta \int_{\Sigma_t^u} |T\psi|^2 \lesssim \delta \int_{\Sigma_{t}^{u}}
\big(|\mathring{\underline{L}}\psi|^2 + \mu^2 |\mathring{L}\psi|^2\big), \label{SSi} \\
\int_{\Sigma_{t}^{u}} \psi^2 &\lesssim \delta^2 \int_{\Sigma_t^u} |T\psi|^2 \lesssim \delta^2 \int_{\Sigma_{t}^{u}} \big( |\mathring{\underline{L}}\psi|^2 + \mu^2 |\mathring{L}\psi|^2 \big). \label{SiSi}
\end{align}
As a consequence,
 \begin{align}
 \int_{S_{t, u}} \psi^2 &\lesssim \delta \big( E_2[\psi](t, u) + E_1[\psi](t, u) \big), \label{SE} \\
 \int_{\Sigma_{t}^{u}} \psi^2 &\lesssim \delta^2 \big( E_2[\psi](t, u) + E_1[\psi](t, u) \big). \label{SiE}
 \end{align}
\end{lemma}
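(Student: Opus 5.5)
We will prove \eqref{SSi} by a one-dimensional Poincaré-type argument in the $u$-direction. Then \eqref{SiSi} follows by integrating \eqref{SSi} in $u$, and \eqref{SE}--\eqref{SiE} follow by expressing $T$ through the null frame.

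\textbf{Step 1: the transverse flow.} As in the proof of \eqref{Tmu}, we pass to the coordinates $(t,\tilde u,\tilde\vartheta)$ in which $T=\frac{\partial}{\partial\tilde u}$. This change of variables is regular, since $\Theta$ is bounded by Lemma \ref{eXi}, and it leaves $\Sigma_t^u$ and $C_{u_0}$ invariant because $\tilde u=u$. Since $\psi$ vanishes on $C_{u_0}$, we have
\[
\psi(t,u,\tilde\vartheta)=\int_{u_0}^{u}(T\psi)(t,u',\tilde\vartheta)\,du'.
\]
Cauchy--Schwarz then gives $\psi^2\le (u-u_0)\int_{u_0}^u|T\psi|^2du'$, and $u-u_0\le U_0-u_0=4\delta/\mathcal A\lesssim\delta$.

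\textbf{Step 2: comparing area elements.} We integrate over $S_{t,u}$ with $d\nu_{\slashed g}=\sqrt{\slashed g_{XX}}\,d\vartheta$. We need $\sqrt{\slashed g_{XX}}(t,u,\cdot)\sim\sqrt{\slashed g_{XX}}(t,u',\cdot)$ uniformly for $u'\in[u_0,u]$, together with a harmless Jacobian for $\vartheta\mapsto\tilde\vartheta$. For this we use
\[
T\slashed g_{XX}={}^{(T)}\slashed\pi_{XX}+(\text{terms from }[T,X])=2\mu\sigma_{XX}+\cdots .
\]
By \eqref{theta} and Propositions \ref{prop:lower-order-Linfty-estimates} and \ref{prop:higher-order-L-infty-estimates}, this gives $|T\log\slashed g_{XX}|\lesssim M\delta^{\nu-2}$. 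Integrating over an interval of $u$-length $O(\delta)$ changes $\log\slashed g_{XX}$ by $O(M\delta^{\nu-1})=o(1)$. Alternatively, one can use directly that $\slashed g_{XX}\approx\varrho^2\approx\mathcal A^{-2}$ via $\check\varrho,\check L$.

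\textbf{Step 3: the two inequalities.} Steps 1 and 2 give $\int_{S_{t,u}}\psi^2\lesssim\delta\int_{\Sigma_t^u}|T\psi|^2$. Integrating this once more in $u$ over an interval of length $\lesssim\delta$ yields the $\delta^2$ bound in \eqref{SiSi}.

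\textbf{Step 4: converting to energies.} Since $2T=\mathring{\underline L}-\mu\mathring L$, we have $|T\psi|^2\lesssim|\mathring{\underline L}\psi|^2+\mu^2|\mathring L\psi|^2$, which is the second inequality in \eqref{SSi} and in \eqref{SiSi}. Using $\mu\lesssim M$ (Proposition \ref{prop:lower-order-Linfty-estimates}, or the $M$-independent bound $\mu\lesssim1$ obtained from \eqref{mathM}), we get $\mu^2|\mathring L\psi|^2\lesssim\mu|\mathring L\psi|^2$. Comparing with \eqref{E1}--\eqref{E2} then gives \eqref{SE} and \eqref{SiE}.

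\textbf{Main obstacle.} The main obstacle is Step 2: we must control the variation of the area density $\sqrt{\slashed g_{XX}}$ along $T$ despite the large factor $\delta^{\nu-2}$ in $T\varphi$. The short $u$-interval of length $O(\delta)$ compensates for this factor.
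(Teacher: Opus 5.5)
Your skeleton is the right one: integrate $T\psi$ along the $T$-curves from $C_{u_0}$, apply Cauchy--Schwarz over a $u$-interval of length $U_0-u_0=4\delta/\mathcal A$, integrate once more in $u$, then use $2T=\mathring{\underline L}-\mu\mathring L$ and $\mu\lesssim 1$. Steps 1, 3 and 4 are fine. The gap is in Step 2. Since $X=\partial_\vartheta$ and $T=\partial_u-\Theta^X X$, you have $[T,X]=(X\Theta^X)X$, so the ``terms from $[T,X]$'' give exactly
\[
T\log\slashed g_{XX}=\operatorname{tr}{}^{(T)}\slashed{\pi}+2X\Theta^X .
\]
Nothing you cite controls $X\Theta^X$. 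Lemma \ref{eXi} bounds only $|\Theta|$, and \eqref{theta} together with Proposition \ref{prop:higher-order-L-infty-estimates} handles only ${}^{(T)}\slashed{\pi}_{XX}=2\mu\sigma_{XX}$. The ``harmless Jacobian'' has the same problem: differentiating $T\tilde\vartheta=0$ in $\vartheta$ gives $T\log(\partial\tilde\vartheta/\partial\vartheta)=X\Theta^X$, so bounding it also requires $\int_{u_0}^u|X\Theta^X|\,du'$, which you never estimate. Your alternative $\slashed g_{XX}\approx\varrho^2$ does not touch the Jacobian. It is true, but it comes from transporting $\mathring L\log(\slashed g_{XX}/\varrho^2)=2\operatorname{tr}\check{\mathscr X}+\slashed g^{XX}G^\kappa_{XX}\mathring L\vp_\kappa$ from $t_0$, not from $\check\varrho$ and $\check L$.

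The fix is to not split the two factors. Write $d\nu_{\slashed g}=a\,d\tilde\vartheta$ with $a=\sqrt{\slashed g(\partial_{\tilde\vartheta},\partial_{\tilde\vartheta})}$. Because $T=\partial_{\tilde u}$ and $\partial_{\tilde\vartheta}$ are coordinate fields, $[T,\partial_{\tilde\vartheta}]=0$, and $\partial_{\tilde\vartheta}$ is tangent to $S_{t,u}$. Hence $T\log a=\frac12\operatorname{tr}{}^{(T)}\slashed{\pi}=\mu\operatorname{tr}\sigma$ exactly; the two $X\Theta^X$ contributions cancel. With $|{}^{(T)}\slashed{\pi}|\lesssim M\delta^{\nu-2}$ from Proposition \ref{prop:higher-order-L-infty-estimates} and $u$-width at most $4\delta/\mathcal A$, you get $a(u,\tilde\vartheta)=(1+O(M\delta^{\nu-1}))\,a(u',\tilde\vartheta)$, and your Step 1 then yields \eqref{SSi}.

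This is the pointwise form of the argument the paper imports without proof from \cite[Lemma 7.3]{M-Y}. There one differentiates $\int_{S_{t,u}}\psi^2$ in $u$. The $\Theta$-part of $\partial_u=T+\Theta^X X$ contributes the integral of an angular divergence over the closed curve $S_{t,u}$, which vanishes, leaving $\int_{S_{t,u}}(2\psi T\psi+\frac12\operatorname{tr}{}^{(T)}\slashed{\pi}\,\psi^2)$. This is then integrated over the $O(\delta)$ range of $u$. In both versions only $\operatorname{tr}{}^{(T)}\slashed{\pi}$ enters.

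A minor point: the change of variables is a diffeomorphism because it is the flow of a smooth field with $Tu=1$ on a circle, not because $|\Theta|$ is small.
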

\begin{lemma}\label{L2L}
 For a given function $f \in C(D^{t, u})$, define
 \[
 F(t,u,\vartheta) := \int_{t_0}^t f(\tau,u,\vartheta) \, d\tau.
 \]
 Under the assumptions \eqref{bootstrap-assumptions}, the following estimate holds for small $\delta > 0$,
 \begin{equation}\label{Ff}
 \|F\|_{t,u} \lesssim \int_{t_0}^t \|f\|_{\tau,u} \, d\tau,
 \end{equation}
 where $\|F\|_{t,u}$ denotes the $L^2$ norm of $F$ over $\Sigma_t^u$, as defined in Definition \ref{3.4}.
\end{lemma}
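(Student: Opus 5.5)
The statement to prove is Lemma \ref{L2L}: with $F(t,u,\vartheta)=\int_{t_0}^t f(\tau,u,\vartheta)\,d\tau$, we want $\|F\|_{t,u}\lesssim\int_{t_0}^t\|f\|_{\tau,u}\,d\tau$. The plan is to work in the coordinates $(t,u,\vartheta)$, where $\mathring L=\partial_t$. By Definition \ref{3.4},
\[
\|F\|_{t,u}^2=\int_{u_0}^u\int_{\mathbb S}F^2(t,u',\vartheta)\sqrt{\slashed g_{XX}(t,u',\vartheta)}\,d\vartheta\,du'.
\]
So the whole task is to compare the area density $\sqrt{\slashed g_{XX}}$ at time $t$ with its value at an earlier time $\tau\in[t_0,t]$. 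The estimate will then follow from Minkowski's integral inequality.

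\textbf{Step 1: evolution of the density.} Since $[\mathring L,X]=0$, we have
\[
\mathring L\slashed g_{XX}={}^{(\mathring L)}\slashed\pi_{XX}=2\chi_{XX}.
\]
By \eqref{errorv-1},
\[
\mathring L\log\sqrt{\slashed g_{XX}}=\operatorname{tr}\chi=\operatorname{tr}\check{\mathscr X}+\tfrac12\slashed g^{XX}G^\kappa_{XX}\mathring L\vp_\kappa+\varrho^{-1}.
\]
Proposition \ref{prop:lower-order-Linfty-estimates} gives $|\operatorname{tr}\check{\mathscr X}|\lesssim M\delta^{\nu-1}$. The bootstrap assumptions \eqref{bootstrap-assumptions} give $|\mathring L\vp|\lesssim M\delta^{2\nu-3}$. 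Moreover $\varrho=t-u\sim 1$ on the region of interest. The interval length is $t-t_0\le t_*\lesssim\delta^{2-\nu}$. Hence for $t_0\le\tau\le t$,
\[
\Big|\log\frac{\sqrt{\slashed g_{XX}}(t,u',\vartheta)}{\sqrt{\slashed g_{XX}}(\tau,u',\vartheta)}\Big|\lesssim (1+M\delta^{2\nu-3})\delta^{2-\nu}+M\delta^{\nu-1}\delta^{2-\nu}\lesssim 1,
\]
because $\delta^{2\nu-3}\cdot\delta^{2-\nu}=\delta^{\nu-1}$ is small for small $\delta$. Therefore
\[
\sqrt{\slashed g_{XX}}(t,\cdot)\sim\sqrt{\slashed g_{XX}}(\tau,\cdot)
\]
uniformly, with constants independent of $M$ once $\delta$ is small.

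\textbf{Step 2: Minkowski.} Write $F(t,\cdot)=\int_{t_0}^t f(\tau,\cdot)\,d\tau$ as a function on the fixed parameter domain $[u_0,u]\times\mathbb S$, carrying the weight $\sqrt{\slashed g_{XX}(t,\cdot)}$. Minkowski's integral inequality gives
\[
\|F\|_{t,u}\le\int_{t_0}^t\Big(\int_{u_0}^u\!\!\int_{\mathbb S}f^2(\tau,u',\vartheta)\sqrt{\slashed g_{XX}(t,u',\vartheta)}\,d\vartheta\,du'\Big)^{1/2}d\tau.
\]
Replacing the weight $\sqrt{\slashed g_{XX}}(t,\cdot)$ by the comparable $\sqrt{\slashed g_{XX}}(\tau,\cdot)$ from Step 1 turns each inner integral into $\lesssim\|f\|_{\tau,u}$, which is \eqref{Ff}.

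\textbf{Main obstacle.} The only delicate point is Step 1: $\mathring L\vp$ is large, of order $\delta^{2\nu-3}$, so the uniform comparability of the area density is not automatic. It is rescued by the short interval $t_*\sim\delta^{2-\nu}$, which makes the product of size $\delta^{\nu-1}$. One should also check that $\mu$, which may degenerate, does not enter. It does not, since the measures in Definition \ref{3.4} use $du'\,d\vartheta$ with only the $\slashed g$ density and no Jacobian factor $\mu$.
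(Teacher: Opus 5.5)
Your proof is correct and is essentially the standard argument behind the lemmas the paper cites for this result ([M-Y, Lemma 7.3], [Sp, Lemma 12.57]); the paper itself gives no separate proof. You use $\mathring L\log\sqrt{\slashed g_{XX}}=\operatorname{tr}\chi$ together with the bootstrap bounds and the short interval $t_*\lesssim\delta^{2-\nu}$ to make the area densities on $\Sigma_t^u$ and $\Sigma_\tau^u$ uniformly comparable, and then apply Minkowski's inequality on the fixed $(u',\vartheta)$ parameter domain.
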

\subsection{$L^2$ norms of some related quantities}
We now derive the $L^2$ estimates for some quantities such as
${\check{\mathscr X}}$, $\leftidx{^{(Z)}}{\slashed\pi}_{\mathring L}$, $\leftidx{^{(R)}}{\slashed\pi}$, $\check{L}$, $\upsilon$, $y^j$
$(j=1,2)$ and $R$. It should be noted that the choice of the good known $\check{\mathscr X}$ and suitable smallness orders of $\delta$
play crucial roles here.
\begin{proposition}\label{L2chi}
 Under the assumptions \eqref{bootstrap-assumptions}, when $\de>0$ is small, it holds that for $m\leq 2N-6$,
 \begin{equation}
 \begin{split}
 &\de^{l+k(2-\nu)}\|Z^{m}\operatorname{tr}\check{\mathscr X}\|_{t,u}\les\de^{\nu-\f12}+\Theta^1(t,u),\\
 &\de^{l+k(2-\nu)}\|Z^{m+1}\mu\|_{t,u}\les\de^{\f12}+\Theta^2(t,u),\\
 &\de^{l+k(2-\nu)}\|Z^{m+2} y^j\|_{t,u}\les\de^{\f12}+\Theta^3(t,u),\\
 &\de^{l+k(2-\nu)}\|\Lies_Z^{m+1}R\|_{t,u}\les(\delta^{2\nu-2}+\delta^{2-\nu})\big(\de^{\nu-\f12}+\Theta^3(t,u)\big),\\
 &\de^{l+k(2-\nu)}\|\Lies_Z^{m+1}\slashed g\|_{t,u}\les\de^{\f12}(\delta^{\nu-1}+\delta^{2-\nu})+\Theta^3(t,u),\\
 &\de^{l+k(2-\nu)}\|\Lies_Z^{m}{}^{(R)}\pis_L\|_{t,u}\les(1+\delta^{3\nu-4})\big(\delta^{\nu-\f12}+\Theta^3(t,u)\big),\\
 &\de^{l+k(2-\nu)}\|\Lies_Z^{m}{}^{(R)}\pis_T\|_{t,u}\les(1+\delta^{2\nu-3})\big(\delta^{\nu-\f12}+\Theta^3(t,u)\big),\\
 &\de^{l+k(2-\nu)}\big(\|Z^{m+1}\Lc^j\|_{t,u}+\|Z^{m+1}\check\varrho\|_{t,u}+\|Z^{m+1}v\|_{t,u}+\|\Lies_Z^{m}{}^{(R)}\pis\|_{t,u}\big)\les\de^{\nu-\f12}
+\Theta^3(t,u),\\
 &\de^{l+k(2-\nu)}\|\Lies_Z^{m}{}^{(T)}\pis\|_{t,u}\les\delta^{\nu-\f32}+\sqrt{\mathcal E_{1,\leq m+2}(t,u)}
+\sqrt{\mathcal E_{2,\leq m+2}(t,u)}\\
 &\qquad\qquad+\delta^{\nu-2}\int_{t_0}^t\mu_{\min}^{-\f12}(\tau)\sqrt{\mathcal E_{1,\leq m+2}(\tau,u)}d\tau+(1+\delta^{2\nu-3})\int_{t_0}^t\sqrt{\mathcal E_{2,\leq m+2}(\tau,u)}d\tau,\\
 &\de^{l+k(2-\nu)}\|\Lies_Z^{m}{}^{(T)}\pis_L\|_{t,u}\les\delta^{2\nu-\f52}+\delta^{\f12}+\delta^{\nu-1}\big(\sqrt{\mathcal E_{1,\leq m+2}(t,u)}+\sqrt{\mathcal E_{2,\leq m+2}(t,u)}\big)\\
 &\qquad+(1+\delta^{2\nu-3})\int_{t_0}^t\mu_{\min}^{-\f12}(\tau)\sqrt{\mathcal E_{1,\leq m+2}(\tau,u)}d\tau+(1+\delta^{3\nu-4})\int_{t_0}^t\sqrt{\mathcal E_{2,\leq m+2}(\tau,u)}d\tau,
 \end{split}
 \end{equation}
 where $l$ (resp. $k$) is the number of $T$ (resp. $\rho \mathring L$) in the string of $Z$, and
 \begin{equation*}
 \begin{split}
 &\Theta^1(t,u)=\int_{t_0}^t\big(\mu_{\min}^{-\f12}(\tau)\sqrt{\mathcal E_{1,\leq m+2}(\tau,u)}+\de^{\nu-1}\sqrt{\mathcal E_{2,\leq m+2}(\tau,u)}\big)\d\tau,\\
 &\Theta^2(t,u)=\int_{t_0}^t\big(\mu_{\min}^{-\f12}(\tau)\sqrt{\mathcal E_{1,\leq m+2}(\tau,u)}+\sqrt{\mathcal E_{2,\leq m+2}(\tau,u)}\big)\d\tau,\\
 &\Theta^3(t,u)=\Theta^1(t,u)+\delta\big(\sqrt{\mathcal E_{1,\leq m+2}(t,u)}+\sqrt{\mathcal E_{2,\leq m+2}(t,u)}\big).
 \end{split}
 \end{equation*}
\end{proposition}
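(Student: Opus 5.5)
The plan is to prove Proposition \ref{L2chi} by induction on $m$, in the same way Proposition \ref{prop:higher-order-L-infty-estimates} was proved. The difference is that $L^\infty$ bounds on the top-order factors are now replaced by $L^2$ bounds, and the transport equations are integrated with Lemma \ref{L2L} instead of pointwise. The quantity driving everything is $\operatorname{tr}\check{\mathscr X}$. We commute $Z^m$ (with $Z\in\{\varrho\mathring L,T,R\}$) through \eqref{Lchi'}, giving
\begin{equation*}
\mathring L\bigl(\varrho^2 Z^m\operatorname{tr}\check{\mathscr X}\bigr)=Z^m\bigl(\text{RHS of }\eqref{Lchi'}\bigr)+[\mathring L,Z^m](\varrho^2\operatorname{tr}\check{\mathscr X}).
\end{equation*}
The commutators are expressed through \eqref{c} via ${}^{(Z)}\slashed\pi_{\mathring L}$, which is controlled at lower order by \eqref{eq:higher-order-L-infty-estimates}. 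The linear terms in $\operatorname{tr}\check{\mathscr X}$ carry coefficients of size $\delta^{2\nu-3}$, and on $[t_0,t_*]$ with $t_*\sim\delta^{2-\nu}$ their time integral is $\delta^{\nu-1}\ll1$. The quadratic term $\varrho^2(\operatorname{tr}\check{\mathscr X})^2$ is handled by placing the lower-order factor in $L^\infty$ using Proposition \ref{prop:lower-order-Linfty-estimates}. So after applying Lemma \ref{L2L} and a Gronwall argument in $t$, the only inhomogeneities left are top-order derivatives of $\varphi$.

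The terms of highest order are $\varrho^2G^\al_{X\mathring L}\slashed d^X\mathring L Z^m\varphi_\al$ and $\tfrac12\varrho^2G^\al_{\mathring L\mathring L}\slashed\triangle Z^m\varphi_\al$, both of order $m+2$ in $\varphi$. For the first, the $\slashed d$-derivative of $\mathring L Z^m\varphi$ is bounded by $\sqrt{\mathcal E_{1,\le m+2}}$. The inverse weight $\mu^{-1/2}$ comes from the factor $\mu$ in $E_1$, and bounding it by $\mu_{\min}^{-1/2}$ produces the first integrand of $\Theta^1$. For $\slashed\triangle Z^m\varphi$, Corollary \ref{cor:12form} turns it into $R^{\le 2}$-derivatives, so it is again controlled by $\mu_{\min}^{-1/2}\sqrt{\mathcal E_{1,\le m+2}}$. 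Any contribution involving $\mathring{\underline L}$ or $T$ derivatives goes through $E_2$, gains $\delta^{\nu-1}$ from the $\delta$-weights, and gives the second integrand of $\Theta^1$. The remaining products, such as $\slashed d\varphi\cdot\mathring L\varphi$ at top order, are bounded by one $L^\infty$ factor times Lemma \ref{L2T}, specifically \eqref{SiE}, which contributes a factor $\delta$. For the data term, \eqref{errorv-1} together with Theorem \ref{Local} gives $|Z^m\operatorname{tr}\check{\mathscr X}|\lesssim\delta^{\nu-1}$ weighted on $\Sigma_{t_0}^u$, whose $u$-width is $O(\delta)$; this yields $\delta^{\nu-1/2}$. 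The weighting $\delta^{l+k(2-\nu)}$ is matched term by term against \eqref{localsmallness}.

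The other quantities follow in order from $\operatorname{tr}\check{\mathscr X}$. For $\mu$, commuting $Z^{m+1}$ with \eqref{lmu} puts the top term $G^\al_{\mathring L\mathring L}TZ^{m}\varphi_\al$ in $E_2$ (giving $\Theta^2$ without a $\delta^{\nu-1}$ gain), and the initial value $\mu=1+O(\delta^{\nu-1})$ contributes $\delta^{1/2}$. Next, $\check L^j$, $\slashed d y^j$, $\check\varrho$, $\upsilon$, $R$ and $\slashed g$ come from \eqref{deL}, \eqref{LeL} and \eqref{rrho}; here the $\slashed d$-derivative brings in $\operatorname{tr}\check{\mathscr X}$ with no loss, and the extra $\delta(\sqrt{\mathcal E_1}+\sqrt{\mathcal E_2})$ in $\Theta^3$ comes from \eqref{SiE}. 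Finally, ${}^{(R)}\slashed\pi$, ${}^{(R)}\slashed\pi_L$, ${}^{(R)}\slashed\pi_T$, ${}^{(T)}\slashed\pi$ and ${}^{(T)}\slashed\pi_L$ are read off from Lemma \ref{lem3.4} and \eqref{zeta}, \eqref{theta}. The $(T)$-deformation tensor contains $\tfrac12G^\gamma_{XX}\tilde T\varphi_\gamma$ and $\slashed d\mu$ at top order. These are not integrated in time, which explains the non-integrated $\sqrt{\mathcal E}$ terms and the weaker $\delta^{\nu-3/2}$ in those two estimates.

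The main obstacle will be the top-order term $\varrho^2 G^\al_{\mathring L\mathring L}\slashed\triangle\varphi_\al$ in the $\operatorname{tr}\check{\mathscr X}$ equation. It requires $m+2$ derivatives of $\varphi$, so it could lose a derivative, and the $\mu^{-1/2}$ weight must be exactly the one that is integrable against the $\mu_{\min}$ bounds of Lemma \ref{lem:higher_order_L2_estimates_mu}. I would first try to control it directly in $E_1$ via Corollary \ref{cor:12form}, accepting the $\mu_{\min}^{-1/2}$ loss inside the time integral. If the $\delta$-counting then fails for the $\mathring L$-heavy strings, the fallback is to substitute \eqref{fequation} and write $\mu\slashed\triangle\varphi$ as $\mathring L\mathring{\underline L}\varphi+\dots$, then move the total $\mathring L$-derivative to the left-hand side as a modified unknown.
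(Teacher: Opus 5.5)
Your overall strategy is the paper's. You commute $Z^m$ through \eqref{Lchi'} and \eqref{lmu} and integrate with Lemma \ref{L2L} and Gronwall. You bound the top-order $\varphi$-terms by $\mu_{\min}^{-1/2}\sqrt{\mathcal E_{1,\le m+2}}$ and $\sqrt{\mathcal E_{2,\le m+2}}$, use \eqref{SiE} for the undifferentiated $Z^{m+1}\varphi$ (this is the $\delta(\sqrt{\mathcal E_1}+\sqrt{\mathcal E_2})$ in $\Theta^3$), and read the deformation tensors off Lemma \ref{lem3.4}.

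\textbf{The gap.} The order you propose does not close at the $\mu$ step. That order is: $\operatorname{tr}\check{\mathscr X}$, then $\mu$, then $\Lc^j,y^j,\upsilon,\check\varrho,R,\slashed g$.
\begin{itemize}
\item In $\mathring L Z^{m+1}\mu$, the source $\frac12G^\gamma_{\mathring L\mathring L}T\varphi_\gamma$ from \eqref{lmu} produces $(Z^{m+1}G^\gamma_{\mathring L\mathring L})\,T\varphi_\gamma$. Since $G_{\mathring L\mathring L}$ depends on $\mathring L^i$, this contains $\delta^{\nu-2}\,\delta^{l+k(2-\nu)}\|Z^{m+1}\Lc^i\|_{t,u}$, exactly the term listed in \eqref{Lmu}.
\item That is one order beyond what your induction hypothesis at level $m-1$ supplies, which is only $Z^{m}\Lc$.
\item You also cannot get $Z^{m+1}\Lc$ before $\mu$. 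By \eqref{TL}, $T\mathring L^i$ contains $(\slashed d_X\mu)\slashed d^Xy^i$, so every string with a $T$ produces $\slashed dZ^m\mu$, i.e.\ $Z^{m+1}\mu$. This is the $(\delta+\delta^{2\nu-2})\|Z^{m_0}\mu\|$ term in \eqref{ZkL}.
\end{itemize}
So $Z^{m+1}\mu$ and $Z^{m+1}\Lc$ feed into each other at the same level. A sequential scheme has nothing to put into the $\mu$ estimate.

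\textbf{The repair} is how the paper organizes the proof.
\begin{enumerate}
\item Derive the \emph{non-integrated} bounds \eqref{ZkL}--\eqref{Zkg}. These come from $\varrho\mathring L\Lc^i=\varrho\mathring L\mathring L^i-\Lc^i$, \eqref{TL}, \eqref{deL}, the formulas for $Ty^i$ and $Ry^i$, \eqref{omega} and \eqref{rrho}.
\item These express $Z^{m+1}\Lc$, $Z^{m+2}y$, $Z^{m+1}\upsilon$, $Z^{m+1}\check\varrho$, $\Lies_Z^{m+1}\slashed g$ and $\Lies_Z^{m+1}R$ through $\Pi(t,u)$. That is, through $\|Z^{\le m}\operatorname{tr}\check{\mathscr X}\|_{t,u}$, $\|Z^{\le m+1}\mu\|_{t,u}$ and $\delta(\sqrt{\mathcal E_{1,\le m+2}}+\sqrt{\mathcal E_{2,\le m+2}})$.
\item Substitute these into the transport inequalities \eqref{LX} and \eqref{Lmu}, and run Gronwall on the coupled pair $(\operatorname{tr}\check{\mathscr X},\mu)$.
\end{enumerate}
This closes because the cross-coefficients are small after integration over $t\lesssim\delta^{2-\nu}$. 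For example, $\mu$ feeds back into itself through $\Lc$ with coefficient $\delta^{\nu-2}(\delta+\delta^{2\nu-2})$. Treating the pair simultaneously also absorbs any same-order $\mu$ or $\Lc$ terms in the $\operatorname{tr}\check{\mathscr X}$ equation (cf.\ the $\|Z^{m_0}\mu\|$ and $\|\slashed dZ^{m_1}\Lc\|$ terms in \eqref{LX}). So you do not need to decide whether ``$\operatorname{tr}\check{\mathscr X}$ first'' is legitimate.

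\textbf{Two minor corrections.}
\begin{itemize}
\item The $\delta^{1/2}$ in the $\mu$ bound does not come from $\mu|_{t_0}-1=O(\delta^{\nu-1})$, which would only give $\delta^{\nu-1/2}$. It comes from the source $\frac12G_{\mathring L\mathring L}T\varphi=O(\delta^{\nu-2})$, integrated over $t\lesssim\delta^{2-\nu}$; it is already present at $t_0$ in the $\mathring L$-derivatives.
\item The $\slashed\triangle\varphi$ term is not an obstacle here. Since $\mathcal E_{\le m+2}$ is allowed on the right-hand side, it goes straight into $\Theta^1$ through $E_1$. Your fallback (a modified unknown) is what the paper uses later, in Subsection \ref{Section 6.2}, for the genuinely top-order $\slashed dZ^m\operatorname{tr}\check{\mathscr X}$ and $Z^m\slashed\triangle\mu$.
\end{itemize}
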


\begin{remark}
 The proof of Proposition \ref{L2chi} is somewhat similar to that of \cite[Proposition 3.3]{Ding4} concerning the global smooth solution problem.
 However, in \cite{Ding4}, greater attention was paid to the decay rate of each quantity since $\mu \sim 1$ holds. In the present work,
$\mu \to 0^+$ will happen when the finite blowup distance is approached. Therefore, we have to provide the detailed proof of Proposition \ref{L2chi}.
\end{remark}

\begin{proof}
By Lemma \ref{Lem3.3}, the explicit expressions of $\varrho\mathring{L}\check{L}^i$, $T\check{L}^i$ and $R\check{L}^i$
can be given as
\[
\varrho\mathring{L}\check{L}^i = \varrho\mathring{L}\mathring{L}^i - \check{L}^i,
\quad
T\check{L}^i = T\mathring{L}^i + \frac{\mu}{\varrho}g^{0i} + \frac{\mu}{\varrho}\check{L}^i + \frac{\mu-1}{\varrho^2}y^i,
\quad
R\check{L}^i = {R^X}\slashed{d}_X\check{L}^i.
\]
One then gets that by $\|1\|_{s,u}\lesssim\delta^{1/2}$,
\begin{equation}\label{ZkL}
\begin{split}
&\delta^{l+k(2-\nu)} \| Z^{m+1}\check{L}^i \|_{t,u} \\
\lesssim&\delta^{\nu-\frac{1}{2}} \bigl( \delta^{\nu-1} + \delta^{2-\nu} \bigr)
+ \delta^{l_1+k_1(2-\nu)} \| Z^{m_1} \operatorname{tr}\check{\mathscr{X}} \|_{t,u} + \delta^{\nu-1} \delta^{l_0+k_0(2-\nu)} \| Z^{m_0}y \|_{s,u}\\
&+ \bigl( \delta + \delta^{2\nu-2} \bigr) \delta^{l_0+k_0(2-\nu)} \| Z^{m_0}\mu \|_{t,u} + \bigl( \delta + \delta^{2\nu-2} \bigr) \delta^{l_1+k_1(2-\nu)} \| \slashed{\mathcal{L}}_Z^{m_1}\slashed{g} \|_{t,u} \\
&+ \delta \big( \sqrt{\mathcal{E}_{1,\leq m+2}(t,u)} + \sqrt{\mathcal{E}_{2,\leq m+2}(t,u)} \big),
\end{split}
\end{equation}
where $l$ (or $k$) and $l_p$ ($p=0,1$) are the numbers of $T$ (or $\varrho\mathring{L}$) in $Z^{m+1}$ and $Z^{m_p}$, respectively,
meanwhile $1 \leq m_p \leq m+1-p$.
Similarly, due to $\varrho Ly^i=\varrho\check L^i+y^i$, $Ty^i=\mu(-g^{0i}-\check L^i-\f{y^i}\varrho)$ and $Ry^a=\Omega^a+\upsilon(g^{0a}+\check L^a+\f{y^a}\varrho)$, then
\begin{equation}\label{Zkx}
\begin{split}
&\delta^{l+k(2-\nu)}\|Z^{m+2}y\|_{t,u}\\
\lesssim&\delta^{\f12}+\delta^{l_0+k_0(2-\nu)}\|Z^{m_0}\upsilon\|_{t,u}
+(\delta^{\nu-1}+\delta^{2-\nu})\sum_{j=1}^2\delta^{l_0+k_0(2-\nu)}\|Z^{m_0}\check L^j\|_{t,u}\\
&+\delta\delta^{l_{0}+k_0(2-\nu)}\|Z^{m_{0}}\mu\|_{t,u}+\delta^{\nu}\big( \sqrt{\mathcal{E}_{1,\leq m+2}(t,u)} + \sqrt{\mathcal{E}_{2,\leq m+2}(t,u)} \big).
\end{split}
\end{equation}
And it follows from $\slashed{\mathcal L}_{\varrho\mathring L}R=\varrho\leftidx{^{(R)}}{\slashed\pi}_{\mathring L}$, $\slashed{\mathcal L}_TR=\leftidx{^{(R)}}{\slashed\pi}_T$ and \eqref{Rpi} that
\begin{equation}\label{ZR}
\begin{split}
&\delta^{l+k(2-\nu)}\|\slashed{\mathcal L}_Z^{m+1}R\|_{t,u}\\
\lesssim&\delta^{\f32}+(\delta^{3\nu-3}+\delta)\delta^{l_0+k_0(2-\nu)}\|Z^{m_0}y\|_{t,u}+(\delta+\delta^{2\nu-2})\delta^{l_1+k_1(2-\nu)}\|Z^{m_1}\upsilon\|_{t,u}\\
&+\delta^{2-\nu}\delta^{l_1+k_1(2-\nu)}\|Z^{m_1}\operatorname{tr}\check{\mathscr X}\|_{t,u}+(\delta^{3\nu-3}+\delta^{2-\nu})\sum_{j=1}^2\delta^{l_0+k_0(2-\nu)}\|Z^{m_1}\check L^j\|_{t,u}\\
&+(\delta^{3\nu-3}+\delta)\delta^{l_1+k_1(2-\nu)}\|\slashed{\mathcal L}_Z^{m_1}\slashed g\|_{t,u}+\delta^{\nu}\delta^{l_{0}+k_0(2-\nu)}\|Z^{m_{0}}\mu\|_{t,u}\\
&+(\delta^{2\nu-1}+\delta^2) \big( \sqrt{\mathcal{E}_{1,\leq m+2}(t,u)} + \sqrt{\mathcal{E}_{2,\leq m+2}(t,u)}\big).
\end{split}
\end{equation}
Additionally, by \eqref{omega}, \eqref{rrho} and the facts of $\slashed{\mathcal L}_{\varrho\mathring L}\slashed g=\varrho\check\chi+\slashed g$, $\slashed{\mathcal L}_T\slashed g=\leftidx{^{(T)}}{\slashed\pi}$ and $\slashed{\mathcal L}_R\slashed g=\leftidx{^{(R)}}{\slashed\pi}$, it holds
\begin{equation}\label{Zku}
\begin{split}
\delta^{l+k(2-\nu)}\|Z^{m+1}\upsilon\|_{t,u}
\lesssim& \delta^{\nu-1}\delta^{l_0+k_0(2-\nu)}\|Z^{k_0}y\|_{t,u}+\sum_{i=1}^2\delta^{l_0+k_0(2-\nu)}\|Z^{m_0}\check L^i\|_{t,u}\\
&+\delta\bigl( \sqrt{\mathcal{E}_{1,\leq m+2}(t,u)} + \sqrt{\mathcal{E}_{2,\leq m+2}(t,u)} \bigr),
\end{split}
\end{equation}
\begin{equation}\label{Zkr}
\begin{split}
\delta^{l+k(2-\nu)}\|Z^{m+1}\check\varrho\|_{t,u}\lesssim&\delta^{\nu-1}\delta^{l_0+k_0(2-\nu)}\|Z^{k_0}y\|_{t,u}
+\sum_{i=1}^2\delta^{l_0+k_0(2-\nu)}\|Z^{m_0}\check L^i\|_{t,u}\\
&+\delta^{2\nu-\f32}+ \delta \bigl( \sqrt{\mathcal{E}_{1,\leq m+2}(t,u)} + \sqrt{\mathcal{E}_{2,\leq m+2}(t,u)} \bigr),
\end{split}
\end{equation}
\begin{equation}\label{Zkg}
\begin{split}
\delta^{l+k(2-\nu)}\|\slashed{\mathcal L}_Z^{m+1}\slashed g\|_{t,u}\lesssim&(\delta^{\nu-1}+\delta^{2-\nu})\bigl(\delta^{1/2}+\delta^{l_1+k_1(2-\nu)}\|Z^{m_1}\operatorname{tr}\check{\mathscr X}\|_{t,u}\bigr)\\
&+\delta^{l_1+k_1(2-\nu)}\|Z^{m_1}\upsilon\|_{t,u}+\delta\delta^{l_{0}+k_0(2-\nu)}\|Z^{m_{0}}\mu\|_{t,u}\\
&+\delta^{\nu-1}\sum_{i=1}^2\delta^{l_0+k_0(2-\nu)}\|Z^{m_1}\check L^i\|_{t,u}+\delta^{\nu-1}\delta^{l_0+k_0(2-\nu)}\|Z^{k_0}y\|_{t,u}\\
&+ \delta \bigl( \sqrt{\mathcal{E}_{1,\leq m+2}(t,u)} + \sqrt{\mathcal{E}_{2,\leq m+2}(t,u)} \bigr).
\end{split}
\end{equation}
Collecting \eqref{ZkL}-\eqref{Zkg} yields
\begin{align}
\delta^{l+k(2-\nu)}\|Z^{m+2}y^i\|_{t,u}\lesssim &\delta^{1/2}+\Pi(t,u),\label{Zk1x}\\
\delta^{l+k(2-\nu)}\|\slashed{\mathcal L}_Z^{m+1}\slashed g\|_{t,u}\lesssim&\delta^{1/2}(\delta^{\nu-1}+\delta^{2-\nu})+\Pi(t,u),\label{Zk1g}\\
\delta^{l+k(2-\nu)}\bigl(\|Z^{m+1}\check L^i\|_{t,u}&+\|Z^{m+1}\upsilon\|_{t,u}+\|Z^{m+1}\check\varrho\|_{t,u}\bigr)\lesssim\delta^{\nu-\f12}+\Pi(t,u),\label{Zk1L}\\
\delta^{l+k(2-\nu)}\|\slashed{\mathcal L}_Z^{m+1}R\|_{t,u}\lesssim & \delta^{\f32}+\delta^{3\nu-\f52}+(\delta^{2\nu-2}+\delta^{2-\nu})\|Z^{m_1}\operatorname{tr}\check{\mathscr X}\|_{t,u}\no\\
&+(\delta^\nu+\delta^{3-\nu}+\delta^{4\nu-4})\delta^{l_{0}+k_0(2-\nu)}\|Z^{m_{0}}\mu\|_{t,u}\no\\
&+(\delta^{2\nu-1}+\delta^{3-\nu})\bigl( \sqrt{\mathcal{E}_{1,\leq m+2}(t,u)} + \sqrt{\mathcal{E}_{2,\leq m+2}(t,u)} \bigr),\label{Zk1R}
\end{align}
where
\begin{align*}
\Pi(t,u)=&\delta^{l_1+k_1(2-\nu)}\|Z^{m_1}\operatorname{tr}\check{\mathscr X}\|_{t,u}+\bigl( \delta + \delta^{2\nu-2} \bigr) \delta^{l_0+k_0(2-\nu)} \| Z^{m_0}\mu \|_{t,u}\\
&+\delta \big( \sqrt{\mathcal{E}_{1,\leq m+2}(t,u)} + \sqrt{\mathcal{E}_{2,\leq m+2}(t,u)} \big).
\end{align*}
Thanks to \eqref{Lpi}-\eqref{Rpi}, one can deduce
\begin{align}
\delta^{l+k(2-\nu)}\|\slashed{\mathcal L}_Z^{m}\leftidx{^{(R)}}{\slashed\pi}\|_{t,u}\lesssim&\delta^{\nu-\f12}+\Pi(t,u),\label{ZkRp}\\
\delta^{l+k(2-\nu)}\|\slashed{\mathcal L}_Z^{m}\leftidx{^{(R)}}{\slashed\pi}_{\mathring L}\|_{t,u}\lesssim&(1+\delta^{3\nu-4})\bigl(\delta^{\nu-\f12}+\Pi(t,u)\bigr),\label{ZkRLp}\\
\delta^{l+k(2-\nu)}\|\slashed{\mathcal L}_Z^{m}\leftidx{^{(R)}}{\slashed\pi}_T\|_{t,u}\lesssim&(1+\delta^{2\nu-3})\bigl(\delta^{\nu-\f12}+\Pi(t,u)\bigr)+\delta^{\nu-1}\delta^{l+k(2-\nu)} \| Z^{m+1}\mu \|_{t,u},\label{ZkRLpi}\\
\delta^{l+k(2-\nu)}\|\slashed{\mathcal L}_Z^m\leftidx{^{(T)}}{\slashed\pi}\|_{t,u}\lesssim&\delta^{\nu-2}\delta^{l_1+k_1(2-\nu)}\|Z^{m_1}\operatorname{tr}\check{\mathscr X}\|_{t,u}+(1+\delta^{3\nu-4})\delta^{l_0+k_0(2-\nu)} \| Z^{m_0}\mu \|_{t,u}\no\\
&+\delta^{\nu-\f32}
+\sqrt{\mathcal{E}_{1,\leq m+2}(t,u)} + \sqrt{\mathcal{E}_{2,\leq m+2}(t,u)} ,\label{ZkTpi}\\
\delta^{l+k(2-\nu)}\|\slashed{\mathcal L}_Z^m\leftidx{^{(T)}}{\slashed\pi}_{\mathring L}\|_{t,u}\lesssim&\delta^{2\nu-3}\delta^{l_1+k_1(2-\nu)}\|Z^{m_1}\operatorname{tr}\check{\mathscr X}\|_{t,u}+(1+\delta^{3\nu-4})\delta^{l_0+k_0(2-\nu)} \| Z^{m_0}\mu \|_{t,u}\no\\
&+\delta^{2\nu-\f52}
+\delta^{\nu-1}\bigl(\sqrt{\mathcal{E}_{1,\leq m+2}(t,u)} + \sqrt{\mathcal{E}_{2,\leq m+2}(t,u)}\bigr).\label{ZkTLpi}
\end{align}
Note that all the terms in the left hand side of \eqref{Zk1x}-\eqref{ZkTLpi} are controlled by the $L^2-$norms of
the derivatives of $\operatorname{tr}{\check{\mathscr X}}$ and $\mu$.
Next we deal with $\|Z^{m}\operatorname{tr}{\check{\mathscr X}}\|_{t,u}$ and $\|Z^{m+1}\mu\|_{t,u}$.

It follows from \eqref{Lchi'}, \eqref{eq:higher-order-L-infty-estimates} and \eqref{Zk1x}-\eqref{ZkTLpi} that
\begin{align}
&\delta^{l+k(2-\nu)}\|\mathring LZ^m\operatorname{tr}{\check{\mathscr X}}\|_{t,u}\lesssim\delta^{l+k(2-\nu)}\bigl(\|Z^m\mathring L\operatorname{tr}{\check{\mathscr X}}\|_{t,u}+\|[\mathring L,Z^m]\operatorname{tr}{\check{\mathscr X}}\|_{t,u}\bigr)\nonumber\\
\lesssim&\delta^{l_1+k_1(2-\nu)}\bigl(\delta^{\nu-1}\|\slashed{\mathcal L}_Z^{m_1}\leftidx{^{(R)}}{\slashed\pi}_{\mathring L}\|_{t,u}+\delta^{\nu-1}\|\slashed{\mathcal L}_Z^{m_1}\leftidx{^{(R)}}{\slashed\pi}\|_{t,u}+(1+\delta^{2\nu-3})\|Z^{m_1}\operatorname{tr}{\check{\mathscr X}}\|_{t,u}\bigr)\nonumber\\
&+\delta^{l_0+k_0(2-\nu)}\bigl(\|\mathring LZ^{m_0}\varphi\|_{t,u}+\|\slashed dZ^{m_0}\varphi\|_{t,u}+\delta^{2\nu-2}(1+\delta^{3\nu-4})\|\slashed{\mathcal L}_Z^{m_0}\slashed g\|_{t,u}\bigr)\nonumber\\
&+\delta^{2\nu-3}\delta^{l_1+k_1(2-\nu)}\sum_{i=1}^2\bigl(\|\slashed dZ^{m_1}y^i\|_{t,u}+\|\slashed dZ^{m_1}\check L^i\|_{t,u}\bigr)+\delta^{\nu-1}\sqrt{\mathcal{E}_{1,\leq m+2}(t,u)}\nonumber\\
&+\delta^{\nu}\delta^{l_1+k_1(2-\nu)}\bigl(\|\slashed{\mathcal L}_Z^{m_1}\leftidx{^{(T)}}{\slashed\pi}_{\mathring L}\|_{t,u}+\|\slashed{\mathcal L}_Z^{m_1}\leftidx{^{(T)}}{\slashed\pi}\|_{t,u}\bigr)+\delta^{\nu-1}\sqrt{\mathcal{E}_{2,\leq m+2}(t,u)}\nonumber\\
\lesssim&(1+\delta^{2\nu-3})\delta^{l_1+k_1(2-\nu)}\|Z^{m_1}\operatorname{tr}{\check{\mathscr X}}\|_{t,u}+\delta^{2\nu-2}(1+\delta^{2\nu-3})\delta^{l_0+k_0(2-\nu)} \| Z^{m_0}\mu \|_{t,u}\label{LX}\\
&+\delta^{2\nu-\f52}+\mu_{\min}^{-\f12}\sqrt{\mathcal{E}_{1,\leq m+2}(t,u)}+\delta^{\nu-1}\sqrt{\mathcal{E}_{2,\leq m+2}(t,u)}.\nonumber
\end{align}
Similarly, with the help of \eqref{lmu}, we have
\begin{align}
&\delta^{l+k(2-\nu)}\|\mathring LZ^{m+1}\mu\|_{t,u}\nonumber\\
\lesssim&(1+\delta^{2\nu-3})\delta^{l_0+k_0(2-\nu)}\|Z^{m_0}\mu\|_{t,u}+\delta^{l_0+k_0(2-\nu)}\|\mathring LZ^{m_0}\varphi\|_{t,u}+\delta^{l_0+k_0(2-\nu)}\|TZ^{m_0}\varphi\|_{t,u}\nonumber\\
&+\delta^{l_1+k_1(2-\nu)}\bigl(\delta^{\nu-1}\|\slashed{\mathcal L}_Z^{m_1}\leftidx{^{(R)}}{\slashed\pi}_{T}\|_{t,u}+\|\slashed{\mathcal L}_Z^{m_1}\leftidx{^{(R)}}{\slashed\pi}_{\mathring L}\|_{t,u}+\delta\|\slashed{\mathcal L}_Z^{m_1}\leftidx{^{(T)}}{\slashed\pi}_{\mathring L}\|_{t,u}\bigr)\nonumber\\
&+\delta^{\nu-2}\delta^{l_0+k_0(2-\nu)}\bigl(\|Z^{m_0}\vp\|_{t,u}+\sum_{i=1}^2(\|Z^{m_0}y^i\|_{t,u}+\|Z^{m_0}\check L^i\|_{t,u})\bigr)\nonumber\\
&+\delta^{\nu-1}(1+\delta^{3\nu-4})\delta^{l_0+k_0(2-\nu)}\|\slashed{\mathcal L}_Z^{m_0}\slashed g\|_{t,u}\nonumber\\
\lesssim&(1+\delta^{2\nu-3})\delta^{l_0+k_0(2-\nu)}\|Z^{m_0}\mu\|_{t,u}+\delta^{\nu-\f32}+\delta^{\nu-2}\delta^{l_1+k_1(2-\nu)}\|Z^{m_1}\operatorname{tr}{\check{\mathscr X}}\|_{t,u}\label{Lmu}\\
&+\mu_{\min}^{-\f12}\sqrt{\mathcal{E}_{1,\leq m+2}(t,u)}+\sqrt{\mathcal{E}_{2,\leq m+2}(t,u)}.\nonumber
\end{align}
By substituting \eqref{Ff} into \eqref{LX} and \eqref{Lmu}, and applying Gronwall's inequality, one obtains the following estimates:
\begin{align}
\delta^{l+k(2-\nu)} \| Z^{m} \, \operatorname{tr}\check{\mathscr{X}} \|_{t,u} \les \delta^{\nu-\frac{1}{2}} + \Theta^1(t,u), \label{ZX} \\
\delta^{l+k(2-\nu)} \| Z^{m+1} \mu \|_{t,u} \les \delta^{\frac{1}{2}} + \Theta^2(t,u). \label{m}
\end{align}
Substituting \eqref{ZX} and \eqref{m} into \eqref{Zk1x}-\eqref{ZkTLpi} completes the proof of Proposition \ref{L2chi}.
\end{proof}

\subsection{Top order $L^2$ estimates for the derivatives of $\check{\mathscr X}$ and $\mu$}\label{Section 6.2}

It is clear from the energy estimate \eqref{e} and the expression \eqref{Psi} that the highest order derivatives of $\varphi$ in the energy estimate for \eqref{Psi} are of order $2N-4$. On the other hand, by the expression of $\leftidx{^{(Z)}}D_{\al,2}^k$ in \eqref{T2}, \eqref{rL2}, and \eqref{R2}, the highest{-}order derivatives of $\check{\mathscr{X}}$ and $\mu$ in \eqref{e} are of orders $2N-5$ and $2N-4$, respectively. However, it follows from Proposition \ref{L2chi} that the $L^2$ estimates for the $(2N-5)$-th order derivatives of $\check{\mathscr{X}}$ and the $(2N-4)$-th order derivatives of $\mu$ should be controlled by the energies of $\vp$ with orders up to $2N-3$, which exceeds the range of energies (containing up to $(2N-4)$-th order derivatives of $\vp$). To overcome this difficulty, as in \cite{Ch1, M-Y, Sp}, one needs to treat $\operatorname{tr}\,\check{\mathscr{X}}$ and $\slashed{\triangle}\mu$ together with their corresponding highest order derivatives.

\subsubsection{Estimates for the derivatives of $\slashed d\operatorname{tr}\check{\mathscr{X}}$}\label{trchi}
If there is at least one vector field $\varrho\mathring L$ in $Z^m$, that is, $Z^m=Z^{p_1}(\varrho\mathring L)\bar Z^{p_2}$ and $p_1+p_2=m-1$ with $\bar Z\in\{T,R\}$, then by \eqref{Lchi'}, Proposition \ref{L2chi} and \eqref{eq:mu_integrability}, we have
\begin{align}
&\delta^{l+(k+1)(2-\nu)}\|\mu\slashed dZ^m(\operatorname{tr}\check{\mathscr{X}})\|_{t,u}\nonumber\\
=&\delta^{l+(k+1)(2-\nu)}\|\slashed dZ^{p_1}\bar Z^{p_2}(\varrho\mathring L) (\operatorname{tr}\check{\mathscr{X}})
+\slashed dZ^{p_1}[\varrho\mathring L,\bar Z^{p_2}](\operatorname{tr}\check{\mathscr{X}})\|_{t,u}\nonumber\\
\lesssim&\delta^{2-\nu}(1+\delta^{2\nu-3})\delta^{l_1+k_1(2-\nu)}\|Z^{m_1}(\operatorname{tr}\check{\mathscr{X}})\|_{t,u}+\delta^{l_0+k_0(2-\nu)}\|\mu\slashed dZ^{m_0}\varphi\|_{t,u}\nonumber\\
&+\delta^{\nu-1}\Big\{\delta^{l_1+k_1(2-\nu)}\Big(\|\slashed{\mathcal L}_Z^{m_1}\slashed g\|_{t,u}+\sum_{i=1}^2\|Z^{m_1}\check L^i\|_{t,u}\Big)+\delta^{l_0+k_0(2-\nu)}\|Z^{k_0}y\|_{t,u}\Big\}\nonumber\\
&+\delta^{l_0+k_0(2-\nu)}\|Z^{m_0}\varphi\|_{t,u}+\delta^{4-2\nu}\delta^{l_1+k_1(2-\nu)}(\|\slashed{\mathcal L}_Z^{m_1}\leftidx^{{(R)}}\slashed\pi_{\mathring L}\|_{t,u}+\delta\|\slashed{\mathcal L}_Z^{m_1}\leftidx^{{(T)}}\slashed\pi_{\mathring L}\|_{t,u})\nonumber\\
\lesssim&\delta^{\nu-\f12}+\mu_{\min}^{-b_{m+2}}\sqrt{\widetilde{\mathcal E}_{1,\leq m+2}(t,u)}+\delta\mu_{\min}^{-b_{m+2}}\sqrt{\widetilde{\mathcal E}_{2,\leq m+2}(t,u)},\label{ZmX}
\end{align}
where $l$ (resp. $k$) is the number of $T$ (resp. $\varrho\mathring L$) appearing in $Z^{p_1}\bar Z^{p_2}$.

It follows from \eqref{Lchi'} that
 \begin{equation}\label{Ltrchi}
\begin{split}
\mathring{L}\big(\operatorname{tr}&\check{\mathscr X}\big)
= -\big(\frac{1}{2}G_{\mathring{L}\mathring{L}}^\al\mathring{L}\varphi_\al
+ G_{\tilde{T}\mathring{L}}^\al\mathring{L}\varphi_\al
- G_{X\mathring{L}}^\al\slashed{d}^X\varphi_\al
+ \slashed{g}^{XX}G_{XX}^\al\mathring{L}\vp_\al\big)
\bigl(\operatorname{tr}\check{\mathscr X} + \varrho^{-1}\bigr) \\
&-G_{X\mathring{L}}^\al\bigl(\slashed{d}^X\mathring{L}\varphi_\al\bigr)
+ \frac{1}{2}G_{\mathring{L}\mathring{L}}^\al\slashed{\triangle}\varphi_\al
- \bigl(\operatorname{tr}\check{\mathscr X}\bigr)^2-2\varrho^{-1}\operatorname{tr}\check{\mathscr X}+f\bigl(\varphi, \slashed{d}y, \mathring{L}^1, \mathring{L}^2\bigr)\slashed{d}\vp
\begin{pmatrix} \mathring{L}\varphi \\ \slashed{d}\varphi \end{pmatrix},
\end{split}
\end{equation}
where
\begin{equation}\label{triphi}
\begin{split}
&\slashed\triangle\varphi_\al=\mu^{-1}\big(\mathring L\mathring{\underline L}\varphi_\al+\f{1}{2\varrho}\mathring {\underline L}\varphi_\al-{H}_\al\big)
\end{split}
\end{equation}
by \eqref{fequation}. In addition, one has that from \eqref{triphi}
\begin{equation*}
\begin{split}
-\mu G_{X\mathring L}^\al\slashed d^X\mathring L\varphi_\al=&-\mathring L(\mu G_{X\mathring L}^\al\slashed d^X\varphi_\al)-\mu(\operatorname{tr}\check{\mathscr X}+\varrho^{-1})G_{X\mathring L}^\al\slashed d^X\varphi_\al\\
&+\f12(G_{\mathring L\mathring L}^\al T\vp_\al)(G_{X\mathring L}^\beta\slashed d^X\vp_\beta)+\mu f\bigl(\varphi, \slashed{d}y, \mathring{L}^1, \mathring{L}^2\bigr)\slashed{d}\vp
\begin{pmatrix} \mathring{L}\varphi \\ \slashed{d}\varphi \end{pmatrix}
\end{split}
\end{equation*}
and
\begin{equation*}
\begin{split}
\f12\mu G_{\mathring L\mathring L}^\al\slashed\triangle\vp_\al=L(\f12G_{\mathring L\mathring L}^\al\underline L\vp_\al)+\f12(G_{\mathring L\mathring L}^\al T\vp_\al)(\operatorname{tr}\check{\mathscr X}+\varrho^{-1})+f\bigl(\varphi, \slashed{d}y, \mathring{L}^1, \mathring{L}^2\bigr)
\begin{pmatrix} \mathring{L}\varphi \\ \slashed{d}\varphi \end{pmatrix}\begin{pmatrix} \mu\mathring{L}\varphi \\ \mu\slashed{d}\varphi\\T\vp \end{pmatrix},
\end{split}
\end{equation*}
then
\begin{equation}\label{Ltr}
\begin{split}
\mathring L\big(\mu\operatorname{tr}\check{\mathscr X}-E)=\big(-\f{2\mu}{\varrho}+\mathcal E\big)\operatorname{tr}\check{\mathscr X}+\f{\mathcal E-\mathring L\mu}{\varrho}-\mu(\operatorname{tr}\check{\mathscr X})^2+e,
\end{split}
\end{equation}
where
\begin{equation}\label{Ee}
\begin{split}
E=&-\mu G_{X\mathring L}^\al\slashed d^X\varphi_\al+\f12G_{\mathring L\mathring L}^\al\mathring{\underline L}\varphi_\al,\\
\mathcal E=&2\mathring L\mu-\mu\slashed g^{XX}G_{XX}^\al\mathring L\vp_\al,\\
e=&f\bigl(\varphi, \slashed{d}y, \mathring{L}^1, \mathring{L}^2\bigr)\begin{pmatrix} \mathring{L}\varphi \\ \slashed{d}\varphi \end{pmatrix}\begin{pmatrix} \mu\mathring{L}\varphi \\ \mu\slashed{d}\varphi\\T\vp \end{pmatrix}.
\end{split}
\end{equation}
Let $F^m=\mu\slashed d\bar Z^m\operatorname{tr}\check{\mathscr X}-\slashed d\bar Z^m E$ with $\bar Z\in\{T,R\}$. Then by an induction
argument on \eqref{Ltr}, we can get
\begin{equation}\label{LFal}
\begin{split}
\slashed{\mathcal L}_{\mathring L}F^m=&(-\f{2}{\varrho}+\mu^{-1}\mathcal E)F^m+(-\f{2}{\varrho}+\mu^{-1}\mathcal E)\slashed d\bar Z^m E-\mu\slashed d\bar Z^{m}\bigl((\operatorname{tr}\check{\mathscr X})^2\bigr)+e^m,
\end{split}
\end{equation}
where
\begin{align}
e^m=&\slashed{\mathcal L}_{\bar Z}^m e^0+\underbrace{\sum_{p_1+p_2=m-1}{\tiny {\tiny {\tiny {\tiny }}}}\slashed{\mathcal L}_{\bar Z}^{p_1}\slashed{\mathcal L}_{[\mathring L,\bar Z]}F^{p_2}}_{\text{vanish when $m=1$}}\nonumber\\
&-\underbrace{\sum_{p_1+p_2=m-1}{\tiny {\tiny {\tiny {\tiny }}}}\slashed{\mathcal L}_{\bar Z}^{p_1}\bigl\{\leftidx^{{(\bar Z)}}\slashed\pi_{\mathring L}^X\slashed d_X\mu+\bar Z(\mathring L\mu+\f{2\mu}{\varrho}-\mathcal E)\bigr\}\slashed d\bar Z^{p_2}\operatorname{tr}\check{\mathscr X}}_{\text{vanish when $m=1$}}\label{eal}\\
&-\underbrace{\sum_{p_1+p_2=m-1}{\tiny {\tiny {\tiny {\tiny }}}}\slashed{\mathcal L}_{\bar Z}^{p_1}\bigl\{(\bar Z\mu)\Bigl(\slashed d\mathring L\bar Z^{p_2}\operatorname{tr}\check{\mathscr X}+\slashed d\bar Z^{p_2}\bigl((\operatorname{tr}\check{\mathscr X})^2\bigr)\Bigr)\bigr\}}_{\text{vanish when $m=1$}}\nonumber
\end{align}
and
\begin{equation}\label{e0}
\begin{split}
e^0=&\slashed de+(\slashed d\mathcal E-\slashed d\mathring L\mu)(\operatorname{tr}\check{\mathscr X}+\varrho^{-1})-\slashed d\mu(\operatorname{tr}\check{\mathscr X})^2-\f{\slashed d\mu}{\varrho^2}\mathring L(\varrho^2\operatorname{tr}\check{\mathscr X}).
\end{split}
\end{equation}
Note that for any one form $\xi$ on $S_{t,u}$, it holds that
\begin{equation}\label{Lxi}
\mathring{L}\bigl(\varrho^2|\xi|^2\bigr)
= -2\varrho^2\bigl(\operatorname{tr}\check{\mathscr{X}}
+ \frac{1}{2}\slashed{g}^{XX}G_{XX}^\al\mathring{L}\vp_\al\bigr)|\xi|^2
+ 2\varrho^2\slashed{g}^{XX}\bigl(\slashed{\mathcal{L}}_{\mathring{L}}\xi_X\bigr)\xi_X.
\end{equation}
By taking $\xi = \varrho^2 F^m$ in \eqref{Lxi} and using \eqref{LFal}, one can obtain
\begin{equation*}
\begin{split}
\mathring{L}\bigl(\varrho^6|F^m|^2\bigr)
&= 2\varrho^6\big\{ \bigl(-\operatorname{tr}\check{\mathscr{X}}
-\frac{3}{2}\slashed{g}^{XX}G_{XX}^\al\mathring{L}\vp_\al
+2\mu^{-1}\mathring L\mu\bigr)|F^m|^2 + e^m \cdot F^m \\
& \quad + \big(-\frac{2}{\varrho} + \mu^{-1}\mathcal{E}\big)\slashed{d}\bar{Z}^m E \cdot F^m
- \mu\slashed{d}\bar{Z}^m\big((\operatorname{tr}\check{\mathscr{X}})^2\big) \cdot F^m \big\}.
\end{split}
\end{equation*}
Since $L\mu \sim -\delta^{\nu-2}$ when $\mu < \frac{1}{10}$ (see \eqref{blowup}), we have $\mu^{-1}\mathring{L}\mu
\leq 10|\mathring{L}\mu| \lesssim \delta^{\nu-2}$. This implies
\[
\mathring{L}\bigl(\varrho^3|F^m|\bigr) \lesssim \delta^{\nu-2}\varrho^3|F^m|
+ \mu^{-1}\delta^{\nu-2}\bigl|\slashed{d}\bar{Z}^m E\bigr|
+ \mu\bigl|\slashed{d}\bar{Z}^m\bigl((\operatorname{tr}\check{\mathscr{X}})^2\bigr)\bigr|
+ |e^m|,
\]
and hence
\begin{equation}\label{LrhoF}
\begin{split}
\varrho^3|F^m(t,u,\vartheta)|\lesssim\varrho_0^3|F^m(t_0,u,\vartheta)|+\int_{t_0}^t\big\{&\delta^{\nu-2}\varrho^3|F^m|+ \mu^{-1}\delta^{\nu-2}|\slashed{d}\bar{Z}^m E| \\
&+ \mu|\slashed{d}\bar{Z}^m\big((\operatorname{tr}\check{\mathscr{X}})^2\big)|
+ |e^m|\}(\tau,u,\vartheta)d\tau.
\end{split}
\end{equation}
It follows from \eqref{Ff} and \eqref{LrhoF} that
\begin{equation*}
\begin{split}
\delta^l\|F^m\|_{t,u}\lesssim\delta^{\nu-\f32}+\delta^l\int_{t_0}^t\big\{&\delta^{\nu-2}\|F^m\|_{\tau,u}+\mu_{\min}^{-1}(\tau)\delta^{\nu-2}\|\slashed d\bar Z^m E\|_{\tau,u}\\
&+\|\mu\slashed d\bar Z^m\bigl((\operatorname{tr}\check{\mathscr{X}})^2\bigr)\|_{\tau,u}+\| e^m\|_{L^2(\Sigma_{\tau}^u)}\big\}d\tau,
\end{split}
\end{equation*}
where $l$ is the number of $T$ in $F^m$. This, together with Gronwall's inequality, yields
\begin{equation}\label{Eal}
\begin{split}
\delta^l\|F^m\|_{t,u}\lesssim\delta^{\nu-\f32}+\delta^l\int_{t_0}^t\big\{&\mu_{\min}^{-1}(\tau)\delta^{\nu-2}\|\slashed d\bar Z^m E\|_{\tau,u}\\
&+\|\mu\slashed d\bar Z^m\bigl((\operatorname{tr}\check{\mathscr{X}})^2\big)\|_{\tau,u}+\| e^m\|_{L^2(\Sigma_{\tau}^u)}\big\}d\tau.
\end{split}
\end{equation}
Each term in the integrand of \eqref{Eal} will be estimated as follows.
\vskip 0.2 true cm
{\bfseries (1) The estimate of $\|\slashed d\bar Z^m E\|_{t,u}$}
\vskip 0.2 true cm
It follows from $Ty^i=\mu(-g^{0i}-\check L^i-\f{y^i}\varrho)$ and $Ry^a=\Omega^a+\upsilon(g^{0a}+\check L^a+\f{y^a}\varrho)$ that
\[
\delta^l|\bar Z^{m+2}y^a|\lesssim1+\delta^{l_0}|\big\{|\bar Z^{m_0}\nu|+\delta^{\nu-1}|\bar Z^{m_0}\check L^1|+\delta^{\nu-1}|\bar Z^{m_0}\check L^2|+\delta^{\nu-1}|\bar Z^{m_0}\vp|+\delta|\bar Z^{m_0}\mu|\big\},
\]
then
\begin{equation*}
\begin{split}
\delta^l\|\slashed d\bar Z^m(G_{\mathring L\mathring L}^\gamma T\vp_{\gamma})\|_{t,u}
\lesssim&\delta^{l_0}\|\bar Z^{m_0}T\vp\|_{t,u}+\delta^{\nu-2}\delta^{l_0}\big\{\|\bar Z^{m_0}\vp\|_{t,u}+\|\bar Z^{m_0}\check L^1\|_{t,u}\\
&+\|\bar Z^{m_0}\check L^2\|_{t,u}+\|\bar Z^{m_0}\nu\|_{t,u}+\delta\|\bar Z^{m_0}\mu\|_{t,u}\big\}.
\end{split}
\end{equation*}
Due to $E=-\mu G_{X\mathring L}^\al\slashed d^X\varphi_\al+\f12G_{\mathring L\mathring L}^\al\mathring{\underline L}\varphi_\al$, then it holds that
\begin{align}
&\delta^l\|\slashed d\bar Z^m E\|_{t,u}\nonumber\\
\lesssim&\delta^{\nu-2}\delta^{l_0}\big\{\|\bar Z^{m_0}\check L^1\|_{t,u}+\|\bar Z^{m_0}\check L^2\|_{t,u}+\|\bar Z^{m_0}\upsilon\|_{t,u}+\|\bar Z^{m_0}\vp\|_{t,u}\big\}+\delta^{l_1}\|\bar Z^{m_1}\mathring L\vp\|_{t,u}\nonumber\\
&+\delta^{\nu-1}(1+\delta^{3\nu-4})\delta^{l_0}\|\slashed{\mathcal L}_{\bar Z}^{m_0}\slashed g\|_{t,u}
+\delta^{l_0}\|T\bar Z^{m_0}\varphi\|_{t,u}+\delta^{l_0}\|\mu\slashed d\bar Z^{m_0}\varphi\|_{t,u}+\delta^l\|\mu\mathring LR\bar Z^m\vp\|_{t,u}\nonumber\\
&+\delta^{2\nu-3}\delta^{l_0}\|\bar Z^{m_0}\mu\|_{t,u}+\de^{\nu-1}\de^{l_1}\big\{\|\slashed{\mathcal L}_{\bar Z}^{m_1}\leftidx{^{(R)}}{\slashed\pi}_T\|_{t,u}+\|\slashed{\mathcal L}_{\bar Z}^{m_1}\leftidx{^{(R)}}{\slashed\pi}_{\mathring L}\|_{t,u}+\delta\|\slashed{\mathcal L}_{\bar Z}^{m_1}\leftidx{^{(T)}}{\slashed\pi}_{\mathring L}\|_{t,u}\big\}\nonumber\\
\lesssim&\delta^{2\nu-\f52}+\sqrt{\mathcal E_{1,\leq m+2}(t,u)}+\sqrt{\mathcal E_{2,\leq m+2}(t,u)}\label{dZE}\\
&+\delta^{\nu-2}\int_{t_0}^t\mu_{\min}^{-\f12}(\tau)\sqrt{\mathcal E_{1,\leq m+2}(\tau,u)}d\tau+\delta^{2\nu-3}\int_{t_0}^t\sqrt{\mathcal E_{2,\leq m+2}(\tau,u)}d\tau,\nonumber
\end{align}
this, together with \eqref{eq:mu_integrability}, yields
\begin{equation}\label{sdE}
\begin{split}
&\int_{t_0}^t\mu_{\min}^{-1}(\tau)\delta^{\nu-2}\delta^l\|\slashed d\bar Z^m E\|_{\tau,u}d\tau\\
\lesssim&\delta^{3\nu-\f92}\int_{t_0}^t\mu_{\min}^{-1}(\tau)d\tau+\f{\mu_{\min}^{-b_{m+2}}(t)}{b_{m+2}}\Big\{\sqrt{\widetilde{\mathcal{E}}_{1,\leq m+2}(t,u)}+\sqrt{\widetilde{\mathcal{E}}_{2,\leq m+2}(t,u)}\Big\}.
\end{split}
\end{equation}
\vskip 0.2 true cm
{\bfseries (2) The estimate of $\|\mu\slashed d\bar Z^m\bigl((\operatorname{tr}\check{\mathscr{X}})^2\bigr)\|_{t,u}$}
\vskip 0.2 true cm
Due to
\[
\mu\slashed d\bar Z^m((\operatorname{tr}\check{\mathscr{X}})^2)
=2\operatorname{tr}\check{\mathscr{X}}(F^m+\slashed d\bar Z^m E)+2\sum_{p\leq m-1}\mu(\bar Z^{m-p}\operatorname{tr}\check{\mathscr{X}})(\slashed d\bar Z^p\operatorname{tr}\check{\mathscr{X}}),
\]
then, together with Proposition \ref{L2chi} and \eqref{sdE}, we have
\begin{equation*}
\begin{split}
&\delta^l\|\mu\slashed d\bar Z^m((\operatorname{tr}\check{\mathscr{X}})^2)\|_{t,u}\\
\lesssim&\delta^{\nu-1}\delta^l\|F^m\|_{t,u}+\delta^{\nu-1}\delta^l\|\slashed d\bar Z^m E\|_{t,u}+\delta^{\nu-1}\delta^{l_1}\|\bar Z^{m_1}\operatorname{tr}\check{\mathscr{X}}\|_{t,u}\\
\lesssim&\delta^{\nu-1}\delta^l\|F^m\|_{t,u}+\delta^{3\nu-\f72}+\delta^{\nu-1}\Big(\sqrt{\mathcal E_{1,\leq m+2}(t,u)}+\sqrt{\mathcal E_{2,\leq m+2}(t,u)}\Big)\\
&+\delta^{2\nu-3}\int_{t_0}^t\mu_{\min}^{-\f12}(\tau)\sqrt{\mathcal E_{1,\leq m+2}(\tau,u)}d\tau+\delta^{3\nu-4}\int_{t_0}^t\sqrt{\mathcal E_{2,\leq m+2}(\tau,u)}d\tau,
\end{split}
\end{equation*}
and hence,
\begin{equation}\label{Y-19}
\begin{split}
&\int_{t_0}^t\delta^l\|\mu\slashed d\bar Z^m((\operatorname{tr}\check{\mathscr{X}})^2)\|_{\tau,u}d\tau\\
\lesssim & \int_{t_0}^t\delta^{\nu-1}\delta^l\|F^m\|_{\tau,u}d\tau+\delta^{2\nu-\f32}+\delta\mu_{\min}^{-b_{m+2}}\Big\{\sqrt{\widetilde{\mathcal{E}}_{1,\leq m+2}(t,u)}+\sqrt{\widetilde{\mathcal{E}}_{2,\leq m+2}(t,u)}\Big\}.
\end{split}
\end{equation}
\vskip 0.2 true cm
{\bfseries (3) The estimate of $\|e^m\|_{t,u}$}
\vskip 0.2 true cm
It follows from \eqref{Ee} that
\begin{align}
&\delta^l\|\slashed{d}\bar{Z}^m e\|_{t,u}\nonumber \\
\lesssim &\, \delta^{\nu-2}\delta^{l_0}\bigl(\|\slashed{d}\bar{Z}^{m_0}\vp\|_{t,u}
+\|\mathring{L}\bar{Z}^{m_0}\vp\|_{t,u}
+\delta^{\nu-1}\|T\bar{Z}^{m_0}\vp\|_{t,u}\bigr)
+\delta^{3\nu-4}\delta^{l_0}\|\bar{Z}^{m_0}\mu\|_{t,u} \nonumber\\
&+\delta^{2\nu-3}\delta^{l_1}\bigl(\delta^{\nu-1}\|\slashed{\mathcal{L}}_{\bar{Z}}^{m_1}\leftidx{^{(R)}}{\slashed{\pi}}_T\|_{t,u}
+\|\slashed{\mathcal{L}}_{\bar{Z}}^{m_1}\leftidx{^{(R)}}{\slashed{\pi}}_{\mathring{L}}\|_{t,u}
+\delta\|\slashed{\mathcal{L}}_{\bar{Z}}^{m_1}\leftidx{^{(T)}}{\slashed{\pi}}_{\mathring{L}}\|_{t,u}\bigr) \nonumber\\
&+\delta^{\nu-2}(1+\delta^{3\nu-4})\delta^{l_0}\|\bar{Z}^{m_0}\vp\|_{t,u}
+\delta^{3\nu-5}\delta^{l_0}\Bigl(\sum_{i=1}^2\|\bar{Z}^{m_0}\check{L}^i\|_{t,u}
+\|\bar{Z}^{m_0}\upsilon\|_{t,u}\Bigr)\nonumber \\
&+\delta^{2\nu-3}(1+\delta^{4\nu-5})\delta^{l_0}\|\slashed{\mathcal{L}}_{\bar{Z}}^{m_0}\slashed{g}\|_{t,u}\nonumber \\
\lesssim &\, \delta^{\nu-\f12}(1+\delta^{3\nu-5})
+\delta^{\nu-2}\mu_{\min}^{-1/2}\sqrt{\mathcal{E}_{1,\leq m+2}(t,u)}
+\delta^{\nu-1}(1+\delta^{2\nu-3})\sqrt{\mathcal{E}_{2,\leq m+2}(t,u)}  \label{de}\\
&+\delta^{3\nu-5}\int_{t_0}^t\mu_{\min}^{-\f12}(\tau)\sqrt{\mathcal{E}_{1,\leq m+2}(\tau,u)}\,\mathrm{d}\tau
+\delta^{4\nu-6}\int_{t_0}^t\sqrt{\mathcal{E}_{2,\leq m+2}(\tau,u)}\,\mathrm{d}\tau.\nonumber
\end{align}
In addition, \eqref{lmu}, \eqref{Lchi'} and \eqref{Ee} give that
\begin{align}
&\delta^l\Bigl\|\slashed{\mathcal{L}}_{\bar{Z}}^m\Bigl\{(\slashed{d}\mathcal{E}-\slashed{d}\mathring{L}\mu)
(\operatorname{tr}\check{\mathscr{X}}+\varrho^{-1})
-\slashed{d}\mu(\operatorname{tr}\check{\mathscr{X}})^2
-\f{\slashed{d}\mu}{\varrho^2}\mathring{L}(\varrho^2\operatorname{tr}\check{\mathscr{X}})\Bigr\}\Bigr\|_{t,u} \nonumber\\
\lesssim&\, \delta^l\bigl(\|\mathring{L}R\bar{Z}^m\vp\|_{t,u}
+\|TR\bar{Z}^m\vp\|_{t,u}
+\|\slashed{d}\bar{Z}^{m+1}\vp\|_{t,u}\bigr)
+\delta^{2\nu-3}\delta^{l_0}\|\bar{Z}^{m_0}\mu\|_{t,u} \nonumber\\
&+\delta^{\nu-1}\delta^{l_1}\bigl(\|\slashed{\mathcal{L}}_{\bar{Z}}^{m_1}\leftidx{^{(R)}}{\slashed{\pi}}_T\|_{t,u}
+\|\slashed{\mathcal{L}}_{\bar{Z}}^{m_1}\leftidx{^{(R)}}{\slashed{\pi}}_{\mathring{L}}\|_{t,u}
+\delta\|\slashed{\mathcal{L}}_{\bar{Z}}^{m_1}\leftidx{^{(T)}}{\slashed{\pi}}_{\mathring{L}}\|_{t,u}
+\|\slashed{\mathcal{L}}_{\bar{Z}}^{m_1}\leftidx{^{(R)}}{\slashed{\pi}}\|_{t,u}\bigr) \nonumber\\
&+\delta^{\nu}\delta^{l_1}\|\slashed{\mathcal{L}}_{\bar{Z}}^{m_1}\leftidx{^{(T)}}{\slashed{\pi}}\|_{t,u}
+\delta^{\nu-2}\delta^{l_0}\Bigl(\sum_{i=1}^2\|\bar{Z}^{m_0}\check{L}^i\|_{t,u}
+\|\bar{Z}^{m_0}\upsilon\|_{t,u}
+\|\bar{Z}^{m_0}\vp\|_{t,u}\Bigr)\nonumber \\
&+\delta^{\nu-2}\delta^{l_1}\|\bar{Z}^{m_1}\operatorname{tr}\check{\mathscr{X}}\|_{t,u}
+\delta^{2\nu-3}\delta^{l_0}\|\slashed{\mathcal{L}}_{\bar{Z}}^{m_0}\slashed{g}\|_{t,u} \nonumber\\
\lesssim&\, \delta^{2\nu-\f52}
+\mu_{\min}^{-1/2}\sqrt{\mathcal{E}_{1,\leq m+2}(t,u)}
+\sqrt{\mathcal{E}_{2,\leq m+2}(t,u)} \label{add-eq}\\
&+\delta^{\nu-2}\int_{t_0}^t\mu_{\min}^{-\f12}(\tau)\sqrt{\mathcal{E}_{1,\leq m+2}(\tau,u)}\,\mathrm{d}\tau
+\delta^{2\nu-3}\int_{t_0}^t\sqrt{\mathcal{E}_{2,\leq m+2}(\tau,u)}\,\mathrm{d}\tau.\nonumber
\end{align}
Due to $\slashed{\mathcal L}_{[\mathring L,\bar Z]}F^{p_2}=\leftidx^{{(\bar Z)}}{\slashed\pi_{\mathring L}}^X\slashed\nabla_XF^{p_2}+F^{p_2}_X\slashed\nabla\leftidx^{{(\bar Z)}}{\slashed\pi_{\mathring L}}^X$, then
\begin{align}
&\sum_{p_1+p_2=m-1}\delta^l\|\slashed{\mathcal L}_{\bar Z}^{p_1}\slashed{\mathcal L}_{[\mathring L, \bar Z]}F^{p_2}\|_{t,u}\nonumber\\
\lesssim&\delta^{\nu-1}(1+\delta^{3\nu-4})\delta^l\|F^m\|_{t,u}+\delta^{\nu-2}\delta^{l_1}\bigl(\|\slashed{\mathcal{L}}_{\bar{Z}}^{m_1}\leftidx{^{(R)}}{\slashed{\pi}}_{\mathring{L}}\|_{t,u}
+\delta\|\slashed{\mathcal{L}}_{\bar{Z}}^{m_1}\leftidx{^{(T)}}{\slashed{\pi}}_{\mathring{L}}\|_{t,u} \bigr)\nonumber\\
&+\delta^{2\nu-3}(1+\delta^{3\nu-4})\delta^{l_1}\|\slashed{\mathcal{L}}_{\bar{Z}}^{m_1}\slashed{g}\|_{t,u}+\delta^{l_1}\|\bar{Z}^{m_1}\operatorname{tr}\check{\mathscr{X}}\|_{t,u} +\delta^{2\nu-3}\delta^{l_1}\|\bar Z^{m_1}\mu\|_{t,u}\nonumber\\
&+\delta^{l_1}\big(\|\slashed d\bar Z^{m_1}\vp\|_{t,u}+\|\bar Z^{m_1}\mathring L\vp\|_{t,u}+\|\bar Z^{m_1}T\vp\|_{t,u}+\delta^{\nu-2}\|\bar Z^{m_1}\vp\|_{t,u}\big)\nonumber\\
&+\delta^{\nu-2}\delta^{l_1}\Bigl(\sum_{i=1}^2\|\bar{Z}^{m_1}\check{L}^i\|_{t,u}
+\|\bar{Z}^{m_1}\upsilon\|_{t,u} \Bigr)\nonumber\\
\lesssim&(1+\delta^{3\nu-4})\Big\{\delta^{\nu-1}\delta^l\|F^m\|_{t,u}+\delta^{2\nu-\f52}+\sqrt{\mathcal E_{1,\leq m+2}(t,u)}+\sqrt{\mathcal E_{2,\leq m+2}(t,u)}\label{LF}\\
&+\delta^{\nu-2}\int_{t_0}^t\mu_{\min}^{-\f12}(\tau)\sqrt{\mathcal E_{1,\leq m+2}(\tau,u)}d\tau+\delta^{2\nu-3}\int_{t_0}^t\sqrt{\mathcal E_{2,\leq m+2}(\tau,u)}d\tau\Big\}.\nonumber
\end{align}
For the remainder terms, it follows from Propositions \ref{prop:higher-order-L-infty-estimates} and \ref{L2chi} that
\begin{align}
&\sum_{p_1+p_2=m-1}\delta^l\|\slashed{\mathcal L}_{\bar Z}^{p_1}\bigl\{\leftidx^{{(\bar Z)}}\slashed\pi_{\mathring L}^X\slashed d_X\mu+\bar Z(\mathring L\mu+\f{2\mu}{\varrho}-\mathcal E)\bigr\}\slashed d\bar Z^{p_2}\operatorname{tr}\check{\mathscr X}\|_{t,u}\nonumber\\
&+\sum_{p_1+p_2=m-1}\delta^l\|\slashed{\mathcal L}_{\bar Z}^{p_1}\bigl\{(\bar Z\mu)\Bigl(\slashed d\mathring L\bar Z^{p_2}\operatorname{tr}\check{\mathscr X}+\slashed d\bar Z^{p_2}\bigl((\operatorname{tr}\check{\mathscr X})^2\bigr)\Bigr)\bigr\}\|_{t,u}\nonumber\\
\lesssim &\delta^{\nu-1}\delta^{l_1}\bigl(\|\slashed{\mathcal{L}}_{\bar{Z}}^{m_1}\leftidx{^{(R)}}{\slashed{\pi}}_{\mathring{L}}\|_{t,u}
+\delta\|\slashed{\mathcal{L}}_{\bar{Z}}^{m_1}\leftidx{^{(T)}}{\slashed{\pi}}_{\mathring{L}}\|_{t,u} \bigr)+\delta^{2\nu-3}\delta^{l_1}\|\bar Z^{m_1}\mu\|_{t,u}\nonumber\\
&+\delta^{\nu-2}\delta^{l_1}\|\bar{Z}^{m_1}\operatorname{tr}\check{\mathscr{X}}\|_{t,u}+\delta^{2\nu-2}(1+\delta^{3\nu-4})\delta^{l_1}\|\slashed{\mathcal{L}}_{\bar{Z}}^{m_1}\slashed{g}\|_{t,u}+\delta^{l_1}\|\bar Z^{m_1}\mathring L\vp\|_{t,u}\nonumber\\
&+\delta^{2\nu-3}\delta^{l_1}\Bigl(\sum_{i=1}^2\|\bar{Z}^{m_1}\check{L}^i\|_{t,u}
+\|\bar{Z}^{m_1}\upsilon\|_{t,u}+\|\bar{Z}^{m_1}\vp\|_{t,u} \Bigr)+\delta^{l_0}\|\bar Z^{m_0}\vp\|_{t,u}\nonumber\\
&+\delta^{\nu-1}\delta^{l_1}\|\bar Z^{m_1}T\vp\|_{t,u}+\delta^{l}\big(\|\mathring LR\bar Z^m\vp\|_{t,u}+\|\bar Z^m\slashed\triangle\vp\|_{t,u}\big)\nonumber\\
\lesssim&\delta^{2\nu-\f52}
+\mu_{\min}^{-1/2}\sqrt{\mathcal{E}_{1,\leq m+2}(t,u)}
+\delta^{\nu-1}\sqrt{\mathcal{E}_{2,\leq m+2}(t,u)} \label{remainder}\\
&+\delta^{\nu-2}\int_{t_0}^t\mu_{\min}^{-\f12}(\tau)\sqrt{\mathcal{E}_{1,\leq m+2}(\tau,u)}\,\mathrm{d}\tau
+\delta^{2\nu-3}\int_{t_0}^t\sqrt{\mathcal{E}_{2,\leq m+2}(\tau,u)}\,\mathrm{d}\tau.\nonumber
\end{align}
Collecting \eqref{de}--\eqref{remainder} and using \eqref{eal}, one has
\begin{equation*}
\begin{split}
\delta^l \|e^m\|_{t,u}
\lesssim& \delta^{\nu-1} \bigl(1 + \delta^{3\nu-4}\bigr) \delta^l \|F^m\|_{t,u}
+ \delta^{2\nu-\frac{5}{2}} \bigl(1 + \delta^{2\nu-3}\bigr) \\
&+ \delta^{\nu-2} \mu_{\min}^{-1/2} \sqrt{\mathcal{E}_{1,\leq m+2}(t,u)}
+ \bigl(1 + \delta^{3\nu-4}\bigr) \sqrt{\mathcal{E}_{2,\leq m+2}(t,u)} \\
&+ \bigl(1 + \delta^{2\nu-3}\bigr) \Bigl\{
\delta^{\nu-2} \int_{t_0}^t \mu_{\min}^{-\frac{1}{2}}(\tau) \sqrt{\mathcal{E}_{1,\leq m+2}(\tau,u)} \,\mathrm{d}\tau \\
&+ \delta^{2\nu-3} \int_{t_0}^t \sqrt{\mathcal{E}_{2,\leq m+2}(\tau,u)} \,\mathrm{d}\tau
\Bigr\},
\end{split}
\end{equation*}
and hence, with the help of \eqref{eq:mu_integrability},
\begin{equation}\label{Y-21}
\begin{split}
\int_{t_0}^t \delta^l \|e^m\|_{\tau,u} \,\mathrm{d}\tau
\lesssim &\delta^{\nu-1} \bigl(1 + \delta^{3\nu-4}\bigr) \int_{t_0}^t \delta^l \|F^m\|_{\tau,u} \,\mathrm{d}\tau
+ \delta^{\nu-\frac{1}{2}} \bigl(1 + \delta^{2\nu-3}\bigr) \\
& + \frac{\mu_{\min}^{\frac{1}{2} - b_{m+2}}(t)}{b_{m+2}} \sqrt{\widetilde{\mathcal{E}}_{1,\leq m+2}(t,u)} + \frac{\delta^{2-\nu} + \delta^{2\nu-2}}{b_{m+2}} \mu_{\min}^{1 - b_{m+1}} \sqrt{\widetilde{\mathcal{E}}_{2,\leq m+2}(t,u)}.
\end{split}
\end{equation}
Inserting \eqref{sdE}, \eqref{Y-19} and \eqref{Y-21} into \eqref{Eal}, and then applying Gronwall's inequality, one has
\begin{equation}\label{Fal}
\begin{split}
\delta^l \| F^m \|_{t,u} \lesssim \,&\delta^{\nu - \frac{3}{2}} + \delta^{3\nu - \frac{9}{2}} \int_{t_0}^t \mu_{\min}^{-1}(\tau) \, d\tau \\
&+ \frac{\mu_{\min}^{-b_{m+2}}(t)}{b_{m+2}} \Big\{ \sqrt{\widetilde{\mathcal{E}}_{1,\leq m+2}(t,u)} + \sqrt{\widetilde{\mathcal{E}}_{2,\leq m+2}(t,u)} \Big\}.
\end{split}
\end{equation}
In addition, by the definition of $F^m$, it holds
\[
\mu \slashed{d} \bar{Z}^m \, \mathrm{tr}\check{\mathscr{X}} = F^m + \slashed{d} \bar{Z}^m E.
\]
Combining inequalities \eqref{Fal} and \eqref{dZE}, we thus have
\begin{equation}\label{d}
\begin{split}
\delta^l \| \mu \slashed{d} \bar{Z}^m \, \mathrm{tr}\check{\mathscr{X}} \|_{t,u} \lesssim \,&\delta^{\nu - \frac{3}{2}} + \delta^{3\nu - \frac{9}{2}} \int_{t_0}^t \mu_{\min}^{-1}(\tau) \, d\tau \\
&+ \mu_{\min}^{-b_{m+2}}(t)\Big\{ \sqrt{\widetilde{\mathcal{E}}_{1,\leq m+2}(t,u)} + \sqrt{\widetilde{\mathcal{E}}_{2,\leq m+2}(t,u)} \Big\}.
\end{split}
\end{equation}
In summary, for any $Z\in\{\varrho\mathring L, T, R\}$, it follows from \eqref{ZmX} and \eqref{d} that
\begin{equation}\label{dnnchi}
\begin{split}
\delta^{l+k(2-\nu)} \| \mu \slashed{d}{Z}^m \, \mathrm{tr}\check{\mathscr{X}} \|_{t,u} \lesssim \,&\delta^{\nu - \frac{3}{2}} + \delta^{3\nu - \frac{9}{2}} \int_{t_0}^t \mu_{\min}^{-1}(\tau) \, d\tau \\
&+ \mu_{\min}^{-b_{m+2}}(t) \Big\{ \sqrt{\widetilde{\mathcal{E}}_{1,\leq m+2}(t,u)} + \sqrt{\widetilde{\mathcal{E}}_{2,\leq m+2}(t,u)} \Big\}.
\end{split}
\end{equation}
\subsubsection{Estimates on the derivatives of $\slashed\triangle\mu$}\label{Sformu}
We next derive the transport equation for $\mu\slashed{\triangle}\mu$ under the derivative of $\mathring{L}$.
Following analogous computations in \cite[Section 4.2]{Ding4}, one has
\begin{equation*}
\begin{split}
\mathring{L}\tilde{F} &= \big(-2\mu\,\mathrm{tr}\check{\mathscr{X}} - \frac{2\mu}{\varrho} + \tilde{\mathcal{E}}\big)\slashed{\triangle}\mu+ \mu\bigl(\slashed{d}^X\mathrm{tr}\check{\mathscr{X}}\bigr)\mathcal{Q}_X + \tilde{e},
\end{split}
\end{equation*}
where
\begin{equation}\label{EEF}
\begin{split}
\tilde{F} &= \mu\slashed{\triangle}\mu - \tilde{E}, \\
\tilde{E} &= -\frac{1}{2}\mu^2 \bigl(G_{\mathring{L}\mathring{L}}^\al + 2G_{\tilde{T}\mathring{L}}^\al\bigr)\slashed{\triangle}\varphi_\al
+ \frac{1}{2}G_{\mathring{L}\mathring{L}}^\al T\underline{\mathring{L}}\varphi_\al, \\
\tilde{\mathcal{E}} &= -\mu G^\al_{\mathring{L}\mathring{L}}\mathring{L}\varphi_\al
- 2\mu G^\al_{\tilde{T}\mathring{L}}\mathring{L}\varphi_\al
+\f12 G_{\mathring{L}\mathring{L}}^\al T\varphi_\al, \\
\mathcal{Q}_X &= -\slashed{d}_X\mu
- \mu G^\al_{X\tilde{T}}\mathring{L}\varphi_\al
- \mu G^{\al}_{\tilde{T}\mathring{L}}\slashed{d}_X\varphi_\al
- \mu G^\al_{\mathring{L}\mathring{L}}\slashed{d}_X\varphi_\al
+ G^\al_{X\mathring{L}}T\varphi_\al
\end{split}
\end{equation}
and
\begin{align}\label{te}
\tilde e=&-\mu\slashed d^X\mu\Big\{\slashed d_X\big(\f12\slashed g^{XX}G_{XX}^\al\mathring L\vp_\al+G_{\mathring L\mathring L}^\al\mathring L\vp_\al+2G_{\tilde T\mathring L}^\al\mathring L\vp_\al\big)\no\\
&\qquad+\f12G_{\mathring L\mathring L}^\beta\slashed d_X\vp_\beta\big(G_{X\mathring L}^\al\slashed d^X\vp_\al+G_{X\tilde T}^\al\slashed d^X\vp_\al+\mathrm{tr}\check{\mathscr{X}}+\varrho^{-1}
\big)
\Big\}-\f12\mathring L(G_{\mathring L\mathring L}^\al)T\underline{\mathring L}\varphi_\al\no\\
&-\f12G_{\mathring L\mathring L}^\al\big\{T\mu(\slashed\triangle\varphi_\al)+\leftidx^{{(T)}}{\slashed\pi}_{\mathring L}^X\slashed d_X\underline{\mathring L}\varphi_\al-\f{1}{2\varrho^2}\underline{\mathring L}\varphi_\al-\f{1}{2\varrho}T\underline{\mathring L}\varphi_\al+T H_\al\big\}\no\\
&+f_1(\varphi, \mathring L^1, \mathring L^2,\slashed dy,\varrho,\slashed\triangle y,\operatorname{tr}\check{\mathscr X})\mu\slashed\triangle\varphi\left(
\begin{array}{ccc}
\mu\mathring L\varphi\\
\mu\slashed d\varphi\\
T\varphi\\
\mu
\end{array}
\right)\\
&+f_2(\varphi,\mathring L^1,\mathring L^2,\slashed d\mathring L^1,\slashed d\mathring L^2,\varrho,\slashed\triangle y)\mu\left(
\begin{array}{ccc}
\slashed d\varphi\\
\slashed d\mathring L^1\\
\slashed d\mathring L^2
\end{array}
\right)
\left(
\begin{array}{ccc}
\slashed d\varphi\\
\mathring L\varphi\\
\slashed d\mathring L^1\\
\slashed d\mathring L^2
\end{array}
\right)
\left(
\begin{array}{ccc}
\mu\mathring L\varphi\\
\mu\slashed d\varphi\\
T\varphi
\end{array}
\right)\no\\
&+f_3(\varphi, \mathring L^1, \mathring L^2,\slashed dy,\varrho,\operatorname{tr}\check{\mathscr X})\mu\slashed\triangle y\left(
\begin{array}{ccc}
\mu\mathring L\varphi\\
\mu\slashed d\varphi\\
T\varphi
\end{array}
\right)+f_4(\varphi,\mathring L^1,\mathring L^2,\slashed dy)\mu\left(
\begin{array}{ccc}
\slashed dT\varphi\\
\mu\slashed d\mathring L\varphi
\end{array}
\right)
\left(
\begin{array}{ccc}
\slashed d\varphi\\
\slashed d\mathring L^1\\
\slashed d\mathring L^2
\end{array}
\right).\no
\end{align}
Analogously to the introduction of $F^m$ in Subsection \ref{trchi}, one  introduces
\[
\tilde{F}^m \;=\; \mu \bar{Z}^m \slashed{\triangle} \mu \;-\; \bar{Z}^m \tilde{E}
\]
with $\bar{Z}$ being any vector field in $\{T, R\}$. Then an induction argument yields that for $m \geq 1$,
\begin{equation}\label{LbarF}
\begin{split}
\mathring{L} \tilde{F}^m
&=\big(-2\,\mathrm{tr}\check{\mathscr{X}} \;-\; \frac{2}{\varrho} \;+\; \mu^{-1}\tilde{\mathcal{E}}\big)
\big(\tilde{F}^m \;+\; \bar{Z}^m \tilde{E}\big)+\mu\big(\slashed{d}^X \bar{Z}^m \,\mathrm{tr}\check{\mathscr{X}}\big)\mathcal{Q}_X
\;+\; \tilde{e}^m,
\end{split}
\end{equation}
where
\begin{equation}\label{bare}
\begin{split}
\tilde e^m=&\bar Z^{m}\tilde e+\sum_{m_1+m_2=m-1}{\bar Z}^{m_1}\Big\{-(\bar Z\mu)\mathring L\bar Z^{m_2}\slashed\triangle\mu+(\slashed d^X{\bar Z}^{m_2}\mathrm{tr}\check{\mathscr{X}})\slashed{\mathcal L}_{\bar Z}(\mu\mathcal Q_X)\Big\}\\
&+\sum_{m_1+m_2=m-1}{\bar Z}^{m_1}\Big\{\leftidx^{{(\bar Z)}}{\slashed\pi}_{\mathring L}^X(\mu\slashed d_X\bar Z^{m_2}\slashed\triangle\mu-\slashed d_X\bar Z^{m_2}\tilde E)\\
&\qquad-\bar Z\bigl(\f12\mu G_{\mathring L\mathring L}^\al\mathring L\vp_\al+\mu G_{\tilde T\mathring L}^\al\mathring L\vp_\al+\mu\slashed g^{XX}G_{XX}^\al\mathring L\vp_\al+2\mu\,\mathrm{tr}\check{\mathscr{X}}+\f\mu\varrho\bigr)\bar Z^{m_2}\slashed \triangle\mu\Big\}.
\end{split}
\end{equation}
Note that
\[\mu^{-1}\tilde{\mathcal E}=\mu^{-1}\mathring L\mu-\f12G_{\mathring L\mathring L}^\al\mathring L\vp_\al-G_{\tilde T\mathring L}^\al\mathring L\vp_\al\lesssim\delta^{\nu-2}\]
holds by \eqref{blowup}.
Then it follows from \eqref{LbarF} that
\begin{equation*}
\begin{split}
\mathring{L}\bigl( \varrho^4 |\tilde{F}^m|^2 \bigr)
&= 2\varrho^2 \tilde{F}^m \mathring{L}\bigl( \varrho^2 \tilde{F}^m \bigr) \\
&\lesssim \delta^{\nu-2} \varrho^4 |\tilde{F}^m|^2 + |\varrho^2 \tilde{F}^m| \big(
\mu^{-1} \delta^{\nu-2} |\bar{Z}^m \tilde{E}| + (1+\delta^{2\nu-3}) |\mu \slashed{d} \bar{Z}^m \mathrm{tr}\check{\mathscr{X}}| + |\tilde{e}^m|
\big),
\end{split}
\end{equation*}
which deduces that
\begin{equation}\label{r2F}
\begin{split}
\delta^{l} \varrho^2 |\tilde{F}^m(t)|
&\lesssim \delta^{l} \varrho_0^2 |\tilde{F}^m(t_0)| + \delta^{l} \int_{t_0}^t \big\{
\mu_{\min}^{-1} \delta^{\nu-2} |\bar{Z}^m \tilde{E}| + (1+\delta^{2\nu-3}) |\mu \slashed{d} \bar{Z}^m \mathrm{tr}\check{\mathscr{X}}| + |\tilde{e}^m|
\big\} d\tau.
\end{split}
\end{equation}
Applying \eqref{Ff} to \eqref{r2F} implies
\begin{equation}\label{F}
\begin{split}
\delta^{l} \|\tilde{F}^m\|_{t,u}
&\lesssim \delta^{\nu-\frac{5}{2}} + \delta^{l} \int_{t_0}^t \mu_{\min}^{-1} \delta^{\nu-2} \|\bar{Z}^m \tilde{E}\|_{\tau,u} d\tau \\
&\quad + \delta^{l} \int_{t_0}^t (1+\delta^{2\nu-3}) \|\mu \slashed{d} \bar{Z}^m \mathrm{tr}\check{\mathscr{X}}\|_{\tau,u} d\tau + \delta^{l} \int_{t_0}^t \|\tilde{e}^m\|_{\tau,u} d\tau,
\end{split}
\end{equation}
where
\begin{equation}\label{chi}
\begin{split}
\delta^{l} \int_{t_0}^t (1+\delta^{2\nu-3}) \|&\mu \slashed{d} \bar{Z}^m \mathrm{tr}\check{\mathscr{X}}\|_{\tau,u} d\tau
\lesssim(1+\delta^{2\nu-3}) \Big\{\delta^{1/2}+\delta^{2\nu-\f52}\int_{t_0}^t\mu_{\min}^{-1}(\tau)d\tau\\
&+\f{\delta^{2-\nu}}{b_{m+2}}\mu_{\min}^{1-b_{m+2}}\bigl(\sqrt{\widetilde{\mathcal{E}}_{1,\leq m+2}(t,u)} + \sqrt{\widetilde{\mathcal{E}}_{2,\leq m+2}(t,u)} \bigr)
\Big\}.
\end{split}
\end{equation}
In addition, the expression of $\tilde E$ in \eqref{EEF} gives that
\begin{align}
&\delta^l\|\bar Z^m\tilde E\|_{t,u}\nonumber\\
\lesssim & \de^{l_1}\bigl(\|\mu\slashed\triangle\bar Z^{m_1}\vp\|_{t,u}+\|T\bar Z^{m_1}\mathring{\underline L}\vp\|_{t,u}\|+\delta^{\nu-1}\|\slashed{\mathcal{L}}_{\bar{Z}}^{m_1}\leftidx{^{(R)}}{\slashed{\pi}}\|_{t,u}
+\delta^\nu\|\slashed{\mathcal{L}}_{\bar{Z}}^{m_1}\leftidx{^{(T)}}{\slashed{\pi}}\|_{t,u}\bigr)\nonumber\\
&+\delta^{\nu-3}\delta^{l_1}\Bigl(\sum_{i=1}^2\|\bar Z^{m_1}\check L^i\|_{t,u}+\|\bar Z^{m_1}\upsilon\|_{t,u}+\delta\|\bar Z^{m_1}\mu\|_{t,u}+\delta^{2}(1+\delta^{2\nu-3})\|\bar Z^{m_1}\mathring{\underline L}\vp\|_{t,u}\Bigr)\nonumber\\
&+\delta^{\nu-3}\delta^{l_0}\|\bar Z^{m_0}\vp\|_{t,u}+\delta^{\nu-2}\delta^{l_1}\|\slashed{\mathcal{L}}_{\bar{Z}}^{m_1}\leftidx{^{(R)}}{\slashed{\pi}}_T\|_{t,u} \nonumber\\
\lesssim &\delta^{2\nu-\f72}
+\delta^{\nu-2}\sqrt{\mathcal{E}_{1,\leq m+2}(t,u)}
+\delta^{-1}\sqrt{\mathcal{E}_{2,\leq m+2}(t,u)} \label{ZtE}\\
&+\delta^{\nu-3}\int_{t_0}^t\mu_{\min}^{-\f12}(\tau)\sqrt{\mathcal{E}_{1,\leq m+2}(\tau,u)}\,\mathrm{d}\tau
+\delta^{2\nu-4}\int_{t_0}^t\sqrt{\mathcal{E}_{2,\leq m+2}(\tau,u)}\,\mathrm{d}\tau,\nonumber
\end{align}
which means
\begin{equation}\label{ZmE}
\begin{split}
&\delta^{\nu-2}\int_{t_0}^t\mu_{\min}^{-1}(\tau)\delta^l\|\bar Z^m\tilde E\|_{\tau,u}d\tau\\
\lesssim&\delta^{3\nu-\f{11}2}\int_{t_0}^t\mu_{\min}^{-1}(\tau)d\tau+\f{\delta^{-1}}{b_{m+2}}\mu_{\min}^{-b_{m+2}}
\big(\sqrt{\widetilde{\mathcal{E}}_{1,\leq m+2}(t,u)} + \sqrt{\widetilde{\mathcal{E}}_{2,\leq m+2}(t,u)}\big).
\end{split}
\end{equation}
Since \eqref{te} and \eqref{H} imply that $\bar{Z}^m \tilde{e}$ contains the term
$\frac{1}{2}(G_{\mathring{L}\mathring{L}}^\gamma T\varphi_\gamma) \bar{Z}^m T\operatorname{tr}\check{\mathscr{X}}$,
which equals
\[
\frac{1}{2} (G_{\mathring{L}\mathring{L}}^\gamma T\varphi_\gamma) \bar{Z}^m \slashed{\triangle}\mu
+ \cdots=\frac{1}{2} \mu^{-1}( G_{\mathring{L}\mathring{L}}^\gamma T\varphi_\gamma)(\tilde F^m+\bar Z^m\tilde E)+\cdots
\]
by \eqref{Tchi'}, then it follows from Proposition \ref{L2chi} and \eqref{ZtE} that
\begin{align}
&\delta^l\|\bar Z^m\tilde e\|_{t,u}\nonumber\\
\lesssim&\delta^{2\nu-4}\delta^{l_0}\|\bar Z^{m_0}\mu\|_{t,u}+\delta^l\bigl(\|\mu\slashed d\bar Z^m\mathring L\vp\|_{t,u}+\delta^{-1}\|\slashed\triangle\bar Z^m\vp\|_{t,u}+(1+\delta^{2\nu-3})\|\slashed d\bar Z^m\mathring{\underline L}\vp\|_{t,u}\bigr)\nonumber\\
&+\delta^{\nu-2}\delta^l\|T\bar Z^{m+1}\vp\|_{t,u}+(1+\delta^{2\nu-3})\bigl(\delta^{l_1}\|T\bar Z^{m_1}\mathring{\underline L}\vp\|_{t,u}+\delta^{-2}\delta^{l_0}\|\bar Z^{m_0}\vp\|_{t,u}\bigr)\nonumber\\
&+\delta^{2\nu-3}\delta^{l_1}\|T\bar Z^{m_1}T\vp\|_{t,u}+\delta^{l_1}\|T\bar Z^{m_1}\mathring L\vp\|_{t,u}+\delta^{\nu-2}(1+\delta^{2\nu-3})\delta^{l_1}\|\slashed{\mathcal{L}}_{\bar{Z}}^{m_1}\leftidx{^{(R)}}{\slashed{\pi}}_T\|_{t,u} \nonumber\\
&+\delta^{\nu-2}\delta^{l_1}\bigl(\delta\|\slashed{\mathcal{L}}_{\bar{Z}}^{m_1}\leftidx{^{(T)}}{\slashed{\pi}}\|_{t,u}
+\|\slashed{\mathcal{L}}_{\bar{Z}}^{m_1}\leftidx{^{(T)}}{\slashed{\pi}}_{\mathring{L}}\|_{t,u}
+\|\slashed{\mathcal{L}}_{\bar{Z}}^{m_1}\leftidx{^{(R)}}{\slashed{\pi}}\|_{t,u}\bigr)+\delta^{\nu-3}\delta^{l_1}\|\bar Z^{m_1}\operatorname{tr}\check{\mathscr X}\|_{t,u} \nonumber\\
&+\delta^{\nu-3}(1+\delta^{2\nu-3})\delta^{l_0}\bigl(\sum_{i=1}^2\|\bar Z^{m_0}\check L^i\|{t,u}+\|\bar Z^{m_0}\upsilon\|_{t,u}+\delta\|\slashed{\mathcal{L}}_{\bar{Z}}^{m_0}\slashed g\|_{t,u} \bigr)\nonumber\\
&+\delta^{\nu-2}\delta^{l_1}\|\mu^{-1}(\tilde F^{m_1}+\bar Z^{m_1}\tilde E\|_{t,u})\nonumber\\
\lesssim &\delta^{\nu-2}\mu_{\min}^{-1}\de^l\|\tilde F^m\|_{t,u}+\delta^{3\nu-\f{11}2}\mu_{\min}^{-1}(t)
\label{Zme}\\
&+\delta^{\nu-3}\mu_{\min}^{-1-b_{m+2}}(t)\big(\sqrt{\widetilde{\mathcal{E}}_{1,\leq m+2}(t,u)} + \sqrt{\widetilde{\mathcal{E}}_{2,\leq m+2}(t,u)} \big).\nonumber
\end{align}
Furthermore, when $m_1+m_2=m-1$,
\begin{align}
&\delta^l\|{\bar Z}^{m_1}\big\{-(\bar Z\mu)\mathring L\bar Z^{m_2}\slashed\triangle\mu+(\slashed d^X{\bar Z}^{m_2}\mathrm{tr}\check{\mathscr{X}})\slashed{\mathcal L}_{\bar Z}(\mu\mathcal Q_X)\big\}\|_{t,u}\nonumber\\
&+\delta^l\|{\bar Z}^{m_1}\big\{\leftidx^{{(\bar Z)}}{\slashed\pi}_{\mathring L}^X(\mu\slashed d_X\bar Z^{m_2}\slashed\triangle\mu-\slashed d_X\bar Z^{m_2}\tilde E)\nonumber\\
&-\bar Z\bigl(\f12\mu G_{\mathring L\mathring L}^\al\mathring L\vp_\al+\mu G_{\tilde T\mathring L}^\al\mathring L\vp_\al+\mu\slashed g^{XX}G_{XX}^\al\mathring L\vp_\al+2\mu\,\mathrm{tr}\check{\mathscr{X}}+\f\mu\varrho\bigr)\bar Z^{m_2}\slashed \triangle\mu\big\}\|_{t,u}\nonumber\\
\lesssim &\delta^{\nu-3}\delta^{l_1}\bigl(\|\slashed{\mathcal{L}}_{\bar{Z}}^{m_1}\leftidx{^{(R)}}{\slashed{\pi}}_{\mathring L}\|_{t,u}
+\delta\|\slashed{\mathcal{L}}_{\bar{Z}}^{m_1}\leftidx{^{(T)}}{\slashed{\pi}}_{\mathring{L}}\|_{t,u}
+\delta^2\|\slashed{\mathcal{L}}_{\bar{Z}}^{m_1}\leftidx{^{(R)}}{\slashed{\pi}}_T\|_{t,u}\bigr)\nonumber\\
&+\delta^{\nu-1}(1+\delta^{3\nu-4})\bigl(\delta^{l}\|\bar Z^m\slashed\triangle\mu\|_{t,u}+\delta^{l_1}\|\bar Z^{m-1}\tilde E\|_{t,u}+\delta^{\nu-3}\delta^{l_1}\|\slashed{\mathcal{L}}_{\bar{Z}}^{m_1}\slashed g\|_{t,u}\bigr)\nonumber\\
&+\delta^{\nu-2}\delta^{l_0}\bigl(\|\bar Z^{m_0}\mu\|_{t,u}+\sum_{i=1}^2\|\bar Z^{m_0}\check L^i\|_{t,u}+\|\bar Z^{m_0}\upsilon\|_{t,u}\bigr)+\delta^{l_0}\|T\bar Z^{m_0}\vp\|_{t,u}\nonumber\\
&+(1+\delta^{2\nu-3})\delta^{l_1}\|\bar Z^{m_1}\operatorname{tr}\check{\mathscr X}\|_{t,u}+\delta^{\nu-2}\delta^{l_0}\|\bar Z^{m_0}\vp\|_{t,u}+\delta^{l_0}\|\mu\mathring L\bar Z^{m_0}\vp\|_{t,u}\nonumber\\
\lesssim &(1+\delta^{3\nu-4})\big\{\delta^{\nu-1}\mu_{\min}^{-1}\de^l\|\tilde F^m\|_{t,u}+\de^{2\nu-\f72}\mu_{\min}^{-1}(t)+\delta^{-1}\mu_{\min}^{-1-b_{m+2}}(t)\sqrt{\widetilde{\mathcal{E}}_{1,\leq m+2}(t,u)}\label{eleft}\\
&\qquad\qquad\qquad+\delta^{\nu-2}\mu_{\min}^{-1-b_{m+2}}(t)\sqrt{\widetilde{\mathcal{E}}_{2,\leq m+2}(t,u)}\big\}.\nonumber
\end{align}
By substituting \eqref{Zme} and \eqref{eleft} into \eqref{bare}, one has
\begin{equation}\label{intem}
\begin{split}
\delta^l \int_{t_0}^t \|\tilde{e}^m\|_{\tau,u} \, d\tau
\lesssim \,&\delta^{\nu-2} \int_{t_0}^t \mu_{\min}^{-1}(\tau) \, \dl^l \|\tilde{F}^m\|_{\tau,u} \, d\tau
+ \delta^{3\nu-\frac{11}{2}} \int_{t_0}^t \mu_{\min}^{-1}(\tau) \, d\tau \\
&+ \frac{\delta^{-1}}{b_{m+2}} \, \mu_{\min}^{-b_{m+2}}(t)
\big(\sqrt{\widetilde{\mathcal{E}}_{1,\leq m+2}(t,u)}
+ \sqrt{\widetilde{\mathcal{E}}_{2,\leq m+2}(t,u)}\big).
\end{split}
\end{equation}
In all, using \eqref{chi}, \eqref{ZmE}, \eqref{intem} and \eqref{F}, we obtain
\begin{equation*}
\mu_{\min}^{b_{m+2}} \, \delta^l \|\tilde{F}^m\|_{t,u}
\lesssim \delta^{\nu - \frac{5}{2}}
+ \frac{\delta^{-1}}{b_{m+2}}
\big(\sqrt{\widetilde{\mathcal{E}}_{1,\leq m+2}(t,u)}
+ \sqrt{\widetilde{\mathcal{E}}_{2,\leq m+2}(t,u)}\big).
\end{equation*}
Combining this with \eqref{ZtE} yields
\begin{equation*}
\mu_{\min}^{b_{m+2}} \, \delta^l \|\mu \bar{Z}^m \slashed{\triangle} \mu\|_{t,u}
\lesssim \delta^{\nu - \frac{5}{2}}
+ \delta^{-1} \big( b_{m+2}^{-1} \sqrt{\widetilde{\mathcal{E}}_{1,\leq m+2}(t,u)}
+ \sqrt{\widetilde{\mathcal{E}}_{2,\leq m+2}(t,u)}\big).
\end{equation*}
When $Z^m$ contains at least one $\varrho\mathring L$, similarly to \eqref{ZmX},
we could take use of \eqref{lmu} and estimate $\|\mu{Z}^m \slashed{\triangle} \mu\|_{t,u}$ directly.

In the end, for any $Z\in\{\varrho\mathring L,T,R\}$,
\begin{equation}\label{Zmu}
\mu_{\min}^{b_{m+2}} \, \delta^{l+k(2-\nu)} \|\mu{Z}^m \slashed{\triangle} \mu\|_{t,u}
\lesssim \delta^{\nu - \frac{5}{2}}
+ \delta^{-1} \big( b_{m+2}^{-1} \sqrt{\widetilde{\mathcal{E}}_{1,\leq m+2}(t,u)}
+ \sqrt{\widetilde{\mathcal{E}}_{2,\leq m+2}(t,u)}\big).
\end{equation}

\section{Estimates for the error terms}\label{ert}

With all preparations made for the optimal $L^2$ estimates of the relevant quantities in Section \ref{EE},
we are now ready to treat the error terms $\delta^{\nu-1}\int_{D^{t, u}}|\Phi \cdot \mathring{\underline{L}}\Psi|$
and $\int_{D^{t, u}}|\Phi \cdot \mathring{L}\Psi|$ in \eqref{e} so that the final energy estimates for $\vp$ can be established.
As in \cite{M-Y, L-Y, Sp}, we need to examine all terms in \eqref{Phik} with explicit expressions listed in \eqref{T1}--\eqref{R3}.

\subsection{Treatment of $J_1^{m+1}$ in \eqref{Phik}}\label{tot}
\begin{enumerate}
\item
By the explicit expression of $J_1^{m+1}$, one may directly conclude that the highest order of derivatives involved in $\leftidx{^{(Z)}}D_{\al,i}^{n}$ $(i=1,3)$ does not exceed $(m-n)$. Since neither the corresponding $L^\infty$ estimates in Section \ref{Section 3} nor the relevant $L^2$ estimates in Proposition \ref{L2chi} rely on the top order derivative estimates in Section \ref{Section 6.2},
then the $L^2$ norms  of
all terms contained in $J_1^{m+1}$ can be bounded via the aforementioned results.
Therefore,
\begin{align}
&\delta^{\nu-1}\delta^{2l+2k(2-\nu)}
\big|
\int_{D^{t, u}}
\sum_{p=1}^{m}
\bigl(Z_{m+1}+\leftidx{^{(Z_{m+1})}}\lambda\bigr)
\cdots
\bigl(Z_{m+2-p}+\leftidx{^{(Z_{m+2-p})}}\lambda\bigr)
\leftidx{^{(Z_{m+1-p})}}D_{\al,1}^{m-p}
\cdot\mathring{\underline L}\varphi_\al^{m+1}
\big|
\notag\\
\lesssim&\,
\delta^{\nu-1}\delta^{2l+2k(2-\nu)}
\int_{t_0}^t
\sum_{p=1}^{m}
\bigl\|
\bigl(Z_{m+1}+\leftidx{^{(Z_{m+1})}}\lambda\bigr)
\cdots
\bigl(Z_{m+2-p}+\leftidx{^{(Z_{m+2-p})}}\lambda\bigr)
\leftidx{^{(Z_{m+1-p})}}D_{\al,1}^{m-p}
\bigr\|_{\tau,u}
\notag\\
&\qquad\qquad\qquad\qquad
\cdot\bigl\|\mathring{\underline L}Z^{m+1}\varphi_\al\bigr\|_{\tau,u}\,d\tau
\notag\\
\lesssim&\,
\delta^{\nu-1}
\int_{t_0}^t
\mu_{\min}^{-b_{m+2}}(\tau)
\sqrt{\widetilde{\mathcal{E}}_{2,\leq m+2}(\tau,u)}
\Big\{
\delta^{3\nu-\frac{9}{2}}
+ \delta^{\nu-2}\mu_{\min}^{-1/2-b_{m+2}}(\tau)\sqrt{\widetilde{\mathcal{E}}_{1,\leq m+2}(\tau,u)}
\notag\\
&\qquad\qquad\qquad\qquad\qquad\qquad\qquad
+ \delta^{2\nu-3}\mu_{\min}^{-b_{m+2}}(\tau)\sqrt{\widetilde{\mathcal{E}}_{2,\leq m+2}(\tau,u)}
\Big\}\!d\tau
\notag\\
\lesssim&\,
\delta^{5\nu-6}
+ \frac{\delta^{\nu-1}}{b_{m+2}}
\mu_{\min}^{-2b_{m+2}}(t)
\big({\widetilde{\mathcal{E}}_{1,\leq m+2}(t,u)}
+{\widetilde{\mathcal{E}}_{2,\leq m+2}(t,u)}
\big).\label{D11}
\end{align}
Similarly,
\begin{align}
&\delta^{\nu-1}\delta^{2l+2k(2-\nu)}\big|\int_{D^{t, u}}\sum_{p=1}^{m}\big(Z_{m+1}+\leftidx{^{(Z_{m+1})}}\lambda\big)\dots\big(Z_{m+2-p}
+\leftidx{^{(Z_{m+2-p})}}\lambda\big)\leftidx{^{(Z_{m+1-p})}}D_{\al,3}^{m-p}\cdot\mathring{\underline L}Z^{m+1}\varphi_\al\big|\notag\\
\lesssim&\delta^{3\nu-2}(1+\delta^{4\nu-6})+\frac{\delta^{\nu-1}}{b_{m+2}}
\mu_{\min}^{-2b_{m+2}}(t)
\bigl(\delta^{2\nu-2}\mu_{\min}(t)
{\widetilde{\mathcal{E}}_{1,\leq m+2}(t,u)}
+{\widetilde{\mathcal{E}}_{2,\leq m+2}(t,u)}
\bigr).\label{D33}
\end{align}
The terms in $\delta^{2l+2s(2-\nu)}\int_{D^{t, u}}|\Phi\cdot \mathring L\Psi|$
related to the integrand factors $\leftidx{^{(Z)}}D_{\al,i}^{n}$ $(i=1,3)$ can also be estimated as follows
\begin{align}\label{D13}
&\delta^{2l+2k(2-\nu)}\int_{D^{t, u}}\big|\sum_{p=1}^{m}\big(Z_{m+1}+\leftidx{^{(Z_{m+1})}}\lambda\big)\dots\big(Z_{m+2-p}
+\leftidx{^{(Z_{m+2-p})}}\lambda\big)\big(\leftidx{^{(Z_{m+1-p})}}D_{\al,1}^{m-p}\no\\
&\qquad\qquad\qquad+\leftidx{^{(Z_{m+1-p})}}D_{\al,3}^{m-p}\big)\mathring L\varphi_\al^{m+1}\big|\no\\
\lesssim&\delta^{2l+2k(2-\nu)+1}\int_{D^{t, u}}\Big\{\sum_{p=1}^{m}\big(Z_{m+1}+\leftidx{^{(Z_{m+1})}}\lambda\big)\dots\big(Z_{m+2-p}
+\leftidx{^{(Z_{m+2-p})}}\lambda\big)\big(\leftidx{^{(Z_{m+1-p})}}D_{\al,1}^{m-p}\no\\
&\qquad+\leftidx{^{(Z_{m+1-p})}}D_{\al,3}^{m-p}\big)\Big\}^2+\delta^{2l+2s(2-\nu)-1}\int_{D^{t, u}}|\mathring LZ^{m+1}\varphi_\al|^2\no\\
\lesssim&\delta^{5\nu-6}+\frac{\delta^{\nu-1}}{b_{m+2}}
\mu_{\min}^{-2b_{m+2}}(t)
\bigl(
{\widetilde{\mathcal{E}}_{1,\leq m+2}(t,u)}
+\delta^{2\nu-2}\mu_{\min}(t){\widetilde{\mathcal{E}}_{2,\leq m+2}(t,u)}
\bigr)\\
&+\delta^{-1}\int_0^uF_{1,m+2}(s,u')du'.\no
\end{align}

\item As stated in \cite[Section 5]{Ding4}, we now proceed to estimate the terms in $J_1^{m+1}$ that involve $\leftidx{^{(Z)}}D_{\al,2}^{n}$ for
$0 \leq n \leq m$. Especially, in the case of $n=0$, the index $j$ in $J_1^{m+1}$ equals $m$ and the top-order derivatives in $\leftidx{^{(Z)}}D_{\al,2}^{0}$ are of order $m$. Consequently, $\leftidx{^{(Z)}}D_{\gamma,2}^{0}$ contains the terms including $(m+1)^\text{th}$-order derivatives of the deformation tensor.
This prevents the direct use of Proposition \ref{L2chi} to estimate the $L^2$ norm of $\leftidx{^{(Z)}}D_{\al,2}^0$
since the $L^2$ norm of the $(m+1)^\text{th}$-order derivatives of the deformation tensor is controlled by $\mathcal{E}_{1,\leq m+3}(t,u)$ and $\mathcal{E}_{2,\leq m+3}(t,u)$ in the energy estimates. However, it follows from \eqref{e} that $\mathcal{E}_{1,\leq m+3}(t,u)$ and $\mathcal{E}_{2,\leq m+3}(t,u)$ cannot be directly absorbed by the energies $\mathcal{E}_{1,\leq m+2}(t,u)$ and $\mathcal{E}_{2,\leq m+2}(t,u)$ on the left hand side of the resulting energy inequality. To overcome this difficulty, we will analyze the expression of $\leftidx{^{(Z)}}D_{\al,2}^n$ more carefully and apply the estimates from Subsections \ref{trchi}--\ref{Sformu} to handle the top order derivatives of $\operatorname{tr}{\check{\mathscr X}}$ and $\mu$.

According to the expressions given in \eqref{T2}, \eqref{rL2} and \eqref{R2}, special attention should be {paid} to the terms marked with underlines, wavy lines, boxes, or braces. For $\leftidx{^{(T)}}D_{\al,2}^{n}$, one can utilize the relation $\mathring{\underline L} = \mu\mathring L + 2T$ to obtain the identity $\frac{1}{2}\slashed{\mathcal L}_{\mathring{\underline L}}\slashed d_X\mu = \slashed d_XT\mu + \frac{1}{2}\mu\slashed d_X\mathring L\mu$. From this, the underlined term $-\slashed d_XT\mu + \frac{1}{2}\slashed{\mathcal L}_{\mathring{\underline L}}\slashed d_X\mu$ in $\leftidx{^{(T)}}D_{\al,2}^{n}$ becomes
\begin{equation}\label{Y-25}
-\slashed d_XT\mu + \frac{1}{2}\slashed{\mathcal L}_{\mathring{\underline L}}\slashed d_X\mu = \frac{1}{2}\mu\slashed d_X\mathring L\mu,
\end{equation}
which can be estimated by means of \eqref{lmu}.
From \eqref{Rpi} and \eqref{theta}, it can be inferred that all wavy-lined terms in \eqref{T2}, \eqref{rL2} and \eqref{R2} involve the quantity $\slashed{\mathcal L}_{\mathring{\underline L}}\check{\mathscr X}$. As indicated in \eqref{Tchi'}, $\slashed{\mathcal L}_{\mathring{\underline L}}\check{\mathscr X}$ may be expanded as $2\slashed\nabla^2\mu + \mu\slashed{\mathcal L}_{\mathring{L}}\check{\mathscr X}+ \text{``good terms"}$. Specifically, the terms contained in $\mu\slashed{\mathcal L}_{\mathring{L}}\check{\mathscr X}+\text{``good terms"}$ can be estimated by \eqref{Lchi'} and Proposition \ref{L2chi}, whereas the $L^2$-norm of the terms corresponding to $\slashed\triangle\mu$ is handled
by use of \eqref{Zmu}.
Meanwhile, the boxed terms and braced terms may be estimated through \eqref{Zmu} and \eqref{dnnchi}, respectively.

On the other hand, it is noteworthy that there exist some terms whose factors include the derivatives of deformation tensors with respect to $\mathring L$. A typical example is $\frac{1}{4}\mathring L(\operatorname{tr}\leftidx{^{(T)}}{\slashed\pi})\underline{\mathring L}\varphi_\gamma^n$ appearing in \eqref{T2}. In fact, such terms are not ``bad" since a detailed examination of each term in \eqref{Lpi} and \eqref{Rpi} reveals that the $\mathring L$-derivatives of the involved deformation tensors are associated with the ``good" quantities, which can be treated through \eqref{lmu} and \eqref{Lchi'}.

Therefore, we arrive at
\begin{align}
&\delta^{l+k(2-\nu)}\|\sum_{p=1}^{m}\big(Z_{m+1}+\leftidx{^{(Z_{m+1})}}\lambda\big)\dots\big(Z_{m+2-p}
+\leftidx{^{(Z_{m+2-p})}}\lambda\big)\leftidx{^{(Z_{m+1-p})}}D_{\al,2}^{m-p}\|_{t,u}\nonumber \\
\lesssim &\delta^{l_0+k_0(2-\nu)}\big(\delta^{2\nu-3}(1+\delta^{2\nu-3})\|Z^{m_0}\mu\|_{t,u}+\delta^{2\nu-3}\|TZ^{m_0}\vp\|_{t,u}
+\delta^{\nu-2}\|\mathring LZ^{m_0}\vp\|_{t,u}\nonumber\\
&+(1+\delta^{2\nu-3})\|\slashed dZ^{m_0}\vp\|_{t,u}+\delta^{2\nu-4}\|Z^{m_0}\vp\|_{t,u}
+\delta^{3\nu-5}\sum_{i=1}^2\|Z^{m_0}\check L^i\|_{t,u}+\delta^{2\nu-4}\|Z^{m_0}\upsilon\|_{t,u}\big)\nonumber\\
&+\delta^{\nu-2}\delta^{l_1+k_1(2-\nu)}\big(\|\slashed dZ^{m_1}\operatorname{tr}\check{\mathscr X}\|_{t,u}+\delta\|Z^{m_0}\slashed\triangle\mu\|_{t,u}+(1+\delta^{2\nu-3})\|Z^{m_1}\operatorname{tr}\check{\mathscr X}\|_{t,u}\nonumber\\
&+\delta^{2\nu-2}\|\slashed{\mathcal{L}}_{{Z}}^{m_1}\leftidx{^{(R)}}{\slashed{\pi}}_T\|_{t,u}
+\delta^{\nu}\|\slashed{\mathcal{L}}_{{Z}}^{m_1}\leftidx{^{(T)}}{\slashed{\pi}}_{\mathring L}\|_{t,u}+\delta^{\nu-1}\|\slashed{\mathcal{L}}_{{Z}}^{m_1}\leftidx{^{(R)}}{\slashed{\pi}}_{\mathring L}\|_{t,u}\nonumber\\
&+\delta^{2\nu-1}\|\slashed{\mathcal{L}}_{{Z}}^{m_1}\leftidx{^{(T)}}{\slashed{\pi}}\|_{t,u}\big)\nonumber\\
\lesssim &\delta^{2\nu-\f72}\mu_{\min}^{-1-b_{m+2}}(t)+\delta^{\nu-2}\mu_{\min}^{-1-b_{m+2}}(t)\big({\widetilde{\mathcal{E}}_{1,\leq m+2}(t,u)}
+{\widetilde{\mathcal{E}}_{2,\leq m+2}(t,u)}
\big),\label{Dg2}
\end{align}
which yields
\begin{align}
&\delta^{\nu-1}\delta^{2l+2k(2-\nu)}
\big|
\int_{D^{t, u}}
\sum_{p=1}^{m}
\big(Z_{m+1}+\leftidx{^{(Z_{m+1})}}\lambda\big)
\cdots
\big(Z_{m+2-p}+\leftidx{^{(Z_{m+2-p})}}\lambda\big)
\leftidx{^{(Z_{m+1-p})}}D_{\al,2}^{m-p}
\cdot\mathring{\underline L}\varphi_\al^{m+1}
\big|\notag\\
&\lesssim\f{\delta^{3\nu-4}}{b_{m+2}}\mu_{\min}^{-2b_{m+2}}(t)+\f{\delta^{\nu-1}}{b_{m+2}}\mu_{\min}^{-2b_{m+2}}(t)
\big({\widetilde{\mathcal{E}}_{1,\leq m+2}(t,u)}
+{\widetilde{\mathcal{E}}_{2,\leq m+2}(t,u)}
\big).\label{iDg2}
\end{align}
It should be noted that \eqref{Dg2} is insufficient to close the energy estimate for $\delta^{2l+2s(2-\nu)}\int_{D^{t, u}}|\Phi \cdot \mathring{L}\Psi|$ if one adopts the same strategy as in \eqref{D13}. Indeed, after examining all terms in $\leftidx{^{(Z)}}D_{\al,2}^{0}$, we find that the worst terms which cannot be controlled as in \eqref{D13} are $(R^{m+1}\operatorname{tr}\check{\mathscr{X}})T\vp_\al$ (arising from $R^m(\leftidx{^{(R)}}D_{\al,2}^{0})$) and $(\bar{Z}^m\slashed{\triangle}\mu) T\vp_\al$ (appearing in $\bar{Z}^m(\leftidx{^{(T)}}D_{\al,2}^{0})$). These two terms cannot be treated by applying \eqref{dnnchi} and \eqref{Zmu}
due to the lack of the degenerate factor $\mu$, which prevents us from closing the final energy estimates.

Inspired by \cite[Section 9.2]{M-Y} and \cite[Section 7.2]{L-Y}, one has
\begin{align}\label{RmX}
&\int_{D^{t,u}}(R^{m+1}\operatorname{tr}\check{\mathscr{X}})T\vp_\al(\mathring LR^{m+1}\vp_\al)\notag\\
= &\int_{D^{t,u}}-(R^{m+1}\operatorname{tr}\check{\mathscr{X}})R^{m+1}\vp_\al\big(\mathring LT\vp_\al+(\operatorname{tr}\check{\mathscr{X}}+\f12\slashed g^{XX}G_{XX}^\beta\mathring L\vp_\beta+\f1\varrho)T\vp_\al\big)\notag\\
&+\int_{D^{t,u}}([R^{m+1},\mathring L]\operatorname{tr}\check{\mathscr{X}})T\vp_\al(R^{m+1}\vp_\al)\\
&+\int_{D^{t, u}}(R^m\mathring L\operatorname{tr}\check{\mathscr{X}})\big(\f12T\vp_\al\cdot R^{m+1}\vp_\al\cdot\operatorname{tr}\leftidx{^{(R)}}{\slashed{\pi}}+RT\vp_\al\cdot R^{m+1}\vp_{\al}+T\vp_\al\cdot R^{m+2}\vp_\al\big)\notag\\
&+\int_{\Sigma_t^u}-(R^m\operatorname{tr}\check{\mathscr{X}})\big(R^{m+1}\vp_\al(\f12T\vp_\al\cdot\operatorname{tr}\leftidx{^{(R)}}{\slashed{\pi}}
+RT\vp_\al)+T\vp_\al(R^{m+2}\vp_\al)\big)\notag\\
&-\int_{\Sigma_{t_0}^u}(R^{m+1}\operatorname{tr}\check{\mathscr{X}})T\vp_\al(R^{m+1}\vp_\al).\notag
\end{align}
The first integral term on the right hand side of \eqref{RmX} can be controlled by
\begin{equation}\label{YHCC9}
\delta^{2\nu-4} \int_{t_0}^t \mu_{\min}^{-1}(\tau) \bigl\| \mu \slashed{d} R^m \, \mathrm{tr}\check{\mathscr{X}} \bigr\|_{\tau,u} \bigl\| R^{m+1} \vp \bigr\|_{\tau,u} \, d\tau.
\end{equation}
Together with \eqref{dnnchi} and \eqref{SiE}, the integral \eqref{YHCC9} is bounded by
\[
\frac{\delta^{3\nu-4}}{b_{m+2}} \mu_{\min}^{-2b_{m+2}}(t) + \frac{\delta^{\nu-1}}{b_{m+2}} \mu_{\min}^{-2b_{m+2}}(t)
\big( \widetilde{\mathcal{E}}_{1,\leq m+2}(t,u) + \widetilde{\mathcal{E}}_{2,\leq m+2}(t,u)\big).
\]
For the other integrals on the right hand side of \eqref{RmX}, we could adapt Proposition \ref{L2chi} directly to obtain
\begin{equation}\label{topX}
\begin{split}
&\big|\int_{D^{t,u}}(R^{m+1}\operatorname{tr}\check{\mathscr{X}})T\vp_\al(\mathring LR^{m+1}\vp_\al)\big|\\
\lesssim&\frac{\delta^{3\nu-4}}{b_{m+2}} \mu_{\min}^{-2b_{m+2}}(t) + \frac{1}{b_{m+2}} \mu_{\min}^{-2b_{m+2}}(t) \big( \widetilde{\mathcal{E}}_{1,\leq m+2}(t,u) +\delta^{\nu-1} \widetilde{\mathcal{E}}_{2,\leq m+2}(t,u) \big).
\end{split}
\end{equation}

In addition, due to
\begin{align*}
&\delta^{2l+2}\int_{D^{t, u}}(\bar Z^m\slashed\triangle\mu)T\vp_\al(\mathring L\bar Z^mT\vp_\al)\\
=&\delta^{2l+2}\int_{D^{t, u}}(\bar Z^m\slashed\triangle\mu)T\vp_\al\big(\mathring L[\bar Z^m, T]\vp_\al+[\mathring L,T]\bar Z^m\vp_\al+T\mathring L\bar Z^m\vp_{\al}\big),
\end{align*}
then it follows from \eqref{Zmu} and Proposition \ref{L2chi} that
\begin{equation}\label{topmu}
\begin{split}
&\big|\delta^{2l+2}\int_{D^{t, u}}(\bar Z^m\slashed\triangle\mu)T\vp_\al(\mathring L\bar Z^mT\vp_\al)\big|\\
\lesssim&\frac{\delta^{3\nu-4}}{b_{m+2}} \mu_{\min}^{-2b_{m+2}}(t) + \frac{\delta^{\nu-1}}{b_{m+2}} \mu_{\min}^{-2b_{m+2}}(t) \big( \widetilde{\mathcal{E}}_{1,\leq m+2}(t,u) + \widetilde{\mathcal{E}}_{2,\leq m+2}(t,u) \big).
\end{split}
\end{equation}
Therefore, with the delicate estimates \eqref{topX} and \eqref{topmu}, we can obtain
\begin{align}\label{DL2}
&\delta^{2l+2k(2-\nu)}\big|\int_{D^{t, u}}\sum_{p=1}^{m}\big(Z_{m+1}+\leftidx{^{(Z_{m+1})}}\lambda\big)\dots\big(Z_{m+2-p}
+\leftidx{^{(Z_{m+2-p})}}\lambda\big)\leftidx{^{(Z_{m+1-p})}}D_{\al,2}^{m-p}(\mathring L\varphi_\al^{m+1})\big|\notag\\
&\lesssim\frac{\delta^{3\nu-4}}{b_{m+2}} \mu_{\min}^{-2b_{m+2}}(t) + \frac{1}{b_{m+2}} \mu_{\min}^{-2b_{m+2}}(t) \big( \widetilde{\mathcal{E}}_{1,\leq m+2}(t,u) +\delta^{\nu-1} \widetilde{\mathcal{E}}_{2,\leq m+2}(t,u) \big)\\
&\quad+\delta^{-1}\int_0^uF_{1,m+2}(s,u')du'.\notag
\end{align}
\end{enumerate}

\subsection{Treatments of $J_2^{m+1}$ and $\Phi_\gamma^1$ in \eqref{Phik}}\label{l}
We now analyze the structure of $J_2^{m+1}$ and $\Phi_\al^1$ in \eqref{Phik}.
It is observed that neither $J_2^{m+1}$ nor $\Phi_\al^1$ contains the highest  order derivatives of $\tr\check{\mathscr X}$ and $\slashed{\triangle}\mu$.
Therefore, in view of Proposition~\ref{L2chi} and the expressions of $\leftidx{^{(Z)}}D_{\al, j}^n$ given in \eqref{T1}--\eqref{R3},
one has
\begin{align}\label{ZL}
&\delta^{\nu-1}\delta^{2l+2k(2-\nu)}\int_{D^{t,u}}|\sum_{p=1}^3\leftidx{^{(Z_{m+1})}}D_{\al,p}^m\cdot \mathring{\underline L}{\varphi}_\al^{m+1}|\no\\
\lesssim&\delta^{2+\nu}(1+\delta^{4\nu-6})^2+\f{\delta^{\nu-1}}{b_{m+2}}\mu_{\min}^{-2b_{m+2}}(t)\big(\widetilde{\mathcal{E}}_{1,\leq m+2}(t,u) + \widetilde{\mathcal{E}}_{2,\leq m+2}(t,u)\big)
\end{align}
and
\begin{align}\label{ZrL}
&\delta^{2l+2k(2-\nu)}\int_{D^{s,u}}|\sum_{p=1}^3\leftidx{^{(Z_{m+1})}}D_{\al,p}^m\cdot \mathring L{\varphi}_\al^{m+1}|\no\\
\lesssim&\delta^{2+\nu}(1+\delta^{4\nu-6})^2 + \frac{\delta^{\nu-1}}{b_{m+2}} \mu_{\min}^{-2b_{m+2}}(t) \big(\widetilde{\mathcal{E}}_{1,\leq m+2}(t,u) +\delta^{2\nu-2} \widetilde{\mathcal{E}}_{2,\leq m+2}(t,u) \big)\\
&\quad+\delta^{-1}\int_0^uF_{1,m+2}(s,u')du'.\notag
\end{align}
By the explicit form of $\Phi_\gamma^0 = \mu \Box_g \varphi_\gamma$ in \eqref{ge}, together with
the $L^2$ estimate for $\tr\checkX$ in Proposition~\ref{L2chi} and the related behaviors from \eqref{lamda},
we are able to deduce that
\begin{align}\label{Phi0}
&\delta^{\nu-1}\delta^{2l+2k(2-\nu)}\int_{D^{t,u}}|(Z_{m+1}+\leftidx{^{(Z_{m+1})}}\lambda)\cdots(Z_{1}
+\leftidx{^{(Z_{1})}}\lambda)\Phi_\al^0\cdot\mathring{\underline L}\varphi_\al^{m+1}|\no\\
\lesssim&\delta^{2+\nu}(1+\delta^{6\nu-10})+\f{\delta^{\nu-1}}{b_{m+2}}\mu_{\min}^{-2b_{m+2}}(t)\big(\widetilde{\mathcal{E}}_{1,\leq m+2}(t,u)+ \widetilde{\mathcal{E}}_{2,\leq m+2}(t,u)\big)
\end{align}
and
\begin{equation}\label{rPhi}
\begin{split}
&\delta^{2l+2k(2-\nu)}\int_{D^{s,u}}|(Z_{m+1}+\leftidx{^{(Z_{m+1})}}\lambda)\cdots(Z_{1}
+\leftidx{^{(Z_{1})}}\lambda)\Phi_\al^0\cdot\mathring L\varphi_\al^{m+1}|\\
\lesssim&\delta^{2+\nu}(1+\delta^{6\nu-10})+\f{\delta^{\nu-1}}{b_{m+2}}\mu_{\min}^{-2b_{m+2}}(t)\big(\widetilde{\mathcal{E}}_{1,\leq m+2}(t,u)+ \delta^{2\nu-2}\widetilde{\mathcal{E}}_{2,\leq m+2}(t,u)\big)\\
&+\delta^{-1}\int_0^u F_{1,m+2}(s,u')du'.
\end{split}
\end{equation}

\subsection{Closure of the bootstrap assumptions}
Note that in Subsections~\ref{tot}--\ref{l}, we have established the estimates for all terms in $\delta^{\nu-1}\int_{D^{t, u}}|\Phi \cdot \mathring{\underline{L}}\Psi|$ and $\int_{D^{t, u}}|\Phi \cdot \mathring{L}\Psi|$ of \eqref{e}.
Substituting the inequalities \eqref{D11}--\eqref{D13}, \eqref{iDg2}, \eqref{DL2} and \eqref{ZL}-\eqref{rPhi} into \eqref{e}
yields
\begin{equation}\label{energy}
\begin{split}
\delta^{\nu-1} \widetilde{\mathcal{E}}_{2,\leq m+2}(t,u) + \delta^{\nu-1} \widetilde{\mathcal{F}}_{2,\leq m+2}(t,u) + \widetilde{\mathcal{E}}_{1,\leq m+2}(t,u) + \widetilde{\mathcal{F}}_{1,\leq m+2}(t,u)
\lesssim \delta^{3\nu-4}.
\end{split}
\end{equation}
In \eqref{energy}, we choose the positive constant $b_{m+2}$ included in the energy expressions on the left hand side of \eqref{energy}
to be sufficiently large and let $b_{m+2} = [b_{m+2}] + \frac{1}{4}$. With this choice, it follows immediately that
both $\mathcal{E}_{1,\leq m+2}(t,u)$ and $\mathcal{E}_{2,\leq m+2}(t,u)$ contain the degenerate factor $\mu$. To close the bootstrap assumptions \eqref{bootstrap-assumptions}, we need to construct suitable non-degenerate energies for the lower order derivatives of the solutions
so that the related Sobolev imbedding theorem can be used. We next estimate $\tilde{\mathcal{E}}_{1,\leq m+1}$ and $\tilde{\mathcal{E}}_{2,\leq m+1}(t,u)$ by use of \eqref{energy} with the fixed number $b_{m+2}$.

For any $Z\in\{\rho\mathring L, T, R\}$, it follows from \eqref{fequation} that
\begin{equation*}
\begin{split}
 \mathring{L}\bigl(\varrho^{1/2}\underline{L}{Z}^m \vp_\al \bigr)
&=\varrho^{1/2}\mathring L[\mathring{\underline L},Z^m]\vp_\al- \f12\varrho^{-1/2}[Z^m,\mathring {\underline L}]\vp_{\al}+
\varrho^{1/2}{Z}^m (\mu\slashed\triangle\vp_\al+H_\al )\\
&+ \varrho^{1/2}[\mathring{L}, {Z}^m] \underline{L} \vp_\al
- \varrho^{1/2}\sum_{\substack{m_1 + m_2 = m \\ m_1 > 0}}{Z}^{m_1}\big(\frac{1}{2\varrho}\big){Z}^{m_2} \underline{L}\vp_\al,
\end{split}
\end{equation*}
together with Proposition \ref{L2chi} and \eqref{energy}, this yields
\begin{align}
&\delta^{l+k(2-\nu)}\|\mathring{L}\bigl(\varrho^{1/2}\underline{L}{Z}^m \vp\bigr)\|_{t,u}\nonumber\\
\lesssim&\delta^{l_0+k_0(2-\nu)}\bigl(\de^{3\nu-5}\|Z^{m_0}\mu\|_{t,u}+(\delta^{-1}+\delta^{2\nu-4})\|Z^{m_0}\vp\|_{t,u}\bigr)
+\delta^{l+k(2-\nu)}\|\mu\slashed\triangle Z^m\vp\|_{t,u}\nonumber\\
&+\delta^{\nu-2}\delta^{l_1+k_1(2-\nu)}\bigl(\|Z^{m_1}\operatorname{tr}\check{\mathscr X}\|_{t,u}+\delta^{\nu-1}\|\slashed{\mathcal{L}}_{{Z}}^{m_1}\leftidx{^{(R)}}{\slashed{\pi}}_T\|_{t,u}
+\delta\|\slashed{\mathcal{L}}_{{Z}}^{m_1}\leftidx{^{(T)}}{\slashed{\pi}}_{\mathring L}\|_{t,u}\nonumber\\
&+\|\slashed{\mathcal{L}}_{{Z}}^{m_1}\leftidx{^{(R)}}{\slashed{\pi}}_{\mathring L}\|_{t,u}+\delta^{2}\|\slashed{\mathcal{L}}_{{Z}}^{m_1}\leftidx{^{(T)}}{\slashed{\pi}}\|_{t,u}
+\delta\|\slashed{\mathcal{L}}_{{Z}}^{m_1}\leftidx{^{(R)}}{\slashed{\pi}}\|_{t,u}\bigr)\nonumber\\
&+\delta^{\nu-1}(1+\delta^{2\nu-3})\delta^{l_1+k_1(2-\nu)}\|\slashed{\mathcal{L}}_{{Z}}^{m_1}\slashed g\|_{t,u}\nonumber\\
\lesssim&\delta^{3\nu-\f92}+(1+\delta^{2\nu-3})\mu_{\min}^{-b_{m+2}}(t)\bigl( \sqrt{\widetilde{\mathcal{E}}_{1,\leq m+2}(t,u)} + \sqrt{\widetilde{\mathcal{E}}_{2,\leq m+2}(t,u)}\bigr)\nonumber\\
\lesssim&\delta^{\nu-\f32}(1+\delta^{2\nu-3})\mu_{\min}^{-b_{m+2}}(t).\label{YHCC10}
\end{align}
By \eqref{YHCC10} and \eqref{Ff}, we arrive at
\begin{equation}\label{E2m+1}
\begin{split}
\delta^{l+k(2-\nu)}\|\varrho^{1/2}\underline{L}{Z}^m \vp\|_{t,u}
\lesssim&\delta^{\nu-\f32}+\delta^{l+k(2-\nu)}\int_{t_0}^t\|\mathring{L}\bigl(\varrho^{1/2}\underline{L}{Z}^m \vp\bigr)\|_{\tau,u}d\tau\\
\lesssim&\delta^{\nu-\f32}\mu_{\min}^{1-b_{m+2}}(t).
\end{split}
\end{equation}
In addition, since $\mu^{-1}\mathring L\mu\lesssim\delta^{\nu-2}$ holds by \eqref{blowup} and \eqref{eq:higher-order-L-infty-estimates},
then
\begin{equation*}
\begin{split}
&\delta^{2l+2k(2-\nu)}\bigl(\mu|RZ^m\vp_\al|^2+\mu|\mathring LZ^m\vp_\al|^2\bigr)(t,u,\vartheta)\\
\lesssim&\delta^{4\nu-6}+\delta^{2l+2k(2-\nu)}\int_{t_0}^t\mu^{-1}\mathring L\mu\bigl(\mu|RZ^m\vp_\al|^2+\mu|\mathring LZ^m\vp_\al|^2\bigr)(\tau,u,\vartheta)d\tau\\
&+\delta^{2l+2k(2-\nu)}\int_{t_0}^t2\mu\bigl(RZ^m\vp_\al\cdot\mathring LRZ^m\vp_\al+\mathring LZ^m\vp_\al\cdot\mathring L^2Z^m\vp_\al\bigr)(\tau,u,\vartheta)d\tau\\
\lesssim&\delta^{4\nu-6}+\delta^{2l+2k(2-\nu)}\int_{t_0}^t\delta^{\nu-2}\bigl(\mu|RZ^m\vp_\al|^2+\mu|\mathring LZ^m\vp_\al|^2\bigr)(\tau,u,\vartheta)d\tau\\
&+\delta^{2l+2k(2-\nu)}\int_{t_0}^t\mu\bigl(|RZ^m\vp_\al|\cdot|\mathring LRZ^m\vp_\al|+|\mathring LZ^m\vp_\al|\cdot|\mathring L^2Z^m\vp_\al|\bigr)(\tau,u,\vartheta)d\tau,
\end{split}
\end{equation*}
and hence,
\begin{equation}\label{YHCC11}
\begin{split}
&\delta^{l+k(2-\nu)}\bigl(\mu^{1/2}|RZ^m\vp_\al|+\mu^{1/2}|\mathring LZ^m\vp_\al|\bigr)(t,u,\vartheta)\\
\lesssim&\delta^{2\nu-3}+\delta^{l+k(2-\nu)}\int_{t_0}^t\bigl(|\mu^{1/2}\mathring LRZ^m\vp_\al|+|\mu^{1/2}\mathring L^2Z^m\vp_\al|\bigr)(\tau,u,\vartheta)d\tau.
\end{split}
\end{equation}
Collecting \eqref{YHCC11} and \eqref{Ff}, one can deduce
\begin{equation}\label{E1m+1}
\begin{split}
&\delta^{l+k(2-\nu)}\bigl(\|\mu^{1/2}RZ^m\vp\|_{t,u}+\|\mu^{1/2}\mathring LZ^m\vp\|_{t,u}\bigr)\\
\lesssim&\delta^{2\nu-\f52}+\int_{t_0}^t\delta^{\nu-2}\mu_{\min}^{-b_{m+2}}(\tau)\sqrt{\tilde{\mathcal E}_{1,\leq m+2}(\tau,u)}d\tau\lesssim\delta^{\f32\nu-2}\mu_{\min}^{1-b_{m+2}}(t).
\end{split}
\end{equation}
Set $b_{m+1} = b_{m+2} - 1$. Then from \eqref{E2m+1} and \eqref{E1m+1}, we have
\[
\delta^{\nu-1} \widetilde{\mathcal{E}}_{2,\leq m+1}(t,u) + \widetilde{\mathcal{E}}_{1,\leq m+1}(t,u) \lesssim \delta^{3\nu-4}.
\]
From this, one can define $b_{m+2-n} = b_{m+2} - n$ for $1 \leq n \leq [b_{m+2}]$ and $b_{m+2-n} = 0$ for $[b_{m+2}] + 1 \leq n \leq m + 2$.
Repeating the argument in deriving \eqref{E2m+1} and \eqref{E1m+1} step by step, we eventually obtain
\[
\delta^{\nu-1} \widetilde{\mathcal{E}}_{2,\leq k}(t,u) + \widetilde{\mathcal{E}}_{1,k}(t,u) \lesssim \delta^{3\nu-4}, \quad 0 \leq k \leq m+2.
\]
In particular, it holds that
\begin{align}
\mathcal E_{2,\leq m + 1 - [b_{m+2}]}(t,u) &\lesssim \delta^{2\nu-3}, \label{E2low}\\
\mathcal E_{1,\leq m + 1 - [b_{m+2}]}(t,u) &\lesssim \delta^{3\nu-4},\label{E1low}
\end{align}
which does not contain the degenerate factor in the energies $\mathcal E_{2,\leq m + 1 - [b_{m+2}]}(t,u)$
and $\mathcal E_{1,\leq m + 1 - [b_{m+2}]}(t,u)$.

We now use the Sobolev inequality on $S_{t,u}$ and \eqref{SE} to close the bootstrap assumptions \eqref{bootstrap-assumptions}.
In fact, for any $n\leq m-[b_{m+2}]$, one has
\begin{equation}\label{YHCC12}
\begin{split}
&\delta^{l+k(2-\nu)}|Z^n\vp_\al(t,u,\vartheta)|
\lesssim\delta^{l+k(2-\nu)}\sum_{i\leq 1}\|R^iZ^n\vp_\al\|_{L^2(S_{t,u})}\\
\lesssim&\delta^{1/2}\bigl(\sqrt{\mathcal E_{1,\leq n+2}(t,u)}+\sqrt{\mathcal E_{2,\leq n+2}(t,u)}\bigr)\lesssim\delta^{\nu-1},
\end{split}
\end{equation}
where he number of $T$ is $l$ and the number of $\varrho\mathring L$ is $k$ in the multi-index $Z^n$.
Obviously, \eqref{YHCC12} implies \eqref{bootstrap-assumptions}.

\section{Sketch of the proof of Theorem \ref{maintheorem} (B)}\label{incoming}
In this section, we provide a sketch of the proof for Theorem \ref{maintheorem} (B) under the assumption \eqref{condition2}.
For this case, shock formation for problem \eqref{mainequation} arises from the intersection of the incoming characteristic conic surfaces.
To this end, we define the $C^1$ optical function ${\hat u}(t,y)$ associated with the equation in \eqref{mainequation} via
\begin{equation}\label{eikonal equation-1}
\begin{cases}
g^{\al\be}\p_{\al}{\hat  u}\p_{\be}{\hat  u}=0,\\
{\hat  u}(t_0,y)=t_0+r,\quad \p_{0}{\hat  u}>0
\end{cases}
\end{equation}
and the corresponding inverse foliation density \(\hat\mu\) as
\[
\hat\mu=-\frac{1}{g^{0\al}\p_{\al}\hat u}.
\]
Meanwhile,  the null frame $\{\hat{\underline L}, \hat L, \hat X\}$ with respect to the metric $(g_{\al\beta})$
is introduced as
\begin{equation}\label{Lr-1}
\begin{split}	
&\hat{\underline L}=-\hat\mu\bigl(g^{\al\beta}\p_\al\hat u\,\p_\beta\bigr),\\
&\hat L={\hat\mu}\hat{\underline L}+2\hat T
\quad \text{with $\hat T=\hat\mu(-g^{0\al}\p_\al-\hat{\underline L})$},\\
&\hat X=\f{\p}{\p{\hat\vartheta}}\quad\text{with $\hat\vartheta$ being determined by ${\hat{\underline L}}{\hat\vartheta}=0$
and $\hat\vartheta|_{t=t_0}=\th\in\mathbb{S}^1$}.\\	
\end{split}
\end{equation}
We perform the following coordinate transformation in the neighborhood of the incoming light conic surface
\(A_0=\{(t,y): r+t = \frac{1-\delta}{\mathcal A}\}\):
$$(t, y^1, y^2)\longrightarrow (t, \hat u, \hat\vartheta)\quad\text{with $\hat u=\hat u(t,y)$ and $\hat\vartheta=\hat\vartheta(t,y)$}.
$$

As in Lemma \ref{Lem3.3}, under the coordinate $(t, \hat u, \hat\vartheta)$, one has
\begin{equation}\label{lumu}
\hat{\underline L}\hat\mu = -\frac{1}{2}\hat\mu G_{\hat{\underline L}\hat{\underline L}}^\al\hat{\underline L}\vp_\al
- \hat\mu G_{\tilde{\underline T}\hat{\underline L}}^\al \hat{\underline L}\vp_\al
+ \frac{1}{2}G_{\hat{\underline L}\hat{\underline L}}^\al \hat T\vp_\al.
\end{equation}
In addition, under the analogous bootstrapping assumption \eqref{bootstrap-assumptions} near $A_0$
and by the arguments similar to those in Proposition \ref{prop:bootstrap-higher-derivatives},
we arrive at
\begin{align*}
&|\vp_\al|\lesssim\delta^{\nu-1},\quad
|\hat{\underline L}\vp_\al|\lesssim\delta^{2\nu-3},\\
&\bigl|\hat\varrho^{1/2}\hat T\vp_\al(t,\hat u,\hat\vartheta)-\hat\varrho_0^{1/2}\hat T\vp_\al(t_0,\hat u,\hat\vartheta)\bigr|\lesssim\delta^{2\nu-3},
\end{align*}
where \(\hat\varrho=t+\hat u\) and \(\hat\varrho_0=t_0+\hat u\).

Introduce the auxiliary quantity
\[
\hat{\mathscr{M}}(t,\hat u,\hat\vartheta):= \frac{(\gamma+1) q_{0}^{3}} {2c^{2}(\rho_{0})\bigl(q_{0}^{2}-c^{2}(\rho_{0}) \bigr)} \,\hat\varrho^{1/2}\, \hat T\varphi_{0}(t,\hat u,\hat\vartheta).
\]
Combining this with \eqref{lumu}, one deduces
\[
\hat\varrho^{1/2}\hat{\underline L}\hat\mu=\hat{\mathscr M}(t_0,\hat u,\hat\vartheta)+O(\delta^{2\nu-3}).
\]
By integrating along the integral curves of $\hat{\underline L}$, it holds that
\begin{equation}\label{YHCCC-4}
\begin{split}
\hat\mu(t, \hat u,\hat\vartheta)&=1+2\hat{\mathscr M}(t_0,\hat u,\hat\vartheta)\bigl(\hat\varrho_0^{1/2}-\hat\varrho^{1/2}\bigr)+O(\delta^{\nu-1})\\
&=1+\frac{(\gamma+1) q_0^3}{2c^2(\rho_0)\bigl(q_0^2 - c^2(\rho_0)\bigr)} \hat T\vp_0(t_0,\hat u,\hat\vartheta)\bigl(t+O(t^2)\bigr) + O(\delta^{\nu-1}) \\
&= 1 + \frac{(\gamma+1) q_0^3}{2c^2(\rho_0)\bigl(q_0^2 - c^2(\rho_0)\bigr)} \p_r\vp_0(t_0,\hat u,\hat\vartheta)\bigl(t+O(t^2)\bigr) + O(\delta^{\nu-1})\\
&=1+\frac{(\gamma+1) q_0^3}{4c^2(\rho_0)\bigl(q_0^3-c^2(\rho_0)\bigr)} L\vp_0(t_0,\hat u,\hat\vartheta)\bigl(t+O(t^2)\bigr)+O(\delta^{\nu-1}),
\end{split}
\end{equation}
where we have used the key identity \(\hat T\vp_0(t_0,\hat u,\hat\vartheta)=\p_r\vp_0(t_0,\hat u,\hat\vartheta)
+O(\delta^{2\nu-3})\) and the estimate \eqref{initial2}.

It follows from \eqref{YHCCC-4} and \eqref{y1} that
    \begin{equation}\label{YHCCC-5}
    \begin{split}
    &\hat\mu(t,\hat u_0,\hat\vartheta_0)\\
    = & 1 + \frac{(\gamma+1) q_0^3\mathcal A}{4c^2(\rho_0)\bigl(q_0^2 - c^2(\rho_0)\bigr)} \delta^{\nu-2} \bigl(\mathcal A\p_s^2\Phi_0(s_0,\omega_0)+\p_s\Phi_1(s_0, \omega_0)\bigr)
    \big(t+O(t^2)\big) + O(\delta^{\nu-1}),
    \end{split}
    \end{equation}
where \((\hat u_0, \hat\vartheta_0)\) is determined by
$(\frac{\mathcal A \hat u_0 - 1}{\delta}, \cos\hat\vartheta_0, \sin\hat\vartheta_0) = (\tilde s_0, \tilde\omega_0)$
in view of assumption \eqref{condition2}. Consequently, \(\hat\mu\to 0+\) holds before
$\tilde t_*$, which completes the proof of Theorem \ref{maintheorem} (B).
Finally, the closure of the analogous bootstrap assumptions \eqref{bootstrap-assumptions} near $A_0$
can be completed by the analogous arguments in Section \ref{Section 3} and Sections \ref{EE}--\ref{ert}.

\vskip 0.4 true cm

{\bf \color{blue}{Conflict of Interest Statement:}}

\vskip 0.2 true cm

{\bf The authors declare that there is no conflict of interest in relation to this article.}

\vskip 0.3 true cm
{\bf \color{blue}{Data availability statement:}}

\vskip 0.2 true cm

{\bf  Data sharing is not applicable to this article as no data sets are generated
during the current study.}

\end{document}